\theoremstyle{definition}
\theoremstyle{remark}
\newtheorem{thm}{Theorem}[section]
\newtheorem{lem}[thm]{Lemma}
\newtheorem{prop}[thm]{Proposition}
\theoremstyle{definition}
\newtheorem{de}[thm]{Definition}
\theoremstyle{remark}
\newtheorem{rem}[thm]{Remark}
\numberwithin{section}{chapter}
\numberwithin{equation}{chapter}
\def \N {\mathbb N}
\def \C {\mathbb C}
\def \Z {\mathbb Z}
\def \R {\mathbb R}
\def \T {\mathbb{T}}
\def \G {\mathbb{G}}
\def \F {\mathcal{F}}
\def \I {\mathcal{I}}
\def \U {\mathcal{U}}
\def \X {\mathcal{X}}
\def \O {\mathcal{O}}
\def \RP {{\bf RP}}
\def \M {{\bf M}}
\def \NN {\mathcal{N}}
\def \id {{\rm id}}
\def \a {\alpha }
\def \b {\beta}
\def \ep {\epsilon}
\def \d {\delta}
\def \D {\Delta}
\def \ll {\lambda}
\def \lra{\longrightarrow}
\def \g {\mathfrak{g}}
\def \exp {\text{exp}}
\def \ad {\text{ad}\ }
\begin{document}

\frontmatter

\title{Nil Bohr-sets and almost automorphy of higher order}

\author{Wen Huang}
\author{Song Shao}
\author{Xiangdong Ye}

\address{Department of Mathematics, University of Science and Technology of China,
Hefei, Anhui, 230026, P.R. China.}

\email{wenh@mail.ustc.edu.cn}\email{songshao@ustc.edu.cn}
\email{yexd@ustc.edu.cn}



\begin{abstract}

Two closely related topics: higher order Bohr sets and higher
order almost automorphy are investigated in this paper. Both of them
are related to nilsystems.

In the first part, the problem which can be viewed as the higher
order version of an old question concerning Bohr sets is studied:
for any $d\in \N$ does the collection of  $\{n\in \Z: S\cap
(S-n)\cap\ldots\cap (S-dn)\neq \emptyset\}$ with $S$ syndetic
coincide with that of Nil$_d$ Bohr$_0$-sets? It is proved that
Nil$_d$ Bohr$_0$-sets could be characterized via generalized
polynomials, and applying this result one side of the problem is
answered affirmatively: for any Nil$_d$ Bohr$_0$-set $A$, there
exists a syndetic set $S$ such that $A\supset \{n\in \Z: S\cap
(S-n)\cap\ldots\cap (S-dn)\neq \emptyset\}.$ Moreover, it is shown
that the answer of the other side of the problem can be deduced from
some result by Bergelson-Host-Kra if modulo a set with zero density.


In the second part, the notion of $d$-step almost automorphic
systems with $d\in\N\cup\{\infty\}$ is introduced and investigated, which is the
generalization of the classical almost automorphic ones. It is worth to mention that some results
concerning higher order Bohr sets will be applied to the investigation. For a
minimal topological dynamical system $(X,T)$ it is shown that the
condition $x\in X$ is $d$-step almost automorphic can be
characterized via various subsets of $\Z$ including the dual sets of
$d$-step Poincar\'e and Birkhoff recurrence sets, and Nil$_d$
Bohr$_0$-sets. Moreover, it turns out that the condition $(x,y)\in
X\times X$ is regionally proximal of order $d$ can also be
characterized via various subsets of $\Z$.

\end{abstract}

\maketitle


\tableofcontents 

\mainmatter

\chapter{Introduction}

In this paper we study two closely related topics: higher order Bohr
sets and higher order almost automorphy. Both of them are related
to nilsystems. In the first part we investigate the higher order
Bohr sets. Then in the second part we study the higher order
automorphy, and explain how these two topics are closely related.

\section{Higher order Bohr problem}

A very old problem from combinatorial number theory and harmonic
analysis, rooted in the classical work of Bogoliuboff, F{\o}lner
\cite{Folner}, Ellis-Keynes \cite{EK}, and Veech \cite{V68}  is the
following. Let $S$ be a syndetic subset of the integers. Is the set
$S -S$ a Bohr neighborhood of zero in $\Z$ (also called
Bohr$_0$-set)? For the equivalent statements and results related to
the problem in combinatorial number theory, group theory and
dynamical systems, see Glasner \cite{G98}, Weiss \cite{W},
Katznelson \cite{Kaz}, Pestov \cite{P}, Boshernitzan-Glasner
\cite{BG}, Huang-Ye \cite{HY12}, Grivaux-Roginskaya \cite{Gri12,
GR12}.

Bohr-sets are fundamentally abelian in nature. Nowadays it has
become apparent that higher order non-abelian Fourier analysis plays
an important role both in combinatorial number theory and ergodic
theory. Related to this, a higher-order version of Bohr$_0$ sets,
namely Nil$_d$ Bohr$_0$-sets, was introduced in \cite{HK10}. For the
recent results obtained by Bergelson-Furstenberg-Weiss and Host-Kra,
see \cite{BFW,HK10}.


\subsection{Nil Bohr-sets}
There are several equivalent definitions for Bohr-sets. Here is the
one easy to understand: a subset $A$ of $\Z$ is a {\em Bohr-set} \index{Bohr-set}
if there exist $m\in \N$, $\a\in \T^m$, and a non-empty open set
$U\subset \T^m$ such that $\{n\in \Z: n\a \in U\}$ is contained in
$A$; the set $A$ is a {\em Bohr$_0$-set} if additionally $0\in U$. \index{Bohr$_0$-set}

It is not hard to see that if $(X,T)$ is a minimal equicontinuous
system, $x\in X$ and $U$ is a neighborhood of $x$, then
$N(x,U)=:\{n\in\Z:T^nx\in U\}$ contains $S-S=:\{a-b: a,b\in S\}$
with $S$ syndetic, i.e. with a bounded gap ($S$ can be chosen as
$N(x,U_1)$, where $U_1\subset U$ is an open neighborhood of $x$).
\index{syndetic} This implies that if $A$ is a Bohr$_0$-set, then
$A\supset S-S$ with $S$ syndetic. The old question concerning
Bohr$_0$-sets is

\medskip

\noindent {\bf Problem A-I:} \ {Let $S$ be a syndetic subset of
$\Z$, is  $S-S$ a Bohr$_0$-set?}

\medskip

Note that Ellis-Keynes \cite{EK} proved that $S-S+S-a$ is a
Bohr$_0$-set for some $a\in S$. Veech showed that it is at least
``almost'' true \cite{V68}. That is, given a syndetic set
$S\subset \Z$, there is an $N\subset \Z$ with density zero such
that $(S-S)\Delta N$ is a Bohr$_0$-set. K$\check{\rm
r}$\'i$\check{\rm z}$ \cite{K} showed that there exists a subset $K$
of $\Z$ with positive upper Banach density such that $K-K$ does not
contains $S-S$ for any syndetic subset $S$ of $\Z$. This implies
that Problem A-I has a negative answer if we replace a syndetic
subset of $\Z$ by a subset of $\Z$ with positive upper Banach
density.
\medskip

A subset $A$ of $\Z$ is a {\em Nil$_d$ Bohr$_0$-set}
\index{Nil$_d$ Bohr$_0$-set} if there exist a $d$-step nilsystem
$(X,T)$, $x_0\in X$ and an open neighborhood $U$ of $x_0$ such that
$N(x_0,U)=:\{n\in \Z: T^n x_0\in U\}$ is contained in $A$. Denote by
$\F_{d,0}$ \index{$\F_{d,0}$} the family\footnote{ A collection $\F$
of subsets of $\Z$ (or $\N$) is {\em a family} \index{family} if it
is hereditary upward, i.e. $F_1 \subset F_2$ and $F_1 \in \F$
imply $F_2 \in \F$. Any nonempty collection $\mathcal{A}$ of subsets
of $\Z$ generates a family $\F(\mathcal{A}) :=\{F \subset \Z:F
\supset A$ for some $A \in \mathcal{A}\}$.} consisting of all
Nil$_d$ Bohr$_0$-sets. We can now formulate a higher order form of
Problem A-I. We note that $$\{n\in \Z: S\cap (S-n)\cap\ldots\cap
(S-dn)\neq \emptyset\}$$ can be viewed as the common differences of
arithmetic progressions with length $d+1$ appeared in the subset
$S$. In fact, $S\cap (S-n)\cap\ldots\cap (S-dn)\neq \emptyset$ if
and only if there is $m\in S$ with $$m,m+n,\ldots,m+dn\in S.$$
Particularly, $S-S=\{n\in\Z: S\cap (S-n)\neq \emptyset\}$.

\medskip

\noindent {\bf Problem B-I:} [Higher order form of Problem A-I] Let
$d\in\N$.

{\em \begin{enumerate} \item For any Nil$_d$ Bohr$_0$-set $A$, is it
true that there is a syndetic subset $S$ of $\Z$ with
$A\supset\{n\in \Z: S\cap (S-n)\cap\ldots\cap (S-dn)\neq
\emptyset\}$?

\item For any syndetic subset $S$ of $\Z$, is $\{n\in \Z:
S\cap (S-n)\cap\ldots\cap (S-dn)\neq \emptyset\}$ a Nil$_d$
Bohr$_0$-set?\end{enumerate}}

\medskip

\subsection{Dynamical version of the higher order Bohr problem}
Sometimes combinatorial questions can be translated into dynamical
ones by the Furstenberg correspondence principle, see Section
\ref{FCP}. Using this principle, it can be shown that Problem A-I is
equivalent to the following version:

\medskip

\noindent {\bf Problem A-II:} {\it For any minimal system $(X,T)$
and any nonempty open subset $U$ of $X$, is the set $\{n\in \Z:
U\cap T^{-n}U\neq \emptyset \}$ a Bohr$_0$-set?}

\medskip

Similarly, Problem B-I has its dynamical version:

\medskip

\noindent {\bf Problem B-II:} [Dynamical version of Problem B-I] Let
$d\in\N$. {\em
\begin{enumerate} \item  For any Nil$_d$ Bohr$_0$-set $A$, is it
true that there are a minimal system $(X,T)$ and a non-empty open
subset $U$ of $X$ with
$$A\supset \{n\in\Z:
U\cap T^{-n}U\cap \ldots \cap T^{-dn}U\not=\emptyset\}?$$

\item For any minimal system $(X,T)$ and any non-empty open subset $U$ of $X$,
is it true that $\{n\in\Z: U\cap T^{-n}U\cap \ldots \cap
T^{-dn}U\not=\emptyset\}$ is a Nil$_d$ Bohr$_0$-set?
\end{enumerate}}

In the next section, we will give the third version of Problem B via recurrence sets. The equivalence of
three versions will be shown in Chapter \ref{section-pre}.

\subsection{Main results on the higher order Bohr problem}

We aim to study Problem B-I or its dynamical version Problem B-II.
We will show that Problem B-II(1) has an affirmative answer, and
Problem B-II(2) has a positive answer if ignoring a set with zero
density. Namely, we will show

\medskip

\noindent {\bf Theorem A}: {\em Let $d\in\N$.
\begin{enumerate} \item
If $A\subset\Z$ is a Nil$_d$ Bohr$_0$-set, then there exist a
minimal $d$-step nilsystem $(X,T)$ and a nonempty open subset $U$ of
$X$ with
\begin{equation*}
    A\supset \{n\in\Z: U\cap T^{-n}U\cap \ldots\cap T^{-dn}U\neq \emptyset \}.
\end{equation*}

\item For any minimal system $(X,T)$ and any non-empty open subset $U$ of $X$,
$I=\{n\in\Z: U\cap T^{-n}U\cap \ldots \cap T^{-dn}U\not=\emptyset\}$
is almost a Nil$_d$ Bohr$_0$-set, i.e. there is $M\subset \Z$ with
zero upper Banach density such that $I\Delta M $ is a Nil$_d$
Bohr$_0$-set\end{enumerate}}

As we said before for $d=1$ Theorem A(1) can be easily proved. To
show Theorem~ A(1) in the general case, we need to investigate the
properties of $\F_{d,0}$. It is interesting that in the process to
do this, generalized polynomials (see \S  \ref{genenal-poly-def} for a
definition) appear naturally. Generalized polynomials have been
studied extensively, see for example the remarkable paper by
Bergelson and Leibman \cite{BL07} and references therein. After
finishing this paper we even find that it also plays an important
role in the recent work by Green, Tao and Ziegler \cite{GTZ}. In
fact the special generalized polynomials defined in this paper are
closely related to the nilcharacters defined there. We remark that
Theorem A(2) was first proved by Veech in the case $d=1$ \cite{V68},
and its proof will be presented in Section \ref{simpleproof}.

Let $\F_{GP_d}$ (resp. $\F_{SGP_d}$) be the family generated by the
sets of forms
$$\bigcap_{i=1}^k\{n\in\Z:P_i(n)\ (\text{mod}\ \Z)\in (-\ep_i,\ep_i)
\},$$ where $k\in\N$, $P_1,\ldots,P_k$ are generalized polynomials
(resp. special generalized polynomials) of degree $\le d$, and
$\ep_i>0$. For the precise definitions see Chapter \ref{section-GP}. We
remark that one can in fact show that $\F_{GP_d}=\F_{SGP_d}$
(Theorem \ref{gpsgp}).

The following theorem illustrates the relation between Nil$_d$
Bohr$_0$-sets and the sets defined above using generalized
polynomials.

\medskip

\noindent {\bf Theorem B}: {\em Let $d\in\N$. Then
$\F_{d,0}=\F_{GP_d}$.}

\medskip
When $d=1$, we have $\F_{1,0}=\F_{SGP_1}$. This is the result of
Katznelson \cite{Kaz}, since $\F_{SGP_1}$ is generated by sets of
forms $\cap_{i=1}^k \{n\in\Z: na_i\ (\text{mod}\ \Z)\in
(-\ep_i,\ep_i)\}$ with $k\in\N$, $a_i\in\R$ and $\ep_i>0$.

Theorem A(1) follows from Theorem B and the following result:

\medskip

\noindent {\bf Theorem C}: {\em Let $d\in\N$.  If $A\in \F_{GP_d}$,
then there exist a minimal $d$-step nilsystem $(X,T)$ and a nonempty
open subset $U$ of $X$ such that
$$A\supset \{n\in\Z: U\cap T^{-n}U\cap \ldots\cap T^{-dn}U\neq \emptyset \}.$$}

The proof of Theorem B is divided into two parts, namely

\medskip

\noindent {\bf Theorem B(1)}: {\em $\F_{d,0}\subset\F_{GP_d}$} and

\medskip

\noindent {\bf Theorem B(2)}: {\em $\F_{d,0}\supset\F_{GP_d}$.}

\medskip

The proof of Theorem B(1) is a theoretical argument using nilpotent
Lie group theory; and the proofs of Theorem B(2) and Theorem C
involve very complicated construction and computation where
nilpotent matrix Lie groups are used.

\begin{rem} Our definition of generalized polynomials is slight different from the ones
defined in \cite{BL07}. In fact we need to specialize the degree of
the generalized polynomials which is not needed in \cite{BL07}.
Moreover, our Theorem B can be compared with Theorem A of Bergelson
and Leibman proved in \cite{BL07}.
\end{rem}

\section{Higher order almost automorphy}

The notion of almost automorphy was first introduced by Bochner in
\cite{Bochner55, Bochner62}, and Veech studied almost automorphic
systems in \cite{V65} using Fourier analysis. To study higher order
almost automorphic systems it is expected that nilpotent Lie groups
and higher order Fourier analysis will be involved and it turns out
that it is the case. We will apply results obtained in the first part to study higher
order almost automorphic systems, namely $d$-step almost automorphic systems
which by the definition are the almost one-to-one extensions of
their maximal $d$-step nilfactors with $d\in\N\cup\{\infty\}$ (an
$\infty$-step nilsystem was defined in \cite{D-Y}). Since for a
minimal system the maximal $d$-step nilfactor is induced by the
regionally proximal relation of order $d$ (which is a closed
invariant equivalence relation \cite{HKM,SY}), the natural way we
study $d$-step almost automorphic systems is that we first show some
characterizations of regionally proximal relation of order $d$, and
then use them to obtain results for $d$-step almost automorphic
systems. In the process of doing above many interesting subsets of $\Z$
including higher order Poincar\'e and Birkhoff recurrence sets
(usual and cubic versions), higher order Bohr-sets, $SG_d$ sets
(introduced in \cite{HK10}) and others are involved. In this section
we introduce some backgrounds of our study and then state the main
results on higher order almost automorphy.

\medskip

First we give some backgrounds.

\subsection{Almost periodicity, almost automorphy and characterizations}

The study of (uniformly) almost periodic functions was initiated by
Bohr in a series of three papers 1924-26 which can be found in
\cite{Bohr}. The literature on almost periodic functions is
enormous, and the notion has been generalized in several directions.
Nowadays the theory of almost periodic functions may be recognized
as the representation theory of compact Hausdorff groups: every
topological group $G$ has a group compactification $\a_G:
G\rightarrow b G$ such that the space of almost periodic functions
on $G$ is just the set of all functions $f\circ \a_G$ with $f\in C(b
G)$. The compactification $(\a_G, b G)$ of $G$ is called the {\em
Bohr compactification} of $G$. \index{Bohr compactification}

A class of functions related to the almost periodic ones is the
class of {\em almost automorphic functions}: \index{almost automorphic function} these functions turn
out to be the ones of the form $h\circ \a_G$ with $h$ a bounded
continuous function on $\a_G(G)$ (if $h$ is uniformly continuous and
bounded on $\a_G(G)$, then it extends to an $f\in C(bG)$, so $h\circ
\a_G=f\circ \a_G$ is almost periodic on $G$).

\medskip

The notion of almost automorphy was first introduced by Bochner in
1955 in a work of differential geometry \cite{Bochner55, Bochner62}.
Taking $G$ for the present to be the group of integers $\Z$, an
almost automorphic function $f$ has the property that from any
sequence $\{n_i'\}\subset \Z$ one may extract a subsequence
$\{n_i\}$ such that both
\begin{equation*}
    \lim_{i\to \infty} f(t+n_i)=g(t)\quad \text{and }\quad \lim_{i\to \infty}
    g(t-n_i)=f(t)
\end{equation*}
hold for each $t\in \Z$ and some function $g$, not necessarily
uniformly. Bochner  \cite{Bochner62} has observed that almost
periodic functions are almost automorphic, but the converse is not
true. Veech \cite{V65} showed that the almost automorphic functions
can be characterized in terms of the almost periodic ones, and vice
versa. In the same paper, Veech considered the system associated
with an almost automorphic function, and introduced the notion of
{\em almost automorphic point} ({\em AA point}, for short) in
topological dynamical systems (t.d.s. for short). \index{almost automorphic point}\index{AA point}
For a t.d.s. $(X, T)$, a point $x\in X$ is said to be {\em almost
automorphic} if from any sequence $\{n_i'\}\subset \Z$ one may
extract a subsequence $\{n_i\}$ such that
$$\lim_{j\to \infty}\lim_{i\to \infty} T^{n_i-n_j}x=x.$$ Moreover, Veech  \cite{V65, V68} gave the structure theorem
for minimal systems with an AA point: each minimal AA system is an
almost one-to-one extension of its maximal equicontinuous factor.

The notion of almost automorphy is very useful in the study of
differential equations, and  see \cite{ShenYi} and references
therein for more information on this topic.

\medskip

To state other characterizations of an AA point we need to introduce
the notion of regionally proximal relations, and Poincar\'e and
Birkhoff recurrence sets.

Let us first discuss regionally proximal relations. For a t.d.s.
$(X,T)$, it was proved in \cite{EG} that there exists a closed
$T$-invariant equivalence relation $S_{eq}$ on $X$ such that
$(X/S_{eq}, T)$ is the maximal equicontinuous factor.
$S_{eq}$ is called the {\em equicontinuous structure relation}. \index{equicontinuous structure relation}
It was also showed in \cite{EG} that $S_{eq}$ is the smallest closed
$T$-invariant equivalence relation containing the regionally
proximal relation $\RP=\RP(X,T)$ \index{regionally proximal relation} (recall that $(x,y)\in \RP$ if
there are sequences $x_i,y_i\in X, n_i\in \Z$ such that $x_i\to x,
y_i\to y$ and $(T\times T)^{n_i}(x_i,y_i)\to (z,z)$, $i\to \infty$,
for some $z\in X$). A natural question was whether $S_{eq}=\RP(X)$
for all minimal t.d.s.? Veech \cite{V68} gave the first positive
answer to this question, i.e. he proved that $S_{eq}=\RP(X)$ for all
minimal t.d.s. under abelian group actions. As a matter of fact,
Veech proved that for a minimal t.d.s. $(x,y)\in S_{eq}$ if and only
if there is a sequence $\{n_i\}\subset \Z$ and $z\in X$ such that
\begin{equation*}
    T^{n_i}x\lra z \quad \text{and}\quad T^{-n_i}z\lra y, \ i\to \infty.
\end{equation*}
As a direct corollary, for a minimal t.d.s. $(X,T)$, {\em a point
$x\in X$ is AA if and only if }$$\RP[x]=\{y\in X: (x,y)\in
\RP\}=\{x\}.$$

Also from Veech's approach, it is easy to show that for a minimal
t.d.s. $(X,T)$, $(x,y)\in \RP$ if and only if for each neighborhood
$U$ of $y$, $N(x,U)=:\{n\in\Z: T^nx\in U\}$ contains some
$\D$-set\footnote{A $\D$-set is obtained by taking an arbitrary
sequence in $\Z$, $\{s_n\}$ and forming the difference
$\{s_n-s_m:n>m\}$. A $\D^*$ set is a subset of $\Z$ intersecting all
$\D$-sets.\index{$\D$-set}\index{$\D^*$-set}}. Hence one can obtain an equivalent condition for an AA
point \cite[Theorem 9.13]{F}: {\em a point $x\in X$ is AA if and
only if it is $\D^*$-recurrent}\footnote{Let $\F$ be a collection of
subsets of $\Z$ and let $(X,T)$ be a t.d.s. A point $x$ of $X$ is
called {\em $\F$-recurrent} \index{$\F$-recurrent} if $N(x,U)\in \F$ for every neighborhood
$U$ of $x$.} (this result will be reproved by a different method as
a special case of our theorems). For other properties related to
$\Delta^*$-sets, see \cite{BFW,HK10}.

\medskip



Now we discuss Poincar\'e and Birkhoff recurrence sets. The
Birkhorff recurrence theorem states that each t.d.s. has a recurrent
point which implies that whenever $(X, T)$ is a minimal t.d.s. and
$U\subset X$ a nonempty open set, $N(U,U)=:\{n\in\Z: U\cap
T^{-n}U\neq \emptyset\}$ is infinite. The measurable version of this
phenomenon is the well known Poincar\'e's recurrence theorem: let
$(X,\X,\mu,T)$ be a measure preserving system and $A\in \X$ with
$\mu(A)>0$, then $N_\mu(A,A)=:\{n\in \Z: \mu(A\cap T^{-n}A)>0\}$ is
infinite.

\medskip
In \cite{F, F81} Furstenberg introduced the notion of Poincar\'e and
Birkhoff recurrence sets. A subset $P$ of $\Z$ is called a {\em
Poincar\'e recurrence set} \index{Poincar\'e recurrence set} if whenever $(X,\X,\mu,T)$ is a measure
preserving system and $A\in \X$ has positive measure, then $P\cap
N_\mu(A,A)\neq \emptyset $. Similarly, a subset $P$ of $\Z$ is
called a {\em Birkhoff recurrence set} \index{Birkhoff recurrence set} if whenever $(X, T)$ is a
minimal t.d.s. and $U\subset X$ a nonempty open set, $P\cap
N(U,U)\neq \emptyset$. Let $\F_{Poi}$ and $\F_{Bir}$ denote the
collections of Poincar\'e  and Birkhoff recurrence sets of $\Z$
respectively.

\medskip

In \cite{HLY}, it was shown that for a minimal t.d.s. $(x,y)\in \RP$
if and only if for each neighborhood $U$ of $y$, $N(x,U)\in
\F_{Poi}$. We will show that one can use $\F_{Poi}$ to get another
equivalent condition for an AA point: {\em a point $x\in X$ is AA if
and only if it is $\F_{Poi}^*$-recurrent,} where $\F_{Poi}^*$ is the
collection of subsets of $\Z$ intersecting all sets from $\F_{Poi}$.
One has similar results for Birkhoff recurrence sets.




\subsection{Nilfactors and higher order almost automorphy}

In the 1970's Furstenberg gave a beautiful proof of Szemer\'edi's
theorem via ergodic theory \cite{F77}. It remains a question if the
multiple ergodic averages $ \frac 1 N\sum_{n=0}^{N-1}f_1(T^nx)\ldots
f_d(T^{dn}x)$ converges in $L^2(X,\mu)$ for  $f_1, \ldots , f_d \in
L^\infty(X,\mu)$. This question was finally answered by Host and Kra
in \cite{HK05} (see also Ziegler in \cite{Z}).

The authors in \cite{HK05} defined for each $d\in\N$ and each
measure-preserving transformation on the probability space
$(X,\mathcal{B},\mu)$ a factor ${\mathcal Z}_d$ which is
characteristic and is an inverse limit of $d$-step nilsytems.  Since
topological dynamics and ergodic theory are `twins', it is natural
to ask how to obtain similar factors in topological dynamics. In the
pioneer paper \cite{HKM} Host-Kra-Maass succeeded doing the job for
minimal distal systems. Namely, for each $d\in\N$ and each t.d.s.
$(X,T)$ they defined $\RP^{[d]}(X,T)$ (the regionally proximal
relation of order $d$) and showed that $\RP^{[d]}(X,T)$ is an
equivalence relation when $(X,T)$ is minimal distal, and
$X/\RP^{[d]}(X,T)$ is the maximal $d$-step nilfactor of $(X,T)$.
Recently, Shao and Ye \cite{SY} proved that the above conclusion
holds for general minimal systems. We note that the counterpart of
the characteristic factors in topological dynamics was also studied
by Glasner \cite{G93,G94}. For the further study and applications of
the factors, see \cite{D-Y,HKM12}.


\medskip


As we said before the notion of the regionally proximal relation of
order $d$  plays an important role in obtaining the maximal $d$-step
nilfactors, see Section 2 for the definition. It is easy to see that
$\RP^{[d]}(X,T)$ is a closed and invariant relation for all $d\in
\N$. When $d=1$, $\RP^{[d]}(X,T)$ is nothing but the classical
regionally proximal relation. Similar to the definition of almost
automorphy, now we give the definition of $d$-step almost automorphy
for all $d\in \N$. Let $(X,T)$ be a minimal t.d.s. and  $d\in \N$. A
point $x\in X$ is called {\em $d$-step almost automorphic} (or
$d$-step AA for short) \index{$d$-step almost automorphic point} \index{$d$-step AA} if $\RP^{[d]}[x]=\{x\}$. A minimal t.d.s.  is
called {\em $d$-step almost automorphic} if it has a $d$-step AA
point. Since $\RP^{[d]}$ is an equivalence relation for minimal
t.d.s. \cite{SY}, by definition it follows that for a minimal system
$(X,T)$, it is a $d$-step AA system for some $d\in \N$ if and only
if it is an almost one-to-one extension of its maximal $d$-step
nilfactor.

\subsection{Higher order recurrence sets}

In this paper we will use higher order recurrence sets to
characterize $d$-step almost automorphy. To define them we need to
state the multiple Poincar\'e and Birkhoof recurrence theorems, see
\cite{F}.

\medskip
\noindent $\bullet$  Let $(X,\X,\mu,T)$ be a measure preserving
system and $d\in\N$. Then for any $A\in \X$ with $\mu(A)>0$ there is
$n\in\N$ such that $\mu(A\cap T^{-n}A\cap \ldots\cap T^{-dn}A)>0$.

\medskip

\noindent $\bullet$ Let $(X,T)$ be a t.d.s. and $d>0$. Then there
are $x\in X$ and a sequence $\{n_i\}$ with $n_i\lra +\infty$ such
that $\lim_{i\lra+\infty}T^{jn_i}x=x$ for each $1\le j\le d$.

\medskip

The facts enable us to get generalizations of Poincar\'e and
Birkhoff recurrence sets (see \cite{FLW}). Let $d\in \N$.
\begin{enumerate}\index{$d$-recurrence}\index{$d$-topological recurrence}
\item We say that $S \subset \Z$ is a set of {\em $d$-recurrence } if
for every measure preserving system $(X,\X,\mu,T)$ and for every
$A\in \X$ with $\mu (A)
> 0$, there exists $n \in S$  such that
$\mu(A\cap T^{-n}A\cap \ldots \cap T^{-dn}A)>0.$


\item We say that $S\subset \Z$ is a set of {\em $d$-topological
recurrence} \index{$d$-topological
recurrence} if for every minimal t.d.s. $(X, T)$ and for every
nonempty open subset $U$ of $X$, there exists $n\in S$ such that
$U\cap T^{-n}U\cap \ldots \cap T^{-dn}U\neq \emptyset.$
\end{enumerate}

\begin{rem}
We remark that in (1) we can require that $(X,\X,\mu,T)$ is ergodic
(see the proof of Theorem \ref{several}). The above definitions are
slightly different from the ones introduced in \cite{FLW}, namely we
do not require $n\not=0$. The main reason we define in this way is
that for each $A\in \F_{d,0}$, $0\in A$. Thus $\{0\}\cup C\in
\F^*_{d,0}$ for each $C\subset \Z$, where $\F_{d,0}^*$ is the dual
family of $\F_{d,0}$, i.e. the collection of sets intersecting every
Nil$_d$ Bohr$_0$-set.
\end{rem}

Let $\F_{Poi_d}$ (resp. $\F_{Bir_d}$) \index{$\F_{Poi_d}$} \index{$\F_{Bir_d}$} be
the family consisting of all sets of $d$-recurrence (resp. sets of
$d$-topological recurrence). It is obvious by the above definition
that $\F_{Poi_d}\subset \F_{Bir_d}$. Moreover, it is known that for
each $d\in\N$, $\F_{Poi_d}\supsetneqq \F_{Poi_{d+1}}$ and
$\F_{Bir_d}\supsetneqq \F_{Bir_{d+1}}$ \cite{FLW}. Now we state a
problem which is related to Problem B-II.

\medskip

\noindent {\bf Problem B-III:} {\em Is it true that $\F_{Bir_d} =
\F^*_{d,0}$?}

\medskip

An immediate corollary of Theorem A(1) is:

\medskip

\noindent {\bf Corollary D}: {\em Let $d\in\N$. Then
$$\F_{Poi_d}\subset\F_{Bir_d}\subset \F^*_{d,0}.$$ }

Note that $\F_{Poi_1}\neq \F_{Bir_1}$ \cite{K}. Though we do not
know if $\F_{Bir_d}=\F^*_{d,0}$, we will show that the two
collections coincide ``dynamically'', i.e. both of them can be used
to characterize higher order almost automorphic points.




\subsection{Main results on higher order almost automorphy}

As we said before, Veech studied AA systems, and Veech and
Furstenberg gave characterizations of AA systems in \cite{V65} and
\cite{F81} respectively. In this paper we aim to define $d$-step AA
systems and obtain their characterizations for
$d\in\N\cup\{\infty\}$. Now we state the main results of the paper.
For the definitions when $d=\infty$ see the following sections.

\subsubsection{$d$-step AA and proximal relations of order $d$}

The following result shows that we can use $\F_{Poi_d}$,
$\F_{Bir_d}$ and $\F_{d,0}^*$ to characterize regionally proximal
pairs of order $d$. Precisely, we show in Theorems \ref{several} and
\ref{rpd} that: for a minimal t.d.s. $(X,T)$, $d\in\N\cup\{\infty\}$
and $x,y\in X$, the following statements are equivalent: (1)
$(x,y)\in \RP^{[d]}(X,T)$. (2) $N(x,U)\in \F_{Poi_d}$ for each
neighborhood $U$ of $y$. (3) $N(x,U)\in \F_{Bir_d}$ for each
neighborhood $U$ of $y$. (4) $N(x,U)\in \F_{d,0}^*$ for each
neighborhood $U$ of $y$.

\subsubsection{$d$-step AA and $SG_d$-sets}

The notion of $SG_d$-sets was introduced by Host and Kra recently
\cite{HK10} to deal with problems related to Nil$_d$ Bohr$_0$-sets.
We show that one may use it to characterize regionally proximal
pairs of order $d$.

\medskip

Let $d\ge 1$ be an integer and let $P=\{p_i\}_i$ be a (finite or
infinite) sequence in $\Z$. The {\em set of sums with gaps of length
less than $d$} of $P$ is the set $SG_d(P)$ of all integers of the
form \index{$SG_d(P)$}\index{set of sums with gaps of length
less than $d$}
$$\ep_1p_1+\ep_2p_2+\ldots +\ep_np_n$$ where $n\ge 1$ is an integer,
$\ep_i\in \{0,1\}$ for $1\le i\le n$, the $\ep_i$ are not all equal
to $0$, and the blocks of consecutive $0$'s between two $1$ have
length less than $d$. A subset $A$ of $\Z$ is an $SG_d$-set
\index{$SG_d$-set} if $A=SG_d(P)$ for some infinite sequence in
$\Z$; and it is an $SG^*_d$-set \index{$SG^*_d$-set} if $A \cap
SG_d(P)\neq \emptyset$ for every infinite sequence $P$ in $\Z$. Let
$\F_{SG_d}$ be the family generated by all $SG_d$-sets.  Note that
each $SG_1$-set is a $\D$-set, and each $SG_1^*$-set is a
$\D^*$-set. The following is the main result of \cite{HK10}: every
$SG_d^*$-set is a PW-Nil$_d$ Bohr$_0$-set. Host and Kra \cite{HK10}
asked the following question: Is every Nil$_d$ Bohr$_0$-set an
$SG_d^*$-set?


Though we can not answer this question, we show in Theorems \ref{huang12} and \ref{rpd} that: for a minimal
t.d.s., $d\in \N\cup\{\infty\}$ and $x,y\in X$,  $(x,y)\in
\RP^{[d]}(X,T)$ if and only if $N(x,U)\in \F_{SG_d}$ for each
neighborhood $U$ of $y$.
Combining Theorems \ref{several} and \ref{huang12} we see that
Nil$_d$ Bohr$_0$-sets and $SG_d^*$-sets are closely related.

\subsubsection{Cubic version of multiple Poincar\'e recurrence sets}

In \cite{HK05} Host and Kra proved the $L^2$ convergence of the
multiple ergodic average of cubic version. Using it one may define
cubic version of multiple Poincar\'e and Birkhoff recurrence sets.
We will show that they can be used to characterize $\RP^{[d]}$.  For
$d\in \N$, a subset $F$ of $\Z$ is a {\em Poincar\'e recurrence set
of order $d$} \index{Poincar\'e recurrence set of order $d$} if for each measure preserving system
$(X,\mathcal{B},\mu,T)$ and $A\in \mathcal{B}$ with positive measure
there are $n_1,\ldots,n_d\in\Z$ such that
$FS(\{n_i\}_{i=1}^d)=:\{n_{i_1}+\ldots+n_{i_k}:1\leq
i_1<\ldots<i_k\leq d\}\subset F$ and
$$\mu\Big(A\cap\big (\bigcap _{n\in
FS(\{n_i\}_{i=1}^d)} T^{-n}A\big )\Big)>0.$$ Similarly, we define
Birkhoff recurrence sets of order $d$. \index{Birkhoff recurrence set of order $d$} For $d\in \N$ let $\F_{P_d}$
(resp. $\F_{B_d}$) be the family of all Poincar\'e recurrence sets
of order $d$ (resp. the family of all Birkhoff recurrence sets of
order $d$). We have the following result proved in Theorems
\ref{birkhoff-thm} and \ref{poincare} (see also Theorem \ref{rpd}):
for a minimal t.d.s. $(X,T)$, $d\in \N\cup\{\infty\}$ and $x,y\in
X$,  $(x,y)\in \RP^{[d]}(X,T)$ if and only if  $N(x,U)\in \F_{P_d}$
for each neighborhood $U$ of $y$ if and only if $N(x,U)\in \F_{B_d}$
for each neighborhood $U$ of $y$.

\subsubsection{Summary of the main results on higher order almost
automorphy}

To sum up we have (see Theorem \ref{rpd}):

\medskip

\noindent {\bf Theorem E:} {\em Let $(X,T)$ be a minimal t.d.s. and
$x,y\in X$. Then the following statements are equivalent for
$d\in\N\cup\{\infty\}$:

\begin{enumerate}

\item $(x,y)\in \RP^{[d]}$.

\item $N(x,U)\in \F_{d,0}^*$ for each
neighborhood $U$ of $y$.

\item $N(x,U)\in \F_{Poi_d}$ for each
neighborhood $U$ of $y$.

\item $N(x,U)\in \F_{Bir_d}$ for each neighborhood $U$ of $y$.

\item $N(x,U)\in \F_{SG_d}$ for each neighborhood $U$ of $y$.


\item $N(x,U)\in \F_{B_d}$ for each
neighborhood $U$ of $y$.

\item $N(x,U)\in \F_{P_d}$ for each neighborhood
$U$ of $y$.

\end{enumerate}}

\medskip

Using the Ramsey property of the families, we can show that one may
use $\F_{Poi_d}^*$, $\F_{Bir_d}^*$ and $\F_{d,0}$ to characterize
$d$-step AA. That is, we show in Theorem \ref{AAgeneral} that:

\medskip

\noindent {\bf Theorem F:} {\em Let $(X,T)$ be a minimal t.d.s.,
$x\in X$ and $d\in\N\cup\{\infty\}$. Then the following statements
are equivalent:
\begin{enumerate}
\item $x$ is a $d$-step AA point.

\item $N(x,V)\in \F_{d,0}$ for each neighborhood $V$ of $x$.

\item $N(x,V)\in \F_{Poi_d}^*$ for each
neighborhood $V$ of $x$.

\item $N(x,V)\in \F_{Bir_d}^*$ for each
neighborhood $V$ of $x$.

\end{enumerate}}


\section{Further questions}

It is believed that a ``big'' subset of integers should contain
``good'' linear structures, for example arbitrarily long arithmetic
progressions. Szeme\'rdi's Theorem \cite{Sz} asserts that every positive
density subset has this property. In the spirit of Theorem A it is
natural to consider the structure of the set of all common
differences for a ``big'' set. To be precise let $d \ge 1$ and
assume that $S$ is a ``big''  subset of integers. Then all common
differences $n$ of arithmetic progressions
$$a, a+n, a+2n, \ldots, a+ d n$$ with length $d+1$ appearing in $S$
form a set
$$ C_d(S):=\{n\in \N : S\cap (S-n)\cap (S-2n)\cap \ldots \cap
(S- d n)\neq \emptyset \}. $$  What can we say about the structure
of the set $C_d (S)$?

By Green-Tao's result \cite{GT08}, the primes contain arbitrarily
long arithmetic progressions. So we ask the question:

\medskip
\noindent {\bf Question 1:} {\em  Is $C_d(\mathbb{P})$ a Nil$_d$
Bohr$_0$-set for prime numbers $\mathbb{P}$?}

\medskip

A direct corollary of Theorems \ref{huang12} and \ref{rpd} is the
following. Assume $(X,T)$ is minimal, $x\in X$ and $d\in \N$. If $x$
is $\F^*_{SG_d}$-recurrent, or $\F^*_{P_d}$-recurrent, or
$\F^*_{B_d}$-recurrent then it is $d$-step AA. Thus, we have the
following question.

\medskip

\noindent{\bf Question 2:} Let $(X,T)$ be a minimal t.d.s., $x\in
X$, and $d\in \N$. Is it true that $x$ is $d$-step AA if and only if
it is $\F_{SG^*_d}$-recurrent if and only if it is
$\F_{P_d}^*$-recurrent if and only if it is $\F_{B_d}^*$-recurrent?

\medskip

Since $\F_{SG_d}$ does not have the Ramsey property (Appendix
\ref{appendix:Ramsey}), and we do not know if $\F_{P_d}$ and
$\F_{P_d}$ have the Ramsey property, we can not apply the methods in
the proof of Theorem \ref{AAgeneral} to solve Question 2. Note that if
the question by Host-Kra in \cite{HK10} has a positive answer, then
by using Theorem \ref{AAgeneral} Question 2 has a positive answer for
$\F_{SG_d}$. We note that there are two possible ways to get
positive answer of Question 2 for $\F_{P_d}$ and $\F_{P_d}$: (1)
prove $\F_{P_d}$ and $\F_{B_d}$ have the Ramsey property, (2) prove
$\F_{P_d}\subset \F_{B_d}\subset \F^*_{d,0}$. Unfortunately, at this
moment we can not prove neither of them.

\medskip

Recall that Veech \cite{V65} showed that for a t.d.s. $(X, T)$, a
point $x\in X$ is AA if from any sequence $\{n_i'\}\subset \Z$ one
may extract a subsequence $\{n_i\}$ such that \break $\lim_{j\to
\infty}\lim_{i\to \infty} T^{n_i-n_j}x=x.$ So we have

\medskip

\noindent{\bf Question 3:} Is there a similar characterization for a
$d$-step AA point?

\medskip
Let $C(X,Y)$ be the collection of all continuous maps from a
topological space $X$ to a topological space $Y$. Let $V$ be a
finite dimensional vector space over the complex field $\C$.  A
function $f\in C(\R, V)$ is said to be {\it admissible} if it is
bounded and uniformly continuous on $\R$. Let $H(f)$ denote the {\it
hull} of $f$, i.e. the closure of $\{f_{\tau}| \tau \in \R\}$ in the
compact open topology, where $f_{\tau} (t)=f(t+\tau)\ (\tau\in \R)$.
Then by Ascoli's theorem, $H(f)$ is compact and the time translation
$\Pi_{t}g=g_t\ (g\in H(f))$ induces a compact flow $(H(f), \R)$. For
$d\in\N\cup\{\infty\}$, we say $f\in C(\R,\C)$ is a $d$-step AA
function if $(H(f), \R)$ is an almost one-to-one extension of a
minimal $d$-step nilflow (see \cite{Z2} for the definition) and
$f\in H(f)$ is a $d$-step AA point.

\medskip

\noindent{\bf Question 4:} Is there a differential equation which
has a $2$-step AA solution and does not have an AA one?


\medskip
Recall that Host and Kra \cite{HK10} asked: Is every Nil$_d$
Bohr$_0$-set an $SG_d^*$-set? Using Theorem B, the question of Host
and Kra can be reformulated in the following way:

\medskip

\noindent {\bf Question 5:} {\em Let $d\in\N$ and $S$ be an
SG$_d$-set. Is it true that for any $k\in\N$, any $P_1,\ldots,
P_k\in {SGP_d}$ and any $\ep_i>0$, there is $n\in S$ such that
$$P_i(n)\ (\text{mod}\ \Z)\in (-\ep_i,\ep_i)$$
for all $i=1,\ldots,k$? }

\medskip

We remark that since a system of order $d$ is distal, the above
question has an affirmative answer for any IP-set.

\section{Organization of the paper}

We organize the paper as follows: In Chapter \ref{section-pre}, we
give some basic definitions, and particularly we show the
equivalence of the Problems B-I, B-II and B-III. In Chapter
\ref{section-nilsystem} we recall some basic facts related to
nilpotent Lie groups and nilmanifolds, and study the properties of
the metric on nilpotent matrix Lie groups. In Chapter
\ref{section-GP}, we introduce the notions related to generalized
polynomials and special generalized polynomials, and give some basic
properties. In the next two chapters we prove the main results.

In Chapter \ref{chapter-rp}, we study
Nil$_d$-Bohr$_0$ sets and higher order recurrence sets, and use them
to characterize $\RP^{[d]}$. In the final chapter, we introduce the
notion of $d$-step almost automorphy and obtain various
characterizations. In the Appendix, we show $\F_{SG_2}$ does not
have the Ramsey property, Theorem \ref{ShaoYe} holds for general
compact Hausdorff systems and the cubic version of the multiple
Poincar\'e and Birkhoff recurrence sets can be interpreted using
intersectiveness.

\medskip
\noindent{\bf Acknowledgments:} We thank V. Bergelson, N. Frantzikinakis and
E. Glasner for useful comments. We would like to thank Jian Li
for very careful reading which helps the writing of our paper.
{ In particular, we thank the referee for the very careful reading
and many useful comments, which help us to improve the writing of the paper and
simplify some proofs.}


\chapter{Preliminaries}\label{section-pre}

In this chapter we introduce some basic notions related to dynamical
systems, explain how Bergelson-Host-Kra's result is related to
Problem B-II and show the equivalence of the Problems B-I, B-II and
B-III.

\section{Basic notions}

\subsection{Measurable and topological dynamics}

In this subsection we give some basic notions in ergodic theory and
topological dynamics.

\subsubsection{Measurable systems}
In this paper, a {\em measure preserving system} \index{measure preserving system} is a quadruple
$(X,\X, \mu, T)$, where $(X,\X,\mu )$ is a Lebesgue probability
space and $T : X \rightarrow X$ is an invertible measure preserving
transformation.


We write $\I=\I (T)$ for the $\sigma$-algebra $\{A\in \X : T^{-1}A =
A\}$ of invariant sets. A system is {\em ergodic} \index{ergodic} if every
$T$-invariant set has measure either $0$ or $1$.


\subsubsection{Topological dynamical systems}

A {\em transformation} of a compact metric space X is a
homeomorphism of X to itself. A {\em topological dynamical system},
referred to more succinctly as just a t.d.s. or a {\em system}, \index{topological dynamical system} is a
pair $(X, T)$, where $X$ is a compact metric space and $T : X
\rightarrow X$ is a transformation. We use $\rho (\cdot, \cdot)$ to
denote the metric on $X$.

A t.d.s. $(X, T)$ is {\em transitive} \index{transitive} if
there exists some point $x\in X$ whose orbit
$\O(x,T)=\{T^nx: n\in \Z\}$ is dense in $X$.\index{$\O(x,T)$}\index{orbit}
The system is {\em minimal} \index{minimal} if the
orbit of any point is dense in $X$. This property is equivalent to
saying that X and the empty set are the only closed invariant sets
in $X$.

A {\em factor} \index{factor} of a t.d.s. $(X, T)$ is another t.d.s. $(Y, S)$ such
that there exists a continuous and onto map $\phi: X \rightarrow Y$
satisfying $S\circ \phi = \phi\circ T$. In this case, $(X,T)$ is
called an {\em extension } \index{extension} of $(Y,S)$ and the map $\phi$ is called a {\em factor map}. \index{factor map}

\subsubsection{}

We also make use of a more general definition of a
measurable or topological system. That is, instead of just a single
transformation $T$, we consider commuting homeomorphisms $T_1,\ldots
, T_k$ of $X$ or a countable abelian group of transformations.
We summarize some basic definitions and properties of systems in the
classical setting of one transformation. Extensions to the general
case are straightforward.

\subsection{Families and filters}

Since many statements of the paper are better stated using the
notion of a family, we now give the definition. See \cite{A, F} for
more details.

\subsubsection{Furstenberg families}

Recall that a collection $\F$ of subsets of $\Z$  is  {\em
a family} \index{family} if it is hereditary upward, i.e. $F_1 \subset F_2$ and
$F_1 \in \F$ imply $F_2 \in \F$. A family $\F$ is called {\em
proper} \index{proper family} if it is neither empty nor the entire power set of $\Z$, or,
equivalently if $\Z \in \F$ and $\emptyset \not\in \F$. Any nonempty
collection $\mathcal{A}$ of subsets  of $\Z$ generates a family
$\F(\mathcal{A}) :=\{F \subset \Z:F \supset A$ for some $A \in
\mathcal{A}\}$. \index{$\F(\mathcal{A})$}

For a family $\F$ its {\em dual} \index{dual family} is the family $\F^{\ast}
:=\{F\subset \Z  : F \cap F' \neq \emptyset \ \text{for  all} \ F'
\in \F \}$. It is not hard to see that $\F^*=\{F\subset\Z:
\Z\setminus F\not\in \F\}$, from which we have that if $\F$ is a
family then $(\F^*)^*=\F.$

\subsubsection{Filter and Ramsey property}

If a family $\F$ is closed under finite intersections and is proper,
then it is called a {\em filter}. \index{filter}

A family $\F$ has the {\em Ramsey property} \index{Ramsey property} if $A=A_1\cup A_2\in \F$
implies that $A_1\in \F$ or $A_2\in \F$. It is well known that a proper family has
the Ramsey property if and only if its dual $\F^*$ is a filter
\cite{F}.

\subsubsection{Some important families}

A subset $S$ of $\Z$ is {\it syndetic} \index{syndetic} if it has a bounded gap, i.e.
there is $N\in \N$ such that $\{i,i+1,\ldots,i+N\} \cap S \neq
\emptyset$ for every $i \in {\Z}$. The collection of all syndetic
subsets is denoted by $\F_s$. \index{$\F_s$}



The {\it upper Banach density} and {\it lower Banach density} of $S$
are \index{upper Banach density} \index{lower Banach density}
$$BD^*(S)=\limsup_{|I|\to \infty}\frac{|S\cap I|}{|I|},\ \text{and}\
BD_*(S)=\liminf_{|I|\to \infty}\frac{|S\cap I|}{|I|},$$ where $I$
ranges over intervals of $\mathbb{Z}$, while the {\it upper density}
of $S$ and the {\it lower density} of $S$ are \index{upper density}\index{lower density}
$$D^*(S)=\limsup_{n\to \infty}\frac{|S\cap [-n,n]|}{2n+1},\ \text{and}\ D_*(S)=\liminf_{n\to \infty}\frac{|S\cap [-n,n]|}{2n+1}.$$
If $D^*(S)=D_*(S)$, then we say the {\it density} of $S$ is
$D(S)=D^*(S)=D_*(S)$. \index{density}
Let $\F_{pubd}=\{S\subset \Z_+: BD^*(S)>0\}$ and
$\F_{pd}=\{S\subset \Z_+: D^*(S)>0\}$. \index{$\F_{pubd}$}\index{$\F_{pd}$}

Let $\{ b_i \}_{i\in I}$ be a finite or infinite sequence in
$\mathbb{Z}$. One defines $$FS(\{ b_i \}_{i\in I})=\Big\{\sum_{i\in
\alpha} b_i: \alpha \text{ is a finite non-empty subset of } I\Big
\}.$$ $F$ is an {\it IP set} \index{IP set} if it contains some
$FS({\{p_i\}_{i=1}^{\infty}})$, where $p_i\in\Z$. The collection of
all IP sets is denoted by $\F_{ip}$. A subset of $\Z$ is an $IP^*$-set if it intersects any $IP$-set. \index{ $IP^*$ set} It
is known that the family of all $IP^*$-sets is a filter and each
$IP^*$-set is syndetic \cite{F}. \index{$\F_{ip}$}

If $I$ is finite, then one says $FS(\{ p_i \}_{i\in I})$ is an {\em
finite IP set}. \index{finite IP set} The collection of all sets containing finite IP sets
with arbitrarily long lengths is denoted by $\F_{fip}$. \index{$\F_{fip}$}

\section{Bergelson-Host-Kra' Theorem and the proof of Theorem A(2)}\label{simpleproof}

In this section we explain how Bergelson-Host-Kra's result in \cite{BHK05} is
related to Problem B-II. First we need some definitions.

\begin{de} \index{basic $d$-step nilsequence} \index{$d$-step nilsequence}
Let $d\ge 1$ be an integer and let $X = G/\Gamma$ be a
$d$-step nilmanifold. Let $\phi$ be a continuous real (or complex)
valued function on $X$ and let $a\in G$ and $b\in X$. The sequence
$\{\phi(a^n\cdot b)\}$ is called a {\em  basic $d$-step nilsequence}. A
{\em $d$-step nilsequence} is a uniform limit of basic $d$-step
nilsequences.
\end{de}

For the definition of nilmanifolds see Chapter
\ref{section-nilsystem}.

\begin{de}
Let $\{a_n : n\in \Z\}$ be a bounded sequence. We say
that $a_n$ {\em tends to zero in uniform density}, and we write
$\text{UD-Lim}\ a_n = 0,$ if $$\lim_{N\lra+\infty}
\sup_{M\in\Z}\cfrac 1N\sum_{n=M}^{M+N-1}|a_n| = 0.$$
\end{de}

Equivalently, $\text{UD-Lim}\ a_n = 0$ if and only if for any $\ep>
0,$ the set $\{n\in \Z : |a_n| >\ep\}$ has upper Banach density
zero. Now we state their result.

\begin{thm}[Bergelson-Host-Kra]\cite[Theorem 1.9]{BHK05}\label{important}
Let $(X,\X,\mu, T )$ be an ergodic
system, let $f \in L^\infty(\mu)$ and let $d\ge 1$ be an integer.
The sequence $\{I_f(d,n)\}$ is the sum of a sequence tending to zero
in uniform density and a $d$-step nilsequence, where
\begin{equation}\label{}
  I_f(d,n)=\int f(x)f(T^nx)\ldots f(T^{dn}x)\ d\mu(x).
\end{equation}
\end{thm}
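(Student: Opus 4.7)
The plan is to follow the Host-Kra decomposition of $f$ into a ``structured'' part, which generates a nilsequence, and an ``uniformly distributed'' part whose correlations vanish in uniform density. The main ingredients will be (i) the Host-Kra factors $\ZZ_d$ and associated seminorms $\|\cdot\|_{U^{d+1}}$, (ii) a uniform-in-$M$ generalized von Neumann estimate, and (iii) the fact that multiple correlations of functions on a nilmanifold are nilsequences.

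First I would recall the Host-Kra machinery: for each $d\ge 1$ there is a factor $\ZZ_d$ of $(X,\X,\mu,T)$, which is an inverse limit of $d$-step nilsystems, and a seminorm $\|\cdot\|_{U^{d+1}}$ on $L^\infty(\mu)$ such that $\E(f\mid \ZZ_d)=0$ if and only if $\|f\|_{U^{d+1}}=0$. I would then set $f_1=\E(f\mid\ZZ_d)$, $f_2=f-f_1$, and expand
\[
I_f(d,n)=\int\prod_{j=0}^{d}(f_1+f_2)(T^{jn}x)\,d\mu(x)
=I_{f_1}(d,n)+R(d,n),
\]
where $R(d,n)$ is a sum of $2^{d+1}-1$ integrals, each of which contains at least one factor $T^{jn}f_2$.

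Next I would treat the main term $I_{f_1}(d,n)$. Since $\ZZ_d$ is an inverse limit of $d$-step nilfactors, one can pick a sequence $f_1^{(k)}\in L^\infty$ measurable with respect to a nilfactor $X_k=G_k/\Gamma_k$ with $f_1^{(k)}\to f_1$ in $L^2$ and $\|f_1^{(k)}\|_\infty\le \|f\|_\infty$. For each $k$, the sequence $I_{f_1^{(k)}}(d,n)=\int F_k(x,a_k^n x,\ldots,a_k^{dn}x)\,d\mu_k(x)$ is a basic $d$-step nilsequence: the joint orbit closure of $(x,a_k^n x,\ldots,a_k^{dn}x)$ in $(G_k/\Gamma_k)^{d+1}$ is itself a nilmanifold (Leibman/Ratner), and integrating $F_k=f_1^{(k)}\otimes\cdots\otimes f_1^{(k)}$ over the fiber at parameter $n$ yields a continuous function of a $d$-step nilpotent orbit. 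A Cauchy-Schwarz argument gives
\[
\sup_n\bigl|I_{f_1}(d,n)-I_{f_1^{(k)}}(d,n)\bigr|\le (d+1)\,\|f\|_\infty^{d}\,\|f_1-f_1^{(k)}\|_{L^1}\longrightarrow 0,
\]
so $I_{f_1}(d,n)$ is a uniform limit of basic $d$-step nilsequences, hence a $d$-step nilsequence.

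The error term $R(d,n)$ must be shown to tend to zero in uniform density. Here I would invoke the uniform generalized von Neumann inequality of Host-Kra: for bounded $g_0,\ldots,g_d$,
\[
\limsup_{N\to\infty}\sup_{M\in\Z}\frac{1}{N}\sum_{n=M}^{M+N-1}\Bigl|\!\int\prod_{j=0}^{d}g_j(T^{jn}x)\,d\mu\Bigr|\le C\,\min_{0\le j\le d}\|g_j\|_{U^{d+1}}.
\]
Because every summand in $R$ has some $g_j=f_2$ with $\|f_2\|_{U^{d+1}}=0$, each of these averages vanishes in uniform density, so $\mathrm{UD\text{-}Lim}\,R(d,n)=0$. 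Combined with the previous paragraph this produces the required decomposition.

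The principal obstacle is the uniformity in $M$: the standard Host-Kra $L^2$-convergence theorem gives Cesaro control $\frac{1}{N}\sum_{n=0}^{N-1}$ but not the shift-invariant version above. The remedy is to iterate the van der Corput lemma $d$ times with respect to the shift parameter $n$, carefully tracking that each application produces an $L^2$-norm of a $\Delta_h f=f\cdot T^h\bar f$ type expression that is itself invariant under translation of the summation window, thereby reducing the multiple average to a lower-order average of $\|\Delta_{h_1}\cdots\Delta_{h_d}f\|_{U^1}^2$ quantities whose limiting value is $\|f\|_{U^{d+1}}^{2^{d+1}}$; since each van der Corput step preserves the uniformity in $M$, the final estimate is uniform. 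This is the only delicate point of the argument, and once it is in hand the decomposition $I_f(d,n)=I_{f_1}(d,n)+R(d,n)$ yields the theorem.
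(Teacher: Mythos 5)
This theorem is quoted in the paper from Bergelson, Host and Kra \cite[Theorem 1.9]{BHK05} and is not proved there, so there is no in-paper argument to compare your proposal against. Your sketch is, however, a plausible reconstruction of the original argument: the decomposition $f=\E(f\mid\ZZ_d)+(f-\E(f\mid\ZZ_d))$, the identification of the main term as a $d$-step nilsequence by approximating along the inverse limit of nilfactors building $\ZZ_d$, and the disposal of the cross terms via a uniform-density generalized von Neumann estimate keyed to $\|\cdot\|_{U^{d+1}}$ is precisely the architecture Bergelson--Host--Kra use.

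Two places need more care than your sketch gives them. First, to make $n\mapsto I_{f_1^{(k)}}(d,n)$ a \emph{basic} $d$-step nilsequence you must take $f_1^{(k)}$ to be a \emph{continuous} function on $G_k/\Gamma_k$ (a bounded measurable function is not enough in the definition of a nilsequence), and you must invoke equidistribution of the closure of the diagonal in $(G_k/\Gamma_k)^{d+1}$ under the translation by $(\mathrm{id},a_k,\ldots,a_k^d)$ so that integrating over the starting point $x$ yields a continuous function evaluated along a nilorbit; you gesture at Leibman/Ratner but the argument has to be spelled out, since otherwise $I_{f_1^{(k)}}(d,n)$ is only an integral of nilsequences, not obviously itself one. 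Second, the uniform-in-$M$ von Neumann estimate is not reached by a na\"ive iteration of van der Corput applied to $I_f(d,n)$: since $I_f(d,n)$ is already an integral, the iteration has to begin with the squared quantity $|I_f(d,n)|^2=I_{f\otimes\bar f}(d,n)$ on the product system $X\times X$, and one then has to justify that $\limsup_{N\to\infty}\sup_M\frac{1}{N}\sum_{n=M}^{M+N-1}$ passes through every van der Corput step with the correct Gowers norm emerging at the end; your one-line claim that ``each step preserves uniformity in $M$'' is, in effect, the entire content of the key BHK lemma and would need a real proof rather than an assertion.
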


{ Note that in Theorem \ref{important} the decomposition of $\{I_f(d,n)\}$ is unique, and if $f$ is a real-valued function
then the corresponding nilsequence is also a real-valued sequence.}
By Theorem \ref{important}, for any $A\in \X$
\begin{equation}\label{noteasy}
\{I_{1_A}(d,n)\}=\{\mu(A\cap T^{-n}A\cap \ldots \cap
T^{-dn}A)\}=F_d+N, \end{equation} where $F_d$ is a $d$-step
nilsequence and $N$ tending to zero in uniform density. { Regard $F_d$
as a function $F_d: \Z\rightarrow \R$.} By \cite{HM} there is a
system $(Z, S)$ of order $d$, $x_0\in Z$ and a continuous function
$\phi\in C(Z)$ such that
\begin{equation*}
    F_d(n)=\phi(S^nx_0).
\end{equation*}
We claim that $\phi(x_0)>0$ if $\mu(A)>0$. Assume that contrary that
$\phi(x_0)\le 0$. By \cite{FK} or \cite[Theorem 6.15]{BM20} there is
$c>0$ such that
$$\{n\in \Z: \mu(A\cap T^{-n}A\cap \ldots \cap T^{-dn}A)>c\}$$ is an
$IP^*$-set. On the other hand there is a small neighborhood $V$ of
$x_0$ such that $\phi(x)< \frac{1}{2}c$ for each $x\in V$ by the
continuity of $\phi$. It is known that $N(x_0,V)$ is an $IP^*$-set
(\cite{F}) since $(Z,S)$ is distal (\cite[Ch 4, Theorem 3]{AGH} or
\cite{L}). This contradicts  (\ref{noteasy}) by the facts that the
family of $IP^*$-sets is a filter, each $IP^*$-set is syndetic and
$N(n)$ tends to zero in uniform density. That is, we have shown that
$\phi(x_0)>0$ if $\mu(A)>0$.

\medskip

For each $\ep>0$, $\{n\in\Z:
\phi(S^nx_0)>\phi(x_0)-\frac{1}{2}\ep\}$ is a Nil$_d$ Bohr$_0$-set.
Since $\{n\in\Z: |N(n)|>\frac{1}{2}\ep\}$ has zero upper Banach
density we have the following corollary

\begin{thm}\label{d-recurrence}
Let $(X,\X,\mu, T )$ be an ergodic system and $d\in \N$. Then for
all $A\in \X$ with $\mu(A)>0$ and $\ep>0$, the set $$I=\{n\in \Z:
\mu(A\cap T^{-n}A\cap \ldots \cap T^{-dn}A)>\phi(x_0)-\ep\}$$ is an
almost Nil$_d$ Bohr$_0$-set, i.e. there is some subset $M$ of $\Z$
with $BD^*(M)=0$ such that $I \Delta M$ is a Nil$_d$ Bohr$_0$-set.
\end{thm}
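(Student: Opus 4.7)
The strategy is to package the discussion immediately preceding the statement into a formal conclusion: the Bergelson--Host--Kra decomposition, the Host--Maass representation of the nilsequence, and the positivity $\phi(x_0)>0$ have already been established, so the corollary follows by choosing the correct Nil$_d$ Bohr$_0$ witness. First I would apply Theorem~\ref{important} to $f=1_A$ to write
\[
\mu\big(A\cap T^{-n}A\cap\cdots\cap T^{-dn}A\big)=F_d(n)+N(n),
\]
with $F_d$ a $d$-step nilsequence and $\text{UD-Lim}\,N(n)=0$, and then use the representation $F_d(n)=\phi(S^nx_0)$ in the system of order $d$ $(Z,S)$ from \cite{HM}, together with the strict inequality $\phi(x_0)>0$ supplied by the $IP^*$-argument above.

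The central step is to introduce the open neighborhood
\[
U:=\{z\in Z:\phi(z)>\phi(x_0)-\tfrac{\ep}{2}\}
\]
of $x_0$ and the candidate error set
\[
M:=N(x_0,U)\setminus I.
\]
By definition $N(x_0,U)$ is a Nil$_d$ Bohr$_0$-set. For any $n\in M$ one has simultaneously $\phi(S^nx_0)>\phi(x_0)-\ep/2$ and $\phi(S^nx_0)+N(n)\le\phi(x_0)-\ep$, which forces $N(n)<-\ep/2$. Hence $M\subset\{n\in\Z:|N(n)|>\ep/2\}$, a set of zero upper Banach density by uniform-density convergence of $N$, so $BD^*(M)=0$.

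It then remains to check that $I\Delta M$ is a Nil$_d$ Bohr$_0$-set. Since $M\cap I=\emptyset$ by construction, $I\Delta M=I\cup M$, and
\[
I\cup M\;\supset\;(I\cap N(x_0,U))\cup(N(x_0,U)\setminus I)\;=\;N(x_0,U),
\]
so $I\Delta M$ contains $N(x_0,U)$ and hence lies in $\F_{d,0}$, completing the proof.

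The only conceptual subtlety I would flag explicitly is that the definition of Nil$_d$ Bohr$_0$-set asks only for \emph{containment} of some $N(x_0,U)$ rather than equality. This one-sidedness is what makes the estimate $M\subset\{|N|>\ep/2\}$ sufficient: the reverse inclusion $I\setminus N(x_0,U)\subset\{|N|>\ep/2\}$ can genuinely fail on the boundary slab $\{\phi(S^nx_0)\in(\phi(x_0)-\ep,\phi(x_0)-\ep/2]\}$ when $|N(n)|$ is small, but such $n$'s stay inside $I$ (not inside $M$) and so they contribute to $I\Delta M$ without increasing the Banach density of $M$. This is the place where I would be most careful to avoid a symmetric-difference sleight of hand.
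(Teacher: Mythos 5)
Your proof is correct and follows essentially the same route as the paper, which only sketches the argument by noting that $\{n:\phi(S^nx_0)>\phi(x_0)-\ep/2\}$ is a Nil$_d$ Bohr$_0$-set and that $\{n:|N(n)|>\ep/2\}$ has zero upper Banach density; you have simply made the choice $M=N(x_0,U)\setminus I$ explicit and verified the required inclusions carefully. The closing remark about the one-sidedness of the definition of Nil$_d$ Bohr$_0$-set is exactly the point that makes the argument work, and you were right to flag it.
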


\noindent{\bf Proof of Theorem A(2):} It follows by Theorem
\ref{d-recurrence} that Theorem A(2) holds, since for a minimal
system $(X,T)$, each invariant measure of $(X,T)$ is fully
supported.

\section{Equivalence of Problems B-I,II,III}

In this subsection we explain why Problems
B-I,II,III are equivalent. We need Furstenberg correspondence principle.

\subsection{Furstenberg correspondence principle}\label{FCP}

Let $\F(\Z)$ denote the collection of finite non-empty subsets of
$\Z$. The following is the well known Furstenberg correspondence
principle \cite{F}. \index{Furstenberg correspondence principle}

\begin{thm}[Topological case]\label{topocase}

\begin{enumerate}

\item Let $E\subset \Z$ be a syndetic set. Then there
exist a minimal system $(X,T)$ and a non-empty open subset $U$ of
$X$ such that
\begin{equation*}
\{\a\in \F(\Z): \bigcap_{n\in \a}T^{-n}U\neq \emptyset\}\subset
\{\a\in \F(\Z): \bigcap_{n\in \a}(E-n)\neq \emptyset \}.
\end{equation*}

\item For any minimal system $(X,T)$ and any open non-empty subset $U$ of $X$, there is a syndetic
set $E$ of $\Z$ such that
\begin{equation*}
\{\a\in \F(\Z): \bigcap_{n\in \a}(E-n)\neq \emptyset \}\subset
\{\a\in \F(\Z): \bigcap_{n\in \a}T^{-n}U\neq \emptyset\}.
\end{equation*}

\end{enumerate}

\end{thm}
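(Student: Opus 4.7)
The plan is to realize $E$ symbolically for part (1) and to use the orbit closure of a single point for part (2), which is the standard Furstenberg correspondence construction.

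For part (1), I would work inside the full shift $(\{0,1\}^\Z,\sigma)$ with $(\sigma\w)(k)=\w(k+1)$, and let $\w_E=\mathbf{1}_E$. Setting $X_E=\overline{\O(\w_E,\sigma)}$, the subshift $(X_E,\sigma)$ carries the natural clopen cylinder $C=\{\w:\w(0)=1\}$. The key feature of syndeticity is that if $E$ meets every interval of length $N$, then by taking limits the same holds for every $\w\in X_E$; in particular no orbit in $X_E$ is the zero sequence. I would then invoke Zorn's lemma to extract a minimal closed $\sigma$-invariant $Y\subset X_E$, set $U=C\cap Y$, and observe $U\neq\emptyset$ because any $y\in Y$ has $y(k)=1$ for some $0\le k<N$, so $\sigma^k y\in U$. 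Then given $\a\in\F(\Z)$ with $\bigcap_{n\in\a}\sigma^{-n}U\neq\emptyset$, any $\w$ in this intersection satisfies $\w(n)=1$ for every $n\in\a$; since $\w$ lies in the orbit closure of $\w_E$ and $\a$ is finite, some translate $\sigma^{n_0}\w_E$ agrees with $\w$ on the finite set $\a$, giving $n_0+n\in E$ for all $n\in\a$, i.e. $n_0\in\bigcap_{n\in\a}(E-n)$.

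For part (2), given a minimal $(X,T)$ and non-empty open $U\subset X$, I would fix any $x_0\in X$ and take $E=N(x_0,U)=\{n\in\Z:T^n x_0\in U\}$. Minimality of $(X,T)$ forces $E$ to be syndetic by a standard compactness argument (finitely many translates of $U$ cover $X$). For any $\a\in\F(\Z)$ with $\bigcap_{n\in\a}(E-n)\neq\emptyset$, pick $m$ in that intersection; then $m+n\in E$ for every $n\in\a$, so $T^n(T^m x_0)\in U$ for every $n\in\a$, showing $T^m x_0\in\bigcap_{n\in\a}T^{-n}U$.

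The only genuinely delicate step is the passage to the minimal subsystem in (1): one must ensure that after descending from $X_E$ to a minimal $Y\subset X_E$ the distinguished cylinder $\{\w(0)=1\}$ still has non-empty trace, and this is exactly where the syndetic hypothesis on $E$ (guaranteeing $1$'s with bounded gaps in every element of $X_E$, hence of $Y$) is essential; everything else reduces to the pointwise nature of the product topology on $\{0,1\}^\Z$ and the definition of $N(x_0,U)$.
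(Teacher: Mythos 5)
Your proof is correct, and it is the standard symbolic realization of the Furstenberg correspondence principle. The paper does not include a proof of this theorem (it cites Furstenberg's book), so there is nothing to contrast with; your argument is exactly the one a reader would expect to find there. You have correctly identified and handled the one delicate point: after passing to a minimal subsystem $Y\subset X_E$, the trace of the cylinder $\{\omega(0)=1\}$ is still non-empty because syndeticity is a closed condition (forbidding a block of $N+1$ consecutive zeros) that passes from $\omega_E$ to every element of $X_E$, hence to every element of $Y$; and since $Y\subset \overline{\mathcal{O}(\omega_E,\sigma)}$, any $\omega\in Y$ is still approximable on finite windows by translates of $\omega_E$, which is all that is needed to produce $n_0\in\bigcap_{n\in\alpha}(E-n)$.
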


\begin{thm}[Measurable case]
\begin{enumerate}
  \item Let $E\subset \Z$ with $BD^*(E)>0$. Then there
  exists a measurable system $(X,\X,\mu,T)$ and $ A\in \X$ with $\mu(A)=BD^*(E)$ such that
  for all $\a\in \F(\Z)$
  \begin{equation*}
    BD^*(\bigcap_{n\in \a}(E-n))\ge \mu(\bigcap_{n\in \a}T^{-n}A).
  \end{equation*}

  \item Let $(X,\X,\mu,T)$ be a measurable system and $ A\in \X$ with $\mu(A)>0$.
  There is a subset $E$ of $\Z$ with $D^*(E)\ge \mu(A)$ such that
  \begin{equation*}
    \{\a\in \F(\Z): \bigcap_{n\in \a}(E-n)\neq \emptyset
    \}\subset \{\a\in \F(\Z): \mu(\bigcap_{n\in \a}T^{-n}A )>0\}.
  \end{equation*}

\end{enumerate}

\end{thm}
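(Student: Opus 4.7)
The plan is to handle both parts by standard constructions from symbolic dynamics and the Birkhoff ergodic theorem.

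For part (1), I would realize $E$ inside a symbolic system. Put $X=\{0,1\}^{\Z}$ with the left shift $T$, let $y\in X$ be the indicator sequence of $E$, and let $A=\{z\in X:z(0)=1\}$, a clopen set. By the definition of the upper Banach density, choose intervals $I_k=[a_k,b_k]$ with $|I_k|\to\infty$ and $|E\cap I_k|/|I_k|\to BD^*(E)$, and form the empirical averages
\[
\mu_k=\frac{1}{|I_k|}\sum_{n\in I_k}\delta_{T^n y}.
\]
A weak-$*$ limit $\mu$ along a subsequence is a $T$-invariant Borel probability. Each finite intersection $B_\a:=\bigcap_{n\in\a}T^{-n}A$ is clopen, and
\[
\mu_k(B_\a)=\frac{\big|I_k\cap\bigcap_{n\in\a}(E-n)\big|}{|I_k|}.
\]
Passing to the limit gives $\mu(A)=BD^*(E)$ and $\mu(B_\a)\le BD^*\big(\bigcap_{n\in\a}(E-n)\big)$, as required.

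For part (2), I would pick a single generic point $x$ whose orbit both realizes the density $\mu(A)$ and avoids every null cylinder $B_\a$. By the Birkhoff ergodic theorem applied to the countable family $\{\mathbf{1}_{B_\a}:\a\in\F(\Z)\}$, a full-measure set of points satisfies $\frac{1}{2N+1}\sum_{n=-N}^{N}\mathbf{1}_{B_\a}(T^n x)\lra\E(\mathbf{1}_{B_\a}\mid\I)(x)$ for every $\a$. Since $\int\E(\mathbf{1}_A\mid\I)\,d\mu=\mu(A)$, the set $\{x:\E(\mathbf{1}_A\mid\I)(x)\ge\mu(A)\}$ has positive measure; and for each $\a$ with $\mu(B_\a)=0$, the saturated set $\bigcup_{m\in\Z}T^{-m}B_\a$ is also $\mu$-null by invariance. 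The countable union of all these null obstructions is still null, so I can choose one point $x$ generic in all senses simultaneously. Setting $E:=\{n\in\Z:T^n x\in A\}$, Birkhoff immediately gives $D^*(E)\ge\E(\mathbf{1}_A\mid\I)(x)\ge\mu(A)$. Finally, if $\bigcap_{n\in\a}(E-n)\ne\emptyset$, there exists $m$ with $T^{m+n}x\in A$ for all $n\in\a$, i.e.\ $T^mx\in B_\a$; by the choice of $x$ this forces $\mu(B_\a)>0$, which is the required inclusion.

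The main obstacle is bookkeeping rather than technique: one must make a single choice of $x$ that is simultaneously generic for countably many observables, satisfies the $\E(\mathbf{1}_A\mid\I)$ lower bound, and avoids the countable union of $\mu$-null invariant saturations indexed by $\{\a:\mu(B_\a)=0\}$. Since each individual requirement discards at most a null set, a positive-measure set of viable $x$ remains. Note the argument works without assuming $(X,\X,\mu,T)$ is ergodic, because the conditional expectation over the invariant $\sigma$-algebra handles the non-ergodic case automatically.
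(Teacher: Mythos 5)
Your argument is correct, and it is the standard Furstenberg correspondence construction; the paper itself states this theorem without proof, citing Furstenberg's book. For part (1) you correctly realize $E$ as a point $y$ in the shift $\{0,1\}^{\Z}$, note that $T^my\in B_\alpha$ iff $m\in\bigcap_{n\in\alpha}(E-n)$, and take a weak-$*$ limit of empirical measures along intervals that attain $BD^*(E)$; the clopenness of each $B_\alpha$ is what lets you pass to the limit and get both the equality $\mu(A)=BD^*(E)$ and the inequality $\mu(B_\alpha)\le BD^*(\bigcap_{n\in\alpha}(E-n))$. For part (2) the key points are all in order: $\F(\Z)$ is countable, so the null saturations $\bigcup_m T^{-m}B_\alpha$ (over those $\alpha$ with $\mu(B_\alpha)=0$) union to a null set; the set $\{\E(\mathbf{1}_A\mid\I)\ge\mu(A)\}$ has positive measure by integrating the conditional expectation; and a single $x$ can be chosen in the intersection of the full-measure Birkhoff set, this positive-measure set, and the complement of the null saturations. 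The remark that conditional expectation absorbs the non-ergodic case (rather than passing to an ergodic component with $\mu_\omega(A)\ge\mu(A)$) is a legitimate and slightly cleaner variant.
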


\subsection{Equivalence of Problems B-I,II,III}

Let $\F$ be the family generated by all
sets of forms $\{n\in\Z: U\cap T^{-n}U\cap \ldots \cap
T^{-dn}U\not=\emptyset\},$ with $(X,T)$ a minimal system, $U$ a
non-empty open subset of $X$. Then it is clear from the definition
that
$$\F_{Bir_d}=\F^*.$$
\begin{prop}
For any $d\in\N$ the following statements are equivalent.
\begin{enumerate}
\item For any Nil$_d$ Bohr$_0$-set $A$, there is a syndetic subset $S$ of $\Z$ with
$A\supset\{n\in \Z: S\cap (S-n)\cap\ldots\cap (S-dn)\neq
\emptyset\}$.
\item  For any Nil$_d$ Bohr$_0$-set $A$, there are a minimal system $(X,T)$ and a
non-empty open subset $U$ of $X$ with $A\supset \{n\in\Z: U\cap
T^{-n}U\cap \ldots \cap T^{-dn}U\not=\emptyset\}.$
\item $\F_{Bir_d}\subset \F^*_{d,0}$.
\end{enumerate}
\end{prop}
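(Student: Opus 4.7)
\medskip

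\noindent\textbf{Proof proposal.} The plan is to prove $(1)\Leftrightarrow (2)$ using the topological Furstenberg correspondence principle (Theorem on topological case above), and then to prove $(2)\Leftrightarrow (3)$ by a routine manipulation of the dual family.

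For $(1)\Rightarrow (2)$, I would start with a Nil$_d$ Bohr$_0$-set $A$, apply hypothesis $(1)$ to get a syndetic $S\subset\Z$ with $A\supset\{n\in\Z: S\cap (S-n)\cap\cdots\cap(S-dn)\neq\emptyset\}$, and then apply part $(1)$ of the topological Furstenberg correspondence to $E=S$ to produce a minimal system $(X,T)$ and a non-empty open $U\subset X$ with
\[
\{\alpha\in\F(\Z): \textstyle\bigcap_{n\in\alpha}T^{-n}U\neq\emptyset\}\subset \{\alpha\in\F(\Z): \textstyle\bigcap_{n\in\alpha}(S-n)\neq\emptyset\}.
\]
Specializing to arithmetic progressions $\alpha=\{0,n,2n,\ldots,dn\}$ gives $\{n: U\cap T^{-n}U\cap\cdots\cap T^{-dn}U\neq\emptyset\}\subset A$, as required. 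For $(2)\Rightarrow (1)$ the argument is symmetric: given a Nil$_d$ Bohr$_0$-set $A$, apply $(2)$ to get $(X,T)$ and $U$, then feed $(X,T,U)$ into part $(2)$ of the topological Furstenberg correspondence to obtain a syndetic $E\subset\Z$; specializing again to $\alpha=\{0,n,2n,\ldots,dn\}$ yields the desired syndetic set $S:=E$.

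For $(2)\Leftrightarrow (3)$, the key observation is that the family $\F$ introduced just before the proposition is, by construction, the hereditary upward closure of the collection of sets $\{n\in\Z: U\cap T^{-n}U\cap\cdots\cap T^{-dn}U\neq\emptyset\}$ as $(X,T)$ ranges over minimal systems and $U$ over non-empty open subsets of $X$. Hence statement $(2)$ is literally the inclusion $\F_{d,0}\subset\F$. Since we are told $\F_{Bir_d}=\F^*$, taking duals gives $\F_{Bir_d}^*=\F^{**}=\F$ (using the standard identity $\mathcal G^{**}=\mathcal G$ for any family $\mathcal G$, which in turn follows from the hereditary upward property). Therefore $\F_{d,0}\subset\F$ is equivalent to $\F_{d,0}\subset\F_{Bir_d}^*$, which by dualizing once more is the same as $\F_{Bir_d}\subset\F_{d,0}^*$, i.e.\ statement $(3)$.

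No serious obstacle is expected: the topological Furstenberg correspondence is quoted in the preceding section, and the dual-family identity $\mathcal G^{**}=\mathcal G$ is a one-line verification from the definition of $\F(\mathcal A)$. The only point requiring a little care is the choice of the finite configuration $\alpha=\{0,n,2n,\ldots,dn\}$ in both directions of $(1)\Leftrightarrow(2)$, which is exactly what transfers the condition ``$\bigcap_{i=0}^d T^{-in}U\neq\emptyset$'' into ``$\bigcap_{i=0}^d (S-in)\neq\emptyset$'' and conversely.
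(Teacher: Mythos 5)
Your proposal is correct and follows essentially the same route as the paper: the $(1)\Leftrightarrow(2)$ equivalence is obtained from the topological Furstenberg correspondence principle exactly as you describe (the paper merely compresses the specialization to $\alpha=\{0,n,\ldots,dn\}$), and the $(2)\Leftrightarrow(3)$ step is the same dual-family bookkeeping using $\F_{Bir_d}=\F^*$ and $(\mathcal{G}^*)^*=\mathcal{G}$.
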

\begin{proof} Let $d\in\N$ be fixed. (1)$\Rightarrow$(2). Let $A$ be a Nil$_d$
Bohr$_0$-set, then there is a syndetic subset $S$ of $\Z$ with
$A\supset\{n\in \Z: S\cap (S-n)\cap\ldots\cap (S-dn)\neq
\emptyset\}$. For such $S$ using Theorem \ref{topocase}, we get that
there exist a minimal system $(X,T)$ and a non-empty open set $
U\subset X$ such that $\{n\in \Z: S\cap (S-n)\cap\ldots\cap
(S-dn)\neq \emptyset\}\supset\{n\in\Z: U\cap T^{-n}U\cap \ldots \cap
T^{-dn}U\not=\emptyset\}.$ Thus $A\supset\{n\in\Z: U\cap T^{-n}U\cap
\ldots \cap T^{-dn}U\not=\emptyset\}.$ (2)$\Rightarrow$(1) follows
similarly by the above argument. (2)$\Rightarrow$(3) follows by the
definition. (3)$\Rightarrow$(2). Since $\F_{Bir_d}\subset
\F^*_{d,0}$ and $\F_{Bir_d}=\F^*$, we have that $\F^*\subset
\F_{d,0}^*$ which implies that $\F\supset \F_{d,0}$.
\end{proof}

\begin{prop} For any $d\in\N$ the following statements are equivalent.
\begin{enumerate}
\item For any syndetic set $S$, $\{n\in \Z:
S\cap (S-n)\cap\ldots\cap (S-dn)\neq \emptyset\}$ is a Nil$_d$
Bohr$_0$-set.

\item For any minimal system $(X,T)$, and any  non-empty open subset $U$ of $X$, $\{n\in\Z: U\cap T^{-n}U\cap \ldots \cap
T^{-dn}U\not=\emptyset\}$ is a Nil$_d$ Bohr$_0$-set.

\item $\F_{Bir_d}\supset \F^*_{d,0}$.
\end{enumerate}
\end{prop}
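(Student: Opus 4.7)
The plan is to imitate the structure of the previous proposition, exploiting two easy observations: (a) $\F_{d,0}$ is a family, so being a Nil$_d$ Bohr$_0$-set is preserved under supersets, and (b) specializing the finite set $\alpha\in\F(\Z)$ in the Furstenberg correspondence principle to $\alpha = \{0,n,2n,\ldots,dn\}$ converts statements about $\bigcap_{n\in\alpha}T^{-n}U$ into statements about the $d$-step return sets that appear in (1) and (2).

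For (1)$\Rightarrow$(2), start with an arbitrary minimal t.d.s.\ $(X,T)$ and a nonempty open $U\subset X$. Apply Theorem~\ref{topocase}(2) to produce a syndetic $E\subset\Z$ such that
\begin{equation*}
\{\alpha\in\F(\Z):\bigcap_{n\in\alpha}(E-n)\neq\emptyset\}\subset\{\alpha\in\F(\Z):\bigcap_{n\in\alpha}T^{-n}U\neq\emptyset\}.
\end{equation*}
Specializing to $\alpha=\{0,n,2n,\ldots,dn\}$ yields
\begin{equation*}
\{n\in\Z:E\cap(E-n)\cap\ldots\cap(E-dn)\neq\emptyset\}\subset\{n\in\Z:U\cap T^{-n}U\cap\ldots\cap T^{-dn}U\neq\emptyset\}.
\end{equation*}
By hypothesis (1) the left side is a Nil$_d$ Bohr$_0$-set, and by the hereditary-upward property of $\F_{d,0}$, so is the right side. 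The converse (2)$\Rightarrow$(1) is symmetric, using Theorem~\ref{topocase}(1) instead: starting from a syndetic $S$, produce $(X,T)$ and $U$ with the inclusion in the opposite direction, which now places the $d$-step return set of $S$ above a Nil$_d$ Bohr$_0$-set and therefore inside $\F_{d,0}$.

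For (2)$\Leftrightarrow$(3), recall from the paragraph preceding the previous proposition that $\F_{Bir_d}=\F^*$, where $\F$ is the family generated by the sets $\{n\in\Z:U\cap T^{-n}U\cap\ldots\cap T^{-dn}U\neq\emptyset\}$ as $(X,T)$ ranges over minimal t.d.s.\ and $U$ over nonempty open sets. Statement (2) says precisely that every generator of $\F$ lies in $\F_{d,0}$, i.e.\ $\F\subset\F_{d,0}$; taking duals (and using $(\F^*)^*=\F$ for families) this is equivalent to $\F_{Bir_d}=\F^*\supset\F_{d,0}^*$, which is (3). No further ingredients are needed, and there is no real obstacle: both halves reduce to one line once the correspondence principle is invoked and the family property of $\F_{d,0}$ is noted.
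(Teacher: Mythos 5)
Your proof is correct and follows essentially the same route as the paper: apply Theorem~\ref{topocase}(2) and Theorem~\ref{topocase}(1) respectively, specialize $\alpha$ to $\{0,n,2n,\ldots,dn\}$, and use the hereditary property of $\F_{d,0}$ for (1)$\Leftrightarrow$(2), then pass to duals via $\F_{Bir_d}=\F^*$ for (2)$\Leftrightarrow$(3). The only difference is that you spell out the specialization of $\alpha$ and the dualization step, which the paper leaves implicit; the underlying argument is identical.
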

\begin{proof} Let $d\in\N$ be fixed. (1)$\Rightarrow$(2). Let
$(X,T)$ be a minimal system and $U$ be a non-empty open set of $X$.
By Theorem \ref{topocase}, there is a syndetic set $S$ such that
$$\{n\in
\Z: S\cap (S-n)\cap\ldots\cap (S-dn)\neq \emptyset\}\subset\{n\in\Z:
U\cap T^{-n}U\cap \ldots \cap T^{-dn}U\not=\emptyset\}.$$ By (1),
$\{n\in \Z: S\cap (S-n)\cap\ldots\cap (S-dn)\neq \emptyset\}$ is a
Nil$_d$ Bohr$_0$-set, and so is $\{n\in\Z: U\cap T^{-n}U\cap \ldots
\cap T^{-dn}U\not=\emptyset\}.$ Similarly, we have
(2)$\Rightarrow$(1). (2)$\Rightarrow$(3) follows by the definition.
{ (3)$\Rightarrow$(2) follows by taking $*$ on both sides of (3).}
\end{proof}





\chapter{Nilsystems}\label{section-nilsystem}

In this chapter we recall some basic facts concerning nilpotent Lie
groups and nilmanifolds. Since in the proofs of our main results we
need to use the metric of the nilpotent matrix Lie group, we state
some basic properties related to the metric. Notice that we follow
Green and Tao \cite{GT} to define such a metric.

\section{Nilmanifolds and nilsystems}

\subsection{Nilmanifolds and nilsystems}

\subsubsection{Nilpotent groups}

Let $G$ be a group. For $g, h\in G$, we write $[g, h] =
ghg^{-1}h^{-1}$ for the commutator of $g$ and $h$ and we write
$[A,B]$ for the subgroup spanned by $\{[a, b] : a \in A, b\in B\}$.
The commutator subgroups $G_j$, $j\ge 1$, are defined inductively by
setting $G_1 = G$ and $G_{j+1} = [G_j ,G]$. Let $d \ge 1$ be an
integer. We say that $G$ is {\em $d$-step nilpotent} if $G_{d+1}$ is
the trivial subgroup. \index{$d$-step nilpotent}

\subsubsection{Nilmanifolds}

Let $G$ be a $d$-step nilpotent Lie group and $\Gamma$ a discrete
cocompact subgroup of $G$, i.e. a uniform subgroup of $G$. The
compact manifold $X = G/\Gamma$ is called a {\em $d$-step
nilmanifold}. \index{$d$-step nilmanifold} The group $G$ acts on $X$ by left translations and we
write this action as $(g, x)\mapsto gx$. The Haar measure $\mu$ of
$X$ is the unique probability measure on $X$ invariant under this
action. Let $\tau\in G$ and $T$ be the transformation $x\mapsto \tau
x$ of $X$, i.e the nilrotation induced by $\tau\in G$. Then $(X, T,
\mu)$ is called a {\em $d$-step nilsystem}. See \cite{CG,M}
for the details. \index{$d$-step nilsystem}

\subsubsection{Systems of order $d$}

We also make use of inverse limits of nilsystems and so we recall
the definition of an inverse limit of systems (restricting ourselves
to the case of sequential inverse limits). If $(X_i,T_i)_{i\in \N}$
are systems with $diam(X_i)\le M<\infty$ and $\phi_i:
X_{i+1}\rightarrow X_i$ are factor maps, the {\em inverse limit} \index{inverse limit} of
the systems is defined to be the compact subset of $\prod_{i\in
\N}X_i$ given by $\{ (x_i)_{i\in \N }: \phi_i(x_{i+1}) = x_i,
i\in\N\}$, which is denoted by $\displaystyle
\lim_{\longleftarrow}\{X_i\}_{i\in \N}$. It is a compact metric
space endowed with the distance $\rho(x, y) =\sum_{i\in \N} 1/2^i
\rho_i(x_i, y_i )$. We note that the maps $\{T_i\}$ induce a
transformation $T$ on the inverse limit.

\begin{de}\label{de-nilsystem} [Host-Kra-Maass] \cite{HKM}\index{system of order $d$}
A system $(X,T)$ is called
a {\em system of order $d$}, if it is an inverse limit of
$d$-step minimal nilsystems.
\end{de}

{\it An $\infty$-step nilsystem or a system of order $\infty$} is an
inverse limit of $d_i$-step nilsystem, see \cite{D-Y}. \index{$\infty$-step nilsystem} \index{system of order $\infty$}


\medskip
Recall that a subset $A$ of $\Z$ is a {\em Nil$_d$ Bohr$_0$-set} \index{Nil$_d$ Bohr$_0$-set}
if there exist a $d$-step nilsystem $(X,T)$, $x_0\in X$ and an open
neighborhood $U$ of $x_0$ such that $N(x_0,U)$ is contained in $A$.
As each $d$-step nilsystem is distal, so is a system of order $d$.
Note that each point in a distal system is minimal. Hence by
Definition~\ref{de-nilsystem}, it is not hard to see that a subset
$A$ of $\Z$ is a {\em Nil$_d$ Bohr$_0$-set} if and only if there
exist a $d$-step (minimal) nilsystem $(X,T)$ (or a {\em system
$(X,T)$ of order $d$}), $x_0\in X$ and an open neighborhood $U$ of
$x_0$ such that $N(x_0,U)$ is contained in $A$. { Note that here we
need the fact that
the orbit closure of any
point in a $d$-step nilsystem is a $d$-step nilmanifold
\cite[Theorem 2.21]{L}.}

\subsection{Reduction}\label{reduction}
Let $X=G/\Gamma$ be a nilmanifold. Then there exists a connected,
simply connected nilpotent Lie group $\widehat{G}$ and
$\widehat{\Gamma}\subset \widehat{G}$ a co-compact subgroup such
that $X$ with the action of $G$ is isomorphic to a submanifold
$\widetilde{X}$ of $\widehat{X}=\widehat{G}/\widehat{\Gamma}$
representing the action of $G$ in $\widehat{G}$. See \cite{L} for
more details.

Thus a subset $A$ of $\Z$ is a {\em Nil$_d$ Bohr$_0$-set} if and
only if there exist a  $d$-step nilsystem $(G/\Gamma,T)$ with $G$ is
a connected, simply connected nilpotent Lie group and $\Gamma$ a
co-compact subgroup of $G$, $x_0\in X$ and an open neighborhood $U$
of $x_0$ such that $N(x_0,U)$ is contained in $A$.

\subsection{Nilpotent Lie group and Mal'cev basis}

\subsubsection{}

We will make use of the Lie algebra $\mathfrak{g}$ of a $d$-step
nilpotent Lie group $G$ together with the exponential map $\exp: \g
\lra G$. When $G$ is a connected, simply-connected $d$-step
nilpotent Lie group the exponential map is a diffeomorphism
\cite{CG, M}. In particular, we have a logarithm map $\log: G\lra
\g$. Let $$\exp(X*Y)=\exp(X)\exp(Y), \ X,Y\in \g.$$

\subsubsection{Campbell-Baker-Hausdorff formula}

The following Campbell-Baker-Hausdorff formula (CBH formula) will be
used frequently \index{Campbell-Baker-Hausdorff formula}
\begin{equation*}
\begin{aligned}
X*Y&=\sum_{n>0}\frac{(-1)^{n+1}}{n}\sum_{p_i+q_i>0,1\le i\le
n}\frac{(\sum_{i=1}^n(p_i+q_i))^{-1}}{p_1!q_1!\ldots p_n!q_n!}\\
&\times (\ad X)^{p_1}(\ad Y)^{q_1}\ldots (\ad X)^{p_n}(\ad
Y)^{q_n-1}Y,
\end{aligned}
\end{equation*}
where $(\ad X)Y=[X,Y]$. (If $q_n=0$, the term in the sum is $\ldots
(\ad X)^{p_n-1}X$; of course if $q_n>1$, or if $q_n=0$ and $p_n>1$,
then the term is zero.) The low order nonzero terms are well known,
\begin{equation*}
\begin{aligned}
X*Y=& X+Y+\frac{1}{2}[X,Y]+\frac{1}{12}[X,[X,Y]]-\frac{1}{12}[Y,[X,Y]]\\
&-\frac{1}{48}[Y,[X,[X,Y]]]-\frac{1}{48}[X,[Y,[X,Y]]]\\
&+ (\text{ commutators in five or more terms}).
\end{aligned}
\end{equation*}

\subsubsection{}

We assume $\mathfrak{g}$ is the Lie algebra of $G$ over $\R$, and
$\exp: \g \lra G$ is the exponential map. The {\em descending central
series} of $\g$ is defined inductively by \index{descending central series}
\begin{equation*}
   \g^{(1)}=\g; \ \g^{(n+1)}=[\g,\g^{(n)}]={\rm span}\{[X,Y]:
   X\in \g, Y\in \g^{(n)}\}.
\end{equation*}
Since $\g$ is a $d$-step nilpotent Lie algebra, we have
$$ \g=\g^{(1)}\supset\g^{(2)}\supset\ldots\supset \g^{(d)} \supset\g^{(d+1)}=\{0\}.$$
We note that $$[\g^{(i)},\g^{(j)}]\subset \g^{(i+j)}, \forall i,
j\in \N.$$ In particular, each $\g^{(k)}$ is an ideal in $\g$.

\subsubsection{Mal'cev Basis}

\begin{de}(Mal'cev basis)\index{Mal'cev basis}
Let $G/\Gamma$ be an $m$-dimensional nilmanifold (i.e. $G$ is a
$d$-step nilpotent Lie group and $\Gamma$ is a uniform
subgroup of $G$) and let $G=G_1\supset \ldots\supset G_d\supset
G_{d+1}=\{e\}$ be the lower central series filtration. A basis
$\mathcal {X}= \{X_1, \ldots ,X_m\}$ for the Lie algebra $\g$ over
$\R$ is called a {\em Mal'cev basis} for $G/\Gamma$ if the following
four conditions are satisfied:

\begin{enumerate}
\item  For each $j = 0,\ldots,m-1$ the subspace $\eta_j := \text{Span}(X_{j+1}, \ldots
,X_m)$ is a Lie algebra ideal in $\g$, and hence $H_j := \exp\
\eta_j$ is a normal Lie subgroup of $G$.

\item { For every $0< i\le d$ we have $G_i = H_{l_{i-1}}$. Thus
$0=l_0<l_1<\ldots<l_{d-1}\le m-1$.}

\item Each $g\in G$ can be written uniquely as
$\exp(t_1X_1) \exp(t_2X_2)\ldots \exp(t_mX_m)$, for $t_i\in \R$.

\item $\Gamma$ consists precisely of those elements which, when written in
the above form, have all $t_i\in \Z$.
\end{enumerate}
\end{de}
Note that such a basis exists when $G$ is a connected, simply
connected $d$-step nilpotent Lie group \cite{CG, GT, M}.

\subsection{Base points}
The following proposition should be well known.

\begin{prop}\label{replace}
Let $X=G/\Gamma$ be a nilmanifold, and $T$ be a nilrotation induced
by $a\in G$. Let $x\in G$ and $U$ be an open neighborhood of $x\Gamma$
in $X$. Then there are a uniform subgroup $\Gamma_x\subset G$ and an
open neighborhood $V\subset G/\Gamma_x$ of $e\Gamma_x$ such that
$$N_T(x\Gamma,U)=N_{T'}(e\Gamma_x,V),$$
where $T'$ is a nilrotation induced by $a\in G$ in $X'=G/\Gamma_x$.
\end{prop}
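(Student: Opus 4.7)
The plan is to change the base point by conjugating the lattice. Concretely, I would define
$$\Gamma_x := x\Gamma x^{-1} \subset G,$$
and verify that $\Gamma_x$ is again a uniform subgroup: conjugation by $x$ is a diffeomorphism of $G$ onto itself, so it sends the discrete cocompact subgroup $\Gamma$ to a discrete cocompact subgroup. Thus $X' := G/\Gamma_x$ is a nilmanifold, and the left translation $T'(g\Gamma_x) := ag\Gamma_x$ is well defined (since $\Gamma_x$ is normalized on the right by itself), giving a nilrotation induced by $a\in G$ on $X'$.

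Next I would introduce the natural identification
$$\phi: G/\Gamma \lra G/\Gamma_x,\qquad g\Gamma \longmapsto gx^{-1}\Gamma_x.$$
This is well defined: for $\gamma \in \Gamma$, one has $g\gamma x^{-1} = gx^{-1}(x\gamma x^{-1}) \in gx^{-1}\Gamma_x$. It is clearly continuous with continuous inverse $h\Gamma_x \mapsto hx\Gamma$, hence a homeomorphism; and it sends the base point $x\Gamma$ to $e\Gamma_x$. The key compatibility is equivariance with the dynamics: for every $g \in G$ and $n \in \Z$,
$$\phi(T^n(g\Gamma)) = \phi(a^n g\Gamma) = a^n g x^{-1}\Gamma_x = T'^n(\phi(g\Gamma)).$$

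Then I would set $V := \phi(U)$, which is an open neighborhood of $e\Gamma_x = \phi(x\Gamma)$ in $X'$. The equality of recurrence sets follows immediately: for any $n \in \Z$,
$$T^n(x\Gamma) \in U \iff \phi(T^n(x\Gamma)) \in V \iff T'^n(e\Gamma_x) \in V,$$
so $N_T(x\Gamma,U) = N_{T'}(e\Gamma_x, V)$, as required.

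There is essentially no obstacle here; the only points that need care are (i) the fact that conjugation preserves the discreteness and cocompactness of $\Gamma$, which is standard because conjugation is a topological group automorphism, and (ii) making sure the identification $\phi$ is chosen so that the translation by $a$ on $G/\Gamma$ intertwines with translation by the same element $a$ (not a conjugate) on $G/\Gamma_x$; right-multiplication by $x^{-1}$ achieves exactly this, since $a$ acts on the left and $x$ is absorbed on the right into the new lattice.
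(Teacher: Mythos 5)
Your proof is correct and takes essentially the same route as the paper: define $\Gamma_x = x\Gamma x^{-1}$ and take $V = Ux^{-1}$ (which is precisely your $\phi(U)$). The only difference is that you package the argument as an explicit equivariant homeomorphism $\phi$ rather than verifying the two containments of $N$-sets directly, which is a slightly cleaner presentation but the same idea.
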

\begin{proof} Let $\Gamma_x=x\Gamma x^{-1}$. Then $\Gamma_x$ is also a uniform subgroup of $G$.

Put $V=Ux^{-1}$, where we view $U$ as the collections of equivalence
classes. It is easy to see that $V\subset G/\Gamma_x$ is open, which
contains $e\Gamma_x$. Let $n\in N_T(x\Gamma,U)$ then $a^nx\Gamma\in
U$ which implies that $a^nx\Gamma x^{-1}\in Ux^{-1}=V$, i.e. $n\in
N_{T'}(e\Gamma_x,V)$. The other direction follows similarly.
\end{proof}

\section{Nilpotent Matrix Lie Group}

\subsection{}

Let $M_{d+1}(\mathbb{R})$ denote the space of all $(d+1)\times
(d+1)$-matrices with real entries. For $A=(A_{ij})_{1\le i,j\le
d+1}\in M_{d+1}(\mathbb{R})$, we define
\begin{equation}\label{Mat-eq-1}
\|A\|_\infty= \max \limits_{1\le i,j\le d+1}
|A_{ij}|.
\end{equation}
Then $\|\cdot\|_\infty$ is a norm on $M_{d+1}(\mathbb{R})$ and the norm
satisfies the inequalities
\begin{align*}
\|A+B\|_\infty\le \|A\|_\infty+\|B\|_\infty
\end{align*}
for $A,B\in M_{d+1}(\mathbb{R})$.

\subsection{}

Let ${\bf a}=(a_i^k)_{1\le k\le d, 1\le i\le d-k+1}\in
\mathbb{R}^{d(d+1)/2}$. Then corresponding to ${\bf a}$ we define
$\M({\bf a})$ with \index{$\M({\bf a})$}
{\footnotesize
$$
\M({\bf a})=\left(
  \begin{array}{cccccccc}
    1 & a_1^1 & a_1^2 & a_1^3  &\ldots & a_1^{d-1} &a_1^d \\
    0 & 1     & a_2^1 & a_2^2   &\ldots &  a_2^{d-2}&a_2^{d-1}\\
    0 & 0     &1      & a_3^1   &\ldots &  a_3^{d-3} &a_3^{d-2}\\
   \vdots & \vdots & \vdots &  \vdots & \vdots & \vdots &\vdots\\
    0 & 0    &0       &0           & \ldots & a_{d-1}^1 &a_{d-1}^2 \\
    0 & 0    &0       &0            & \ldots & 1 &a_{d}^1 \\
    0 & 0    &0       &0            & \ldots & 0  & 1
  \end{array}
\right).
$$}

\subsection{}

Let $\mathbb{G}_d$ \index{$\mathbb{G}_d$} be the (full) upper triangular group
$$\G_d=\{\M({\bf a}):a_i^k\in \mathbb{R}, 1\le k\le d, 1\le i\le d-k+1\}.$$
The group $\G_d$ is a $d$-step nilpotent group, and it is clear that
for $A\in \G_d$ there exists a unique ${\bf c}=(c_i^k)_{1\le k\le d,
1\le i\le d-k+1}\in \mathbb{R}^{d(d+1)/2}$ such that $A=\M({\bf
c})$. Let
$$\Gamma=\{\M({\bf h}): h_i^k\in \mathbb{Z}, 1\le k\le d, 1\le i\le d-k+1\}.$$
Then $\Gamma$ is a uniform subgroup of $\G_d$.

\medskip

\subsection{}

Let ${\bf a}=(a_i^k)_{1\le k\le d, 1\le i\le d-k+1}\in
\mathbb{R}^{d(d+1)/2}$ and ${\bf b}=(b_i^k)_{1\le k\le d, 1\le i\le
d-k+1}\in \mathbb{R}^{d(d+1)/2}$. If ${\bf c}=(c_i^k)_{1\le k\le d,
1\le i\le d-k+1}\in \mathbb{R}^{d(d+1)/2}$ such that $\M({\bf
c})=\M({\bf a})\M({\bf b})$, then
\begin{equation}\label{sec8-4-eq-1}
c_i^k=\sum \limits_{j=0}^k a_i^{k-j}b_{i+k-j}^j=a_i^k+(\sum
\limits_{j=1}^{k-1}a_i^{k-j}b_{i+k-j}^j)+b_i^k
\end{equation}
for $1\le k \le d$ and $1\le i \le d-k+1$, where we assume
$a_1^0=a_2^0=\ldots=a_d^0=1$ and $b_1^0=b_2^0=\ldots=b_d^0=1$.

\chapter{Generalized polynomials}\label{section-GP}

Generalized polynomials have been studied extensively, see for
example the remarkable paper by Bergelson and Leibman \cite{BL07}
and references therein. In this chapter we introduce the notions and
study the basic properties of (special) generalized polynomials
which will be used in the following chapters. Note that our
definition of the generalized polynomials is slightly different from
the usual one.

\section{Definitions}\label{genenal-poly-def}

\subsection{}

For a real number $a\in\R$, let $||a||=\inf\{|a-n|:n\in\Z\}$ and
$$\lceil{a}\rceil=\min\{m\in\Z: |a-m|=||a||\}.$$ \index{$\lceil{\cdot}\rceil$}

When studying $\F_{d,0}$ we find that the generalized polynomials
appear naturally. Here is the precise definition. Note that we use
$f(n)$ or $f$ to denote the generalized polynomials.

\subsection{Generalized polynomials}\index{generalized polynomial}

\begin{de} Let $d\in\N$. We define the {\it generalized polynomials} of
degree $\le d$ (denoted by GP$_d$) by induction. For $d=1$, GP$_1$
is the smallest collection of functions from $\Z$ to $\R$ containing $\{h_a:
a\in \R\}$ with $h_a(n)=an$ for any $n\in\Z$, which is closed under
taking $\lceil\ \rceil$, multiplying by a constant and the finite
sums.

Assume that GP$_i$ is defined for $i<d$. Then GP$_d$ is the smallest
collection of functions from $\Z$ to $\R$ containing GP$_i$ with
$i<d$, functions of the forms $$a_0n^{p_0}{\lceil{f_1(n)}
\rceil}\ldots {\lceil{f_k(n)} \rceil}$$ (with $a_0\in\R,$ $p_0\ge
0$, $k\ge 0$, $f_l\in$ GP$_{p_l}$ and $\sum_{l=0}^kp_l=d$), which is
closed under taking $\lceil\ \rceil$, multiplying by a constant and
the finite sums. Let GP$=\cup_{i=1}^\infty$GP$_i$. \index{GP}
\end{de}

For example, $a_1\lceil{a_2\lceil{a_3n}}\rceil \rceil+b_1n\in$
GP$_1$, and
$a_1\lceil{a_2n^2}\rceil+b_1\lceil{b_2n\lceil{b_3n}}\rceil
\rceil+c_1n^2+c_2n\in$ GP$_2$, where $a_i,b_i,c_i\in\R$. Note that
if $f\in$ GP then $f(0)=0$.

\subsection{Special generalized polynomials}\index{special generalized polynomial}

Since generalized polynomials are very complicated, we will specify
a subclass of them, called  the {\it special generalized
polynomials} which will be used in our proofs of the main results.
To do this, we need some notions.

For $a\in \mathbb{R}$, we define $L(a)=a$. For $a_1,a_2\in
\mathbb{R}$ we define $L(a_1,a_2)=a_1\lceil L(a_2)\rceil$.
Inductively, for $a_1,a_2,\ldots,a_{\ell}\in \mathbb{R}$ ($\ell\ge
2$) we define
\begin{equation}\label{eq-de-L}\index{$L(a_1,a_2,ccots, a_l)$}
L(a_1,a_2,\ldots,a_{\ell})=a_1\lceil
L(a_2,a_3,\ldots,a_\ell)\rceil.
\end{equation}

For example, $L(a_1,a_2,a_3)=a_1\lceil a_2\lceil a_3\rceil\rceil$.

\medskip

We give now the precise definition of special generalized
polynomials.

\begin{de} For $d\in\N$ we define {\it special generalized polynomials of degree} $\le d$,
denoted by SGP$_d$ as follows.  SGP$_d$ is the collection of
generalized polynomials of the forms
$L(n^{j_1}a_1,\ldots,n^{j_\ell}a_\ell)$, where $1\le \ell \le d,
a_1,\ldots,a_\ell\in \mathbb{R}, j_1,\ldots,j_\ell\in \mathbb{N}
\text{ with } \sum_{t=1}^\ell j_t\le d.$
\end{de}

Thus SGP$_1=\{an : a\in\R\},$ SGP$_2=\{an^2,
bn\lceil{cn}\rceil,en:a,b,c,e\in\R\}$ and SGP$_3$=SGP$_2\cup
\{an^3,an\lceil bn^2\rceil, an^2\lceil bn\rceil, an\lceil bn\lceil
cn\rceil\rceil: a,b,c\in\R\}$. \index{SGP}

\subsection{$\F_{GP_d}$ and $\F_{SGP_d}$} \index{$\F_{GP_d}$}\index{$\F_{SGP_d}$}

Let $\F_{GP_d}$  be the family generated by the sets of forms
$$\bigcap_{i=1}^k\{n\in\Z:P_i(n)\ (\text{mod}\ \Z)\in (-\ep_i,\ep_i)
\},$$ where $k\in\N$, $P_i\in GP_d$, and $\ep_i>0$, $1\le i\le k$.
Note that $P_i(n)\ (\text{mod}\ \Z)\in (-\ep_i,\ep_i)$ if and only
if $||P_i(n)||<\ep_i.$

Let $\F_{SGP_d}$ be the family generated by the sets of forms
$$\bigcap_{i=1}^k\{n\in\Z:P_i(n)\ (\text{mod}\ \Z)\in (-\ep_i,\ep_i)
\},$$ where $k\in\N$, $P_i\in SGP_d$, and $\ep_i>0$, $1\le i\le k$.
Note that from the definition both $\F_{GP_d}$ and $\F_{SGP_d}$ are
filters; and $\F_{SGP_d}\subset \F_{GP_d}.$

\section{Basic properties of generalized polynomials}

\subsection{}

The following lemmas lead a way to simplify the generalized
polynomials. For $f\in$ GP we let $f^*=-\lceil{f}\rceil.$
\begin{lem}\label{simple2} Let $c\in\R$ and $f_1, \ldots, f_k\in GP$ with $k\in \N$. Then
$$c\lceil{f_1}\rceil\ldots \lceil{f_k}\rceil=c(-1)^k\prod_{i=1}^k(f_i-\lceil{f_i}\rceil)
-c(-1)^k\sum_{i_1,\ldots,i_k\in
\{1,*\}\atop{(i_1,\ldots,i_k)\not=(*,\ldots,*)}}f_{1}^{i_1}\ldots
f_{k}^{i_k}.$$
In particular, if $k=2$ one has that
$$c\lceil{f_1}\rceil\lceil{f_2}\rceil=cf_1\lceil{f_2}\rceil-cf_1f_2+
cf_2\lceil{f_1}\rceil+c(f_1-\lceil{f_1}\rceil)(f_2-\lceil{f_2}\rceil).$$
\end{lem}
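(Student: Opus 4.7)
The lemma is a purely algebraic identity; the recipe is to expand a product and isolate a single monomial. Here is the plan.

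The plan is to rewrite everything in terms of the two ``pieces'' $f_i$ and $f_i^{*}=-\lceil f_i\rceil$ attached to each $f_i$, since by definition $f_i-\lceil f_i\rceil=f_i+f_i^{*}$. I will then expand the product $\prod_{i=1}^{k}(f_i+f_i^{*})$ by distributivity as a sum over all choice sequences $(i_1,\ldots,i_k)\in\{1,*\}^{k}$, where the convention $f_i^{1}=f_i$ is already built into the notation of the statement. This gives
\begin{equation*}
\prod_{i=1}^{k}\bigl(f_i-\lceil f_i\rceil\bigr)=\sum_{(i_1,\ldots,i_k)\in\{1,*\}^{k}}f_{1}^{i_1}\cdots f_{k}^{i_k}.
\end{equation*}

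Next I would single out the term corresponding to $(i_1,\ldots,i_k)=(*,\ldots,*)$, which is $f_{1}^{*}\cdots f_{k}^{*}=(-1)^{k}\lceil f_1\rceil\cdots\lceil f_k\rceil$, and move it to the other side, leaving the remaining $2^{k}-1$ mixed terms on the right. Multiplying through by $c(-1)^{k}$ (so that the isolated term becomes $c\lceil f_1\rceil\cdots\lceil f_k\rceil$) and rearranging yields exactly the claimed identity
\begin{equation*}
c\lceil f_1\rceil\cdots\lceil f_k\rceil=c(-1)^{k}\prod_{i=1}^{k}(f_i-\lceil f_i\rceil)-c(-1)^{k}\sum_{\substack{(i_1,\ldots,i_k)\in\{1,*\}^{k}\\ (i_1,\ldots,i_k)\neq(*,\ldots,*)}}f_{1}^{i_1}\cdots f_{k}^{i_k}.
\end{equation*}
The $k=2$ formula follows by listing the three non-$(*,*)$ choices $(1,1),(1,*),(*,1)$ explicitly and substituting $f_i^{*}=-\lceil f_i\rceil$.

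There is no real obstacle here; the only thing to watch is the bookkeeping of signs, namely that the all-$*$ term carries a factor $(-1)^{k}$ from the $k$ copies of $f_i^{*}=-\lceil f_i\rceil$, which is precisely what is cancelled by the prefactor $c(-1)^{k}$ in the final rearrangement. No properties of generalized polynomials beyond the notational shorthand $f^{*}=-\lceil f\rceil$ are used, so the identity in fact holds for any real-valued functions $f_1,\ldots,f_k$ on $\Z$.
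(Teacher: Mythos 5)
Your proof is correct and follows essentially the same route as the paper: expand $\prod_{i=1}^k(f_i-\lceil f_i\rceil)$ as $\sum_{(i_1,\ldots,i_k)\in\{1,*\}^k}f_1^{i_1}\cdots f_k^{i_k}$, isolate the all-$*$ term, and rearrange. The explicit $k=2$ verification and the remark that no property of generalized polynomials beyond the notation $f^*=-\lceil f\rceil$ is used are both accurate.
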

\begin{proof}
Expanding $\prod_{i=1}^k(f_i-\lceil{f_i}\rceil)$ we get that
$$\prod_{i=1}^k(f_i-\lceil{f_i}\rceil)=\sum_{i_1,\ldots,i_k\in \{1,*\}}f_{1}^{i_1}\ldots f_{k}^{i_k}.$$
So we have $$c\lceil{f_1}\rceil\ldots
\lceil{f_k}\rceil=c(-1)^k\prod_{i=1}^k(f_i-\lceil{f_i}\rceil)-c(-1)^k\sum_{i_1,\ldots,i_k\in
\{1,*\}\atop{(i_1,\ldots,i_k)\not=(*,\ldots,*)}}f_{1}^{i_1}\ldots
f_{k}^{i_k}.$$
\end{proof}

Let $c=1$ in Lemma \ref{simple2} we have

\begin{lem}\label{simple3} Let $f_1,f_2, \ldots,f_k\in GP$. Then
$$f_1\lceil{f_2}\rceil\ldots\lceil{f_k}\rceil =(-1)^{k-1}\prod_{i=1}^k(f_i-\lceil{f_i}\rceil)
+(-1)^k\sum_{i_1,\ldots,i_k\in
\{1,*\}\atop{(i_1,\ldots,i_k)\not=(1,*,\ldots,*)}}f_{1}^{i_1}\ldots
f_{k}^{i_k}.$$

In particular, if $k=2$ one has that
$$f_1\lceil{f_2}\rceil =\lceil{f_1}\rceil\lceil{f_2}\rceil +f_1f_2-
f_2\lceil{f_1}\rceil-(f_1-\lceil{f_1}\rceil)(f_2-\lceil{f_2}\rceil).$$
\end{lem}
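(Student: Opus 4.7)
The plan is to derive this identity directly from the expansion of $\prod_{i=1}^k (f_i - \lceil f_i \rceil)$, in complete analogy with (and in fact as an easy consequence of) the computation hidden inside Lemma \ref{simple2}. The key observation is that under the abbreviation $f_i^1 = f_i$ and $f_i^* = -\lceil f_i \rceil$, we have the tautology $f_i - \lceil f_i \rceil = f_i^1 + f_i^*$. This is the whole engine of the argument.

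First I would expand the product
\begin{equation*}
\prod_{i=1}^k (f_i - \lceil f_i \rceil) = \prod_{i=1}^k (f_i^1 + f_i^*) = \sum_{(i_1,\ldots,i_k) \in \{1,*\}^k} f_1^{i_1} \cdots f_k^{i_k},
\end{equation*}
which follows by distributivity. Among the $2^k$ terms on the right, the one indexed by the tuple $(1,*,\ldots,*)$ is precisely
\begin{equation*}
f_1 \cdot (-\lceil f_2 \rceil) \cdots (-\lceil f_k \rceil) = (-1)^{k-1} f_1 \lceil f_2 \rceil \cdots \lceil f_k \rceil .
\end{equation*}

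Next I would isolate this distinguished term and rewrite the expansion as
\begin{equation*}
\prod_{i=1}^k (f_i - \lceil f_i \rceil) = (-1)^{k-1} f_1 \lceil f_2 \rceil \cdots \lceil f_k \rceil + \sum_{(i_1,\ldots,i_k) \neq (1,*,\ldots,*)} f_1^{i_1} \cdots f_k^{i_k}.
\end{equation*}
Multiplying by $(-1)^{k-1}$ and solving for $f_1 \lceil f_2 \rceil \cdots \lceil f_k \rceil$ yields exactly the claimed formula, since $-(-1)^{k-1} = (-1)^k$. The special case $k=2$ is then just substitution: one expands $(f_1-\lceil f_1\rceil)(f_2-\lceil f_2\rceil)$ and collects the three leftover cross terms.

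There is essentially no obstacle here; the lemma is a bookkeeping identity, and the only thing to keep straight is the sign accounting coming from each factor of $-\lceil f_i \rceil$. One could even deduce Lemma \ref{simple3} mechanically from Lemma \ref{simple2} by writing $f_1 = \lceil f_1 \rceil + (f_1 - \lceil f_1 \rceil)$, applying the formula of Lemma \ref{simple2} (with $c=1$) to the term $\lceil f_1 \rceil \lceil f_2 \rceil \cdots \lceil f_k \rceil$, and combining with the contribution $(f_1 - \lceil f_1 \rceil) \lceil f_2 \rceil \cdots \lceil f_k \rceil$; but the direct expansion above is the cleanest route.
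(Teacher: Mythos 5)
Your argument is correct. Its engine — expanding
\[
\prod_{i=1}^k\bigl(f_i-\lceil f_i\rceil\bigr) \;=\; \sum_{(i_1,\ldots,i_k)\in\{1,*\}^k} f_1^{i_1}\cdots f_k^{i_k},
\]
singling out the one distinguished term, and solving for the quantity it hides — is exactly the mechanism the paper uses to prove Lemma \ref{simple2}, where the distinguished tuple is $(*,\ldots,*)$ rather than $(1,*,\ldots,*)$; that change of tuple accounts both for the sign $(-1)^{k-1}$ in place of $(-1)^k$ and for the different excluded index in the sum. The one thing worth flagging is that the paper introduces Lemma \ref{simple3} with the words ``Let $c=1$ in Lemma \ref{simple2} we have.'' Read literally, setting $c=1$ in Lemma \ref{simple2} produces a formula for $\lceil f_1\rceil\cdots\lceil f_k\rceil$, not for $f_1\lceil f_2\rceil\cdots\lceil f_k\rceil$. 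The extra splitting $f_1=\lceil f_1\rceil+(f_1-\lceil f_1\rceil)$ that you sketch in your closing remark is precisely the unstated bridge needed to turn that one-liner into a proof of Lemma \ref{simple3}; your direct expansion avoids the detour entirely and is the cleaner route, while resting on the same underlying identity.
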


Let $k=1$ in Lemma \ref{simple2} we have

\begin{lem}\label{simple1} Let $c\in\R$ and $f\in GP$. Then $c\lceil{f}\rceil=cf-c(f-\lceil{f}\rceil)$.
\end{lem}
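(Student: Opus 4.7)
The plan is to derive this as the $k=1$ specialization of Lemma \ref{simple2}, which is how the excerpt explicitly introduces it. Since Lemma \ref{simple2} is already available, no further tools are required.

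First, I would apply Lemma \ref{simple2} with $k=1$ and $f_1 = f$. The product $\prod_{i=1}^{1}(f_i - \lceil f_i\rceil)$ collapses to the single factor $f - \lceil f\rceil$, and the sign $(-1)^k$ becomes $-1$. The indexing set $\{(i_1) \in \{1,*\} : (i_1)\neq (*)\}$ contains only the single tuple $(1)$, so the sum $\sum f_1^{i_1}$ reduces to $f_1^{1} = f$. Substituting these into the formula from Lemma \ref{simple2} yields
\[
c\lceil f\rceil \;=\; c(-1)(f - \lceil f\rceil) \;-\; c(-1)\cdot f \;=\; cf - c(f - \lceil f\rceil),
\]
which is exactly the claimed identity.

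Alternatively, and more directly, one can simply observe that $cf - c(f - \lceil f\rceil) = cf - cf + c\lceil f\rceil = c\lceil f\rceil$ as a pointwise identity of real-valued functions on $\Z$; this is just the trivial rearrangement $X = Y - (Y - X)$ applied to $X = c\lceil f\rceil$ and $Y = cf$.

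There is no real obstacle here: the lemma is a formal consequence of Lemma \ref{simple2} (or, viewed directly, a one-line algebraic identity). Its purpose is presumably to be recorded in the same format as Lemmas \ref{simple2} and \ref{simple3} so that it can be cited uniformly in the later reductions of generalized polynomial expressions.
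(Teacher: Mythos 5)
Your derivation is correct and matches the paper's own route: the paper explicitly introduces Lemma \ref{simple1} as the $k=1$ specialization of Lemma \ref{simple2}, and your worked-out substitution (with the noted collapse of the product and of the index set to the single tuple $(1)$) is exactly that step. The additional remark that the identity is also a trivial pointwise rearrangement is accurate but unnecessary.
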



\subsection{}

In the next subsection we will show that $\F_{GP_d}=\F_{SGP_d}.$ To
do this we use induction. To make the proof clear, first we give
some results under the assumption
\begin{equation}\label{firsta}
\F_{GP_{d-1}}\subset\F_{SGP_{d-1}}.
\end{equation}
\begin{de}\label{de-ws}\index{$\mathcal{SW}_r$} \index{$\mathcal{W}_r$}
Let $r\in \mathbb{N}$ with $r\ge 2$. We define
$$\mathcal{SW}_r=\{ \prod \limits_{i=1}^\ell (w_i(n)-\lceil w_i(n) \rceil):
\ell\ge 2, r_i\ge 1, w_i(n)\in GP_{r_i} \text{ and } \sum
\limits_{i=1}^\ell r_i\le r\}$$ and
$$\mathcal{W}_r=\mathbb{R}\text{-Span}\{\mathcal{SW}_r\}, $$
that is,
$$\mathcal{W}_r=\{ \sum \limits_{j=1}^\ell a_jp_j(n): \ell\ge 1, a_j\in \mathbb{R}, p_j(n)\in
\mathcal{SW}_r \text{ for each }j=1,2,\ldots,\ell\}.$$
\end{de}

\begin{lem}\label{lem1-1}
Under the assumption \eqref{firsta}, for any $p(n)\in
\mathcal{W}_d$ and $\epsilon>0$ one has
$$\{ n\in \mathbb{Z}:p(n) \ (\text{mod}\ \Z)\in (-\epsilon,\epsilon)\}\in \F_{SGP_{d-1}}.$$
\end{lem}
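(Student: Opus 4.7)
The plan is to reduce the condition $p(n)\pmod{\Z}\in(-\epsilon,\epsilon)$ to a genuine real-valued bound $|p(n)|<\epsilon$, by exploiting the fact that every factor $w_i(n)-\lceil w_i(n)\rceil$ of a defining monomial of $\mathcal{W}_d$ is just the signed fractional part, hence has absolute value exactly $\|w_i(n)\|\le 1/2$. Thus by forcing each $\|w_i(n)\|$ to be small simultaneously, we make each monomial, and hence the real number $p(n)$, arbitrarily small in absolute value; since $\epsilon$ can be taken smaller than $1/2$, a real bound $|p(n)|<\epsilon$ immediately gives $p(n)\pmod{\Z}\in(-\epsilon,\epsilon)$.

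Concretely, by linearity write $p(n)=\sum_{j=1}^{L}a_j\prod_{i=1}^{\ell_j}(w_{i,j}(n)-\lceil w_{i,j}(n)\rceil)$ with $\ell_j\ge 2$, $r_{i,j}\ge 1$, $w_{i,j}\in GP_{r_{i,j}}$, and $\sum_i r_{i,j}\le d$. The crucial numerical observation is that $\ell_j\ge 2$ together with $r_{i,j}\ge 1$ and $\sum_i r_{i,j}\le d$ forces $r_{i,j}\le d-1$ for every $i,j$; hence each $w_{i,j}$ lies in $GP_{d-1}$. I would then choose $\delta>0$ so small that $\delta<1/2$, $\delta<\epsilon$, and $\delta^{2}\sum_{j=1}^{L}|a_j|<\epsilon$.

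Next, for each pair $(i,j)$ the set
\[
A_{i,j}=\{n\in\Z:w_{i,j}(n)\ (\mathrm{mod}\ \Z)\in(-\delta,\delta)\}
\]
belongs to $\F_{GP_{d-1}}$ by the very definition of this family (since $w_{i,j}\in GP_{d-1}$), and so to $\F_{SGP_{d-1}}$ by the inductive assumption \eqref{firsta}. Because $\F_{SGP_{d-1}}$ is a filter, the finite intersection $B=\bigcap_{i,j}A_{i,j}$ still lies in $\F_{SGP_{d-1}}$. For $n\in B$, one has $|w_{i,j}(n)-\lceil w_{i,j}(n)\rceil|=\|w_{i,j}(n)\|<\delta$, so
\[
|p(n)|\le\sum_{j=1}^{L}|a_j|\prod_{i=1}^{\ell_j}\|w_{i,j}(n)\|\le\sum_{j=1}^{L}|a_j|\,\delta^{\ell_j}\le\delta^{2}\sum_{j=1}^{L}|a_j|<\epsilon.
\]
Since $\epsilon\le 1/2$ may be assumed (otherwise replace $\epsilon$ by $1/4$), this real bound forces $p(n)\pmod{\Z}\in(-\epsilon,\epsilon)$. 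Thus $B$ is contained in $\{n:p(n)\pmod{\Z}\in(-\epsilon,\epsilon)\}$, and hereditary upwardness of $\F_{SGP_{d-1}}$ finishes the proof.

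There is no serious obstacle: the argument is essentially a quantitative "product of small fractional parts is small" observation, combined with the filter property of $\F_{SGP_{d-1}}$. The only point requiring care is the combinatorial bookkeeping that $\ell\ge 2$ strictly lowers the maximum allowed degree of each factor from $d$ to $d-1$, which is exactly what makes the induction hypothesis applicable.
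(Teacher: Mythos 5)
Your proof is correct and rests on the same two ingredients as the paper's: the filter property of $\F_{SGP_{d-1}}$, and the combinatorial observation that $\ell\ge 2$ combined with $r_i\ge 1$ and $\sum r_i\le d$ keeps the factors inside $GP_{d-1}$, which is what makes the inductive hypothesis \eqref{firsta} applicable. The one place you diverge is that you force \emph{every} fractional-part factor to be small and then sum the resulting $O(\delta^{\ell_j})$ bounds, whereas the paper reduces to a single term $p(n)=aq(n)$ with $q\in\mathcal{SW}_d$ and makes only \emph{one} factor $w_1(n)-\lceil w_1(n)\rceil$ small, exploiting the trivial bound $|w_i(n)-\lceil w_i(n)\rceil|\le 1/2<1$ for all the remaining factors to get $|p(n)|\le |a|\,|w_1(n)-\lceil w_1(n)\rceil|$ directly. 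Your version is a bit more bookkeeping-heavy (you need the $\delta^2\sum|a_j|<\epsilon$ choice and must intersect over all $(i,j)$), while the paper's is more economical; both are valid, and both depend on noticing that $\ell\ge 2$ strictly lowers the degree so that \eqref{firsta} can be invoked.
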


\begin{proof}
Since $\F_{SGP_d}$ is a filter, it is sufficient to show that for
any $p(n)=aq(n)$ and $\frac{1}{2}>\delta>0$ with $q(n)\in
\mathcal{SW}_d$ and $a\in \mathbb{R}$,
$$\{ n\in \mathbb{Z}:p(n)\ (\text{mod}\ \Z) \in (-\delta,\delta)\}\in \F_{SGP_{d-1}}.$$
Note that as $q(n)\in \mathcal{SW}_d$, there exist $\ell\ge 2,
r_i\ge 1, w_i(n)\in GP_{r_i} \text{ and } \sum \limits_{i=1}^\ell
r_i\le d$ such that $q(n)=\prod \limits_{i=1}^\ell (w_i(n)-\lceil
w_i(n) \rceil)$. Since $\ell\ge 2$, one has $r_1\le d-1$ and so
$w_1(n)\in GP_{d-1}$. By the assumption \eqref{firsta}, $\{n\in
\mathbb{Z}: w_1(n)\ (\text{mod}\ \Z)\in
(-\frac{\delta}{1+|a|},\frac{\delta}{1+|a|}) \}\in \F_{SGP_{d-1}}$.
By the inequality $|p(n)|\le |a||w_1(n)-\lceil w_1(n) \rceil|$ for
$n\in \mathbb{Z}$, we get that
\begin{align*}
\{ n\in \mathbb{Z}:p(n)\ (\text{mod}\ \Z) \in (-\delta,\delta)\}
&\supset
\{n\in \mathbb{Z}: |w_1(n)-\lceil w_1(n) \rceil|\in (-\tfrac{\delta}{1+|a|},\tfrac{\delta}{1+|a|})\}\\
&=\{n\in \mathbb{Z}: w_1(n)\ (\text{mod}\ \Z)\in
(-\tfrac{\delta}{1+|a|},\tfrac{\delta}{1+|a|})\}.
\end{align*}
Thus $\{ n\in \mathbb{Z}:p(n)\ (\text{mod}\ \Z) \in
(-\delta,\delta)\} \in \F_{SGP_{d-1}}$ since $\{n\in \mathbb{Z}:
w_1(n)\ (\text{mod}\ \Z)\in
(-\frac{\delta}{1+|a|},\frac{\delta}{1+|a|})\}\in \F_{SGP_{d-1}}$.
\end{proof}

\begin{de}\index{$\simeq_r$}
Let $r\in \mathbb{N}$ with $r\ge 2$. For $q_1(n),q_2(n)\in GP_{r}$
we define
$$q_1(n)\simeq_r q_2(n)$$
if there exist $h_1(n)\in GP_{r-1}$ and $h_2(n)\in \mathcal{W}_r$
such that
$$q_2(n)=q_1(n)+h_1(n)+h_2(n) \ (\text{mod} \ \Z)$$ for all $n\in \mathbb{Z}$.
\end{de}

\begin{lem} \label{lem8-13}
Let $p(n)\in GP_r$ and $q(n)\in GP_t$ with $r,t\in \mathbb{N}$.
\begin{enumerate}
\item $p(n)\lceil q(n)\rceil \simeq_{r+t} (p(n)-\lceil p(n) \rceil) q(n)$.

\item If $q_1(n),q_2(n),\ldots,q_k(n)\in GP_t$ such that $q(n)= \sum_{i=1}^k q_i(n)$, then
$$p(n)\lceil q(n)\rceil \simeq_{r+t} \sum_{i=1}^k p(n)\lceil q_i(n)\rceil.$$
\end{enumerate}
\end{lem}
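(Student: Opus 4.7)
The plan is to derive (1) directly from Lemma \ref{simple3} in its $k=2$ form, and then obtain (2) as a short consequence of (1) together with the fact that $\simeq_{r+t}$ is an additive equivalence relation.

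For (1), I would apply Lemma \ref{simple3} with $f_1=p$ and $f_2=q$ to obtain
\begin{equation*}
p\lceil q\rceil \;=\; \lceil p\rceil \lceil q\rceil + pq - q\lceil p\rceil - (p-\lceil p\rceil)(q-\lceil q\rceil).
\end{equation*}
The key observation is that $\lceil p(n)\rceil \lceil q(n)\rceil\in\mathbb{Z}$ for every $n\in\mathbb{Z}$, so this term vanishes modulo $\mathbb{Z}$. Rewriting $pq-q\lceil p\rceil = (p-\lceil p\rceil)q$, this rearranges to
\begin{equation*}
(p-\lceil p\rceil)q \;\equiv\; p\lceil q\rceil \;+\; 0 \;+\; (p-\lceil p\rceil)(q-\lceil q\rceil) \pmod{\mathbb{Z}},
\end{equation*}
which is exactly the defining identity of $p\lceil q\rceil\simeq_{r+t}(p-\lceil p\rceil)q$: the candidate $h_1=0\in GP_{r+t-1}$, and the candidate $h_2=(p-\lceil p\rceil)(q-\lceil q\rceil)$ lies in $\mathcal{SW}_{r+t}\subset\mathcal{W}_{r+t}$ by Definition \ref{de-ws}, since it is a product of $\ell=2$ factors of the form $w_i-\lceil w_i\rceil$ with $w_1=p\in GP_r$, $w_2=q\in GP_t$ and $r+t\le r+t$.

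For (2), I would first record that $\simeq_{r+t}$ is an equivalence relation that is preserved by finite $\mathbb{R}$-linear combinations: reflexivity is clear, while symmetry, transitivity and additivity are immediate from the fact that $GP_{r+t-1}$ and $\mathcal{W}_{r+t}$ are each closed under finite sums and under multiplication by scalars (in particular by $-1$). Applying (1) to every pair $(p,q_i)$ gives $p\lceil q_i\rceil \simeq_{r+t} (p-\lceil p\rceil)q_i$, and summing over $i=1,\ldots,k$ yields
\begin{equation*}
\sum_{i=1}^k p\lceil q_i\rceil \;\simeq_{r+t}\; (p-\lceil p\rceil)\sum_{i=1}^k q_i \;=\; (p-\lceil p\rceil)q.
\end{equation*}
A second application of (1) gives $p\lceil q\rceil\simeq_{r+t}(p-\lceil p\rceil)q$, and transitivity now closes the loop.

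There is essentially no obstacle in this argument; the only real content is identifying the single ``cross term'' produced by Lemma \ref{simple3} as a member of $\mathcal{W}_{r+t}$ and observing that $\lceil p\rceil\lceil q\rceil$ is an integer. The additivity bookkeeping for $\simeq_{r+t}$ in part (2) is routine but should be stated explicitly so that it can be reused elsewhere in the chapter.
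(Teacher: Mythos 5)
Your proof is correct and takes essentially the same route as the paper, which simply writes ``(1) follows from Lemma \ref{simple3} and (2) follows from (1).'' You have faithfully filled in the details: applying the $k=2$ case of Lemma \ref{simple3}, dropping the integer term $\lceil p\rceil\lceil q\rceil$ modulo $\Z$, identifying the remaining cross term as an element of $\mathcal{SW}_{r+t}\subset\mathcal{W}_{r+t}$, and then deducing (2) via additivity and transitivity of $\simeq_{r+t}$.
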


\begin{proof}
(1) follows from Lemma \ref{simple3} and (2) follows from (1).
\end{proof}

\begin{de}\index{$GP_r'$}
For $r\in \mathbb{N}$, we define
$${GP_r'}=\{p\in GP_r: \{ n\in \mathbb{Z}: p(n)\ (\text{mod}\ \Z)\in
(-\epsilon,\epsilon)\}\in \F_{SGP_r}\text{ for any }\epsilon>0\}.$$
\end{de}

\begin{prop}\label{useful}
Let $r,k\in \N$.
\begin{enumerate}
\item For $p(n)\in GP_r$, $p(n)\in GP'_r$ if and only if $-p(n)\in
GP'_r$.

\item If $p_1(n),p_2(n),\ldots,p_k(n)\in GP'_r$ then
$$p(n)=p_1(n)+p_2(n)+\ldots+p_k(n)\in GP'_r.$$

\item $\F_{GP_d}\subset\F_{SGP_d}$ if and only if $GP'_d=GP_d$.
\end{enumerate}
\end{prop}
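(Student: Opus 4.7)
The plan is to prove each part essentially from the definitions of $GP_r'$, $\F_{SGP_r}$, and $\F_{GP_d}$, exploiting that $\F_{SGP_r}$ is a filter and the triangle inequality for $\|\cdot\|$. None of the three parts requires the induction machinery (Lemmas \ref{simple2}--\ref{lem8-13}) developed earlier; those will be needed for the substantive inclusion $GP_d'=GP_d$ itself, but the present proposition is purely formal.

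For (1), I would simply observe that $\|-a\|=\|a\|$ for every real number $a$, so that for any $\epsilon>0$
$$\{n\in\Z:\|p(n)\|<\epsilon\}=\{n\in\Z:\|-p(n)\|<\epsilon\}.$$
Hence the defining condition for membership in $GP_r'$ is symmetric in $p$ and $-p$.

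For (2), I would use the triangle inequality $\|p(n)\|\le \sum_{i=1}^k\|p_i(n)\|$. Given $\epsilon>0$, this yields the inclusion
$$\{n\in\Z:\|p(n)\|<\epsilon\}\supset \bigcap_{i=1}^k\{n\in\Z:\|p_i(n)\|<\epsilon/k\}.$$
Each of the $k$ sets on the right lies in $\F_{SGP_r}$ by hypothesis that $p_i\in GP_r'$, so because $\F_{SGP_r}$ is a filter (closed under finite intersections) the intersection lies in $\F_{SGP_r}$; the hereditary upward property of the family then gives the left-hand side is in $\F_{SGP_r}$. Since $\epsilon>0$ was arbitrary, $p\in GP_r'$.

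For (3), I would argue both directions directly. Assume first that $GP_d'=GP_d$. A generating set of $\F_{GP_d}$ has the form $\bigcap_{i=1}^k\{n:\|P_i(n)\|<\epsilon_i\}$ with $P_i\in GP_d$; each factor lies in $\F_{SGP_d}$ by the assumption $P_i\in GP_d'$, and since $\F_{SGP_d}$ is a filter the intersection lies in $\F_{SGP_d}$. Hereditary upward closure then gives $\F_{GP_d}\subset \F_{SGP_d}$. Conversely, if $\F_{GP_d}\subset\F_{SGP_d}$, then for any $p\in GP_d$ and $\epsilon>0$ the set $\{n:\|p(n)\|<\epsilon\}$ is a generator of $\F_{GP_d}$, hence lies in $\F_{SGP_d}$, so $p\in GP_d'$; combined with the obvious inclusion $GP_d'\subset GP_d$ this gives equality. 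The only ``obstacle'' here is making sure no step implicitly uses the nontrivial direction of $\F_{GP_d}=\F_{SGP_d}$, which it does not — each step uses only the filter/family axioms and the triangle inequality for $\|\cdot\|$.
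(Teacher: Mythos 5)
Your proof is correct and follows the same route as the paper, which simply states ``(1) can be verified directly,'' gives the $\ep/k$ inclusion for (2), and says ``(3) follows from the definition''; you have merely filled in the one-line justifications (symmetry of $\|\cdot\|$, triangle inequality plus the filter property, and unwinding the definition of $GP_d'$) that the paper leaves implicit.
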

\begin{proof} (1) can be verified directly. (2) follows from the fact
that for each $\ep>0$, $\{n\in\Z: p(n)\ (\text{mod}\ \Z)\in
(-\epsilon,\epsilon)\}\supset \cap_{i=1}^k \{n\in\Z: p_i(n)\
(\text{mod}\ \Z)\in (-\ep/k,\ep/k)\}$. (3) follows from the
definition of $GP'_d$.
\end{proof}

\begin{lem}\label{lem-8-equi} Let $p(n),q(n)\in GP_d$ with $p(n)\simeq_d q(n)$. Under the assumption \eqref{firsta},
$p(n)\in GP'_d$ if and only if $q(n)\in GP'_d$.
%
\end{lem}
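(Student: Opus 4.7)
The plan is to unwind the definition of $\simeq_d$ and show directly that, for any $\epsilon > 0$, the level set $\{n \in \Z : q(n) \pmod \Z \in (-\epsilon,\epsilon)\}$ lies in $\F_{SGP_d}$ whenever the corresponding set for $p$ does. First I would note that $\simeq_d$ is a symmetric relation: if $q = p + h_1 + h_2 \pmod \Z$ with $h_1 \in GP_{d-1}$ and $h_2 \in \mathcal{W}_d$, then $p = q + (-h_1) + (-h_2) \pmod \Z$, and both $-h_1 \in GP_{d-1}$ and $-h_2 \in \mathcal{W}_d$ (the latter because $\mathcal{W}_d$ is by definition an $\R$-span). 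Hence it suffices to prove one implication, say $p \in GP'_d \Rightarrow q \in GP'_d$.

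Fix $\epsilon > 0$. Since $q(n) \equiv p(n) + h_1(n) + h_2(n) \pmod \Z$ for every $n \in \Z$, one has the elementary inclusion
\begin{equation*}
\{n : q(n) \in (-\epsilon,\epsilon) \pmod{\Z}\} \;\supset\; A_p \cap A_{h_1} \cap A_{h_2},
\end{equation*}
where $A_f := \{n \in \Z : f(n) \in (-\epsilon/3, \epsilon/3) \pmod \Z\}$.

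The main step is to verify that each of $A_p$, $A_{h_1}$, and $A_{h_2}$ is a member of $\F_{SGP_d}$. The set $A_p$ lies in $\F_{SGP_d}$ by the hypothesis $p \in GP'_d$. For $A_{h_1}$, since $h_1 \in GP_{d-1}$, the set $A_{h_1}$ belongs to $\F_{GP_{d-1}}$ by definition; the assumption \eqref{firsta} gives $\F_{GP_{d-1}} \subset \F_{SGP_{d-1}} \subset \F_{SGP_d}$, so $A_{h_1} \in \F_{SGP_d}$. For $A_{h_2}$, since $h_2 \in \mathcal{W}_d$, Lemma \ref{lem1-1} (which is proved precisely under the assumption \eqref{firsta}) yields $A_{h_2} \in \F_{SGP_{d-1}} \subset \F_{SGP_d}$.

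Finally, since $\F_{SGP_d}$ is a filter it is closed under finite intersection, so $A_p \cap A_{h_1} \cap A_{h_2} \in \F_{SGP_d}$; being hereditary upward, $\F_{SGP_d}$ then contains $\{n : q(n) \in (-\epsilon,\epsilon) \pmod \Z\}$. As $\epsilon > 0$ was arbitrary, $q \in GP'_d$, completing the proof. No step is really an obstacle here: the lemma is essentially a bookkeeping consequence of the filter property of $\F_{SGP_d}$ combined with Lemma \ref{lem1-1}, which is where the real work (handling products of fractional parts via the inductive hypothesis \eqref{firsta}) was already carried out.
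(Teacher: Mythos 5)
Your proof is correct and takes essentially the same route as the paper, whose proof is the one-liner that the lemma ``follows from Lemma \ref{lem1-1} and the fact that $\F_{SGP_d}$ is a filter.'' You have simply unpacked that sentence: the $\epsilon/3$-splitting, the use of the inductive hypothesis \eqref{firsta} for the $GP_{d-1}$ part, Lemma \ref{lem1-1} for the $\mathcal{W}_d$ part, and the filter property to intersect, together with the observation that $\simeq_d$ is symmetric.
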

\begin{proof} It follows from Lemma \ref{lem1-1} and the fact that $\F_{SGP_d}$ is a filter.
\end{proof}






\subsection{$\F_{GP_d}=\F_{SGP_d}$}


\begin{thm}\label{gpsgp}
$\F_{GP_d}=\F_{SGP_d}$ for each $d\in \N$.
\end{thm}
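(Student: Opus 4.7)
The inclusion $\F_{SGP_d}\subset\F_{GP_d}$ is immediate since $SGP_d\subset GP_d$. For the converse, by Proposition~\ref{useful}(3) it suffices to prove $GP_d=GP'_d$, which I would establish by induction on $d$.

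For the base case $d=1$, every element of $GP_1$ is a finite $\R$-linear combination of iterated ceilings of linear maps $h_a(n)=an$. I would run a secondary induction on nesting depth, using Lemma~\ref{simple1} at each peel-off step to write $c\lceil g\rceil=cg-c(g-\lceil g\rceil)$. Restricting to the set where the innermost $\alpha n\pmod\Z$ is small (a condition in $\F_{SGP_1}$) forces $|g-\lceil g\rceil|$ to be small, which combined with the nested induction hypothesis places every summand in $GP'_1$; Proposition~\ref{useful}(2) then handles the finite $\R$-linear combination.

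For the inductive step, I would assume $GP_{d-1}=GP'_{d-1}$, which is precisely assumption~\eqref{firsta} of Lemma~\ref{lem1-1}. By Proposition~\ref{useful}(2) and the generation rules of $GP_d$, it is enough to verify $P\in GP'_d$ for each generating atom $P$. Atoms lying already in $GP_{d-1}$ are absorbed by the main induction hypothesis. The remaining atoms have the form $P(n)=a_0n^{p_0}\lceil f_1\rceil\cdots\lceil f_k\rceil$ with $k\ge 1$, $f_l\in GP_{p_l}$, and $\sum_{l=0}^k p_l=d$. By Lemma~\ref{lem-8-equi}, it suffices to show that $P$ is $\simeq_d$-equivalent to a finite $\R$-linear combination of elements of $SGP_d$, which are tautologically in $GP'_d$. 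I would carry this out by a secondary induction on the pair (number $k$ of outer ceilings, maximum nesting complexity of the $f_l$): Lemma~\ref{lem8-13}(2) is used to distribute $\lceil\cdot\rceil$ over sums and recursively reduce each $f_l$ to $SGP$-nested form; Lemma~\ref{lem8-13}(1), in the form $p\lceil q\rceil\simeq_d pq-\lceil p\rceil q$, is used to peel off one outer ceiling when $k\ge 2$, followed by expansion of the chosen $f_l$ into its atomic summands. When $k=1$ and $f_1$ has been reduced to $SGP_{p_1}$-form $f_1=L(n^{j_2}a_2,\ldots,n^{j_\ell}a_\ell)$, the atom becomes $a_0n^{p_0}\lceil f_1\rceil=L(a_0n^{p_0},n^{j_2}a_2,\ldots,n^{j_\ell}a_\ell)\in SGP_d$ by concatenation. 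Closure of $GP'_d$ under $\lceil\cdot\rceil$ is obtained as in the base case from $c\lceil p\rceil\equiv cp-c(p-\lceil p\rceil)\pmod\Z$.

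The main obstacle is the bookkeeping of the secondary induction: every application of Lemma~\ref{lem8-13}(1) or the expansions from Lemmas~\ref{simple2}--\ref{simple3} produces residual terms which must be routed either into $GP_{d-1}$ (absorbed by the main induction) or into $\mathcal{W}_d$ (absorbed by Lemma~\ref{lem1-1}), and it takes care to design a complexity measure (lexicographic on outer-ceiling count, then total nesting depth, then largest subfactor degree) that strictly decreases under every reduction step. The crucial algebraic input enabling the reductions is that $\lceil h\rceil$ is $\Z$-valued on $\Z$, so products of the form $n^a\lceil h\rceil$ remain integer-valued and polynomial prefactors can be absorbed directly into the nested $L$-representation, landing the reduced expression back inside $SGP_d$.
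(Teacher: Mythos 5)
Your proposal is correct and follows essentially the same route as the paper: induction on $d$, reduction to showing $GP_d=GP'_d$ via Proposition~\ref{useful}(3), a secondary induction on a complexity filtration of $GP_d$, peeling off ceilings with Lemmas~\ref{simple1}--\ref{simple3}, and routing residual terms into $GP_{d-1}$ or $\mathcal{W}_d$ using Lemma~\ref{lem1-1} and the $\simeq_d$-equivalence of Lemmas~\ref{lem8-13} and~\ref{lem-8-equi}. The only superficial difference is that the paper filters $GP_d$ simply by counting the number of $\lceil\cdot\rceil$ symbols (with an auxiliary induction tracking increases in SGP-degree), whereas you sketch a lexicographic measure; both make the reduction terminate.
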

\begin{proof} It is easy to see that $\F_{SGP_d}\subset \F_{GP_d}.$
So it remains to show $\F_{GP_d}\subset \F_{SGP_d}.$ That is, if
$A\in\F_{GP_d}$ then there is $A'\in\F_{SGP_d}$ with $A\supset A'$.
We will use induction to show the theorem.

Assume first $d=1$. In this case we let $GP_1(0)=\{g_a:a\in\R\}$,
where $g_a(n)=an$ for each $n\in\Z$. Inductively if
$GP_1(0),\ldots,GP_1(k)$ have been defined then $f\in GP_1(k+1)$ if
and only if $f\in GP_1\setminus  (\bigcup_{j=0}^kGP_1(j))$ and there
are $k+1$ $\lceil {\ \ } \rceil$ in $f$. It is clear that
$GP_1=\cup_{k=1}^\infty GP_1(k)$. If $f\in GP_1(0)$ then it is clear
that $f\in GP'_1$. Assume that $GP_1(0), \ldots, GP_1(k)\subset
GP'_1$ for some $k\in \mathbb{Z}_+$.

Let $f\in GP_1(k+1)$. We are going to show that $f\in GP_1'$. If
$f=f_1+f_2$ with $f_1,f_2\in \bigcup_{i=0}^k GP_1(i)$, then by the
above assumption and Proposition \ref{useful} we conclude that $f\in
GP_1'$. The remaining case is $f=c\lceil f_1 \rceil +f_2$ with $c\in
\mathbb{R}\setminus \{0\}$, $f_1\in GP_1(k)$,  and $f_2\in GP_1(0)$.
By Proposition \ref{useful} and the fact $GP_1(0)\subset GP_1'$,
$f\in GP_1'$ if and only if $c \lceil{f_1}\rceil \in GP_1'$. So it
remains to show $c\lceil f_1 \rceil\in GP'_1$. By Lemma
\ref{simple1} we have $c\lceil f_1\rceil=cf_1-c(f_1-\lceil f_1
\rceil)$. It is clear that $cf_1\in GP_1(k)\subset GP_1'$ since
$f_1\in GP_1(k)\subset GP_1'$. For any $\ep>0$ since
\begin{equation*}
\{n\in\Z:||-c(f_1(n)-\lceil{f_1(n)}\rceil)||<\ep\}\supset
\Big\{n\in\Z: ||f_1(n)||<\tfrac{\ep}{1+|c|}\Big\},
\end{equation*}
it implies that $-c(f_1-\lceil{f_1}\rceil)\in GP_1'$. By
Proposition \ref{useful} again we conclude that
$c\lceil{f_1}\rceil\in GP'_1$. Hence $f\in GP_1'$.
Thus $GP_1\subset GP_1'$ and we
are done for the case $d=1$ by Proposition \ref{useful} (3).

\medskip
Assume that we have proved $\F_{GP_{d-1}}\subset \F_{SGP_{d-1}}\
d\ge 2$, i.e. the assumption (\ref{firsta}) holds.
We define $GP_d(k)$ with $k=0,1,2,\ldots.$ First $f\in GP_d(0)$ if
and only if there is no $\lceil {\  \ } \rceil$ in $f$, i.e. $f$ is
the usual polynomial of degree $\le d$.  Inductively if
$GP_d(0),\ldots,GP_d(k)$ have been defined then $f\in GP_{k+1}$ if
and only if $f\in GP_d\setminus  (\bigcup_{j=0}^kGP_d(j))$ and there are $k+1$
$\lceil {\ \ } \rceil$ in $f$. It is clear that
$GP_d=\cup_{k=0}^\infty GP_d(k)$. We now show $GP_d(k)\subset GP_d'$ by induction on $k$.

Let $f$ be an ordinary polynomial of degree $\le d$. Then
$f(n)=a_0n^d+f_1(n)\simeq_d a_0n^d$ with $f_1\in GP_{d-1}$. By Lemma \ref{lem-8-equi},
$f\in GP_d'$ since $a_0n^d\in \text{SGP}_d\subset GP_d'$. This shows $GP_d(0)\subset GP_d'$.
Now assume that for some
$k\in \mathbb{Z}_+$ we have proved
\begin{equation}\label{aye2}
\bigcup_{i=0}^k GP_d(i)\subset GP_d'.
\end{equation}
Let $f\in GP_d(k+1)$. We are going to show that $f\in GP_d'$. If
$f=f_1+f_2$ with $f_1,f_2\in \bigcup_{i=0}^k GP_d(i)$, then by the
assumption (\ref{aye2}) and Proposition \ref{useful} (2) we conclude
that $f\in GP_d'$. The remaining case is that $f$ can be expressed
as the sum of a function in $GP_d(0)$ and a function $g\in
GP_d(k+1)$ having the form of
\begin{enumerate}
\item $g=c\lceil{f_1}\rceil\ldots \lceil{f_l}\rceil$ with $c\neq 0$, $l\ge 1$ or

\item $g=g_1(n)\lceil{g_2(n)}\rceil\ldots \lceil{g_l(n)}\rceil$ for any $n\in\Z$ with
$g_1(n)\in SGP_r$ and $r<d$.
\end{enumerate}
Since $GP_d(0)\subset GP_d'$, $f\in GP_d'$ if and only if $g\in GP_d'$ by Proposition \ref{useful}.
It remains to show that $g\in GP_d'$. There are two cases.

\medskip
\noindent Case (1): $g=c\lceil{f_1}\rceil\ldots \lceil{f_l}\rceil$ with $c\neq 0$, $l\ge 1$.

If $l=1$, then $g=c\lceil{f_1}\rceil$ with $f_1\in GP_d(k)$.
By Lemma \ref{simple1} we have
$c\lceil f_1\rceil=cf_1-c(f_1-\lceil f_1 \rceil)$. It is
clear that $cf_1\in GP_d(k)\subset GP_d'$ since $f_1\in GP_d(k)\subset GP_d'$. For any $\ep>0$ since
\begin{equation*}
\{n\in\Z:||-c(f_1(n)-\lceil{f_1(n)}\rceil)||<\ep\}\supset
\Big\{n\in\Z: ||f_1(n)||<\tfrac{\ep}{1+|c|}\Big\},
\end{equation*}
it implies that $-c(f_1-\lceil{f_1}\rceil)\in GP_d'$. By
Proposition \ref{useful} again we conclude that
$g=c\lceil{f_1}\rceil\in GP'_d$.

If $l\ge 2$, using Lemmas \ref{simple2} and \ref{lem1-1} we get that
$$c\lceil{f_1}\rceil\ldots \lceil{f_l}\rceil\simeq_d -c(-1)^l\sum_{i_1,\ldots,i_l\in
\{1,*\}\atop{(i_1,\ldots,i_l)\not=(*,\ldots,*)}}f_{1}^{i_1}\ldots
f_{l}^{i_l}.$$
Since each term of the right side is in $GP_d(k)$,  $g\in GP_d'$  by Lemma \ref{lem-8-equi},
the assumption
(\ref{aye2}) and Proposition \ref{useful} (2).

\medskip

\noindent Case (2): $g=g_1(n)\lceil{g_2(n)}\rceil\ldots \lceil{g_l(n)}\rceil$ for any $n\in\Z$ with
$g_1\in SGP_r$ and $1\le r<d$.

In this case using Lemmas \ref{simple3} and
\ref{lem1-1} we get that
$$g_1\lceil{g_2}\rceil\ldots\lceil{g_l}\rceil\simeq_d (-1)^l\sum_{i_1,\ldots,i_l\in
\{1,*\}\atop{(i_1,\ldots,i_l)\not=(1,*,\ldots,*),(*,*,\ldots,*)}}g_{1}^{i_1}\ldots
g_{l}^{i_l}.$$ Assume $i_1,\ldots,i_l\in \{1,*\}$ with
$(i_1,\ldots,i_l)\not=(1,*,\ldots,*),(*,*,\ldots,*)$. If there are at least two $1$ appearing in $(i_1,i_2,\ldots,i_l)$, then $(-1)^\ell
g_{1}^{i_1}\ldots g_{l}^{i_\ell}\in \bigcup_{i=0}^k GP_d(i)$. Hence
$$(-1)^\ell g_{1}^{i_1}\ldots g_{l}^{i_\ell}\in GP_d'$$ by
the assumption (\ref{aye2}). The remaining situation is that $i_1=*$
and there is exact one 1 appearing in $(i_2, \ldots, i_l)$. In this
case,
$(-1)^\ell g_{1}^{i_1}\ldots g_{l}^{i_\ell}\in GP_d(k+1)$ is the
finite sum of the forms $a_1n^{t_1}\lceil{h_1(n)}\rceil \ldots
\lceil{h_{l_1'}(n)}\rceil$ with $t_1\ge 1$ and $h_1(n)=g_1(n)$; or
the forms $c\lceil{h_l}\rceil \ldots \lceil{h_{l_1}}\rceil$ or terms
in $GP_d'$.

If the term has the form $a_1n^{t_1}\lceil{h_1(n)}\rceil \ldots
\lceil{h_{l_1'}(n)}\rceil$ with $t_1\ge 1$ and $h_1(n)=g_1(n)$, we
let
$g_1^{(1)}(n)=a_1n^{t_1}\lceil{h_1(n)}\rceil=a_1n^{t_1}\lceil{g_1(n)}\rceil\in
SGP_{r_1}$. It is clear $d\ge r_1>r$. If $r_1=d$, then
$a_1n^{t_1}\lceil{h_1(n)}\rceil \ldots
\lceil{h_{l_1'}(n)}\rceil=g_1^{(1)}(n)\in GP_d'$ since $SGP_d\subset GP_d'$. If $r_1<d$,
then we write
$$a_1n^{t_1}\lceil{h_1(n)}\rceil\ldots
\lceil{h_{l_1'}(n)}\rceil=g_1^{(1)}(n)\lceil{g_2^{(1)}(n)}\rceil\ldots\lceil{g_{l_1}^{(1)}(n)}\rceil.$$
By using  Case (1) we conclude that

$g\simeq_d$ finite sum of the forms
$g_1^{(1)}(n)\lceil{g_2^{(1)}(n)}\rceil\ldots\lceil{g_{l_1}^{(1)}(n)}\rceil$
and terms in $GP_d'$.

\medskip
Repeating the above process finitely many time (at most $k+1$-times)
we get that $g\simeq_d$ finite sum of terms in $GP_d'$. Thus $g\in GP_d'$ by Lemma \ref{lem-8-equi}
and Proposition \ref{useful} (2). The proof is now completed.
\end{proof}




\chapter{Nil Bohr$_0$-sets and generalized polynomials: Proof of Theorem B}

In this chapter for a given $d\in\N$ we investigate the relationship
between the family of all Nil$_d$ Bohr$_0$-sets and
the family generalized by all generalized polynomials of order $\le
d$, i.e we will prove Theorem B.

\section{Proof of Theorem B(1)} \label{section-proof1}

In this section, we will prove Theorem B(1), i.e. we will show that
if $A\in \F_{d,0}$ then there are $k\in\N$, $P_i\in GP_d$ ($1\le
i\le k$) and $\ep_i>0$ such that
$$A\supset \bigcap_{i=1}^k\{n\in\Z:P_i(n)\ (\text{mod}\ \Z)\in (-\ep_i,\ep_i) \}.$$

We remark that by Section \ref{reduction}, it is sufficient to
consider the case when the group $G$ is a connected,
simply-connected $d$-step nilpotent Lie group.

\subsection{Notations}

Let $X=G/\Gamma$ with $G$ a connected, simply-connected $d$-step
nilpotent Lie group, $\Gamma$ a uniform subgroup. Let $T:X\lra X$ be
the nilrotation induced by $a\in G$.

\medskip
Let $\mathfrak{g}$ be the Lie algebra of $G$ over $\R$, and
let $\exp: \g \lra G$ be the exponential map. Consider
$$ \g=\g^{(1)}\supset\g^{(2)}\supset \ldots\supset \g^{(d)} \supset\g^{(d+1)}=\{0\}.$$
Notice that $$[\g^{(i)},\g^{(j)}]\subset \g^{(i+j)}, \ \forall i,j\in \N.$$ There is a
Mal'cev basis $\mathcal {X}= \{X_1, \ldots ,X_m\}$ for $\g$ with

\begin{enumerate}
\item  For each $j = 0,\ldots, m-1$ the subspace $\eta_j:=\text{Span}(X_{j+1}, \ldots
,X_m)$ is a Lie algebra ideal in $\g$, and hence $H_j := \exp\
\eta_j$ is a normal Lie subgroup of $G$.

\item { For every $0<i\le d$ we have $G_i = H_{l_{i-1}}$, where $0=l_0<l_1<\ldots<l_{d-1}<l_d=m$. }

\item Each $g\in G$ can be written uniquely as
$\exp(t_1X_1) \exp(t_2X_2)\ldots \exp(t_mX_m)$, for $t_i\in \R$.

\item $\Gamma$ consists precisely of those elements which, when written in
the above form, have all $t_i\in \Z$,
\end{enumerate}
where $G=G_1$, $G_{i+1}=[G_i,G]$ with $G_{d+1}=\{e\}$. { Notice that
$\text{Span}$-$\{X_{l_i+1},\ldots,X_{m}\}=\g^{(i+1)}$ for
$i=0,1,\ldots,d-1$.}

\begin{de} Let $\{X_1,\ldots,X_m\}$ be a Mal'cev bases for
$G/\Gamma$. Assume that $P=P(u_1,\ldots,u_m)$ is a polynomial.
Define the {\it weighted degree} $o(u_i)$ of $u_i$ to be the largest integer $k$ such that $X_i$ is contained in
$\g^{(k)}$, i.e. $o(u_i)=j$ if $l_{j-1}+1\le i\le l_j$, $ 1\le j\le
d$. The weighted degree of a monomial in $u_i'$s is the sum of the weighted degree of each term,
i.e. $o(u_1^{k_1}\ldots u_m^{k_m})=\sum_{i=1}^m k_io(u_i).$ As usual the weighted degree of $P$
is the maximum of the weighted degrees of monomials in $P$.
\end{de}


\subsection{Some lemmas}

We need several lemmas. Note that if
$$\exp(t_1X_1)\ldots\exp(t_mX_m)=\exp(u_1X_1+\ldots+u_mX_m)$$ it is
known that \cite{CG, M} each $t_i$ is a polynomial of $u_1,\ldots,
u_m$ and each $u_i$ is a polynomial of $t_1,\ldots, t_m$. For our
purpose we need to know the precise degree of the polynomials.

\begin{lem}\label{d-step} Let $\{X_1,\ldots,X_m\}$ be a Mal'cev bases for
$G/\Gamma$. Assume that
$$\exp(t_1X_1)\ldots\exp(t_mX_m)=\exp(u_1X_1+\ldots+u_mX_m).$$
Then we have

\begin{enumerate}

\item Each $u_i$ is  a polynomial in $t_j$'s with no constant term such that
the weighted degree of the polynomial is no bigger than that of $u_i$ and
the ordinary degree 1 part of this polynomial is exactly $t_i$ (i.e. $u_i=t_i$ for $1\le i\le l_1$ and if $l_{j-1}+1\le i\le l_j,\ 2\le
j\le d$ then
$u_i=t_i+\sum c_{k_1,\ldots, k_m,i}t_{1}^{k_1}\ldots t_{m}^{k_m},$ where the sum is over all
$0\le k_1,\ldots,k_{m}\le m$ with $\sum_{j=1}^m k_jo(t_j)\le o(u_i)$ and there are at least two $j$'s with $k_j\not=0$).

\item Each $t_i$ is  a polynomial in $u_j$'s with no constant term such that
the weighted degree of the polynomial is no bigger than that of $t_i$ and
the ordinary degree 1 part of this polynomial is exactly $u_i$.


\end{enumerate}
\end{lem}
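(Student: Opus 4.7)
The plan is to prove (1) by iterating the Campbell--Baker--Hausdorff formula and tracking weighted degrees via the filtration $\g=\g^{(1)}\supset \g^{(2)}\supset\cdots\supset \g^{(d+1)}=0$; then (2) will follow by induction on the weighted degree.

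First I would set $Z := \log\bigl(\exp(t_1X_1)\cdots\exp(t_mX_m)\bigr)$. Since $G$ is $d$-step nilpotent, every iterated bracket of length $>d$ vanishes, so CBH applied successively from the left terminates after finitely many terms. By multilinearity of Lie brackets one obtains an expansion
\[
Z \;=\; \sum_{i=1}^{m} t_i X_i \;+\; \sum_{\sigma} c_\sigma\, t_{i_1(\sigma)}\cdots t_{i_{k(\sigma)}(\sigma)}\, B_\sigma,
\]
where each $B_\sigma$ is an iterated Lie bracket of length $k(\sigma)\ge 2$ in the basis vectors $X_{i_1(\sigma)},\ldots,X_{i_{k(\sigma)}(\sigma)}$ and each $c_\sigma\in\Q$. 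The key observation is that $[\g^{(a)},\g^{(b)}]\subset \g^{(a+b)}$ forces $B_\sigma\in \g^{(s_\sigma)}$ with $s_\sigma := \sum_j o(X_{i_j(\sigma)})$, so when $B_\sigma$ is expanded in the Mal'cev basis only those $X_\ell$ with $o(X_\ell)\ge s_\sigma$ receive nonzero contributions. Reading off the coefficient of $X_i$ in $Z$ yields $u_i = t_i + P_i(t_1,\ldots,t_m)$, where $P_i$ is a sum of monomials $t_{i_1}\cdots t_{i_k}$ with $k\ge 2$ and $\sum_j o(X_{i_j})\le o(X_i)$. Since $[X,X]=0$ implies that any iterated bracket in a single variable vanishes, every nonzero such monomial involves at least two distinct indices; together this gives all the assertions in (1).

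For (2) I would argue by induction on $r := o(u_i)$. When $r=1$, item (1) gives $u_i = t_i$ outright, hence $t_i = u_i$. For $r\ge 2$, assume that for every $j$ with $o(u_j)<r$, $t_j$ is already a polynomial in $u_1,\ldots,u_m$ of weighted degree $\le o(u_j)$ with linear part $u_j$. Any factor $t_{i_s}$ appearing in a monomial $t_{i_1}\cdots t_{i_k}$ ($k\ge 2$) inside $P_i$ satisfies $o(u_{i_s}) \le \bigl(\sum_j o(X_{i_j})\bigr) - 1 \le r-1 < r$, so the inductive hypothesis applies to each such $t_{i_s}$. Substituting into $t_i = u_i - P_i(t)$ expresses $t_i$ as a polynomial in the $u_j$'s of weighted degree $\le r$ with linear part $u_i$. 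Global well-definedness of this inversion is automatic because both $(t_j)$ and $(u_j)$ are global coordinates on the simply-connected nilpotent group $G\cong \R^m$.

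The main obstacle will be the bookkeeping in step (1): verifying carefully that each iterated bracket $B_\sigma$ in the CBH expansion indeed carries a clean monomial coefficient $c_\sigma\, t_{i_1}\cdots t_{i_k}$ rather than a more complicated polynomial, and confirming that every nonzero bracket of length $\ge 2$ involves at least two distinct basis vectors. The former is a consequence of multilinearity of iterated brackets applied to the successive CBH products $(\cdots (t_1X_1 * t_2X_2) * t_3X_3) * \cdots * t_mX_m$; the latter follows from $[X,X]=0$.
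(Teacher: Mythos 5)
Your proof follows essentially the same approach as the paper: iterate the CBH formula across the product $\exp(t_1X_1)\cdots\exp(t_mX_m)$, use the filtration $[\g^{(a)},\g^{(b)}]\subset\g^{(a+b)}$ together with $\g^{(d+1)}=0$ to control which $X_\ell$ can appear in each bracket term and hence bound the weighted degree, and then invert by back-substitution for part (2). The only cosmetic difference is that you run the inversion by induction on the weighted degree $r=o(u_i)$ whereas the paper runs it by induction on the index $i$; these are equivalent because the Mal'cev ordering respects the filtration, and the crucial estimate in both cases is that each factor $t_{i_s}$ in a monomial of $P_i$ has $o(t_{i_s})\le o(u_i)-1$ owing to the presence of at least two nontrivial factors.
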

\begin{proof}
(1). It is easy to see that if $m=1$ then $d=1$ and (1) holds. So we
may assume that $m\ge 2$. For $s\in
\{0,1,\ldots,m\}^{\{1,\ldots,m\}}$, let $\{i_1<\ldots<i_n\}$ be the
collection of $p's$ with $s(p)\not=0$. Let
$$X_{s}=[X_{s(i_1)}, [X_{s(i_2)}, \ldots, [X_{s(i_{n-1})},X_{s(i_n)}]]].$$ For
each $0\le p\le m$ let $k_p(s)$ be the number of $p's$ appearing in
$s(1),\ldots, s(m)$ (as usual, the cardinality of the empty set is defined as $0$).
Using the CBH formula $m-1$ times and the condition
$\g^{(d+1)}=\{0\}$ it is easy to see that $(t_1X_1)*\ldots*(t_mX_m)$
is the sum of $\sum_{i=1}^mt_iX_i$ and the terms $$constant \times
t_{q_1}\ldots t_{q_n}[X_{q_1}, [X_{q_2}, \ldots,
[X_{q_{n-1}},X_{q_n}]]],\ m\ge n\ge 2,$$ i.e.
$\exp(t_1X_1)\ldots\exp(t_mX_m)$ can be written as
$$\exp(\sum_{j=1}^mt_jX_j+\sum c'_{s}t_1^{k_1(s)}\ldots t_m^{k_m(s)}X_s),$$ where
the sum is over all  $s\in \{0, 1,\ldots, m\}^{\{1,\ldots,m\}}$
and there are at least two $j$'s with $s(j)\not=0$.
Note that $X_{s}\in \g^{(\sum_{j=1}^m k_j(s)o(t_j))}.$ Let
$X_s=\sum_{j=1}^m c'_{s,j}X_j$. Thus, $c'_{s,1}, \ldots, c_{s,i}'=0$
if $\sum_{j=1}^m k_j(s)o(t_j)>o(t_i)$. Hence, $u_i=t_i$ for $1\le i\le
l_1$ and if $l_{j-1}+1\le i\le l_j,\ 2\le j\le d$ then the
coefficient of $X_i$ is
$$u_i=t_i+\sum c_{k_1,\ldots, k_m,i}t_{1}^{k_1}\ldots t_{m}^{k_m},$$ where the sum is over all
$0\le k_1,\ldots,k_{m}\le m$ with $\sum_{j=1}^m k_jo(t_j)\le o(u_i)$ and there are at least two $j$'s with $k_j\not=0$.

Note that when ${k_1o(t_1)+\ldots+ k_{m}o(t_m)}\le o(u_i)$ and there are at least two
$j$'s with $k_j\not=0$, we have that $k_i=k_{i+1}=\ldots=k_m=0$ and some other
restrictions. For example, when $l_1+1\le i\le l_2$,
$t_{1}^{k_1}\ldots t_{m}^{k_m}=t_{i_1}t_{i_2}$ with $1\le i_1,i_2\le
l_1$; and when $l_2+1\le i\le l_3$, $t_{1}^{k_1}\ldots
t_{m}^{k_m}=t_{i_1}t_{i_2}t_{i_3}$ with $1\le i_1,i_2,i_3\le l_1$ or
$t_{i_1}t_{i_2}$ with $1\le i_1\le l_1$ and $l_1+1\le i_2\le l_2$.

\medskip

(2) It is easy to see that $t_i=u_i$ for $1\le i\le l_1$.
If $d=1$ (2) holds, and thus we assume that $d\ge 2$. We show (2) by
induction. We assume that
\begin{equation}\label{tp-eq}
t_p=u_p+\sum d_{k_1',\ldots, k_m',p}u_1^{k_1'}\ldots u_m^{k_m'},
\end{equation}
where the sum is over all $0\le k_1',\ldots,k_{m}'\le m$ with $\sum_{j=1}^m k_j'o(u_j)\le o(t_p)$
and there are at least two $j$'s with $k_j'\not=0$ for all $p$ with $l_1+1\le
p\le i.$

Since
$$u_{i+1}=t_{i+1}+\sum c_{k_1,\ldots,
k_m,i+1}t_{1}^{k_1}\ldots t_{m}^{k_m},$$ we have that

$$t_{i+1}=u_{i+1}-\sum c_{k_1,\ldots,
k_m,i+1}t_{1}^{k_1}\ldots t_{m}^{k_m},$$
where the sum is over all $0\le k_1,\ldots,k_{m}\le m$ with $\sum_{j=1}^m k_jo(t_j)\le o(t_{i+1})$
and there are at least two $j$'s with $k_j\not=0$.

Since $o(t_{i+1})\le o(t_i)+1$ and there are at least two $j$'s with $k_j\not=0$, we have that if
$k_1o(u_1)+\ldots+ k_{m}o(u_m)\le o(t_{i+1})$ then $k_jo(u_j)\le o(t_i)$ for
each $1\le j\le m$, which implies that $k_{i+1},\ldots,k_m=0$. By
the induction each $t_p$ ($1\le p\le i$) is a polynomial of
$u_1,\ldots,u_m$ of the weighted degree at most $o(t_p)$ (see
Equation \eqref{tp-eq}) thus
$$\sum  c_{k_1,\ldots, k_m,i+1}t_{1}^{k_1}\ldots
t_{m}^{k_m}$$ is a polynomial of $u_1,\ldots,u_m$ of the weighted degree at most
$\sum_{p=1}^m k_p o(u_p)=\sum_{p=1}^m k_p o(t_p)  \le o(t_{i+1}).$  Rearranging the coefficients we
get (2). Note that there are at least two $j$'s with $k_j\not=0$.
\end{proof}


\begin{lem}\label{product}
Assume that
$$x=\exp(x_1X_1+\ldots+x_mX_m)\ \text{ and}\
y=\exp(y_1X_1) \ldots \exp(y_mX_m).$$ Then
$$xy^{-1}=\exp(\sum_{i=1}^{l_1}(x_i-y_i)X_i+\sum_{i=l_1+1}^m((x_i-y_i)+P_{i,1}(\{y_p\})
+P_{i,2}(\{x_p\},\{y_p\}))X_i),$$ where $P_{i,1}(\{y_p\}),
P_{i,2}(\{x_p\},\{y_p\})$ are polynomials of the weighted degree at most $o(y_i)$ for each $l_1+1\le i\le m$. To be precise, we have
\begin{equation}\label{Polyi1}
P_{i,1}(\{y_p\})= -\sum c_{k_1',\ldots, k_m',i}y_{1}^{k_1'}\ldots y_{m}^{k_m'},
\end{equation}
the sum is over all $0\le k_1',\ldots,k_{m}'\le m$ with $\sum_{j=1}^m k_j'o(y_j)\le o(y_i)$
and there are at least two $j$'s with $k_j'\not=0$ for all $i$ with $l_1+1\le
i\le m.$ Moreover,
\begin{equation}\label{Polyi2}
P_{i,2}(\{x_p\},\{y_p\})=\sum
e_{k_1,\ldots,k_m}^{k_1',\ldots,k_m'}x_{1}^{k_1}\ldots
x_{m}^{k_m}y_{1}^{k_1'}\ldots y_{m}^{k_m'},
\end{equation} the sum is over all $0\le k_j, k_j'\le m$ with
$\sum_{j=1}^m (k_j+k_j')o(y_j)\le o(y_i)$,
and there are at least one $j$ with $k_j\not=0$, one $j$ with $k_j'\not=0$
for all $l_1+1\le i\le m.$
\end{lem}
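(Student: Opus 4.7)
The plan is to compute $xy^{-1}$ by first converting the coordinate-style expression of $y$ into a single exponential via Lemma \ref{d-step}(1), then applying the CBH formula to $\exp(X)\exp(-U)$, tracking weighted degrees throughout. Concretely, Lemma \ref{d-step}(1) applied to $y=\exp(y_1X_1)\cdots\exp(y_mX_m)$ yields $y=\exp(U)$ with $U=\sum_{i=1}^m u_iX_i$, where $u_i=y_i$ for $1\le i\le l_1$ and, for $l_1<i\le m$,
$$u_i=y_i+\sum c_{k_1',\ldots,k_m',i}\,y_1^{k_1'}\cdots y_m^{k_m'},$$
the sum taken over tuples with $\sum_j k_j'o(y_j)\le o(y_i)$ having at least two nonzero entries. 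Setting $P_{i,1}(\{y_p\}):=-(u_i-y_i)$ gives precisely the form \eqref{Polyi1}.

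Write $X:=\sum_i x_iX_i$. Then $xy^{-1}=\exp(X)\exp(-U)=\exp(X*(-U))$, which is a finite sum by $d$-step nilpotency. The linear piece $X-U=\sum_i(x_i-u_i)X_i$ contributes $(x_i-y_i)+P_{i,1}(\{y_p\})$ to the $X_i$-coefficient of the exponent; for $i\le l_1$ this is exactly $x_i-y_i$ since $P_{i,1}\equiv 0$, and no bracket term contributes either (see below). Every remaining CBH summand is a constant multiple of an iterated bracket which, because $[X,X]=[U,U]=0$, must contain at least one $X$ and at least one $U$. Expanding multilinearly gives contributions of the shape
$$c\cdot x_{i_1}\cdots x_{i_r}\cdot u_{j_1}\cdots u_{j_s}\cdot [X_{a_1},[X_{a_2},\ldots,[X_{a_{n-1}},X_{a_n}]\cdots]],$$
with $r,s\ge 1$, $n=r+s$, and $(a_1,\ldots,a_n)$ a permutation of $(i_1,\ldots,i_r,j_1,\ldots,j_s)$ dictated by the specific CBH term. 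By the filtration property $[\g^{(a)},\g^{(b)}]\subset\g^{(a+b)}$, the iterated bracket lies in $\g^{(\sum_a o(X_{i_a})+\sum_b o(X_{j_b}))}$, so its $X_i$-component vanishes unless
$$\sum_a o(X_{i_a})+\sum_b o(X_{j_b})\le o(X_i).$$
For $i\le l_1$ this forces $r+s\le 1$, contradicting $r,s\ge 1$, confirming the earlier claim.

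Finally, substituting each $u_{j_b}$ back as a polynomial in $\{y_p\}$ of weighted degree $\le o(y_{j_b})=o(X_{j_b})$ (via Lemma \ref{d-step}(1)) and collecting all contributions into a single polynomial $P_{i,2}(\{x_p\},\{y_p\})$, every monomial $x_1^{k_1}\cdots x_m^{k_m}y_1^{k_1'}\cdots y_m^{k_m'}$ of $P_{i,2}$ has total weighted degree $\sum_j(k_j+k_j')o(y_j)\le o(X_i)=o(y_i)$, carries at least one nonzero $k_j$ (from $r\ge 1$), and at least one nonzero $k_j'$ (since each $u_{j_b}$ expands into monomials containing at least one $y_p$). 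This matches \eqref{Polyi2} verbatim. The only real obstacle is careful bookkeeping: verifying that the weighted-degree bound survives both the CBH expansion in the $X_i$'s and the polynomial re-expansion of each $u_j$ back into the $y_p$'s; no additional ingredient is needed beyond Lemma \ref{d-step}(1), $d$-step nilpotency, and the filtration compatibility $[\g^{(a)},\g^{(b)}]\subset\g^{(a+b)}$.
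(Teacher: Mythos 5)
Your proof is correct and follows essentially the same route as the paper's: convert $y$ to a single exponential via Lemma \ref{d-step}(1) so that $P_{i,1}=-(u_i-y_i)$, then apply the CBH formula to $\exp(X)\exp(-U)$ and track weighted degrees using $[\g^{(a)},\g^{(b)}]\subset\g^{(a+b)}$ to extract $P_{i,2}$. The paper abbreviates the final bookkeeping to ``a discussion similar to the one used in Lemma \ref{d-step}''; you have supplied the multilinear expansion and the degree count explicitly, which is exactly the content that phrase stands for.
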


\begin{proof}
By Lemma \ref{d-step} we have
\begin{equation*}
\begin{aligned}xy^{-1}=\exp(X)\exp(Y)
\end{aligned}
\end{equation*}
with $X=\sum_{i=1}^{m}x_iX_i$ and
$$Y=-\sum_{i=1}^{m}y_iX_i+
\sum_{i=l_1+1}^m P_{i,1}(\{y_p\})X_i,$$
where
\begin{equation*}
P_{i,1}(\{y_p\})= -\sum c_{k_1',\ldots, k_m',i}y_{1}^{k_1'}\ldots y_{m}^{k_m'},
\end{equation*}
the sum is over all $0\le k_1',\ldots,k_{m}'\le m$ with $\sum_{j=1}^m k_j'o(y_j)\le o(y_i)$
and there are at least two $j$'s with $k_j'\not=0$ for all $i$ with $l_1+1\le
i\le m.$

Using the CBH formula we get that
\begin{equation*}
\begin{aligned}
xy^{-1}&=\exp(X*Y)=\exp(X+Y+\frac{1}{2}[X,Y]+\frac{1}{12}[X,[X,Y]]+\ldots)\\
&= \exp(\sum_{i=1}^m(x_i-y_i)X_i+\sum_{i=l_1+1}^m (P_{i,1}(\{y_p\})+P_{i,2}(\{x_p\},\{y_p\}))X_i)\\
&=\exp(\sum_{i=1}^{l_1}(x_i-y_i)X_i+\sum_{i=l_1+1}^m((x_i-y_i)+P_{i,1}(\{y_p\})+
P_{i,2}(\{x_p\},\{y_p\}))X_i)
\end{aligned}
\end{equation*}
where
\begin{equation*}
P_{i,2}(\{x_p\},\{y_p\})=\sum
e_{k_1,\ldots,k_m}^{k_1',\ldots,k_m'}x_{1}^{k_1}\ldots
x_{m}^{k_m}y_{1}^{k_1'}\ldots y_{m}^{k_m'},
\end{equation*} the sum is over all $0\le k_j, k_j'\le m$ with
$\sum_{j=1}^m (k_j+k_j')o(y_j)\le o(y_i)$,
and there are at least one $j$ with $k_j\not=0$, one $j$ with $k_j'\not=0$
for all $l_1+1\le i\le m.$

Note that the reason $P_{i,2}$ has the above form follows from the
fact that $[\g^{(i)},\g^{(j)}]\subset \g^{(i+j)}$,
$\g^{(d+1)}=\{0\}$ and a discussion similar to the one used in Lemma
\ref{d-step}.
\end{proof}

\medskip

\subsection{Proof of Theorem B(1)}
Let $X=G/\Gamma$ with $G$ a connected, simply-connected $d$-step
nilpotent Lie group, $\Gamma$ a uniform subgroup. Let $T:X\lra X$ be
the nilrotation induced by $a\in G$. Assume that $A\supset
N(x\Gamma,U)$ with $x\in G$, $x\Gamma\in U$ and $U\subset G/\Gamma$
open. By Proposition \ref{replace} we may assume that $x$ is the unit
element $e$ of $G$, i.e. $A\supset N(e\Gamma,U)$.

Assume that $a=\exp(a_1X_1+\ldots+a_mX_m)$, where $a_1,\ldots a_m\in
\mathbb{R}$. Then $$a^n=\exp(na_1X_1+\ldots+na_mX_m)$$ for any $n\in
\mathbb{Z}$. For $h=\exp(h_1X_1)\ldots \exp(h_mX_m)$, where
$h_1,\ldots,h_m\in \mathbb{R}$, write \begin{equation*}
\begin{aligned}
a^nh^{-1}=\exp(p_1X_1+\ldots+p_mX_m)=\exp(w_1X_1)\ldots\exp(w_mX_m).
\end{aligned}
\end{equation*}
Then by Lemma \ref{product} with $x_j,y_j$ are replaced by $na_j,h_j$ respectively, we have

(1) if $1\le i\le l_1$, then $p_i=na_i-h_i$, and

(2) if $l_{j-1}+1\le i\le l_j,\ 2\le j\le d$ then
\begin{equation}\label{eq-5.2}
\begin{aligned}
p_i =na_i-h_i+P_{i,1}(\{h_p\})+P_{i,2}(\{na_p\}, \{h_p\}),
\end{aligned}
\end{equation}
where $P_{i,1}$ is defined in (\ref{Polyi1}) and $P_{i,2}$ is defined in (\ref{Polyi2}), which
satisfy the properties stated there. It is clear that
$$P_{i,2}(\{na_p\}, \{h_p\})=\sum e_{k_1,\ldots,k_m}^{k_1',\ldots,k_m'}n^{k_1+\ldots+k_m}a_1^{k_1}\ldots a_m^{k_m} h_{1}^{k_1'}\ldots h_{m}^{k_m'}.$$


Changing the exponential coordinates to Mal'sev coordinates (Lemma
\ref{d-step}), we get that

(i) if $1\le i\le l_1$, then $w_i=na_i-h_i$, and

(ii) if $l_{j-1}+1\le i\le l_j,\ 2\le j\le d$, then
\begin{equation*}
w_i=p_i+\sum
d_{k_1,\ldots, k_m,i}p_1^{k_1}\ldots
p_m^{k_m},
\end{equation*}
where the sum is over all $0\le k_1,\ldots,k_{m}\le m$ with $\sum_{j=1}^m k_jo(t_j)\le o(u_i)$
and there are at least two $j$'s with $k_j\not=0$.
In this case using \eqref{eq-5.2} it is not hard to see that
$$w_i=-h_i+Q_i(n,h_1,\ldots,h_{m})$$ such that
$Q_i$ is a polynomial, and each term of $Q_i$ is the form of
$$c(k_1',\ldots,k_{m}', k_1,\ldots,k_{m})n^{k_1'+\ldots+k_{m}'}h_1^{k_1}\ldots h_{m}^{k_{m}}$$
with $\sum_{j=1}^{m}(k_j'+k_j)o(h_j)\le o(h_i)$ (see the argument of Lemma
\ref{d-step}(2)). Note that if $k=k_1'+\ldots+k_{m}'=0$ then there are at least two $j$'s with $k_j\not=0$, and if
$k_1=\ldots=k_m=0$ then $k\ge 1$. This implies that in case (ii) in fact we have
$$w_i=-h_i+Q_i(n,h_1,\ldots,h_{i-1}).$$

For a given $n\in \mathbb{Z}$, let $h_i(n)=\lceil{na_i}\rceil$ if
$1\le i\le l_1$. Moreover, when $h_{v}$ is defined for $1\le v\le i-1$ we let
$$h_i(n)=\lceil{Q_i(n,{h_1(n)},\ldots,h_{i-1}(n))}\rceil$$ if
$l_{j-1}+1\le i\le l_j,\ 2\le j\le d$. Again a similar argument as
in the proof of Lemma~ \ref{d-step}(2) shows that $h_i(n)$ is well
defined and is a generalized polynomial of degree at most $o(h_i)\le
d$. For example, if $l_1+1\le i\le l_2$ then
$$p_i=na_i-h_i+\sum_{1\le i_1<i_2\le
l_1}c(i_1,i_2,i)h_{i_1}h_{i_2}+\sum_{1\le j_1\le l_1}c(j_1,i)nh_{j_1}.$$
So
\begin{equation*}
\begin{aligned}w_i=na_i-h_i+\sum_{1\le i_1<i_2\le
l_1}c(i_1,i_2,i)h_{i_1}h_{i_2}&+\sum_{1\le j_1\le
l_1}c(j_1,i)nh_{j_1}+\sum_{1\le i_1<i_2\le
l_1}d(i_1,i_2,i)p_{i_1}p_{i_2}\\
=na_i-h_i+\sum_{1\le i_1<i_2\le
l_1}c(i_1,i_2,i)h_{i_1}h_{i_2}&+\sum_{1\le j_1\le
l_1}c(j_1,i)nh_{j_1}\\
&+\sum_{1\le i_1<i_2\le
l_1}d(i_1,i_2,i)(na_{i_1}-h_{i_1})(na_{i_2}-h_{i_2}).
\end{aligned}
\end{equation*}
Thus if we let $h_i(n)=\lceil{na_i}\rceil$, $1\le i\le l_1$ then if
$l_1+1\le i\le l_2$
\begin{equation*}
\begin{aligned}
h_i(n)=\lceil na_i&+\sum_{1\le i_1<i_2\le
l_1}c(i_1,i_2,i)\lceil na_{i_1}\rceil \lceil na_{i_2}\rceil+\sum_{1\le j_1\le l_1}c(j_1,i)n\lceil na_{j_1}\rceil\\
&+\sum_{1\le i_1<i_2\le l_1}d(i_1,i_2,i)(na_{i_1}-\lceil
na_{i_1}\rceil)(na_{i_2}-\lceil na_{i_2}\rceil) \rceil.
\end{aligned}
\end{equation*}
That is, $$h_i(n)=\lceil na_i+n^2a_i'+\sum_{1\le i_1<i_2\le
l_1}c'(i_1,i_2,i)\lceil na_{i_1}\rceil \lceil
na_{i_2}\rceil+\sum_{1\le j_1\le l_1}c'(j_1,i)n\lceil
na_{j_1}\rceil\rceil
$$ is a generalized polynomial of degree at most $2$ in $n$.

\medskip
Next we let
 $w_i(n)=na_i-h_i(n)=na_i-\lceil{na_i}\rceil$ for $1\le i\le l_1$ and
  if
$l_{j-1}+1\le i\le l_j,\ 2\le j\le d$, let
\begin{align*}
w_i(n)&=Q_i(n,{h_1(n)},\ldots,h_{i-1}(n))-h_i(n)\\
&=Q_i(n,{h_1(n)},\ldots,h_{i-1}(n))-\lceil{Q_i(n,{h_1(n)},\ldots,h_{i-1}(n))}\rceil.
\end{align*}
The previous argument shows that $w_i(n)$ is a generalized polynomial of degree at most $d$.

\medskip
Let $h(n)=\exp(h_1(n)X_1)\ldots \exp(h_m(n)X_m)$. Then $h(n)\in
\Gamma$ and $$a^nh(n)^{-1}=\exp(w_1(n)X_1)\ldots \exp(w_m(n)X_m).$$
{Denote by $\pi$ the quotient map $\pi: G\rightarrow X$. Since $\pi^{-1}(U)$ is open and contains $e$, there is some $0<\ep < \frac{1}{2}$ such that
$$\pi^{-1}(U)\supset \{\exp(t_1X_1)\ldots \exp(t_mX_m): |t_1|,\ldots, |t_m|<\ep\}=: V.$$
Then
$$A\supset N(e\Gamma,U)\supset \{n\in\Z: a^nh(n)^{-1} \in V \}.$$

So if $n\in \bigcap_{i=1}^m\{n\in\Z:w_i(n)\ (\text{mod}\ \Z)\in
(-\ep,\ep) \}$ then $n\in \{n\in\Z: a^nh(n)^{-1} \in V \}\subset N(e\Gamma,U)\subset A$.} That is,
$$A\supset \bigcap_{i=1}^m\{n\in\Z:w_i(n)\ (\text{mod}\ \Z)\in (-\ep,\ep) \}.$$
This ends the proof of Theorem B(1).

\section{Proof of Theorem B(2)}
\label{section-proof2}

In this section, we aim to prove Theorem B(2), i.e.
$\F_{d,0}\supset\F_{GP_d}$.
To do this first we make some preparations, then derive some results
under the inductive assumption, and finally give the proof. Note
that in the construction the nilpotent matrix Lie group is used.


More precisely, to show $\F_{d,0}\supset\F_{GP_d}$ we need only to
prove $\F_{d,0}\supset\F_{SGP_d}$ by Theorem \ref{gpsgp}. To do this, for a
given $F\in \F_{SGP_d}$ we need to find a $d$-step nilsystem
$(X,T)$, $x_0\in X$ and a neighborhood $U$ of $x_0$ such that
$F\supset N(x_0,U)$. In the process of doing this, we find that it is
convenient to consider a finite sum of specially generalized
polynomials $P(n;\alpha_1,\ldots,\alpha_r)$ (defined in
(\ref{key-poly})) instead of considering a single specially
generalized polynomial. We can prove that $\F_{d,0}\supset\F_{GP_d}$
if and only if $\{ n\in \mathbb{Z}:
||P(n;\alpha_1,\ldots,\alpha_d)||<\epsilon\}\in \mathcal{F}_{d,0}$
for any $\alpha_1,\ldots,\alpha_d\in \mathbb{R}$ and $\epsilon>0$
(Theorem \ref{lem-trans}). We choose $(X,T)$ as the closure of the
orbit of $\Gamma$ in $\G_d/\Gamma$ (the nilrotation is induced by a
matrix $A\in \G_d$), and consider the most right-corner entry
$z_1^d(n)$ in $A^nB_n$ with $B_n\in\Gamma$. We finish the proof by
showing that $P(n;\alpha_1,\ldots,\alpha_d)\simeq_d z_1^d(n)$ and
$\{n\in\Z: ||z_1^d(n)||<\ep\}\in \F_{d,0}$ for any $\ep>0$.

\subsection{Some preparations}

For a matrix $A$ in $\G_d$ we now give a precise formula of $A^n$.
\begin{lem} \label{lem-8-ite} Let ${\bf x}=(x_i^k)_{1\le k\le d, 1\le i\le
d-k+1}\in \mathbb{R}^{d(d+1)/2}$. For $n\in \mathbb{N}$, assume that
${\bf x}(n)=(x_i^k(n))_{1\le k\le d, 1\le i\le d-k+1}\in
\mathbb{R}^{d(d+1)/2}$ satisfies $\M({\bf x}(n))=\M({\bf x})^n$,
then
\begin{equation}\label{8-11-0}
x_i^k(n)=\tbinom{n}{1}P_1({\bf x};i,k)+\tbinom{n}{2}P_2({\bf
x};i,k)+\ldots+\tbinom{n}{k} P_k({\bf x};i,k)
\end{equation}
for $1\le k\le d$ and $1\le i\le d-k+1$, where
$\tbinom{n}{k}=\frac{n(n-1)\ldots (n-k+1)}{k!}$ for $n,k\in
\mathbb{N}$ and
$$P_\ell({\bf x};i,k)=\sum\limits_{(s_1,s_2,\ldots,s_{\ell})\in \{1,2,\ldots,k\}^\ell
\atop s_1+s_2+\ldots+s_\ell=k}x_i^{s_1}x_{i+s_1}^{s_2}
x_{i+s_1+s_2}^{s_3}\ldots x_{i+s_1+s_2+\ldots+s_{\ell-1}}^{s_\ell}$$
for $1\le k\le d$, $1\le i\le d-k+1$ and $1\le \ell \le k$.
\end{lem}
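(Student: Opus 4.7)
The plan is to proceed by induction on $n\ge 1$, using the explicit multiplication formula \eqref{sec8-4-eq-1} to generate a recursion relating $x_i^k(n+1)$ to $x_i^k(n)$, and then to identify the resulting binomial sum with the claimed expression through a combinatorial identity on compositions.

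\medskip

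For the base case $n=1$, only the $\ell=1$ term contributes, and the single composition $s_1=k$ of $k$ into one part gives $P_1({\bf x};i,k)=x_i^k$, so the formula reduces to $x_i^k(1)=x_i^k$, which is correct. For the inductive step, writing $\M({\bf x})^{n+1}=\M({\bf x}(n))\M({\bf x})$ and applying \eqref{sec8-4-eq-1} yields
\begin{equation*}
x_i^k(n+1)=x_i^k(n)+\sum_{j=1}^{k-1}x_i^{k-j}(n)\,x_{i+k-j}^{j}+x_i^k.
\end{equation*}
Substituting the induction hypothesis for each $x_i^{k-j}(n)$, the middle sum becomes $\sum_{j=1}^{k-1}\sum_{\ell=1}^{k-j}\binom{n}{\ell}P_\ell({\bf x};i,k-j)\,x_{i+k-j}^{j}$. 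On the other hand, Pascal's identity $\binom{n+1}{\ell}=\binom{n}{\ell}+\binom{n}{\ell-1}$ lets me split the target expression as
\begin{equation*}
\sum_{\ell=1}^{k}\binom{n+1}{\ell}P_\ell({\bf x};i,k)=\sum_{\ell=1}^{k}\binom{n}{\ell}P_\ell({\bf x};i,k)+P_1({\bf x};i,k)+\sum_{\ell=1}^{k-1}\binom{n}{\ell}P_{\ell+1}({\bf x};i,k).
\end{equation*}
The first sum is exactly $x_i^k(n)$ by the induction hypothesis, and $P_1({\bf x};i,k)=x_i^k$, so matching the two sides reduces the whole inductive step to the identity
\begin{equation*}
P_{\ell+1}({\bf x};i,k)=\sum_{j=1}^{k-\ell}P_\ell({\bf x};i,k-j)\,x_{i+k-j}^{j}\qquad (1\le \ell\le k-1).
\end{equation*}

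\medskip

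The main technical step, and the only place where genuine work happens, is this combinatorial identity. I would establish it by classifying the $(\ell+1)$-term compositions $(s_1,\ldots,s_{\ell+1})$ of $k$ according to the value $s_{\ell+1}=j$, noting that the remaining tuple $(s_1,\ldots,s_\ell)$ is then a composition of $k-j$ into $\ell$ positive parts (forcing $1\le j\le k-\ell$). The factor $x_{i+s_1+\ldots+s_\ell}^{s_{\ell+1}}=x_{i+k-j}^{j}$ peels off from every summand with this fixed $s_{\ell+1}$, while the sum over the truncated tuples is precisely $P_\ell({\bf x};i,k-j)$, which is exactly the claimed decomposition. Once this identity is in hand, both sides of the desired equality for $x_i^k(n+1)$ agree term by term, completing the induction. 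The only subtlety to watch is the index bookkeeping on the starting column $i+s_1+\ldots+s_{\ell-1}$, which must align with $i+k-j$ precisely because $s_1+\ldots+s_\ell=k-j$; aside from this, no deeper machinery is required.
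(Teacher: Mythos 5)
Your proof is correct and rests on the same underlying computation as the paper's: the recursion from \eqref{sec8-4-eq-1}, the binomial identity $\binom{n}{\ell}+\binom{n}{\ell-1}=\binom{n+1}{\ell}$, and the combinatorial identity $P_{\ell+1}({\bf x};i,k)=\sum_{j}P_\ell({\bf x};i,k-j)\,x_{i+k-j}^{j}$ obtained by stripping the last part of a composition of $k$ into $\ell+1$ positive parts (the paper writes the same identity as $\sum_{j=r}^{k-1}P_r({\bf x};i,j)x_{i+j}^{k-j}=P_{r+1}({\bf x};i,k)$, the two being related by $j\mapsto k-j$). The only difference is organizational: you run a single induction on $n$ whose hypothesis covers all $(i,k)$ simultaneously, whereas the paper nests an induction on $n$ inside an outer induction on $k$; your arrangement is marginally cleaner since it avoids the nesting, but the core algebra is identical.
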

\begin{proof} Let $x_i^0=1$ and $x_i^0(m)=1$ for $1\le i\le d$ and
$m\in \mathbb{N}$. By \eqref{sec8-4-eq-1}, it is not hard to see
that
\begin{equation}\label{sec8-4-eq-2}
x_i^k(m+1)=\sum \limits_{j=0}^k x_i^{k-j}(m)\cdot x_{i+k-j}^j
\end{equation}
for $1\le k\le d$, $1\le i\le d-k+1$ and $m\in \mathbb{N}$.

Now we do induction for $k$. When $k=1$, $x_i^1(1)=x_i^1$ and
$x_i^1(m+1)=x_i^1(m)+x_i^1$ for $m\in \mathbb{N}$ by
\eqref{sec8-4-eq-2}. Hence $x_i^1(n)=nx_i^1=\tbinom{n}{1}P_1({\bf
x};i,1)$. That is, \eqref{8-11-0} holds for each $1\le i\le d$ and
$n\in \mathbb{N}$ if $k=1$.

Assume that $1\le \ell \le d-1$, and \eqref{8-11-0} holds for each
$1\le k\le \ell$, $1\le i\le d-k+1$ and $n\in \mathbb{N}$. For
$k=\ell+1$, we make induction on $n$. When $n=1$ it is clear
$$x_i^k(1)=x_i^k=\tbinom{1}{1}P_1({\bf x};i,k)+\tbinom{1}{2}P_2({\bf
x};i,k)+\ldots+\tbinom{1}{k} P_k({\bf x};i,k)$$ for $1\le i\le
d-k+1$. That is, \eqref{8-11-0} holds for $k=\ell+1$, $1\le i\le
d-k+1$ and $n=1$. Assume for $n=m\ge 1$,  \eqref{8-11-0} holds for
$k=\ell+1$, $1\le i\le d-k+1$ and $n=m$. For $n=m+1$, by
\eqref{sec8-4-eq-2}
\begin{align*}
x_i^k(n)&=x_i^k(m)+\Big(\sum \limits_{j=1}^{k-1} x_i^{k-j}(m)\cdot x_{i+k-j}^j\Big)+x_i^k\\
&=x_i^k(m)+\Big(\sum \limits_{j=1}^{k-1} (\sum \limits_{r=1}^{k-j}\tbinom{m}{r}P_r({\bf x};i,k-j))\cdot x_{i+k-j}^j)\Big)+x_i^k\\
&=x_i^k(m)+\Big(\sum \limits_{r=1}^{k-1} (\sum \limits_{j=1}^{k-r} P_r({\bf x};i,k-j)x_{i+k-j}^{j})\tbinom{m}{r}\Big)+x_i^k\\
&=x_i^k(m)+\Big(\sum \limits_{r=1}^{k-1} (\sum \limits_{j=r}^{k-1}
P_r({\bf x};i,j)x_{i+j}^{k-j})\tbinom{m}{r}\Big)+x_i^k
\end{align*}
for $1\le i\le d-k+1$. Note that

$$\sum \limits_{j=r}^{k-1} P_r({\bf x};i,j)x_{i+j}^{k-j}=\sum
\limits_{j=r}^{k-1} \sum \limits_{(s_1,\ldots,s_r)\in
\{1,2,\ldots,k-1\}^r \atop
s_1+\ldots+s_r=j}x_i^{s_1}x_{i+s_1}^{s_2}\ldots
x_{i+s_1+\ldots+s_{r-1}}^{s_r} x_{i+j}^{k-j}$$ which is equal to
$$\sum \limits_{(s_1,\ldots,s_r,s_{r+1})\in \{1,2,\ldots,k-1\}^{r+1}
\atop s_1+s_2+\ldots+s_r+s_{r+1}=k}x_i^{s_1}x_{i+s_1}^{s_2}\ldots
x_{i+s_1+\ldots+s_{r-1}}^{s_r}
x_{i+s_1+\ldots+s_{r-1}+s_r}^{s_{r+1}}=P_{r+1}({\bf x};i,k)$$ for
$1\le r\le k-1$ and $1\le i\le d-k+1$. Collecting terms we have

\begin{align*}
x_i^k(n)&=x_i^k(m)+\Big(\sum \limits_{r=1}^{k-1} P_{r+1}({\bf x};i,k) \tbinom{m}{r}\Big)+x_i^k\\
&=x_i^k(m)+\Big(\sum \limits_{r=2}^{k} P_{r}({\bf x};i,k) \tbinom{m}{r-1}\Big)+P_1({\bf x};i,k)\\
&=\Big(\sum \limits_{r=1}^m P_r({\bf x};i,k)\tbinom{m}{r}\Big)+
\Big(\sum \limits_{r=2}^{k} P_{r} ({\bf x};i,k)
\tbinom{m}{r-1}\Big)+P_1({\bf x};i,k).
\end{align*}

Rearranging the order we get
\begin{align*}
x_i^k(n)&=(m+1)P_1({\bf x};i,k)+\sum_{r=2}^k \Big(\tbinom{m}{r}+\tbinom{m}{r-1}\Big)P_r({\bf x};i,k)\\
&=\sum \limits_{r=1}^k \tbinom{m+1}{r}P_r({\bf x};i,k)=\sum
\limits_{r=1}^k \tbinom{n}{r}P_r({\bf x};i,k)
\end{align*}
for $1\le i\le d-k+1$. This ends the proof of the lemma.
\end{proof}

\begin{rem} \label{rem-8-ite}
By the above lemma, we have $$P_1({\bf x};i,k)=x_i^k \text{ and
 }P_k({\bf x};i,k)=x_i^1x_{i+1}^1\ldots x_{i+k-1}^1$$ for $1\le k\le
d$ and $1\le i\le d-k+1$.
\end{rem}

\subsection{Consequences under the inductive assumption}
We will use induction to show Theorem B(2). To make the proof
clearer to read, we derive some results under the following
inductive assumption.
\begin{equation}\label{as-d-1}
\mathcal{F}_{d-1,0}\supset \mathcal{F}_{GP_{d-1}},
\end{equation}
where $d\in\mathbb{N}$ with $d\ge 2$. For that purpose, we need more
notions and lemmas. The proof of Lemma \ref{lem-8-11} is similar to
the one of Lemma \ref{lem1-1}, where $\mathcal{W}_d$ is defined in
Definition \ref{de-ws}.

\begin{lem}\label{lem-8-11}
Under the assumption \eqref{as-d-1}, one has for any $p(n)\in
\mathcal{W}_d$ and $\epsilon>0$,
$$\{n\in \mathbb{Z}:p(n)\ (\text{mod}\ \Z) \in (-\epsilon,\epsilon)\}\in \mathcal{F}_{d-1,0}.$$
\end{lem}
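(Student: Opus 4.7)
The argument will mirror that of Lemma \ref{lem1-1}, with the family $\F_{SGP_{d-1}}$ systematically replaced by $\F_{d-1,0}$. The only structural property of the ambient family used there is that it is a filter, so the first step is to verify that $\F_{d-1,0}$ is indeed a filter: if $A_j \supset N(x_j, U_j)$ in a $(d-1)$-step nilsystem $(X_j, T_j)$ for $j=1,2$, then $A_1 \cap A_2 \supset N((x_1,x_2),\, U_1\times U_2)$ inside the product $(X_1\times X_2,\, T_1\times T_2)$, which is again a $(d-1)$-step nilsystem; properness follows from $0 \in N(x_0,U)$ whenever $x_0 \in U$. Using this filter property I can reduce to the case $p(n) = a\, q(n)$ for a single $a \in \R$ and $q \in \mathcal{SW}_d$.

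Writing such a $q$ as $q(n) = \prod_{i=1}^\ell \bigl(w_i(n) - \lceil w_i(n)\rceil\bigr)$ with $\ell \geq 2$, $w_i \in GP_{r_i}$, $r_i \geq 1$, and $\sum_{i=1}^\ell r_i \leq d$, the condition $\ell \geq 2$ forces $r_1 \leq d-1$, hence $w_1 \in GP_{d-1}$. By the inductive assumption \eqref{as-d-1}, the set $\{n \in \Z : w_1(n) \pmod{\Z} \in (-\delta,\delta)\}$ belongs to $\F_{d-1,0}$ for every $\delta > 0$.

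The remaining step is the pointwise estimate. Since $|w_i(n) - \lceil w_i(n)\rceil| = \|w_i(n)\| \leq \tfrac12$ for every $i$, one has $|p(n)| \leq |a|\cdot \|w_1(n)\|$. Taking $\delta = \epsilon/(1+|a|)$ (and shrinking $\epsilon$ below $\tfrac12$ if necessary, which is harmless since the conclusion for a smaller $\epsilon$ implies it for the given one), we get
\[
\{n : p(n) \pmod{\Z} \in (-\epsilon,\epsilon)\} \;\supset\; \{n : w_1(n) \pmod{\Z} \in (-\delta,\delta)\},
\]
and the right-hand side lies in $\F_{d-1,0}$.

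I do not foresee any substantive obstacle: the proof is a direct transcription of Lemma \ref{lem1-1}, and the only item one must verify beyond that template is the filter property of $\F_{d-1,0}$, recorded in the first paragraph.
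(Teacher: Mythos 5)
Your proof is correct and matches the paper's, which simply notes that the argument of Lemma~\ref{lem1-1} carries over with $\F_{SGP_{d-1}}$ replaced by $\F_{d-1,0}$: reduce to a single summand $a\,q(n)$ with $q\in\mathcal{SW}_d$ via the filter property, note that $\ell\ge 2$ forces $w_1\in GP_{d-1}$ so the corresponding set lies in $\F_{d-1,0}$ by the inductive hypothesis \eqref{as-d-1}, and conclude from the pointwise bound $|p(n)|\le|a|\,\|w_1(n)\|$. Your explicit verification that $\F_{d-1,0}$ is a filter is a sensible addition; the paper invokes this fact (and records it formally only later, in the chapter on $\RP^{[d]}$) by exactly the same product-system argument you give.
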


\begin{de} \index{${\widetilde{GP}}_r$}
For $r\in \mathbb{N}$, we define
$${\widetilde {GP}}_r=\{ p(n)\in GP_r: \{ n\in \mathbb{Z}: p(n)\ (\text{mod}\ \Z)\in (-\epsilon,\epsilon)\}\in
\mathcal{F}_{r,0} \text{ for any }\epsilon>0\}.$$
\end{de}
\begin{rem}\label{rem-8-15} It is clear that for $p(n)\in GP_r$,
$p(n)\in {\widetilde {GP}}_r$ if and only if $-p(n)\in {\widetilde
{GP}}_r$. Since $\mathcal{F}_{r,0}$ is a filter, if
$p_1(n),p_2(n),\ldots,p_k(n)\in {\widetilde {GP}}_r$ then
$$p_1(n)+p_2(n)+\ldots+p_k(n)\in {\widetilde {GP}}_r.$$ Moreover by the definition
of ${\widetilde {GP}}_d$, we know that $\mathcal{F}_{d,0}\supset
\mathcal{F}_{GP_d}$ if and only if ${\widetilde {GP}}_d=GP_d$.
\end{rem}

\begin{lem}\label{lem-8-equi-new} Let $p(n),q(n)\in GP_d$ with $p(n)\simeq_d q(n)$. Under the assumption \eqref{as-d-1},
$p(n)\in {\widetilde {GP}}_d$ if and only if $q(n)\in {\widetilde
{GP}}_d$.
%
\end{lem}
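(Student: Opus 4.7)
The plan is to mimic the proof of Lemma \ref{lem-8-equi}, replacing the role of $\mathcal{F}_{SGP_d}$ and Lemma \ref{lem1-1} by the family $\mathcal{F}_{d,0}$ and Lemma \ref{lem-8-11}. Two elementary facts are needed: first, that $\mathcal{F}_{d,0}$ is a filter (if $A_i \supset N_{T_i}(x_i,U_i)$ with $(X_i,T_i)$ a $d$-step nilsystem for $i=1,2$, then $A_1 \cap A_2 \supset N_{T_1\times T_2}((x_1,x_2),U_1\times U_2)$ in the product $d$-step nilsystem), and second, that $\mathcal{F}_{d-1,0}\subset \mathcal{F}_{d,0}$, since a $(d{-}1)$-step nilsystem is in particular a $d$-step nilsystem.

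By symmetry of $\simeq_d$ (the classes $GP_{d-1}$ and $\mathcal{W}_d$ are closed under negation), it suffices to prove one direction: assume $p\in \widetilde{GP}_d$ and $q(n) = p(n) + h_1(n) + h_2(n) \pmod{\mathbb{Z}}$ with $h_1 \in GP_{d-1}$ and $h_2 \in \mathcal{W}_d$, and derive $q \in \widetilde{GP}_d$. Fix $\epsilon>0$. By the triangle inequality for $\|\cdot\|$,
\begin{equation*}
\{n\in\mathbb{Z}: \|q(n)\|<\epsilon\} \supset A_p \cap A_{h_1} \cap A_{h_2},
\end{equation*}
where $A_f := \{n\in\mathbb{Z}: \|f(n)\|<\epsilon/3\}$.

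Now I bound each factor. By hypothesis $p\in \widetilde{GP}_d$, so $A_p \in \mathcal{F}_{d,0}$. Since $h_1\in GP_{d-1}$, the set $A_{h_1}$ belongs to $\mathcal{F}_{GP_{d-1}}$, hence by the inductive assumption \eqref{as-d-1} it lies in $\mathcal{F}_{d-1,0}\subset \mathcal{F}_{d,0}$. Finally, Lemma \ref{lem-8-11} (applied under \eqref{as-d-1}) gives $A_{h_2} \in \mathcal{F}_{d-1,0}\subset \mathcal{F}_{d,0}$. Using that $\mathcal{F}_{d,0}$ is a filter, $A_p\cap A_{h_1}\cap A_{h_2}\in \mathcal{F}_{d,0}$, and the hereditary upward property gives $\{n: \|q(n)\|<\epsilon\}\in \mathcal{F}_{d,0}$. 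As $\epsilon>0$ was arbitrary, $q\in \widetilde{GP}_d$, completing the proof.

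There is no real obstacle here; the content of the lemma is entirely pushed into the inductive hypothesis \eqref{as-d-1} and into Lemma \ref{lem-8-11}, together with the fact that $\mathcal{F}_{d,0}$ is a filter extending $\mathcal{F}_{d-1,0}$. The only point to watch is that the perturbation $h_1 + h_2$ is controlled by pieces that already live in $\mathcal{F}_{d-1,0}$, which is precisely why the definition of $\simeq_d$ was set up with $h_1\in GP_{d-1}$ and $h_2\in \mathcal{W}_d$.
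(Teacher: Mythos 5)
Your proof is correct and takes the same approach as the paper's one-line argument (``This follows from Lemma \ref{lem-8-11} and the fact that $\mathcal{F}_{d,0}$ is a filter''), filling in the routine details: symmetry of $\simeq_d$, the triangle-inequality split $\{n:\|q(n)\|<\ep\}\supset A_p\cap A_{h_1}\cap A_{h_2}$, the containments $\mathcal{F}_{GP_{d-1}}\subset \mathcal{F}_{d-1,0}\subset \mathcal{F}_{d,0}$ from \eqref{as-d-1} and the step-monotonicity of nilsystems, and Lemma \ref{lem-8-11} for the $\mathcal{W}_d$ piece.
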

\begin{proof} This follows from Lemma \ref{lem-8-11} and the fact that $\mathcal{F}_{d,0}$ is a filter.
\end{proof}

For $\alpha_1,\alpha_2,\ldots, \alpha_r\in \mathbb{R}, r\in \N$, we
define \index{$P(n; \alpha_1,\alpha_2, \ldots, \alpha_r)$} $P(n;\alpha_1,\alpha_2,\ldots,\alpha_r)$ as
\begin{align}\label{key-poly}
\sum \limits_{\ell=1}^r \sum \limits_{j_1,\ldots, j_\ell\in
\mathbb{N}\atop j_1+\ldots+j_\ell=r}(-1)^{\ell-1} L\Big(
\frac{n^{j_1}}{j_1!}\prod \limits_{r_1=1}^{j_1}\alpha_{r_1},\,
\frac{n^{j_2}}{j_2!}\prod \limits_{r_2=1}^{j_2}\alpha_{j_1+r_2},\,
\ldots,\, \frac{n^{j_\ell}}{j_\ell!}\prod
\limits_{r_\ell=1}^{j_\ell}\alpha_{a(\ell-1)+r_\ell} \Big)
\end{align}
where the definition of $L$ is given in \eqref{eq-de-L}, and $a(\ell)=\sum_{t=1}^{\ell}j_t$.

\begin{thm} \label{lem-trans}
Under the assumption \eqref{as-d-1}, the following properties are
equivalent:
\begin{enumerate}
\item  $\mathcal{F}_{d,0}\supset \mathcal{F}_{GP_d}$.

\item $P(n;\alpha_1,\alpha_2,\ldots,\alpha_d)\in {\widetilde {GP}}_d$ for any
$\alpha_1,\alpha_2,\ldots,\alpha_d\in \mathbb{R}$, that is
$$\{ n\in \mathbb{Z}: P(n;\alpha_1,\alpha_2,\ldots,\alpha_d)\ (\text{mod}\ \Z)
\in (-\epsilon, \epsilon) \}\in \mathcal{F}_{d,0}$$ for any
$\alpha_1,\alpha_2,\ldots,\alpha_d\in \mathbb{R}$ and $\epsilon>0$.

\item $\text{SGP}_d\subset {\widetilde {GP}}_d$.
\end{enumerate}
\end{thm}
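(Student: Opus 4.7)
The plan is to establish the equivalences via the two biconditionals $(1) \Leftrightarrow (3)$ and $(2) \Leftrightarrow (3)$, leveraging the inductive hypothesis \eqref{as-d-1} and the already-proved identity $\F_{GP_d} = \F_{SGP_d}$ (Theorem \ref{gpsgp}).

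For $(1) \Leftrightarrow (3)$, I would simply unfold the definition of ${\widetilde{GP}}_d$: the condition $\F_{d,0} \supset \F_{GP_d}$ is equivalent to $GP_d \subset {\widetilde{GP}}_d$, which since $\text{SGP}_d \subset GP_d$ trivially yields (3). For the converse, Theorem \ref{gpsgp} gives $\F_{GP_d} = \F_{SGP_d}$; since $\F_{SGP_d}$ is generated by finite intersections $\bigcap_{i=1}^k \{n : \|P_i(n)\| < \epsilon_i\}$ with $P_i \in \text{SGP}_d$, each of which lies in $\F_{d,0}$ by (3), the filter property of $\F_{d,0}$ closes up to give $\F_{SGP_d} \subset \F_{d,0}$, hence (1).

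For $(3) \Rightarrow (2)$, I would unpack the defining formula \eqref{key-poly}: each summand $(-1)^{\ell-1} L\bigl(\tfrac{n^{j_1}}{j_1!}\prod_{r_1}\alpha_{r_1},\ldots,\tfrac{n^{j_\ell}}{j_\ell!}\prod_{r_\ell}\alpha_{a(\ell-1)+r_\ell}\bigr)$ is, after absorbing its scalar coefficient into the leading slot, an element of $\text{SGP}_d$ (since $\sum j_t = d$). By (3) each summand lies in ${\widetilde{GP}}_d$, and Remark \ref{rem-8-15} propagates this through the finite sum $P(n;\alpha_1,\ldots,\alpha_d)$.

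The main direction is $(2) \Rightarrow (3)$. Given $q(n) = L(n^{j_1}a_1,\ldots,n^{j_\ell}a_\ell) \in \text{SGP}_d$ with $s := j_1 + \cdots + j_\ell \le d$: if $s < d$, then $q \in GP_{d-1}$ and the inductive hypothesis \eqref{as-d-1} combined with the trivial inclusion $\F_{d-1,0} \subset \F_{d,0}$ (every $(d-1)$-step nilsystem is a $d$-step nilsystem) immediately gives $q \in {\widetilde{GP}}_d$. When $s = d$, I would specialize $\alpha_1,\ldots,\alpha_d$ in $P(n;\alpha_1,\ldots,\alpha_d)$ so that the summand corresponding to the partition $(j_1,\ldots,j_\ell)$ reproduces $\pm q(n)$ up to a nonzero constant: as a base case, choosing $\alpha_1 = d!\,\alpha,\ \alpha_2 = \cdots = \alpha_d = 1$ and using $\lceil n \rceil = n$ on $\Z$ collapses $P$ to the pure monomial $\alpha n^d$, proving this simplest $\ell=1$ instance lies in ${\widetilde{GP}}_d$. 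I then proceed by induction on a refinement order for partitions of $d$ (equivalently, on $\ell$): having placed all $\text{SGP}_d$ elements with partitions coarser than $(j_1,\ldots,j_\ell)$ in ${\widetilde{GP}}_d$, a further specialization isolates $q$ up to a combination of such already-controlled terms plus residuals in $\mathcal{W}_d$ and $GP_{d-1}$, the latter discarded via Lemma \ref{lem-8-11} and Lemma \ref{lem-8-equi-new}. The main obstacle is precisely this combinatorial disentangling, since the non-target summands appearing in $P(n;\alpha_1,\ldots,\alpha_d)$ are themselves genuine $\text{SGP}_d$ elements rather than lower-degree or $\mathcal{W}_d$ terms; the whole induction on partition structure must be orchestrated so that each extraction step invokes only $\text{SGP}_d$ elements already shown to belong to ${\widetilde{GP}}_d$ at an earlier stage.
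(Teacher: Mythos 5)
Your plan for $(1)\Leftrightarrow(3)$ and $(3)\Rightarrow(2)$ is sound: the unfolding of ${\widetilde{GP}}_d$, the use of Theorem \ref{gpsgp}, and the observation that each summand of $P(n;\alpha_1,\ldots,\alpha_d)$ is (after absorbing the sign into the first slot) an $\text{SGP}_d$ element are all correct, and give a route to $(2)$ slightly different from (but no harder than) the paper's.

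The direction $(2)\Rightarrow(3)$ is where the real work lies, and there the proposal has a genuine error and a genuine gap. The base-case specialization $\alpha_1=d!\,\alpha$, $\alpha_2=\cdots=\alpha_d=1$ does \emph{not} collapse $P$ to $\alpha n^d$ modulo $\Z$. Already for $d=3$ the summand with partition $(1,2)$ becomes $-6\alpha\,n\,\lceil n^2/2\rceil$, which for odd $n$ contributes $-3\alpha n^3+3\alpha n$, leaving an uncontrolled $3\alpha n\pmod\Z$. What is needed is a specialization that forces the \emph{first} slot of every non-target $L$-term to be an integer (since $L(a_1,\ldots)=a_1\lceil\cdots\rceil\in\Z$ whenever $a_1\in\Z$); the choice $\alpha_t=t$ for $t<d$, $\alpha_d=d\,a_1$ accomplishes this because $\tfrac{n^{j_1}}{j_1!}\prod_{t=1}^{j_1}\alpha_t=n^{j_1}\in\Z$ whenever $j_1<d$.

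The inductive step is also mischaracterized. You describe a "refinement order (equivalently, on $\ell$)", but these are not equivalent (e.g.\ $(2,1,1)$ and $(1,2,1)$ have the same length), and what the argument actually requires is the lexicographic total order on compositions of $d$. More importantly, the mechanism you propose — that the non-target terms reduce to "residuals in $\mathcal{W}_d$ and $GP_{d-1}$" — is not what happens, and I do not see how to make it work. For a target composition $\mathbf{i}$ and the appropriate specialization, the summands corresponding to compositions $\mathbf{j}>\mathbf{i}$ are handled directly by the inductive hypothesis, and those with $\mathbf{j}<\mathbf{i}$ either become integers outright, or — this is the key trick you omit — have some interior slot $\tfrac{n^{j_u}}{j_u!}\prod\alpha$ that specializes to $n^{j_u}\in\Z$, so that $\lceil L(n^{j_u}\beta_u,\ldots)\rceil=L(n^{j_u}\beta_u,\ldots)$ and adjacent slots can be merged, turning the $L$-term into one for the strictly coarser composition $(j_1,\ldots,j_{u-1}+j_u,\ldots,j_\ell)>\mathbf{i}$, again handled by induction. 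Without this integer-slot collapse, the non-target summands for compositions below $\mathbf{i}$ remain genuine degree-$d$ special generalized polynomials and cannot be discarded via Lemma \ref{lem-8-11} or Lemma \ref{lem-8-equi-new}.
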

\begin{proof} $(1)\Rightarrow(2)$. Assume $\mathcal{F}_{d,0}\supset \mathcal{F}_{GP_d}$.
By the definition of ${\widetilde {GP}}_d$, we know that
$\mathcal{F}_{d,0}\supset \mathcal{F}_{GP_d}$ if and only if
${\widetilde {GP}}_d=GP_d$. Particularly
$P(n;\alpha_1,\alpha_2,\ldots,\alpha_d)\in {\widetilde {GP}}_d$ for
any $\alpha_1,\alpha_2,\ldots,\alpha_d\in \mathbb{R}$.

\medskip
$(3)\Rightarrow (1)$. Assume that $\text{SGP}_d\subset {\widetilde
{GP}}_d$. Then $\mathcal{F}_{d,0}\supset \mathcal{F}_{SGP_d}$.
Moveover $\mathcal{F}_{d,0}\supset\F_{SGP_d}=\F_{GP_d}$ by Theorem
\ref{gpsgp}.

\medskip
$(2)\Rightarrow (3)$.  Assume that
$P(n;\alpha_1,\alpha_2,\ldots,\alpha_d)\in {\widetilde {GP}}_d$ for
any $\alpha_1,\alpha_2,\ldots,\alpha_d\in \mathbb{R}$. We define
$$\Sigma_d=\{ (j_1,j_2,\ldots,j_\ell): \ell\in \{1,2,\ldots,d\}, j_1,j_2,\ldots,
j_\ell\in \mathbb{N} \text{ and }\sum \limits_{t=1}^\ell j_t=d\}. $$
For $(j_1,j_2,\ldots,j_\ell),(r_1,r_2,\ldots,r_s)\in \Sigma_d$, we
say $(j_1,j_2,\ldots,j_\ell)>(r_1,r_2,\ldots,r_s)$ if there exists
$1\le t\le \ell$ such that $j_t>r_t$ and $j_i=r_i$ for $i<t$.
Clearly $(\Sigma_d,>)$ is a totally ordered set with the maximal
element $(d)$ and the minimal element $(1,1,\ldots,1)$.

For ${\bf j}=(j_1,j_2,\ldots,j_\ell)\in \Sigma_d$, put
$$\mathcal{L}({\bf j})=\{L(n^{j_1}a_1,\ldots,n^{j_\ell}a_\ell):a_1,\ldots,a_\ell\in \mathbb{R}\}.$$
Now, we have

\medskip
\noindent{\bf Claim:} $\mathcal{L}({\bf s})\subset {\widetilde
{GP}}_d$ for each ${\bf s}\in \Sigma_d$.

\begin{proof} We do induction for ${\bf s}$ under the order $>$.
First, consider the case when ${\bf s}=(d)$. Given  $a_1\in
\mathbb{R}$, we take $\alpha_1=1,\alpha_2=2,\ldots ,\alpha_{d-1}=d-1$
and $\alpha_d=d a_1$. Then for any $1\le j_1\le d-1$,
$\frac{n^{j_1}}{j_1!}\prod \limits_{t=1}^{j_1} \alpha_t \in
\mathbb{Z}$ for $n\in \mathbb{Z}$. Thus
$$P(n;\alpha_1,\alpha_2,\ldots,\alpha_d)=L(\frac{n^d}{d!}\prod
\limits_{t=1}^{d} \alpha_t)=L(n^d a_1) \ (\text{mod}\ \Z)$$ for any
$n\in \mathbb{Z}$. Hence $L(n^d a_1)\in {\widetilde {GP}}_d$ since
$P(n;\alpha_1,\alpha_2,\ldots,\alpha_d)\in {\widetilde {GP}}_d$.
Since $a_1$ is arbitrary, we conclude that $\mathcal{L}((d))\subset
{\widetilde {GP}}_d$.

Assume that for any ${\bf s}>{\bf i}=(i_1,\ldots,i_k)\in \Sigma_d$,
we have $\mathcal{L}({\bf s})\subset {\widetilde {GP}}_d$. Now
consider the case when  ${\bf s}={\bf i}=(i_1,\ldots,i_k)$. There
are two cases.

The first case is $k=d$, $i_1=i_2=\ldots =i_d=1$. Given
$a_1,a_2,\ldots,a_d \in \R$, by the assumption we have that for any
$(j_1,j_2,\ldots,j_{\ell})>{\bf i}$,
$\mathcal{L}((j_1,j_2,\ldots,j_{\ell}))\subset {\widetilde {GP}}_d$.
Thus
$$\sum \limits_{\ell=1}^{d-1} \sum \limits_{j_1,\ldots, j_\ell\in
\mathbb{N}\atop j_1+\ldots+j_\ell=r}(-1)^{\ell-1} L\Big(
\frac{n^{j_1}}{j_1!}\prod \limits_{r_1=1}^{j_1}a_{r_1},\,
\frac{n^{j_2}}{j_2!}\prod \limits_{r_2=1}^{j_2}a_{j_1+r_2},\,
\ldots,\, \frac{n^{j_\ell}}{j_\ell!}\prod
\limits_{r_\ell=1}^{j_\ell}a_{a(\ell-1)+r_\ell} \Big)$$
belongs to ${\widetilde {GP}}_d$ by the Remark \ref{rem-8-15}. This
implies  $$P(n;a_1,a_2,\ldots,a_d)-(-1)^{d-1} L(n a_1,n a_2,\ldots,n a_d)\in
{\widetilde {GP}}_d$$ by \eqref{key-poly}. Combining this with
$P(n;a_1,a_2,\ldots,a_d)\in {\widetilde {GP}}_d$, we have $$L(n
a_1,n a_2,\ldots,n a_d)\in {\widetilde {GP}}_d$$ by  Remark
\ref{rem-8-15}. Since $a_1,a_2,\ldots,a_d \in \R$ are arbitrary, we
get $\mathcal{L}({\bf i})\subset {\widetilde {GP}}_d$.

The second case is ${\bf i}>(1,1,\ldots,1)$. Given
$a_1,a_2,\ldots,a_k\in \mathbb{R}$, for $r=1,2,\ldots,k$, we put
$\alpha_{\sum_{t=1}^{r-1}i_t+h}=h$ for $1\le h\le i_r-1$ and
$\alpha_{\sum_{t=1}^{r-1}i_t+i_r}=i_r a_r$.

By the assumption, for $(j_1,j_2,\ldots,j_{\ell})>{\bf i}$,
 $$L\Big(
\frac{n^{j_1}}{j_1!}\prod \limits_{r_1=1}^{j_1}\alpha_{r_1},\,
\frac{n^{j_2}}{j_2!}\prod \limits_{r_2=1}^{j_2}\alpha_{j_1+r_2},\,
\ldots,\, \frac{n^{j_\ell}}{j_\ell!}\prod
\limits_{r_\ell=1}^{j_\ell}\alpha_{a(\ell-1)+r_\ell}
\Big)\in {\widetilde {GP}}_d.$$

For $(j_1,j_2,\ldots,j_\ell)<{\bf i}$, there exists $1\le u\le k$
such that $j_t=i_t$ for $1\le t\le u-1$ and $i_{u}>j_u$. Then
\begin{equation}\label{8-interger}
\frac{n^{j_u}}{j_u!}\prod
\limits_{r_u=1}^{j_u}\alpha_{a(u-1)+r_u}=n^{j_u}.
\end{equation}
When $u=1$,  by \eqref{8-interger}, $$L\Big(
\frac{n^{j_1}}{j_1!}\prod \limits_{r_1=1}^{j_1}\a_{r_1},\ldots,\,
\ldots,\, \frac{n^{j_\ell}}{j_\ell!}\prod
\limits_{r_\ell=1}^{j_\ell}\a_{a(\ell-1)+r_\ell}
\Big) \in \mathbb{Z}$$ for any $n\in \mathbb{Z}$. Hence
$$L\Big(
\frac{n^{j_1}}{j_1!}\prod \limits_{r_1=1}^{j_1}\a_{r_1},\ldots,\,
\ldots,\, \frac{n^{j_\ell}}{j_\ell!}\prod
\limits_{r_\ell=1}^{j_\ell}\a_{a(\ell-1)+r_\ell}
\Big)\in \widetilde{GP}_d.$$ When $u>1$, write
$\beta_v=\frac{1}{j_v!}\prod_{r_v=1}^{j_v}\alpha_{a(v-1)+r_v}$ for
$v=1,2,\ldots,\ell$. Then $\beta_u=1$ and $$\lceil
L(n^{j_u}\beta_u,n^{j_{u+1}}\beta_{u+1}\, \ldots,\,
n^{j_\ell}\beta_\ell)\rceil=L(n^{j_u}\beta_u,n^{j_{u+1}}\beta_{u+1}\,
\ldots,\, n^{j_\ell}\beta_\ell).$$ Moreover,
\begin{align*}&L\Big(\frac{n^{j_1}}{j_1!}\prod \limits_{r_1=1}^{j_1}\a_{r_1},\ldots,\frac{n^{j_u}}{j_u!}\prod
\limits_{r_u=1}^{j_u}\alpha_{a(u-1)+r_\ell},\, \ldots,\, \frac{n^{j_\ell}}{j_\ell!}\prod
\limits_{r_\ell=1}^{j_\ell}\a_{a(\ell-1)+r_\ell} \Big)\\
&=L\left(n^{j_1}\beta_1,\ldots,n^{j_u}\beta_u,\, \ldots,\,n^{j_\ell}\beta_\ell \right)\\
&=L\left(n^{j_1}\beta_1,\ldots,n^{j_{u-1}}\beta_{u-1}\lceil L(n^{j_u}\beta_u,
\ldots,\, n^{j_\ell}\beta_\ell)\rceil \right)
\end{align*} which is equal to
\begin{align*} &L\left( n^{j_1}\beta_1,\ldots,n^{j_{u-1}}\beta_{u-1}
L(n^{j_u}\beta_u,n^{j_{u+1}}\beta_{u+1}\,
\ldots,\, n^{j_\ell}\beta_\ell)\right)\\
&=L\left( n^{j_1}\beta_1,\ldots,n^{j_{u-1}+j_u}\beta_{u-1}\beta_u
\lceil L(n^{j_{u+1}}\beta_{u+1}\, \ldots,\,
n^{j_\ell}\beta_\ell)\rceil \right)\\
&=L\left( n^{j_1}\beta_1,\ldots,n^{j_{u-1}+j_u}\beta_{u-1}\beta_u,
n^{j_{u+1}}\beta_{u+1}\, \ldots,\, n^{j_\ell}\beta_\ell \right) \in
{\widetilde {GP}}_d
\end{align*}
since $(j_1,\ldots,j_{u-2},j_{u-1}+j_u,j_{u+1},\ldots,j_\ell)>{\bf
i}$.

Summing up for any ${\bf j}=(j_1,\ldots,j_\ell)\in \Sigma_d$ with
${\bf j}\neq {\bf i}$, we have
$$L\Big( \frac{n^{j_1}}{j_1!}\prod
\limits_{r_1=1}^{j_1}\a_{r_1},\ldots,\frac{n^{j_u}}{j_u!}\prod
\limits_{r_u=1}^{j_u}\alpha_{a(u-1)+r_\ell},\,
\ldots,\, \frac{n^{j_\ell}}{j_\ell!}\prod
\limits_{r_\ell=1}^{j_\ell}\a_{a(\ell-1)+r_\ell} \Big)
\in {\widetilde {GP}}_d.$$ Combining this with
$P(n;\alpha_1,\ldots,\alpha_d)\in {\widetilde {GP}}_d$, we have
\begin{align*}
&\hskip0.6cm L\Big( n^{i_1}a_1,\, n^{i_2}a_2,\, \ldots,\,
n^{i_k}a_k \Big)\\
&= L\Big( \frac{n^{i_1}}{i_1!}\prod
\limits_{r_1=1}^{i_1}\alpha_{r_1},\, \frac{n^{i_2}}{i_2!}\prod
\limits_{r_2=1}^{i_2}\alpha_{i_1+r_2},\, \ldots,\,
\frac{n^{i_k}}{i_k!}\prod
\limits_{r_k=1}^{i_k}\alpha_{\sum_{t=1}^{k-1}i_t+r_k} \Big)\in
{\widetilde {GP}}_d
\end{align*}
by \eqref{key-poly} and Remark \eqref{rem-8-15}. Since
$a_1,\ldots,a_k \in \R$ are arbitrary, $\mathcal{L}({\bf i})\subset
{\widetilde {GP}}_d$.
\end{proof}

Finally, since $\text{SGP}_d=\bigcup_{{\bf j}\in \Sigma_d}
\mathcal{L}({\bf j})$, we have $\text{SGP}_d\subset {\widetilde
{GP}}_d$ by the above Claim.
\end{proof}


\subsection{Proof of Theorem B(2)}

We are now ready to give the proof of the Theorem B(2). As we said
before, we will use induction to show Theorem B(2). Firstly, for
$d=1$, since $\mathcal{F}_{GP_1}=\mathcal{F}_{SGP_1}$ and
$\mathcal{F}_{1,0}$ is a filter, it is sufficient to show for any
$a\in \mathbb{R}$ and $\epsilon>0$,
$$\{n\in \mathbb{Z}: an \ (\text{mod}\ \Z)\in (-\epsilon,\epsilon)\}\in \mathcal{F}_{1,0}.$$
This is obvious since the rotation on the unit circle is a 1-step
nilsystem.

\medskip
Now we assume that $\mathcal{F}_{d-1,0}\supset
\mathcal{F}_{GP_{d-1}}$, i.e. the the assumption \eqref{as-d-1}
holds. By Theorem \ref{lem-trans}, to show $\mathcal{F}_{d,0}\supset
\mathcal{F}_{GP_d}$, it remains to prove that
$P(n;\alpha_1,\alpha_2,\ldots,\alpha_d)\in {\widetilde {GP}}_d$ for
any $\alpha_1,\alpha_2,\ldots,\alpha_d\in \mathbb{R}$, that is
$$\{ n\in \mathbb{Z}: P(n;\alpha_1,\alpha_2,\ldots,\alpha_d)\ (\text{mod}\ \Z)
\in (-\epsilon, \epsilon) \}\in \mathcal{F}_{d,0}$$ for any
$\alpha_1,\alpha_2,\ldots,\alpha_d\in \mathbb{R}$ and $\epsilon>0$.

Let $\alpha_1,\alpha_2,\ldots,\alpha_d\in \mathbb{R}$ and choose
${\bf x}=(x_i^k)_{1\le k\le d, 1\le i\le d-k+1}\in
\mathbb{R}^{d(d+1)/2}$ with $x_i^1=\alpha_i$ for $i=1,2,\ldots,d$
and $x_i^k=0$ for $2\le k\le d$ and $1\le i\le d-k+1$. Then
{\footnotesize
$$
A=\M({\bf x})=\left(
  \begin{array}{cccccccc}
    1 & \alpha_1 & 0                &\ldots & 0 &0\\
    0 & 1     & \alpha_2            &\ldots &  0&0\\
    \vdots & \vdots  &  \vdots & \vdots &\vdots\\
    0 & 0    &0                   & \ldots & \alpha_{d-1} &0 \\
    0 & 0    &0                   & \ldots & 1 &\alpha_d \\
    0 & 0    &0                 & \ldots & 0  & 1
  \end{array}
\right)
$$}
{ For $n\in \mathbb{\mathbb{Z}}$, if ${\bf x}(n)=(x_i^k(n))_{1\le k\le d, 1\le
i\le d-k+1}\in \mathbb{R}^{d(d+1)/2}$ satisfies $\M({\bf
x}(n))=A^n$, then $x_i^k(n)$ is a polynomial of $n$ for $1\le k\le d$ and $1\le i\le d-k+1$.
Moreover by Lemma \ref{lem-8-ite} and Remark
\ref{rem-8-ite}, when $n\in \mathbb{Z}$
\begin{equation}\label{thm-8-11-0}
x_i^k(n)=\tbinom{n}{k} P_k({\bf
x};i,k)=\tbinom{n}{k}x_i^1x_{i+1}^1\ldots
x_{i+k-1}^1=\tbinom{n}{k}\alpha_i \alpha_{i+1}\ldots \alpha_{i+k-1}
\end{equation}
for $1\le k\le d$ and $1\le i\le d-k+1$, where $\tbinom{n}{k}=\frac{n(n-1)\cdots(n-k+1)}{k!}$.}

Now we define $f_i^1(n)=\lceil x_i^1(n)\rceil=\lceil n\alpha_i
\rceil$ for $1\le i\le d$ and inductively for $k=2,3,\ldots,d$
define
\begin{equation}\label{sec8-dtgs-1}
f_i^k(n)=\bigg\lceil x_i^k(n)-\sum
\limits_{j=1}^{k-1}x_i^{k-j}(n)f_{i+k-j}^j(n)  \bigg\rceil
\end{equation}
for $1\le i \le d-k+1$. Then we define
$$z_i^1(n)=x_i^1(n)-f_i^1(n)$$ for $1\le i\le d$ and inductively for
$k=2,3,\ldots,d$ define
\begin{equation}\label{sec8-thm-eq-1-old}
z_i^k(n)=x_i^k(n)-\Big( \sum
\limits_{j=1}^{k-1}x_i^{k-j}(n)f_{i+k-j}^j(n) \Big)-f_i^k(n)
\end{equation}
for $1\le i \le d-k+1$.

It is clear that $z_i^k(n)\in GP_k$ for $1\le k\le d$ and $1\le i\le
d-k+1$. First, we have

\medskip
\noindent{\bf Claim:} $P(n;\alpha_1,\alpha_2,\ldots,\alpha_d)
\simeq_d z_1^d(n)$.

Since the proof of the Claim is long, the readers find the proof in
the following subsection. Now we are going to show $z_1^d(n) \in
{\widetilde {GP}}_d$.

Let $X=\G_d/\Gamma$  and $T$ be the nilrotation induced by $A\in\G_d$,
i.e. $B\Gamma\mapsto AB\Gamma$ for $B\in\G_d$. Since $\G_d$ is a
$d$-step nilpotent Lie group and $\Gamma$ is a uniform subgroup of
$\G_d$, $(X,T)$ is a $d$-step nilsystem.

Let $I$ be  the $(d+1)\times (d+1)$ identity matrix. For a given $\eta>0$, choose
the open neighborhood $V=\{C\in \G_d: \|C-I\|_\infty< \min\{\frac{1}{2},\eta\}\}$ of $I$ in $\G_d$.
Let $x_0=\Gamma\in X$ and $U=V\Gamma$. Then $U$ is an open neighborhood of $x_0$ in $X$.
Put $$ S=\{ n\in \mathbb{\mathbb{Z}}: A^n\Gamma \in U\}=\{ n\in
\mathbb{Z}:T^n x_0 \in U\}.$$
Then $S\in \mathcal{F}_{d,0}$. In the following we are
going to show that
$$\{ m\in \mathbb{Z}: z_1^d(m)\ (\text{mod} \ \Z)\in (-\eta,\eta) \}\supset S.$$
This clearly implies that $\{ m\in \mathbb{Z}: z_1^d(m)\ (\text{mod}
\ \Z)\in (-\eta,\eta) \}\in \mathcal{F}_{d,0}$ since $S\in
\mathcal{F}_{d,0}$. As $\eta>0$ is arbitrary, we conclude that
$z_1^d(n) \in {\widetilde {GP}}_d$.

Given $n\in S$, one has $A^n\Gamma\in V\Gamma$. Thus there exists
$B_n\in \Gamma$ such that $A^nB_n\in V$, that is,
\begin{equation}\label{me-eq-222}
\|A^nB_n-I\|_\infty<\min\Big\{\frac{1}{2},\eta\Big\}.
\end{equation}
Take
${\bf h}(n)=(-h_i^k(n))_{1\le k\le d, 1\le i\le d-k+1}\in
\mathbb{Z}^{d(d+1)/2}$ with $\M({\bf h}(n))=B_n$.
Let ${\bf y}(n)=(y_i^k(n))_{1\le k\le d, 1\le i\le d-k+1}\in
\mathbb{R}^{d(d+1)/2}$ such that
$$\M({\bf y}(n))=A^nB_n=\M({\bf x}(n))\M({\bf h}(n)).$$
By \eqref{sec8-4-eq-1}
\begin{equation}\label{sec8-thm-eq-1}
y_i^k(n)=x_i^k(n)-\Big( \sum
\limits_{j=1}^{k-1}x_i^{k-j}(n)h_{i+k-j}^j(n) \Big)-h_i^k(n)
\end{equation}
for $1\le k \le d$ and $1\le i \le d-k+1$. Thus
\begin{equation}\label{sec8-thm-eq-2}
|y_i^k(n)|<\min\{\frac{1}{2},\eta\}
\end{equation}
 for $1\le k \le d$ and $1\le i \le d-k+1$ by \eqref{me-eq-222}.
Hence $h_i^1(n)=\lceil x_i^1(n)\rceil=\lceil n\alpha_i \rceil$ for
$1\le i\le d$ and
\begin{equation}\label{sec8-dtgs-2}
h_i^k(n)=\bigg \lceil x_i^k(n)- \sum
\limits_{j=1}^{k-1}x_i^{k-j}(n)h_{i+k-j}^j(n) \bigg \rceil
\end{equation}
for $2\le k \le d$ and $1\le i \le d-k+1$.

Since $h_i^1(n)=\lceil n\alpha_i \rceil=f_i^1(n)$ for $1\le i\le d$,
one has $h_i^k(n)=f_i^k(n)$ for $2\le k \le d$ and $1\le i \le
d-k+1$ by \eqref{sec8-dtgs-1} and \eqref{sec8-dtgs-2}. Moreover by
\eqref{sec8-thm-eq-1-old} and \eqref{sec8-thm-eq-1}, we know
$z_i^k(n)=y_i^k(n)$ for $2\le k \le d$ and $1\le i \le d-k+1$.
Combining this with \eqref{sec8-thm-eq-2},
$|z_i^k(n)|<\min\{\frac{1}{2},\eta\}$ for $1\le k \le d$ and $1\le i
\le d-k+1$. Particularly, $|z_1^d(n)|<\eta$. Thus $$n\in \{ m\in
\mathbb{Z}: z_1^d(m)\ (\text{mod} \ \Z)\in (-\eta,\eta)\},$$ which
implies that $ \{ m\in \mathbb{Z}: z_1^d(m)\ (\text{mod} \ \Z)\in
(-\eta,\eta)\}\supset S$. That is, $z_1^d(n) \in {\widetilde
{GP}}_d$.

Finally using the Claim and the fact that $z_1^d(n) \in {\widetilde
{GP}}_d$ we have $P(n;\alpha_1,\alpha_2,\ldots,\alpha_d)\in
{\widetilde {GP}}_d$ by Lemma \ref{lem-8-equi-new}. This ends the
proof, i.e. we have proved $\mathcal{F}_{d,0}\supset
\mathcal{F}_{GP_d}$.

\subsection{Proof of the Claim} Let $$u_i^k(n)=z_i^k(n)+f_i^k(n)=x_i^k(n)-\sum
\limits_{j=1}^{k-1}x_i^{k-j}(n)f_{i+k-j}^j(n) $$ for
 $1\le k\le d$
and $1\le i\le d-k+1$. Then $$f_i^k(n)=\lceil u_i^k(n) \rceil $$ for
 $1\le k\le d$
and $1\le i\le d-k+1$.

We define $U(n;j_1)=\frac{n^{j_1}}{j_1!}\prod_{r=1}^{j_1}\alpha_r$
for $1\le j_1\le d$ and recall that $a(\ell)=\sum_{t=1}^\ell j_t$. Then inductively for $\ell=2,3,\ldots,d$ we
define
\begin{align*}
U(n;j_1,j_2,\ldots,j_\ell)&=(U(n;j_1,\ldots,j_{\ell-1})-\lceil
U(n;j_1,\ldots,j_{\ell-1})\rceil )
\frac{n^{j_\ell}}{j_\ell!}\prod_{r=1}^{j_\ell}
\alpha_{a(\ell-1)+r}\\
&=(U(n;j_1,\ldots,j_{\ell-1})-\lceil
U(n;j_1,\ldots,j_{\ell-1})\rceil
)L(\frac{n^{j_\ell}}{j_\ell!}\prod_{r_{\ell}=1}^{j_\ell}
\alpha_{a(\ell-1)+r_{\ell}})
\end{align*}
for $j_1,j_2,\ldots,j_\ell\ge 1$ and $j_1+\ldots+j_\ell\le d$ (see
\eqref{eq-de-L} for the definition of $L$).

Next, $U(n;d)=\frac{n^{d}}{d!}\prod
\limits_{r=1}^{d}\alpha_r=L(\frac{n^{d}}{d!}\prod
\limits_{r=1}^{d}\alpha_r)$ and for $2\le \ell \le d$,
$j_1,j_2,\ldots,j_\ell\in \mathbb{N}$ with
$j_1+j_2+\ldots+j_\ell=d$, by Lemma \ref{lem8-13}(1)

\begin{align*}
U(n;j_1,j_2,\ldots,j_\ell)&=(U(n;j_1,\ldots,j_{\ell-1})-\lceil
U(n;j_1,\ldots,j_{\ell-1}) \rceil)
L(\frac{n^{j_\ell}}{j_\ell!}\prod_{r_{\ell}=1}^{j_\ell}
\alpha_{a(\ell-1)+r_{\ell}})\\&\simeq_d
U(n;j_1,\ldots,j_{\ell-1}) \lceil
L(\frac{n^{j_\ell}}{j_\ell!}\prod_{r_{\ell}=1}^{j_\ell}
\alpha_{a(\ell-1)+r_{\ell}})\rceil
\end{align*}
which is equal to
\begin{align*}
&(U(n;j_1,\ldots,j_{\ell-2})-\lceil U(n;j_1,\ldots,j_{\ell-2}) \rceil)\times
L(\frac{n^{j_{\ell-1}}}{j_{\ell-1}!}\prod_{r_{\ell-1}=1}^{j_{\ell-1}}\alpha_{a(\ell-2)+r_{\ell-1}},
\frac{n^{j_\ell}}{j_\ell!}\prod_{r_{\ell}=1}^{j_\ell}
\alpha_{a(\ell-1)+r_{\ell}})\\
&\simeq_d U(n;j_1,\ldots,j_{\ell-2}) \lceil
L(\frac{n^{j_{\ell-1}}}{j_{\ell-1}!}\prod_{r_{\ell-1}=1}^{j_{\ell-1}}\alpha_{a(\ell-2)+r_{\ell-1}},
\frac{n^{j_\ell}}{j_\ell!}\prod_{r_{\ell}=1}^{j_\ell}
\alpha_{a(\ell-1)+r_{\ell}})\rceil.
\end{align*}
Continuing the above argument we have
$$U(n;j_1,j_2,\ldots,j_\ell)\simeq_d L\Big(
\frac{n^{j_1}}{j_1!}\prod \limits_{r_1=1}^{j_1}\alpha_{r_1},\,
\frac{n^{j_2}}{j_2!}\prod \limits_{r_2=1}^{j_2}\alpha_{j_1+r_2},\,
\ldots,\, \frac{n^{j_\ell}}{j_\ell!}\prod
\limits_{r_\ell=1}^{j_\ell}\alpha_{a(\ell-1)+r_\ell}
\Big).$$

That is, for $1\le \ell \le d$, $j_1,j_2,\ldots,j_\ell\in
\mathbb{N}$ with $j_1+j_2+\ldots+j_\ell=d$,

\begin{equation}\label{eq-u=l}
U(n;j_1,j_2,\ldots,j_\ell) \simeq_d L\Big(
\frac{n^{j_1}}{j_1!}\prod \limits_{r_1=1}^{j_1}\alpha_{r_1},\,
\frac{n^{j_2}}{j_2!}\prod \limits_{r_2=1}^{j_2}\alpha_{j_1+r_2},\,
\ldots,\, \frac{n^{j_\ell}}{j_\ell!}\prod
\limits_{r_\ell=1}^{j_\ell}\alpha_{a(\ell-1)+r_\ell}
\Big).
\end{equation}

Thus using (\ref{eq-u=l}) we have
\begin{equation}\label{eq-8-14}
P(n;\alpha_1,\alpha_2,\ldots,\alpha_d)\simeq_d\sum
\limits_{\ell=1}^d \sum \limits_{j_1,\ldots j_\ell\in
\mathbb{N}\atop j_1+\ldots+j_\ell=d}(-1)^{\ell-1}
U(n;j_1,j_2,\ldots,j_{\ell}).
\end{equation}

Next using Lemma \ref{lem8-13}(1), for any $j_1,\ldots,j_\ell\in
\mathbb{N}$ with $a(\ell)\le d-1$, we have
\begin{align*}
&U(n;j_1,\ldots,j_\ell)f_{1+a(\ell)}^{d-a(\ell)}(n)=
U(n;j_1,\ldots,j_\ell) \lceil u_{1+a(\ell)}^{d-a(\ell)} (n) \rceil\\
&\simeq_d \Big( U(n;j_1,\ldots,j_\ell)-\lceil
U(n;j_1,\ldots,j_\ell)\rceil \Big)
u_{1+a(\ell)}^{d-a(\ell)} (n)\\
&=\Big( U(n;j_1,\ldots,j_\ell)-\lceil U(n;j_1,\ldots,j_\ell)\rceil \Big) \times
\Big( x_{1+a(\ell)}^{d-a(\ell)}(n)-\sum \limits_{j_{\ell+1}=1}^{d-(a(\ell))-1}
x_{1+a(\ell)}^{j_{\ell+1}}(n) f_{1+a(\ell+1)}^{d-a(\ell+1)} (n) \Big)\\
&=\Big( U(n;j_1,j_2,\ldots,j_\ell)-\lceil U(n;j_1,j_2,\ldots,j_\ell)\rceil \Big) \times\\
&\hskip1.6cm \Bigg(\tbinom{n}{d-a(\ell)} \prod \limits_{r_{\ell+1}=1}^{d-a(\ell)}
\alpha_{a(\ell)+r_{\ell+1}}-\sum_{j_{\ell+1}=1}^{d-a(\ell)-1} \tbinom{n}{j_{\ell+1}} \prod
\limits_{r_{\ell+1}=1}^{j_{\ell+1}}\alpha_{a(\ell)+r_{\ell+1}} f_{1+a(\ell+1)}^{d-a(\ell+1)}(n) \Bigg)\\
&\simeq_d \Big( U(n;j_1,j_2,\ldots,j_\ell)-\lceil U(n;j_1,j_2,\ldots,j_\ell)\rceil \Big) \times \\
&\hskip1.6cm \Bigg( \frac{n^{d-a(\ell)}}{(d-a(\ell))!} \prod \limits_{r_{\ell+1}=1}^{d-a(\ell)}
\alpha_{a(\ell)+r_{\ell+1}}-\sum_{j_{\ell+1}=1}^{d-a(\ell)-1} \frac{n^{j_{\ell+1}}}{j_{\ell+1}!} \prod
\limits_{r_{\ell+1}=1}^{j_{\ell+1}}\alpha_{a(\ell)+r_{\ell+1}} f_{1+a(\ell+1)}^{d-a(\ell+1)}(n)
\Bigg)\\
&=U(n;j_1,\ldots,j_\ell,d-a(\ell))-\sum_{j_{\ell+1}=1}^{d-a(\ell)-1}U(n;j_1,\ldots,j_\ell,j_{\ell+1})f_{1+a(\ell+1)}^{d-a(\ell+1)}(n).
\end{align*}
Using this fact and Lemma \ref{lem8-13}(1), we have
\begin{align*}
z_1^d(n)&\simeq_d u_1^d(n)=x_1^d(n)- \sum
\limits_{j_1=1}^{d-1}x_1^{j_1}(n)f_{1+j_1}^{d-j_1}(n) \\
&=\tbinom{n}{d}\alpha_1 \alpha_2\ldots \alpha_d- \sum
\limits_{j_1=1}^{d-1}\tbinom{n}{j_1}\alpha_1 \alpha_2\ldots \alpha_{j_1} f_{1+j_1}^{d-j_1}(n)
\simeq_d U(n;d)-\sum
\limits_{j_1=1}^{d-1}U(n;j_1) f_{1+j_1}^{d-j_1}(n) \\
&\simeq_d U(n;d)-\Big( \sum
\limits_{j_1=1}^{d-1}(U(n;j_1,d-j_1)-\sum \limits_{j_2=1}^{d-j_1-1}
U(n;j_1,j_2) f_{1+j_1+j_2}^{d-(j_1+j_2)}(n)) \Big).
\end{align*}

Continuing this argument we obtain
\begin{align*}
z_1^d(n)&\simeq_d \sum \limits_{\ell=1}^d \sum
\limits_{j_1,\ldots,j_\ell\in \mathbb{N}\atop j_1+\ldots+j_{\ell}}
(-1)^{\ell-1} U(n;j_1,\ldots,j_\ell).
\end{align*}

Combining this with \eqref{eq-8-14}, we have proved the Claim.

\chapter{Generalized polynomials and recurrence sets: Proof of Theorem C}
\label{section-proof3}

In this chapter we will prove Theorem C. That is, we will show that
for $d\in\N$ and $F\in \F_{GP_d}$, there exist a minimal $d$-step
nilsystem $(X,T)$ and a nonempty open set $U$ such that
$$F\supset \{n\in\Z: U\cap T^{-n}U\cap \ldots\cap T^{-dn}U\neq \emptyset \}.$$


Let us explain the idea of the proof of Theorem C. Put
\begin{eqnarray}\label{nnd}\index{$\NN_d$}
& &\NN_d=\{B\subset \Z: \text{there are a minimal $d$-step
nilsystem $(X,T)$ and an open} \\ \nonumber & & \text{non-empty set
$U$ of $X$ with $B\supset \{n\in\Z: U\cap T^{-n}U\cap \ldots\cap
T^{-dn}U\neq \emptyset \}$.} \}
\end{eqnarray} Similar to the
proof of Theorem B(2) we first show that
$\mathcal{F}_{GP_d}\subset \NN_d$ if and only if $\{ n\in
\mathbb{Z}: ||P(n;\alpha_1,\ldots,\alpha_d)||<\epsilon\}\in \NN_d$
for any $\alpha_1,\ldots,\alpha_d\in \mathbb{R}$ and $\epsilon>0$.
We choose $(X,T)$ as the closure of the orbit of $\Gamma$ in
$\G_d/\Gamma$ (the nilrotation is induced by a matrix $A\in \G_d$),
define $U\subset X$ depending on a given $\ep>0$, put $S=\{n\in\Z:
\bigcap_{i=0}^dT^{-in}U\neq \emptyset\}$; and consider the most
right-corner entry $z_1^d(m)$ in $A^{nm}BC_m$ with $B\in\G_d$ and
$C_m\in\Gamma$ for a given $n\in S$ with $1\le m\le d$. We finish
the proof by showing $S\subset \{ n\in \mathbb{Z}:
||P(n;\alpha_1,\ldots,\alpha_d)||<\epsilon\}$ which implies that $\{
n\in \mathbb{Z}: ||P(n;\alpha_1,\ldots,\alpha_d)||<\epsilon\}\in
\NN_d$.

\section{A special case and preparation}

\subsection{The ordinary polynomial case}

To illustrate the idea of the proof of Theorem C, we first consider
the situation when the generalized polynomials are the ordinary
ones. That is, we want to explain if $p(n)$ is a polynomial of
degree $d$ with $p(0)=0$ and $\ep>0$, how we can find a $d$-step
nilsystem $(X,T)$, and a nonempty open set $U\subset X$ such that
\begin{equation}\label{polynomial}
\{n\in \Z: p(n)\ (\text{mod}\ \Z)\in (-\ep,\ep)\}\supset \{n\in\Z: U
\cap T^{-n}U \cap \ldots\cap T^{-dn}U\neq \emptyset \}.
\end{equation}
To do this define $T_{\alpha,d}:\T^d\lra \T^d$ by
$$T_{\alpha,d}(\theta_1,\theta_2,\ldots,\theta_d)=(\theta_1+\alpha, \theta_2+
\theta_1,\theta_3+ \theta_2, \ldots,\theta_d+ \theta_{d-1}),$$ where
$\alpha\in \mathbb{R}$. A simple computation yields that
\begin{align}\label{computation}
T_{\alpha,d}^n(\theta_1,\ldots,\theta_d)=(\theta_1+n\alpha,
n\theta_1+\theta_2+ \frac{1}{2}n(n-1)\alpha,\ldots,
\sum_{i=0}^d\tbinom{n}{d-i} \theta_i),
\end{align} where
$\theta_0=\alpha$, $n\in \mathbb{Z}$ and $\tbinom{n}{0}=1$,
$\tbinom{n}{i}:=\frac{\prod_{j=0}^{i-1} (n-j)}{i!}$ for
$i=1,2,\ldots,d$.

\medskip
We now prove (\ref{polynomial}) by induction. The case when $d=1$ is
easy, and we assume that for each polynomial of degree $\le d-1$
(\ref{polynomial}) holds. Now let $p(n)=\sum_{i=1}^d\alpha_in^i$
with $\alpha_i\in\R$. By induction for each $1\le i\le d-1$ there is
an $i$-step nilsystem $(X_i,T_i)$ and an open non-empty subset $U_i$
of $X_i$ such that
$$\{n\in\Z: \alpha_i n^i \ ({\rm mod}\ \Z)\in (-\tfrac{\ep}{d},\tfrac{\ep}{d})\}\supset
\{n\in\Z: U_i\cap T_i^{-n}U_i\cap \ldots\cap T_i^{-dn}U_i\neq
\emptyset \}.$$

By the Vandermonde's formula, we know
$$ \left(
  \begin{array}{ccccc}
    1 & 2 & 3 & \ldots & d \\
    1 & 2^2 & 3^2& \ldots & d^2 \\
    \vdots & \vdots & \vdots & \vdots & \vdots \\
    1 & 2^{d-1} & 3^{d-1} & \ldots & d^{d-1} \\
    1 & 2^d & 3^d & \ldots & d^d \\
  \end{array}
\right)$$ is a non-singular matrix. Hence there are integers
${\lambda} _1, {\lambda}_2,\ldots, {\lambda}_d$ and ${\lambda} \in
\N$ such that the following equation holds:
$$
\left(
  \begin{array}{ccccc}
    1 & 2 & 3 & \ldots & d \\
    1 & 2^2 & 3^2& \ldots & d^2 \\
    \vdots & \vdots & \vdots & \vdots & \vdots \\
    1 & 2^{d-1} & 3^{d-1} & \ldots & d^{d-1} \\
    1 & 2^d & 3^d & \ldots & d^d \\
  \end{array}
\right)\left(
         \begin{array}{c}
           {\lambda}_1 \\
            {\lambda}_2 \\
           \vdots \\
            {\lambda}_{d-1} \\
            {\lambda}_d \\
         \end{array}
       \right)=\left(
                 \begin{array}{c}
                   0 \\
                   0 \\
                   \vdots \\
                   0 \\
                   {\lambda} \\
                 \end{array}
               \right).
$$
That is,
\begin{equation}\label{a1}
\begin{split}
      & \sum_{m=1}^d {\lambda}_m m^j={\lambda}_1+{\lambda}_22^j+\ldots+ {\lambda}_dd^j=0, \ 1\le j\le d-1;\\
       & \sum_{m=1}^d {\lambda}_mm^d={\lambda}_1+{\lambda}_22^d+\ldots+{\lambda}_d d^d={\lambda}.
\end{split}
\end{equation}
Now let $T_d=T_{\frac{\alpha_d}{\lambda},d}$ and $Y_d=\T^d$. Let
$K_d=d!\sum_{i=1}^d|\lambda_i|$, $\ep_1>0$ with $2K_d\ep_1<\ep/d$ and
$U_d=(-\ep_1,\ep_1)^d$.

It is easy to see that if $n\in \{n\in\Z: U_d\cap T_d^{-n}U_d\cap
\ldots\cap T_d^{-dn}U_d\neq \emptyset \}$ then we know that there is
$(\theta_1,\ldots,\theta_d)\in U_d$ such that
$T_d^{in}(\theta_1,\ldots,\theta_d)\in U_d$ for each $1\le i\le d$.
Thus, by (\ref{computation}) considering the last coordinate we ge
that
{\footnotesize
\begin{align*}
\tbinom{n}{d}\theta_0+\tbinom{n}{d-1}\theta_1+\ldots+\tbinom{n}{0}\theta_d
\ (\text{\rm mod} \ \Z)&\in
(-\ep_1,\ep_1)\\
\tbinom{2n}{d}\theta_0+\tbinom{2n}{d-1}\theta_1+\ldots+\tbinom{2n}{0}\theta_d
\ (\text{\rm mod} \ \Z)&\in
(-\ep_1,\ep_1)\\
\ldots \ldots \ldots \ldots \ \ \ \ & \ldots\\
\tbinom{dn}{d}\theta_0+\tbinom{dn}{d-1}\theta_1+\ldots+\tbinom{dn}{0}\theta_d
\ (\text{\rm mod} \ \Z)&\in (-\ep_1,\ep_1),
\end{align*}}
where $\theta_0=\frac{\alpha_d}{\lambda}$. Multiplying
$\tbinom{in}{d}\theta_0+\tbinom{in}{d-1}\theta_1+\ldots+\tbinom{in}{0}\theta_d$
by $\lambda_id!$ and summing over $i=1,\ldots,d$ { we get that
$$ \sum_{j=1}^d\lambda_jd!\sum_{i=0}^d\tbinom{jn}{d-i}\theta_i=\theta_d(\sum_j \lambda_j) d! +\alpha_d n^d
 \ (\text{\rm mod} \ \Z)\in (-K_d\ep_1,K_d\ep_1).$$
Thus $\alpha_d n^d
 \ (\text{\rm mod} \ \Z)\in (-2K_d\ep_1,2K_d\ep_1)\subset (-\ep/d,\ep/d)$. }

Choose $x_i\in U_i$ for $1\le i\le d$. Let
$x=(x_1,x_2,\ldots,x_d)\in X_1\times \ldots\times X_d$ and $X$ be
the orbit closure of $x$ under $T=T_1\times T_2\ldots\times T_d$. Then $(X,T)$
is a $d$-step nilsystem. If we let $U=(U_1\times U_2\times
\ldots\times U_d)\cap X$, then we have (\ref{polynomial}).

\medskip
By the property of nilsystems and the discussion above it is easy to
see
\begin{rem}\label{corr} Let $k\in\N$, $q_i(x)$ be a polynomial of degree $d$ with
$q_i(0)=0$ and $\ep_i>0$ for $1\le i\le k$. Then there are a
$d$-step nilsystem $(X,T,\mu)$ and $B\subset X$ with $\mu(B)>0$ such
that
$$\bigcap_{i=1}^k\{n\in \Z: ||q_i(n)||<\ep_i \}\supset \{n\in\Z: \mu(B\cap
T^{-n}B\cap \ldots\cap T^{-dn}B)>0\}$$
\end{rem}


\subsection{Some preparation}

Recall that for $d\in \N$, $\NN_d$ is defined in (\ref{nnd}). Hence
Theorem C is equivalent to $$\F_{GP_d}\subset \NN_d.$$

\begin{lem}\label{lem-a1}
For each $d\in \N$, $\NN_d$ is a filter.
\end{lem}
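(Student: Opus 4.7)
The plan is to verify the four defining properties of a filter for $\NN_d$: (i) $\Z\in\NN_d$, (ii) $\emptyset\notin\NN_d$, (iii) hereditary upward, and (iv) closed under finite intersections. Property (iii) is immediate from the definition, since enlarging $B$ clearly preserves the containment. Property (i) follows by taking any trivial one-point nilsystem, and property (ii) follows because the set $\{n\in\Z: U\cap T^{-n}U\cap\ldots\cap T^{-dn}U\neq\emptyset\}$ always contains $n=0$ (the intersection being $U\neq\emptyset$), so every member of $\NN_d$ is nonempty.

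The real content is (iv). Given $B_1,B_2\in\NN_d$ witnessed respectively by minimal $d$-step nilsystems $(X_1,T_1),(X_2,T_2)$ and nonempty open sets $U_i\subset X_i$, I plan to pass to the product system $(X_1\times X_2,\,T_1\times T_2)$. This product is again a $d$-step nilsystem, since the product of two $d$-step nilpotent Lie groups (with the product of uniform subgroups) is $d$-step nilpotent. The product need not be minimal, so I will pick any point $p=(x_1,x_2)\in U_1\times U_2$ and take $Y:=\overline{\mathcal O(p,T_1\times T_2)}$. Using that every nilsystem is distal (hence every orbit closure is minimal) together with the standard fact that the orbit closure of a point in a nilsystem is itself a sub-nilmanifold (see Chapter \ref{section-nilsystem}, in particular the remark following Definition \ref{de-nilsystem}), the restricted system $(Y,T_1\times T_2)$ is a minimal $d$-step nilsystem. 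Set $V:=Y\cap(U_1\times U_2)$, a nonempty open subset of $Y$ since $p\in V$.

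It then remains to check that
\[
\{n\in\Z:V\cap (T_1\times T_2)^{-n}V\cap\ldots\cap(T_1\times T_2)^{-dn}V\neq\emptyset\}\subset B_1\cap B_2.
\]
Indeed, if $n$ lies in the left-hand side, there is $(y_1,y_2)\in Y$ with $(T_1\times T_2)^{jn}(y_1,y_2)\in V\subset U_1\times U_2$ for $j=0,1,\ldots,d$; projecting to each coordinate gives $y_i\in U_i$ and $T_i^{jn}y_i\in U_i$, so $n$ belongs to the recurrence set witnessing $B_i\in\NN_d$, hence $n\in B_1\cap B_2$. This shows $B_1\cap B_2\in\NN_d$ and completes the verification.

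The only non-routine ingredient is the assertion that the orbit closure in the product is a minimal $d$-step nilsystem; I expect this to be the main point to cite carefully, but it is a standard consequence of distality of nilsystems and the Leibman/Lesigne-type theorem on orbit closures already invoked in the paper's preliminary chapter, so no new technology is required.
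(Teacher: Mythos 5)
Your proof is correct and takes essentially the same route as the paper: pass to the product system $(X_1\times X_2, T_1\times T_2)$, take the orbit closure of a point to obtain a minimal $d$-step nilsystem, and observe that the recurrence set of a suitable open product neighborhood is contained in $B_1\cap B_2$. The only cosmetic difference is that you place the base point directly in $U_1\times U_2$, while the paper picks an arbitrary minimal point $(x_1,x_2)$ and then replaces each $U_i$ by a preimage $T_i^{-k_i}U_i$ containing $x_i$; both choices rely on the same fact that orbit closures in nilsystems are minimal sub-nilmanifolds.
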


\begin{proof}
Let $B_1, B_2\in \NN_d$. To show $\NN_d$ is a filter, it suffices to
show $B_1\cap B_2\in \NN_d$. By definition, there exist minimal
$d$-step nilsystems $(X_i,T_i)$, and nonempty open sets $U_i$ for
$i=1,2$ such that
$$B_i\supset \{n\in\Z: U_i\cap T_i^{-n}U_i\cap
\ldots\cap T_i^{-dn}U_i\neq \emptyset \}.$$ Taking any minimal point
$x=(x_1,x_2)\in X_1\times X_2$, let $X=\overline{{\O}(x,T)}$,
where $T=T_1\times T_2$. Note that $(X,T)$ is also a minimal
$d$-step nilsystem.

Since $(X_i, T_i), i=1,2$, are minimal, there are $k_i\in \N$ such
that $x_i\in T_i^{-k_i}U_i$, $i=1,2$. Let $U=(T_1^{-k_1}U_1\times
T_2^{-k_2}U_2 )\cap X$, then $U$ is an open set of $X$. Note that
\begin{equation*}
\begin{split}&\hskip0.6cm \{n\in\Z: U\cap T^{-n}U\cap \ldots\cap T^{-dn}U\neq \emptyset \}
       \\&=\bigcap_{i=1,2}\{n\in\Z: T_i^{-k_i}U_i\cap T_i^{-(k_i+n)}U_i\cap \ldots
       \cap T_i^{-(k_i+dn)}U_i\neq \emptyset \}\\
       &=\bigcap_{i=1,2} \{n\in\Z: U_i\cap T_i^{-n}U_i\cap \ldots\cap T_i^{-dn}U_i\neq \emptyset \}
\end{split}
\end{equation*}
Hence $$B_1\cap B_2\supset \{n\in\Z: U\cap T^{-n}U\cap \ldots\cap
T^{-dn}U\neq \emptyset \}.$$ That is, $B_1\cap B_2\in \NN_d$ and
$\NN_d $ is a filter.
\end{proof}

\begin{de}\index{$\widehat{GP}_r$}
For $r\in \N$, define
\begin{equation*}
    \widehat{GP}_r=\{p(n)\in GP_r: \{n\in \Z: p(n) \ ({\rm mod}\ \Z) \in (-\ep,\ep)\}\in \NN_r, \forall
    \ep>0\}.
\end{equation*}
\end{de}

\begin{rem}\label{rem-a1}
It is clear that for $p(n)\in GP_r$, $p(n)\in \widehat{GP}_r$ if and
only if $-p(n)\in \widehat{GP}_r$. Since $\NN_r$ is a filter, if
$p_1(n),p_2(n),\ldots,p_k(n)\in \widehat{GP}_r$ then
$$p_1(n)+p_2(n)+\ldots+p_k(n)\in \widehat{GP}_r.$$ Moreover by the definition
of $\widehat{GP}_r$, we know that $\mathcal{F}_{GP_r}\subset \NN_r$
if and only if $\widehat{GP}_r=GP_r$.
\end{rem}

We shall prove Theorem C inductively, thus we need to
obtain some results under the following assumption, that is for some
$d\ge 2$,
\begin{equation}\label{induction-d-1}
\F_{GP_{d-1}}\subset \NN_{d-1}.
\end{equation}

\begin{lem}\label{lem-a2}
Let $p(n),q(n)\in GP_d$ with $p(n)\simeq_d q(n)$. Under the
assumption \eqref{induction-d-1}, $p(n)\in \widehat{GP}_d$ if and
only if $q(n)\in \widehat{GP}_d$.
\end{lem}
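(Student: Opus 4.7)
My plan mirrors the proof of Lemma~\ref{lem-8-equi-new}, replacing the family $\mathcal{F}_{d,0}$ by $\NN_d$; the filter property established in Lemma~\ref{lem-a1} is exactly what lets the template carry over. First I would reduce to one direction by noting that $\simeq_d$ is symmetric: if $q(n)=p(n)+h_1(n)+h_2(n)\pmod{\mathbb{Z}}$ with $h_1\in GP_{d-1}$ and $h_2\in\mathcal{W}_d$, then $p(n)=q(n)+(-h_1(n))+(-h_2(n))\pmod{\mathbb{Z}}$, and $\mathcal{W}_d$ is closed under negation (being an $\mathbb{R}$-span), as is $GP_{d-1}$. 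So it suffices to assume $p\in\widehat{GP}_d$ and deduce $q\in\widehat{GP}_d$.

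Fix $\epsilon>0$. The triangle inequality for $\|\cdot\|$ gives
\[
\{n:\|q(n)\|<\epsilon\}\supset\{n:\|p(n)\|<\tfrac{\epsilon}{3}\}\cap\{n:\|h_1(n)\|<\tfrac{\epsilon}{3}\}\cap\{n:\|h_2(n)\|<\tfrac{\epsilon}{3}\}.
\]
By Lemma~\ref{lem-a1}, $\NN_d$ is a filter, so I only need to show each of these three sets belongs to $\NN_d$. The first is in $\NN_d$ by the hypothesis $p\in\widehat{GP}_d$. For the second, $h_1\in GP_{d-1}$ means the set lies in $\mathcal{F}_{GP_{d-1}}$, which is contained in $\NN_{d-1}$ by the inductive hypothesis~\eqref{induction-d-1}; and $\NN_{d-1}\subset\NN_d$, since any $(d-1)$-step nilsystem is a $d$-step nilsystem and shrinking from $d-1$ returns to $d$ returns only shrinks the defining subset of $\Z$.

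The only real work is the third term. I would establish the following direct analogue of Lemma~\ref{lem-8-11}: \emph{under assumption~\eqref{induction-d-1}, every $h(n)\in\mathcal{W}_d$ and every $\epsilon>0$ satisfy $\{n:h(n)\!\pmod{\mathbb{Z}}\in(-\epsilon,\epsilon)\}\in\NN_{d-1}$.} The proof copies Lemma~\ref{lem1-1} almost verbatim: by $\mathbb{R}$-linearity and the filter property of $\NN_{d-1}$, reduce to a single monomial $a\prod_{i=1}^{\ell}(w_i(n)-\lceil w_i(n)\rceil)$ with $\ell\ge 2$, $w_i\in GP_{r_i}$ and $\sum_{i=1}^\ell r_i\le d$; then $r_1\le d-1$, so $w_1\in GP_{d-1}$, and bounding $|w_i(n)-\lceil w_i(n)\rceil|\le 1$ for $i\ge 2$ yields $|h(n)|\le|a|\cdot\|w_1(n)\|$. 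Hence the sublevel set contains $\{n:\|w_1(n)\|<\epsilon/(1+|a|)\}\in\mathcal{F}_{GP_{d-1}}\subset\NN_{d-1}$ by~\eqref{induction-d-1}, as desired.

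Honestly, there is no hard step here: the entire argument is a mechanical transcription of the proof of Lemma~\ref{lem-8-equi-new}, once Lemma~\ref{lem-a1} (filter) and the trivial inclusion $\NN_{d-1}\subset\NN_d$ are in hand. The only thing worth pausing on is verifying the $\NN$-analogue of Lemma~\ref{lem-8-11}, but this is a single-factor bound plus the inductive hypothesis and the filter property, exactly as in Lemma~\ref{lem1-1}.
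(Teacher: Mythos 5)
Your proof is correct and follows essentially the same route as the paper's (which is stated in a single sentence: cite Lemma~\ref{lem-8-11}, the filter property from Lemma~\ref{lem-a1}, and the chain $\F_{GP_{d-1}}\subset\NN_{d-1}\subset\NN_d$). You spell out what the paper leaves implicit — that the argument of Lemma~\ref{lem1-1}/\ref{lem-8-11} transfers verbatim to give the $\NN_{d-1}$-analogue under assumption~\eqref{induction-d-1}, that $\simeq_d$ is symmetric, and that $\NN_{d-1}\subset\NN_d$ — and these details are all correct.
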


\begin{proof}
It follows from Lemma \ref{lem-8-11}, $\NN_d$ being a filter and
$\F_{GP_{d-1}}\subset \NN_{d-1}\subset \NN_d$.
\end{proof}

\begin{thm} \label{thm-a1}
Under the assumption \eqref{induction-d-1}, the following properties
are equivalent:
\begin{enumerate}
\item  $\mathcal{F}_{GP_d}\subset \NN_d$.

\item $P(n;\alpha_1,\alpha_2,\ldots,\alpha_d)\in \widehat{GP}_d$ for any $\alpha_1,\alpha_2,\ldots,\alpha_d\in \mathbb{R}$, that is
$$\{ n\in \mathbb{Z}: P(n;\alpha_1,\alpha_2,\ldots,\alpha_d) \ (\text{\rm mod} \ \Z) \in (-\epsilon, \epsilon)\}\in \NN_d$$ for any
$\alpha_1,\alpha_2,\ldots,\alpha_d\in \mathbb{R}$ and $\epsilon>0$.

\item $\text{SGP}_d\subset \widehat{GP}_d$.
\end{enumerate}
\end{thm}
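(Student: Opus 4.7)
The plan is to adapt the proof of Theorem \ref{lem-trans} almost verbatim, replacing $\mathcal{F}_{d,0}$ throughout with $\NN_d$ and $\widetilde{GP}_d$ with $\widehat{GP}_d$. The structural ingredients carry over cleanly: $\NN_d$ is a filter (Lemma \ref{lem-a1}), $\widehat{GP}_d$ is closed under sums and negation (Remark \ref{rem-a1}), and under the inductive assumption \eqref{induction-d-1} we have $\mathcal{F}_{GP_{d-1}} \subset \NN_{d-1} \subset \NN_d$, so the $\simeq_d$ reduction of Lemma \ref{lem-a2} transfers membership in $\widehat{GP}_d$.

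The implication $(1) \Rightarrow (2)$ is immediate from the definition of $\widehat{GP}_d$: condition $(1)$ is equivalent to $\widehat{GP}_d = GP_d$, and $P(n;\alpha_1,\ldots,\alpha_d) \in GP_d$. The implication $(3) \Rightarrow (1)$ also follows quickly: by Theorem \ref{gpsgp} we have $\mathcal{F}_{GP_d} = \mathcal{F}_{SGP_d}$, and since $\NN_d$ is a filter, $\text{SGP}_d \subset \widehat{GP}_d$ directly yields $\mathcal{F}_{SGP_d} \subset \NN_d$.

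The substantive step is $(2) \Rightarrow (3)$, which mirrors the most technical portion of the proof of Theorem \ref{lem-trans}. I would introduce the totally ordered index set
\[
\Sigma_d = \{(j_1,\ldots,j_\ell) : \ell \in \{1,\ldots,d\},\ j_i \in \N,\ \textstyle\sum_{t=1}^\ell j_t = d\}
\]
with the lexicographic-like order described in Theorem \ref{lem-trans}, and for each $\mathbf{j} \in \Sigma_d$ set $\mathcal{L}(\mathbf{j}) = \{L(n^{j_1}a_1,\ldots,n^{j_\ell}a_\ell) : a_i \in \R\}$. The goal is to prove by reverse induction on $\Sigma_d$ that $\mathcal{L}(\mathbf{s}) \subset \widehat{GP}_d$ for every $\mathbf{s} \in \Sigma_d$; since $\text{SGP}_d = \bigcup_{\mathbf{j} \in \Sigma_d} \mathcal{L}(\mathbf{j})$, this gives $(3)$. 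The base case $\mathbf{s} = (d)$ is handled by the specialization $\alpha_1 = 1, \alpha_2 = 2, \ldots, \alpha_{d-1} = d-1$, $\alpha_d = d a_1$, which makes all lower-order terms in $P(n;\alpha_1,\ldots,\alpha_d)$ land in $\Z$ modulo $\Z$; the inductive step splits into the case $\mathbf{i} = (1,1,\ldots,1)$ (where the hypothesis eliminates every summand in $P$ except the single $L(na_1,\ldots,na_d)$ term) and the case $\mathbf{i} > (1,\ldots,1)$ (where one specializes $\alpha_{\sum_{t<r} i_t + h} = h$ for $h < i_r$ and $\alpha_{\sum_{t \le r} i_t} = i_r a_r$, and then reorganizes the resulting $L$-expressions via $\lceil L(\cdot) \rceil = L(\cdot)$ when the innermost argument is an integer, collapsing longer tuples into strictly larger indices in $\Sigma_d$).

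The main obstacle, as in Theorem \ref{lem-trans}, is bookkeeping the many terms that arise when the specialization is inserted into $P(n;\alpha_1,\ldots,\alpha_d)$ and identifying that every summand indexed by $(j_1,\ldots,j_\ell) \ne \mathbf{i}$ either vanishes modulo $\Z$ (for $\mathbf{j} < \mathbf{i}$ with $u = 1$), collapses by the nested $L$-identity into an $L$-expression indexed by some $\mathbf{j}' > \mathbf{i}$ (for $\mathbf{j} < \mathbf{i}$ with $u > 1$), or is already in $\widehat{GP}_d$ by the inductive hypothesis over $\Sigma_d$. Once this bookkeeping is in place, Remark \ref{rem-a1} and Lemma \ref{lem-a2} (together with $\NN_d$ being a filter) conclude that the remaining term $L(n^{i_1}a_1, \ldots, n^{i_k}a_k)$ belongs to $\widehat{GP}_d$, completing the induction.
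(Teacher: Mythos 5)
Your proposal is correct and matches the paper's approach: the paper's own proof of Theorem \ref{thm-a1} is the single sentence ``The proof is similar to that of Theorem \ref{lem-trans},'' and your expansion correctly identifies that the argument carries over verbatim once $\mathcal{F}_{d,0}$, $\widetilde{GP}_d$ are replaced by $\NN_d$, $\widehat{GP}_d$, with Lemma \ref{lem-a1}, Remark \ref{rem-a1}, and Lemma \ref{lem-a2} playing the roles of the filter property, closure-under-sums, and $\simeq_d$-transfer previously used. In particular the key implication $(2)\Rightarrow(3)$ via reverse induction on $(\Sigma_d,>)$, with the specializations of the $\alpha_i$ and the collapsing of nested $L$-expressions, is exactly the structure of the original argument.
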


\begin{proof}
The proof is similar to that of Theorem \ref{lem-trans}.
\end{proof}

\section{Proof of Theorem C}

Now we prove $\F_{GP_d}\subset \NN_d$ by induction on $d$. When
$d=1$, since $\F_{GP_1}=\F_{SGP_1}$ and $\NN_d$ is a filer, it is
sufficient to show that: for any $p(n)=an\in SGP_1$ and $\ep>0$, we
have
\begin{equation}\label{}
    \{n\in \Z: p(n) \ ({\rm mod}\ \Z) \in (-\ep, \ep)\}\in \NN_1.
\end{equation}
This is easy to be verified.

Now we assume that for $d\ge 2$, $\F_{GP_{d-1}}\subset \NN_{d-1}$,
i.e. (\ref{induction-d-1}) holds.
Then it follows from Theorem \ref{thm-a1} that under the assumption
\eqref{induction-d-1},  to show $\mathcal{F}_{GP_d}\subset \NN_d$,
it is sufficient to show that
\begin{equation*}
    P(n;\b_1,\b_2,\ldots,\b_d)\in \widehat{GP}_d,
\end{equation*} for any $\beta_1,\beta_2,\ldots,\beta_d\in \mathbb{R}$.

Fix $\b_1,\b_2,\ldots,\b_d\in \R$. We divide the remainder of the
proof into two steps.

\medskip
\noindent{\bf Step 1.} We are going to show
\begin{equation*}
P(n;\b_1,\b_2,\ldots,\b_d)\simeq_d \sum \limits_{\ell=1}^d \sum
\limits_{j_1,\ldots j_\ell\in \mathbb{N}\atop
j_1+\ldots+j_\ell=d}(-1)^{\ell-1} {\lambda}
U(n;j_1,j_2,\ldots,j_{\ell}),
\end{equation*}
where as in the proof of Theorem B, we define
\begin{equation}\label{}
U(n;j_1)=\frac{n^{j_1}}{j_1!}\prod_{r=1}^{j_1}\alpha_r, \ 1\le
j_1\le d.
\end{equation}
And inductively for $\ell=2,3,\ldots,d$ define
\begin{align*}
U(n;j_1,j_2,\ldots,j_\ell) &=(U(n;j_1,\ldots,j_{\ell-1})-\lceil
U(n;j_1,\ldots,j_{\ell-1})\rceil )
\frac{n^{j_\ell}}{j_\ell!}\prod_{r=1}^{j_\ell}
\alpha_{\sum_{t=1}^{\ell-1}j_t+r}\\
&=(U(n;j_1,\ldots,j_{\ell-1})-\lceil
U(n;j_1,\ldots,j_{\ell-1})\rceil
)L(\frac{n^{j_\ell}}{j_\ell!}\prod_{r_{\ell}=1}^{j_\ell}
\alpha_{\sum_{t=1}^{\ell-1}j_t+r_{\ell}})
\end{align*}
for $j_1,j_2,\ldots,j_\ell\ge 1$ and $j_1+\ldots+j_\ell\le d$ (see
\eqref{eq-de-L} for the definition of $L$).

\medskip

In fact, let ${\lambda} _1, {\lambda}_2,\ldots, {\lambda}_d\in
\mathbb{Z}$ and ${\lambda} \in \N$ satisfying \eqref{a1}. Put
$$\a_1=\b_1/{\lambda}, \a_2=\b_2, \a_3=\b_3,\ldots, \a_d=\b_d.$$ Then
\begin{equation*}
    P(n;\b_1,\b_2,\ldots,\b_d)={\lambda} P(n;\a_1,\a_2,\ldots,\a_d).
\end{equation*}

Note that in proof of Theorem B we have
\begin{equation}\label{}
P(n;\alpha_1,\alpha_2,\ldots,\alpha_d)\simeq_d \sum
\limits_{\ell=1}^d \sum \limits_{j_1,\ldots j_\ell\in
\mathbb{N}\atop j_1+\ldots+j_\ell=d}(-1)^{\ell-1}
U(n;j_1,j_2,\ldots,j_{\ell}).
\end{equation}
Since ${\lambda}$ is an integer, we have
\begin{equation*}
{\lambda} P(n;\alpha_1,\alpha_2,\ldots,\alpha_d)\simeq_d \sum
\limits_{\ell=1}^d \sum \limits_{j_1,\ldots j_\ell\in
\mathbb{N}\atop j_1+\ldots+j_\ell=d}(-1)^{\ell-1} {\lambda}
U(n;j_1,j_2,\ldots,j_{\ell}).
\end{equation*}
That is,
\begin{equation*}
P(n;\b_1,\b_2,\ldots,\b_d)\simeq_d \sum \limits_{\ell=1}^d \sum
\limits_{j_1,\ldots j_\ell\in \mathbb{N}\atop
j_1+\ldots+j_\ell=d}(-1)^{\ell-1} {\lambda}
U(n;j_1,j_2,\ldots,j_{\ell}).
\end{equation*}
Hence, by Lemma \ref{lem-a2}, to show $P(n;\b_1,\b_2,\ldots,\b_d)\in
\widehat{GP}_d$, it suffices to show
\begin{equation}\label{a2}
\sum \limits_{\ell=1}^d \sum \limits_{j_1,\ldots j_\ell\in
\mathbb{N}\atop j_1+\ldots+j_\ell=d}(-1)^{\ell-1} {\lambda}
U(n;j_1,j_2,\ldots,j_{\ell})\in \widehat{GP}_d.
\end{equation}

Now choose ${\bf x}=(x_i^k)_{1\le k\le d, 1\le i\le d-k+1}\in
\mathbb{R}^{d(d+1)/2}$ with $x_i^1=\alpha_i$ for $i=1,2,\ldots,d$
and $x_i^k=0$ for $2\le k\le d$ and $1\le i\le d-k+1$. Let
{\footnotesize
$$
A=\M({\bf x})=\left(
  \begin{array}{cccccccc}
    1 & \alpha_1 & 0                &\ldots & 0 &0\\
    0 & 1     & \alpha_2            &\ldots &  0&0\\
    \vdots & \vdots &   \vdots & \vdots &\vdots\\
    0 & 0    &0                    & \ldots & \alpha_{d-1} &0 \\
    0 & 0    &0                    & \ldots & 1 &\alpha_d \\
    0 & 0    &0                   & \ldots & 0  & 1
  \end{array}
\right).
$$}

{ For $n\in \mathbb{\mathbb{Z}}$, if ${\bf x}(n)=(x_i^k(n))_{1\le k\le d, 1\le
i\le d-k+1}\in \mathbb{R}^{d(d+1)/2}$ satisfies $\M({\bf
x}(n))=A^n$, then $x_i^k(n)$ is a polynomial of $n$ for $1\le k\le d$ and $1\le i\le d-k+1$.
Moreover, by Lemma \ref{lem-8-ite} and Remark
\ref{rem-8-ite}, when $n\in \mathbb{Z}$
\begin{equation}\label{a3}
x_i^k(n)=\tbinom{n}{k}\alpha_i \alpha_{i+1}\ldots \alpha_{i+k-1}
\end{equation}
for $1\le k\le d$ and $1\le i\le d-k+1$.}

\medskip

Let $X=\G_d/\Gamma$  and $T$ be the nilrotation induced by $A\in\G_d$,
i.e. $B\Gamma\mapsto AB\Gamma$ for $B\in\G_d$. Since $\G_d$ is a
$d$-step nilpotent Lie group and $\Gamma$ is a uniform subgroup of
$\G_d$, $(X,T)$ is a $d$-step nilsystem. Let $x_0=\Gamma\in X$ and
$Z$ be the closure of the orbit $\O (x_0,T)$ of $x_0$ in $X$.
Then $(Z,T)$ is a minimal $d$-step nilsystem.

\medskip
\noindent{\bf Step 2.} For any $\ep>0$, we are going to show there
is a nonempty open subset $U$ of $Z$ such that
\begin{equation}\label{a0}
\begin{split}
& \ \{n\in \Z: \sum \limits_{\ell=1}^d \sum \limits_{j_1,\ldots
j_\ell\in \mathbb{N}\atop j_1+\ldots+j_\ell=d}(-1)^{\ell-1}
{\lambda}
U(n;j_1,j_2,\ldots,j_{\ell}) \ ({\rm mod}\ \Z)\in (-\ep,\ep)\}\\
&\supset \{n\in\Z: U\cap T^{-n}U\cap \ldots\cap T^{-dn}U\neq
\emptyset\}.
\end{split}
\end{equation}
That means $\sum \limits_{\ell=1}^d \sum \limits_{j_1,\ldots
j_\ell\in \mathbb{N}\atop j_1+\ldots+j_\ell=d}(-1)^{\ell-1}
{\lambda} U(n;j_1,j_2,\ldots,j_{\ell})\in \widehat{GP}_d$.

\medskip

Fix an $\ep>0$. Take $\displaystyle
\ep_1=\min\{\tfrac{\ep}{2K(\sum_{i=0}^{d-1}d^i)}, \tfrac 14\}$,
where $\displaystyle
K=\sum_{m=1}^d|\lambda_m|\Big(\sum_{t=0}^dm^t\Big)$, and let
{  $V=\{C\in \G_d: \|C-I\|_\infty< \ep_1\}$ be a neighborhood of $I$ in $\G_d$.
Put $U=V\Gamma\cap Z$. Then $U$ is an open neighborhood of $x_0$ in $Z$.
Let
\begin{equation*}
S=\{n\in \Z: U\cap T^{-n}U\cap \ldots \cap T^{-dn}U\neq \emptyset
\}.
\end{equation*}}

Now we show that
\begin{equation*}
   S\subset \Big\{n\in \Z: \sum \limits_{\ell=1}^d \sum \limits_{j_1,\ldots,
j_\ell\in \mathbb{N}\atop j_1+\ldots+j_\ell=d}(-1)^{\ell-1}
{\lambda} U(n;j_1,j_2,\ldots,j_{\ell}) \ ({\rm mod}\ \Z)\in
(-\ep,\ep)\Big\}.
\end{equation*}
Let $n\in S$. Then $U\cap T^{-n}U\cap \ldots \cap T^{-dn}U\neq
\emptyset$. Hence there is some $B\in \G_d$ with
\begin{equation*}
    B\Gamma \in U\cap T^{-n}U\cap \ldots \cap T^{-dn}U.
\end{equation*}
Thus
$A^{mn}B\Gamma\in V\Gamma, \ m=0,1,2,\ldots, d$. We may assume that
$B\in V$.

For each $m\in \{1,2,\ldots, d\}$, since $A^{mn}B\Gamma\in V\Gamma$ there is some $C_m\in \Gamma$ such that
\begin{equation}\label{a5-1}
    \|A^{mn}BC_m-I\|_\infty<\ep_1.
\end{equation}
Let $A^{mn}BC_m=\M({\bf z}(m))$, where ${\bf z}(m)=(z_i^k(m))_{1\le
k\le d, 1\le i\le d-k+1}\in \mathbb{R}^{d(d+1)/2}$. Then from
\eqref{a5-1}, we have
$$|z_i^k(m)|<\ep_1, \quad 1\le k\le d, 1\le
i\le d-k+1. $$

On the one hand, since $|z_1^d(m)|<\ep_1$, we have
\begin{equation}\label{a8}
    \sum_{m=1}^d {\lambda}_m z_1^d(m)\in (-K\ep_1, K\ep_1).
\end{equation}

On the other hand, we have

\medskip
\begin{align}\label{longproof-1}
\begin{split}
\sum_{m=1}^d{\lambda}_mz_1^d(m) \approx&
\Big(\sum_{l=1}^d(-1)^{l-1}\sum_{\substack{j_1,j_2,\ldots,j_l\in \N\\
j_1+j_2+\ldots +j_l=d}} {\lambda} U(n;j_1,j_2,\ldots,j_l)\Big)\\
&+\vartriangle\Big((d+d^2+\ldots+d^{d-1})(2K\ep_1)\Big).
\end{split}
\end{align}
Note that for $a,b\in \R$ and $\d>0$, $a\thickapprox b+
\vartriangle(\d)$ means that $a-b \ ({\rm mod}\ \Z)\in (- \d,\d)$.

\medskip
Since the proof of (\ref{longproof-1}) is long, we put it after
Theorem C. Now we continue the proof. By (\ref{longproof-1}) and
(\ref{a8}), we have {\footnotesize
\begin{equation*}
\begin{split}
\sum_{l=1}^d(-1)^{l-1}\sum_{\substack{j_1,j_2,\ldots,j_l\in \N\\
j_1+j_2+\ldots +j_l=d}}{\lambda} U(n;j_1,j_2,\ldots,j_l) \ ({\rm
mod}\ \Z)&\in\Big(-M(2K\ep_1),M(2K\ep_1)\Big)\subset (-\ep,\ep),
\end{split}
\end{equation*}}
where $M=1+d+\ldots+d^{d-1}$. This means that
\begin{equation*}
    n\in \Big\{q\in \Z:\sum_{l=1}^d\sum_{\substack{j_1,j_2,\ldots,j_l\in \N\\
j_1+j_2+\ldots +j_l=d}}(-1)^{l-1}{\lambda} U(q;j_1,j_2,\ldots,j_l) \
({\rm mod}\ \Z)\in (-\ep,\ep)\Big \}.
\end{equation*}
Hence
\begin{equation*}
   S\subset  \Big\{q\in \Z:\sum_{l=1}^d\sum_{\substack{j_1,j_2,\ldots,j_l\in \N\\
j_1+j_2+\ldots +j_l=d}}(-1)^{l-1}{\lambda} U(q;j_1,j_2,\ldots,j_l) \
({\rm mod}\ \Z)\in (-\ep,\ep)\Big \}.
\end{equation*}
Thus we have proved (\ref{a0}) which means $\sum \limits_{\ell=1}^d
\sum \limits_{j_1,\ldots j_\ell\in \mathbb{N}\atop
j_1+\ldots+j_\ell=d}(-1)^{\ell-1} {\lambda}
U(n;j_1,j_2,\ldots,j_{\ell})\in \widehat{GP}_d$. The proof of
Theorem C is now finished.

\subsection{Proof of (\ref{longproof-1})} Since $B\in V$,
\begin{equation}\label{a4}
    ||B-I||_\infty<\ep_1<1/2.
\end{equation}

Denote $B=\M({\bf y})$, where ${\bf y}=(y_i^k)_{1\le k\le d, 1\le
i\le d-k+1}\in \mathbb{R}^{d(d+1)/2}$. From (\ref{a4}),
$$|y_i^k|<\ep_1, \quad 1\le k\le d,\ 1\le
i\le d-k+1. $$ For $m=1,2,\ldots,d$, recall that $C_m\in \Gamma$
satisfies \eqref{a5-1}. Denote $C_m=\M({\bf h}(m))$, where ${\bf h}(m)=(-h_i^k(m))_{1\le
k\le d, 1\le i\le d-k+1}\in \mathbb{Z}^{d(d+1)/2}$.  Let
$A^{mn}B=\M({\bf w}(m))$, where ${\bf w}(m)=(w_i^k(m))_{1\le k\le d,
1\le i\le d-k+1}\in \mathbb{R}^{d(d+1)/2}$. Then
\begin{equation}\label{a6}
\begin{split}
w_i^k(m)&=x_i^k(mn)+\Big( \sum
\limits_{j=1}^{k-1}x_i^{j}(mn)y_{i+j}^{k-j} \Big)+y_i^k\\
&=\tbinom{mn}{k}\alpha_i \alpha_{i+1}\ldots
\alpha_{i+k-1}+\sum_{j=1}^{k-1}\tbinom{mn}{j}\alpha_i
\alpha_{i+1}\ldots \alpha_{i+j-1}y_{i+j}^{k-j}+y_i^k\\
&\triangleq \frac{(mn)^k}{k!}\a_i\ldots \a_{i+k-1}+\sum_{j=1}^{k-1}
m^j a_i^k(j)+a_i^k(0),
\end{split}
\end{equation}
where $m=1,2,\ldots, d$, $a_i^k(j)$ does not depend on $m$ and
$|a_i^k(0)|=|y_i^k|<\ep_1$.

Recall that ${\bf z}(m)=(z_i^k(m))_{1\le k\le d, 1\le i\le d-k+1}\in
\mathbb{R}^{d(d+1)/2}$ satisfies $A^{mn}BC_m=\M({\bf z}(m))$. Hence
\begin{equation}\label{addye}
z_i^k(m)= w_i^k(m)-\Big( \sum \limits_{j=1}^{k-1} w_i^{j}(m)
h_{i+j}^{k-j}(m) \Big) -h_i^k(m).
\end{equation}
From $\|A^{mn}BC_m-I\|_\infty<\ep_1$, we have
$$|z_i^k(m)|<\ep_1, \quad 1\le k\le d, 1\le
i\le d-k+1. $$ Note that $h_i^k(m)\in \Z$, and we have
$$h_i^k(m)=\Big \lceil w_i^k(m)- \sum
\limits_{j=1}^{k-1}w_i^{j}(m)h_{i+j}^{k-j}(m) \Big
\rceil.$$ Let
$$u_i^k(m)=w_i^k(m)- \sum
\limits_{j=1}^{k-1}w_i^{j}(m)h_{i+j}^{k-j}(m) .$$ Then
\begin{equation*}
    |u_i^k(m)-h_i^k(m)|=|z_i^k(m)|<\ep_1<1/2.
\end{equation*}

\medskip

Recall that for $a,b\in \R$ and $\d>0$, $a\thickapprox b+
\vartriangle(\d)$ means $a-b \ ({\rm mod}\ \Z)\in (- \d,\d)$.

\bigskip

\noindent {\bf Claim:} {\em Let $1\le r\le d-1$ and $v_r(0),
v_r(1),\ldots,v_r(r)\in \R$. Then for each $1\le r_1\le d-r-1$ and
$1\le j\le r_1+r$, there exist $v_{r,r_1}(j)\in \R$ such that
\begin{enumerate}
\item we have
\begin{equation*}
\begin{split}
&\sum_{m=1}^d  {\lambda}_m \Big(\sum_{t=0}^r m^t
v_r(t)\Big)h_{1+r}^{d-r}(m) \thickapprox {\lambda}(v_r(r)-\lceil
v_r(r)\rceil)\frac{n^{d-r}}{(d-r)!}\a_{1+r}\ldots\a_d \\
&\hskip3.cm-\sum_{r_1=1}^{d-r-1}\sum_{m=1}^d{\lambda}_m\Big(\sum_{t=0}^{r_1+r}m^tv_{r,r_1}(t)\Big)h_{1+r+r_1}^{d-r-r_1}(m)+
\vartriangle(2K\ep_1)
\end{split}
\end{equation*}

\item $\displaystyle v_{r,r_1}(r+r_1)=\Big( v_r(r)-\lceil v_r(r)\rceil\Big)\frac{n^{r_1}}{r_1!}
\a_{r+1}\ldots \a_{r+r_1}$ for all $1\le r_1\le d-r-1$.
\end{enumerate}}

\begin{proof}[Proof of Claim]
{ First we have
\begin{equation*}
\begin{split}
\Big | \sum_{m=1}^d {\lambda}_m\Big(\sum_{t=0}^rm^t(v_r(r)-\lceil
v_r(r)\rceil)\Big) \Big| \le \sum_{m=1}^d
|{\lambda}_m|\Big(\sum_{t=0}^rm^t\Big)=K.
\end{split}
\end{equation*}
Since $|u_{1+r}^{d-r}(m)-h_{1+r}^{d-r}(m)|<\ep_1$, we have}
\begin{equation}\label{a7}
\begin{split}&\sum_{m=1}^d {\lambda}_m \Big(\sum_{t=0}^r
m^tv_r(t)\Big)h_{1+r}^{d-r}(m)\\ &\thickapprox  \sum_{m=1}^d
{\lambda}_m \Big(\sum_{t=0}^r m^t (
v_r(t)-\lceil v_r(t)\rceil)\Big)h_{1+r}^{d-r}(m)\\
&\thickapprox  \sum_{m=1}^d {\lambda}_m \Big(\sum_{t=0}^r m^t (
v_r(t)-\lceil v_r(t)\rceil
)\Big)u_{1+r}^{d-r}(m)+\vartriangle(K\ep_1).
\end{split}
\end{equation}

Then we have
\begin{equation*}
\begin{split}
&\sum_{m=1}^d {\lambda}_m \Big(\sum_{t=0}^r m^t ( v_r(t)-\lceil
v_r(t)\rceil )\Big)u_{1+r}^{d-r}(m)\\
&=\sum_{m=1}^d {\lambda}_m \Big(\sum_{t=0}^r m^t ( v_r(t)-\lceil
v_r(t)\rceil)\Big)\Big(w_{1+r}^{d-r}(m)- \sum
\limits_{r_1=1}^{d-r-1}w_{1+r}^{r_1}(m)h_{1+r+r_1}^{d-r-r_1}(m)
\Big).
\end{split}
\end{equation*}

From (\ref{a6}) we have
\begin{equation*}
\begin{split}
&\sum_{m=1}^d {\lambda}_m \Big(\sum_{t=0}^r m^t ( v_r(t)-\lceil
v_r(t)\rceil)\Big)w_{1+r}^{d-r}(m)\\ &= \sum_{m=1}^d {\lambda}_m
\Big(\sum_{t=0}^r m^t ( v_r(t)-\lceil v_r(t)\rceil)\Big)
\Big(\frac{(mn)^{d-r}}{(d-r)!}\a_{1+r}\ldots
\a_{d}+\sum_{j=0}^{d-r-1} m^j a_{1+r}^{d-r}(j)\Big)\\
&=\sum_{t=0}^r \big( \sum_{m=1}^d {\lambda}_m m^{d-r+t}\big) \frac{n^{d-r}}{(d-r)!} \a_{1+r}\ldots\a_{d}\Big( v_r(t)-\lceil v_r(t)\rceil \Big)\\
&\hskip2.cm+ \sum_{h=1}^{d-1}\big(\sum_{m=1}^d {\lambda}_m m^h\big)
\Bigg(\sum_{0\le t\le r\atop{0\le j\le d-r-1\atop{t+j=h}}}(
v_r(t)-\lceil
v_r(t)\rceil )a_{1+r}^{d-r}(j)\Bigg)\\
&\hskip6.cm+\sum_{m=1}^d {\lambda}_m \Big( v_r(0)-\lceil
v_r(0)\rceil\Big)a_{1+r}^{d-r}(0),
\end{split}
\end{equation*}
and so $\sum_{m=1}^d {\lambda}_m \Big(\sum_{t=0}^r m^t (
v_r(t)-\lceil v_r(t)\rceil)\Big)w_{1+r}^{d-r}(m)$ is equal to

\begin{equation*}
\begin{split}
&  {\lambda} \frac{n^{d-r}}{(d-r)!}\a_{1+r}\ldots \a_{d} (
v_r(r)-\lceil v_r(r)\rceil)+(\sum_{m=1}^d {\lambda}_m )
( v_r(0)-\lceil v_r(0)\rceil)y_{1+r}^{d-r}\\
&\approx  {\lambda} \frac{n^{d-r}}{(d-r)!}\a_{1+r}\ldots \a_{d} (
v_r(r)-\lceil v_r(r)\rceil )+ \vartriangle (K\ep_1).
\end{split}
\end{equation*}
The last equation follows from
$$\Big |(\sum_{m=1}^d {\lambda}_m)
( v_r(0)-\lceil v_r(0)\rceil)y_{1+r}^{d-r}\Big |\le \sum_{m=1}^d
|{\lambda}_m|\ep_1<K\ep_1.$$ Then for $1\le r_1\le d-r-1$ and $m=1,2,\ldots, d$, by
(\ref{a6}), we have

\begin{equation*}
\begin{split}
& \Big(\sum_{t=0}^r m^t ( v_r(t)-\lceil
v_r(t)\rceil)\Big)w_{1+r}^{r_1}(m) \\
&=\Big(\sum_{t=0}^r m^t ( v_r(t)-\lceil
v_r(t)\rceil )\Big) \Big(\frac{(mn)^{r_1}}{(r_1)!}\a_{1+r}\ldots
\a_{r+r_1}+\sum_{j=0}^{r_1-1} m^j a_{1+r}^{r_1}(j)\Big)\\
&=\sum_{t=0}^r   m^{r_1+t}
\frac{n^{r_1}}{(r_1)!}\a_{1+r}\ldots \a_{r+r_1} ( v_r(t)-\lceil
v_r(t)\rceil)+ \\ &\hskip 2cm \sum_{h=0}^{r+r_1-1}m^h
\big(\sum_{0\le t\le r\atop{0\le j\le r_1-1\atop{t+j=h}}}\big(
v_r(t)-\lceil v_r(t)\rceil \big ) a_{1+r}^{r_1}(j)\big) \Big).
\end{split}
\end{equation*}
Let
\begin{equation*}
    v_{r,r_1}(h)=\sum_{0\le t\le
r\atop{0\le j\le r_1-1\atop{t+j=h}}}\big( v_r(t)-\lceil v_r(t)\rceil
\big ) a_{1+r}^{r_1}(j)
\end{equation*} for $0\le h\le r_1-1$,
\begin{align*}
    v_{r,r_1}(h)=&
\frac{n^{r_1}}{(r_1)!}\a_{1+r}\ldots \a_{r+r_1} ( v_r(h-r_1)-\lceil
v_r(h-r_1)\rceil)+\\
&\hskip1cm \sum_{0\le t\le
r\atop{0\le j\le r_1-1\atop{t+j=h}}}\big( v_r(t)-\lceil v_r(t)\rceil
\big ) a_{1+r}^{r_1}(j)
\end{align*}
for $r_1\le h\le r+r_1-1$ and \begin{equation*}
    v_{r,r_1}(r+r_1)=\tfrac{n^{r_1}}{(r_1)!}\a_{1+r}\ldots \a_{r+r_1} \left(
v_r(r)-\lceil v_r(r)\rceil\right).
\end{equation*}
Thus
\begin{equation*}
\begin{split}
& \sum_{r_1=1}^{d-r-1} \Big(\sum_{m=1}^d {\lambda}_m \Big(\sum_{t=0}^r m^t ( v_r(t)-\lceil
v_r(t)\rceil)\Big)w_{1+r}^{r_1}(m)h_{1+r+r_1}^{d-r-r_1}(m)\Big) \\
&= \sum_{r_1=1}^{d-r-1} \Big(\sum_{m=1}^d {\lambda}_m \Big(\sum_{t=0}^{r+r_1} m^t v_{r,r_1}(t)
\Big) h_{1+r+r_1}^{d-r-r_1}(m)\Big).
\end{split}
\end{equation*}

To sum up, we have {\footnotesize
\begin{equation*}
\begin{split}
&\sum_{m=1}^d {\lambda}_m \Big(\sum_{t=0}^r m^t \left( v_r(t)-\lceil
v_r(t)\rceil\right)\Big)u_{1+r}^{d-r}(m)\approx {\lambda} \frac{n^{d-r}}{(d-r)!}\a_{1+r}\ldots \a_{d} (v_r(r)-\lceil v_r(r)\rceil)\\
&\hskip4.5cm-\sum_{r_1=1}^{d-r-1}\Bigg(\sum_{m=1}^d{\lambda}_m\Big(\sum_{t=0}^{r_1+r}m^tv_{r,r_1}(t)\Big)h_{1+r+r_1}^{d-(r+r_1)}(m)\Bigg)
+\vartriangle(K\ep_1).
\end{split}
\end{equation*}}
Together with ( \ref{a7}), we conclude {\footnotesize
\begin{equation*}
\begin{split}
\sum_{m=1}^d {\lambda}_m \Big(\sum_{t=0}^rm^tv_r(t)\Big)h_{1+r}^{d-r}(m) &\approx {\lambda}
\frac{n^{d-r}}{(d-r)!}\a_{1+r}\ldots \a_{d} \Big(v_r(r)-\lceil v_r(r)\rceil \Big)\\
&-\sum_{r_1=1}^{d-r-1}\Bigg(\sum_{m=1}^d{\lambda}_m\Big(\sum_{t=0}^{r_1+r}m^tv_{r,r_1}(t)\Big)
h_{1+r+r_1}^{d-(r+r_1)}(m)\Bigg)+\vartriangle(2K\ep_1).
\end{split}
\end{equation*}}
The proof of the claim  is completed. \end{proof}

We will use the claim repeatedly. First using (\ref{addye}) we have
\begin{equation*}
    \sum_{m=1}^d{\lambda}_mz_1^d(m)\approx \sum_{m=1}^d {\lambda}_m
    \bigg (w_1^d(m)-\sum_{j_1=1}^{d-1} w_1^{j_1}(m) h_{1+j_1}^{d-j_1}(m)\bigg).
\end{equation*}
By (\ref{a6}), we have
\begin{equation*}
\begin{split}
\sum_{m=1}^d {\lambda}_mw_1^d(m)& = \sum_{m=1}^d {\lambda}_m m^d
\frac{n^d}{d!}\a_1\ldots \a_d+\sum_{m=1}^d {\lambda}_my_1^d
\approx {\lambda} \frac{n^d}{d!}\a_1\ldots \a_d
+\vartriangle(K\ep_1).
\end{split}
\end{equation*}
Using this, (\ref{a6}) and the claim, we have {\footnotesize
\begin{equation*}
\begin{split} \sum_{m=1}^d {\lambda}_mz_1^d(m)&
\approx {\lambda} \frac{n^d}{d!}\a_1\ldots
\a_d-\sum_{m=1}^d{\lambda}_m\sum_{j_1=1}^{d-1}\Big(m^{j_1}\frac{n^{j_1}}{j_1!}\a_1\ldots
\a_{j_1}+\sum_{t=0}^{j_1-1}m^ta_1^{j_1}(t)\Big)h_{1+j_1}^{d-j_1}(m)+\vartriangle(K\ep_1)\\
&\approx {\lambda} \frac{n^d}{d!}\a_1\ldots
\a_d-\left(\sum_{j_1=1}^{d-1}{\lambda}
\frac{n^{d-j_1}}{(d-j_1)!}\a_{1+j_1}\ldots\a_d\Big(\frac{n^{j_1}}{j_1!}\a_1\ldots\a_{j_1}-
\lceil \frac{n^{j_1}}{j_1!}\a_1\ldots\a_{j_1} \rceil\Big)\right)\\
& +
\sum_{j_1=1}^{d-1}\sum_{j_2=1}^{d-j_1-1}\Bigg(\sum_{m=1}^d{\lambda}_m\bigg(m^{j_1+j_2}
\Big(\frac{n^{j_1}}{j_1!}\a_1\ldots\a_{j_1}- \lceil
\frac{n^{j_1}}{j_1!}\a_1\ldots\a_{j_1}
\rceil\Big)\frac{n^{j_2}}{j_2!}\a_{1+j_1}\ldots
\a_{j_1+j_2}\\
& +
\sum_{t=0}^{j_1+j_2-1}m^tv_{j_1,j_2}(t)\bigg)h_{1+j_1+j_2}^{d-(j_1+j_2)}(m)\Bigg)+\vartriangle(\big(2(d-1)K+K\big)\ep_1).
\end{split}
\end{equation*}}
Note that here we use $v_{j_1}(t)=a_1^{j_1}(t), t=0,1,\ldots, j_1-1$
and $v_{j_1}(j_1)=\frac{n^{j_1}}{j_1!}\a_1\ldots\a_{j_1}$.

Recall the definition of $U(\cdot)$:
\begin{equation*}
    \tfrac{n^d}{d!}\a_1\ldots\a_d=U(n;d),
\end{equation*}
\begin{equation*}
    \Big(\tfrac{n^{j_1}}{j_1!}\a_1\ldots\a_{j_1}-
\lceil \tfrac{n^{j_1}}{j_1!}\a_1\ldots\a_{j_1}
\rceil\Big)\tfrac{n^{j_2}}{j_2!}\a_{1+j_1}\ldots
\a_{j_1+j_2}=U(n;j_1,j_2).
\end{equation*}
Substituting these in the above equation, we have {\footnotesize
\begin{equation*}
\begin{split}
\sum_{m=1}^d{\lambda}_mz_1^d(m)& \approx {\lambda}
U(n;d)-\sum_{j_1=1}^{d-1}{\lambda}
U(n; j_1,d-j_1) +\vartriangle(2d K\ep_1)\\
&\hskip0.5cm
+\sum_{j_1=1}^{d-1}\sum_{j_2=1}^{d-j_1-1}\Bigg(\sum_{m=1}^d{\lambda}_m\bigg(m^{j_1+j_2}U(n;j_1,j_2)
+\sum_{t=0}^{j_1+j_2-1}m^tv_{j_1,j_2}(t)\bigg)h_{1+j_1+j_2}^{d-(j_1+j_2)}(m)\Bigg)
\end{split}
\end{equation*}}
Using the claim again, we have: {\footnotesize
\begin{equation*}
\begin{split}
\sum_{m=1}^d{\lambda}_mz_1^d(m)& \approx {\lambda}
U(n;d)-\sum_{j_1=1}^{d-1}{\lambda} U(n; j_1,d-j_1)+
\sum_{j_1=1}^{d-1}\sum_{j_2=1}^{d-j_1-1}{\lambda}
U(n;j_1,j_2,d-j_1-j_2)
\\
&-\sum_{j_1=1}^{d-1}\sum_{j_2=1}^{d-j_1-1}\sum_{j_3=1}^{d-(j_1+j_2)-1}\Bigg(\sum_{m=1}^d{\lambda}_m
\bigg(m^{j_1+j_2+j_3}U(n;j_1,j_2,j_3) +\\
& \sum_{t=0}^{j_1+j_2+j_3-1}
m^tv_{j_1,j_2,j_3}(t)\bigg)h_{1+j_1+j_2+j_3}^{d-(j_1+j_2+j_3)}(m)\Bigg)+\vartriangle(2dK\ep_1+2d^2K\ep_1).
\end{split}
\end{equation*}}

Inductively, we have  {\footnotesize
\begin{equation*}
\begin{split}
\sum_{m=1}^d{\lambda}_mz_1^d(m) & \approx
\Big(\sum_{l=1}^d(-1)^{l-1}\sum_{\substack{j_1,\ldots,j_l\in \N\\
j_1+\ldots +j_l=d}}{\lambda} U(n;j_1,\ldots,j_l)\Big)
+\vartriangle(2dK\ep_1+2d^2K\ep_1+\ldots+2d^{d-1}K\ep_1)\\
&\approx
\Big(\sum_{l=1}^d(-1)^{l-1}\sum_{\substack{j_1,\ldots,j_l\in \N\\
j_1+\ldots +j_l=d}} {\lambda} U(n;j_1,\ldots,j_l)\Big)
+\vartriangle\Big((d+d^2+\ldots+d^{d-1})(2K\ep_1)\Big).
\end{split}
\end{equation*}}
The proof of (\ref{longproof-1}) is now finished. \hfill $\square$

\chapter{Recurrence sets and regionally proximal relation of order $d$}\label{chapter-rp}

From this chapter we begin the study of higher order almost automorphy. In this chapter we investigate the
relationship between recurrence sets and $\RP^{[d]}$. Then using the results developed in this chapter, one
can characterize higher order almost automorphy in the next chapter.

\section{Regionally proximal relation of order $d$}

\subsection{Cubes and faces}
In the following subsections, we will
introduce notions about cubes, faces and face transformations. For
more details see \cite{HK05, HKM}.

\subsubsection{} Let $X$ be a set, let $d\ge 1$ be an integer, and write
$[d] = \{1, 2,\ldots , d\}$. We view $\{0, 1\}^d$ in one of two
ways, either as a sequence $\ep=\ep_1\ldots \ep_d$ of $0'$s and
$1'$s, or as a subset of $[d]$. A subset $\ep$ corresponds to the
sequence $(\ep_1,\ldots, \ep_d)\in \{0,1\}^d$ such that $i\in \ep$
if and only if $\ep_i = 1$ for $i\in [d]$. For example, ${\bf
0}=(0,0,\ldots,0)\in \{0,1\}^d$ is the same as $\emptyset \subset
[d]$.

If ${\bf n} = (n_1,\ldots, n_d)\in
\Z^d$ and $\ep\in \{0,1\}^d$, we define ${\bf n}\cdot \ep =
\sum_{i=1}^d n_i\ep_i .$ If we consider $\ep$ as $\ep\subset [d]$,
then ${\bf n}\cdot \ep  = \sum_{i\in \ep} n_i .$

\subsubsection{}

We denote $X^{2^d}$ by $X^{[d]}$. \index{$X^{[d]}$} A point ${\bf x}\in X^{[d]}$ can
be written in one of two equivalent ways, depending on the context:
${\bf x} = (x_\ep :\ep\in \{0,1\}^d )= (x_\ep : \ep\subset [d]). $
Hence $x_\emptyset =x_{\bf 0}$ is the first coordinate of ${\bf x}$.
For example, points in $X^{[2]}$ are like
$$(x_{00},x_{10},x_{01},x_{11})=(x_{\emptyset}, x_{\{1\}},x_{\{2\}},x_{\{1,2\}}).$$

For $x \in X$, we write $x^{[d]} = (x, x,\ldots , x)\in  X^{[d]}$.
The diagonal of $X^{[d]}$ is $\D^{[d]} = \{x^{[d]}: x\in X\}$.
Usually, when $d=1$, denote the diagonal by $\D_X$. A point ${\bf x}
\in X^{[d]}$ can be decomposed as ${\bf x} = ({\bf x'},{\bf  x''})$
with ${\bf x}', {\bf x}''\in X^{[d-1]}$, where ${\bf x}' = (x_{\ep0}
: \ep\in \{0,1\}^{d-1})$ and ${\bf x}''= (x_{\ep1} : \ep\in
\{0,1\}^{d-1})$. We can also isolate the first coordinate, writing
$X^{[d]}_* = X^{2^d-1}$ and then writing a point ${\bf x}\in
X^{[d]}$ as ${\bf x} = (x_\emptyset, {\bf x}_*)$, where ${\bf x}_*=
(x_\ep : \ep\neq \emptyset) \in X^{[d]}_*$.

\subsection{Face transformations}

\begin{de}
Let $\phi: X\rightarrow Y$ and $d\in \N$. Define $\phi^{[d]}:
X^{[d]}\rightarrow Y^{[d]}$ by $(\phi^{[d]}{\bf x})_\ep=\phi x_\ep$
for every ${\bf x}\in X^{[d]}$ and every $\ep\subset [d]$. Let $(X,
T)$ be a system and $d\ge 1$ be an integer. The {\em diagonal
transformation} \index{diagonal transformation} of $X^{[d]}$ is the map $T^{[d]}$. {\em Face
transformations} \index{face transformation} are defined inductively as follows: Let
$T^{[0]}=T$, $T^{[1]}_1=\id \times T$. If
$\{T^{[d-1]}_j\}_{j=1}^{d-1}$ is defined already, then set
$$T^{[d]}_j=T^{[d-1]}_j\times T^{[d-1]}_j, \ j\in \{1,2,\ldots, d-1\},$$
$$T^{[d]}_d=\id ^{[d-1]}\times T^{[d-1]}.$$
\end{de}


The {\em face group} \index{face group} of dimension $d$ is the
group $\F^{[d]}(X)$ \index{$\F^{[d]}(X)$} of
transformations of $X^{[d]}$ generated by the face transformations. We
often write $\F^{[d]}$ instead of $\F^{[d]}(X)$. For $\F^{[d]}$, we
use similar notations to that used for $X^{[d]}$: namely, an element
of the group is written as $S = (S_\ep : \ep\in\{0,1\}^d)$. For
convenience, we denote the orbit closure of ${\bf x}\in X^{[d]}$
under $\F^{[d]}$ by $\overline{\F^{[d]}}({\bf x})$, instead of
$\overline{\O({\bf x}, \F^{[d]})}$.

\subsection{Regionally proximal pairs of order $d$}

First let us define regionally proximal pairs of order $d$.

\begin{de}\index{regionally proximal relation of order $d$ }\index{$\RP^{[d]}$}
Let $(X, T)$ be a t.d.s. and let $d\ge 1$ be an integer. A
pair $(x, y) \in X\times X$ is said to be {\em regionally proximal
of order $d$} if for any $\d  > 0$, there exist $x', y'\in X$ and a
vector ${\bf n} = (n_1,\ldots , n_d)\in\Z^d$ such that $\rho (x, x')
< \d, \rho (y, y') <\d$, and
$$\rho(T^{{\bf n}\cdot \ep}x', T^{{\bf n}\cdot \ep}y') < \d\
\text{for any nonempty $\ep\subset [d]$}.$$ The set of regionally
proximal pairs of order $d$ is denoted by $\RP^{[d]}$ (or by
$\RP^{[d]}(X)$ in case of ambiguity), which is called {\em the
regionally proximal relation of order $d$}.
\end{de}

Moreover, let $\RP^{[\infty]}=\bigcap_{d=1}^\infty\RP^{[d]}(X)$. The
following theorem was proved by Host-Kra-Maass for minimal distal
systems \cite{HKM} and by Shao-Ye  for general minimal systems, see
Theorems 3.1, 3.5, 3.11, Proposition 6.1 and Theorem 6.4 in
\cite{SY}. \index{$\RP^{[\infty]}$}

\begin{thm}\label{ShaoYe}
Let $(X, T)$ be a minimal t.d.s. and $d\in \N$. Then
\begin{enumerate}
\item $(x,y)\in \RP^{[d]}$ if and only if $(x,y,y,\ldots,y)=(x,y^{[d+1]}_*) \in
\overline{\F^{[d+1]}}(x^{[d+1]})$ if and only if $(x,x^{[d]}_*, y,
x^{[d]}_*) \in \overline{\F^{[d+1]}}(x^{[d+1]})$.

\item $(\overline{\F^{[d]}}(x^{[d]}),\F^{[d]})$ is minimal for all
$x\in X$.

\item $\RP^{[d]}(X)$ is an equivalence relation, and so is
$\RP^{[\infty]}.$

\item If $\pi:(X,T)\lra (Y,S)$ is a factor map, then $(\pi\times
\pi)(\RP^{[d]}(X))=\RP^{[d]}(Y).$

\item $(X/\RP^{[d]},T)$ is the maximal $d$-step nilfactor of $(X,T)$.
\end{enumerate}\end{thm}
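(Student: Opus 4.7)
The plan is to establish parts (1)--(5) in an order where each part feeds the next, treating (2) --- minimality of the face action on $\overline{\F^{[d]}}(x^{[d]})$ --- as the technical heart from which everything else flows.

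First I would prove (2) by induction on $d$. The base case $d=1$ is immediate: $\overline{\F^{[1]}}(x^{[1]})=\{x\}\times X$ since $(X,T)$ is minimal, and the action of $\F^{[1]}=\langle \id\times T\rangle$ on the second factor is minimal. For the inductive step one views a point in $X^{[d]}$ as a pair of points in $X^{[d-1]}$ and splits $\F^{[d]}$ into the diagonal copy of $\F^{[d-1]}$ and the "top" face transformation $\id^{[d-1]}\times T^{[d-1]}$. Using an Ellis-semigroup argument, pick a minimal idempotent $u$ in the enveloping semigroup of $(\overline{\F^{[d]}}(x^{[d]}),\F^{[d]})$, restrict to a minimal subset, and show via the inductive hypothesis that $u$ acts as identity on the image of $x^{[d]}$; then every point in the orbit closure is $\F^{[d]}$-equivalent to $x^{[d]}$. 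This is the main obstacle: in the distal case \cite{HKM} the argument is straightforward because all points are minimal for any group action, but for general minimal $(X,T)$ one must contend with proximal phenomena inside $X^{[d]}$, which is where the Shao--Ye analysis enters.

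With (2) in hand, (1) follows by a lifting/approximation argument. For the forward direction, given $(x,y)\in\RP^{[d]}$ and $\delta>0$, apply the definition with the $(d+1)$-cube by splitting ${\bf n}=(n_1,\ldots,n_d,0)$ so that the last coordinate acts trivially: the resulting point is within $\delta$ of $(x,y^{[d+1]}_*)$ after a suitable face transformation permutes coordinates. For the converse, if $(x,y^{[d+1]}_*)\in \overline{\F^{[d+1]}}(x^{[d+1]})$, extract a sequence $S^{(k)}\in \F^{[d+1]}$ with $S^{(k)}x^{[d+1]}\to (x,y^{[d+1]}_*)$, and read off the vectors ${\bf n}^{(k)}$ and perturbations witnessing $(x,y)\in\RP^{[d]}$. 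The second equivalence is obtained from the first by invoking (2) to apply the face transformation that toggles the role of the $x$- and $y$-blocks inside the $(d+1)$-cube orbit closure.

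Parts (3) and (4) are then essentially corollaries. Symmetry in (3) is visible from the definition; for transitivity, given $(x,y)$ and $(y,z)$ in $\RP^{[d]}$, lift both via (1) to points in $\overline{\F^{[d+1]}}(x^{[d+1]})$ and $\overline{\F^{[d+1]}}(y^{[d+1]})$, then use the minimality from (2) to concatenate the witnessing face-orbit sequences and conclude $(x,z)\in\RP^{[d]}$. For (4), inclusion $(\pi\times\pi)\RP^{[d]}(X)\subset \RP^{[d]}(Y)$ is direct; for the reverse, given $(\pi(x),y')\in\RP^{[d]}(Y)$ place $(\pi(x),(y')^{[d+1]}_*)\in \overline{\F^{[d+1]}}(\pi(x)^{[d+1]})$ via (1), lift a sequence in $\F^{[d+1]}$ through the factor map $\pi^{[d+1]}:X^{[d+1]}\to Y^{[d+1]}$, and use minimality of $\overline{\F^{[d+1]}}(x^{[d+1]})$ from (2) to find $x'\in X$ with $\pi(x')=y'$ and $(x,x')\in\RP^{[d]}(X)$.

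Finally, for (5), I would apply (4) to the quotient map $X\to X/\RP^{[d]}$ to conclude that this quotient has trivial $\RP^{[d]}$, then invoke the Host--Kra--Maass structure theorem identifying minimal systems with $\RP^{[d]}=\Delta_X$ as inverse limits of $d$-step nilsystems (i.e.\ systems of order $d$). Maximality follows from (4): if $\pi:X\to Z$ is any factor with $Z$ of order $d$, then $\RP^{[d]}(Z)=\Delta_Z$ combined with $(\pi\times\pi)(\RP^{[d]}(X))=\RP^{[d]}(Z)$ forces $\RP^{[d]}(X)\subset\ker\pi$, so $\pi$ factors through $X/\RP^{[d]}$. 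The same statements for $d=\infty$ in (3) follow at once because an intersection of closed equivalence relations is a closed equivalence relation. The structural input characterizing trivial-$\RP^{[d]}$ minimal systems as inverse limits of $d$-step nilsystems is a deep but separately established result which I would cite rather than reprove.
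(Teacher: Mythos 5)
The paper does not contain a proof of this theorem: it is quoted with the remark that it ``was proved by Host-Kra-Maass for minimal distal systems \cite{HKM} and by Shao-Ye for general minimal systems, see Theorems 3.1, 3.5, 3.11, Proposition 6.1 and Theorem 6.4 in \cite{SY}.'' So there is no in-paper argument to compare your sketch against; what follows is a critique of the sketch on its own merits.

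There are two concrete gaps. For the forward direction of part (1): taking ${\bf n}=(n_1,\ldots,n_d,0)$ and applying the corresponding element of $\F^{[d+1]}$ to $x^{[d+1]}$ yields a cube whose top and bottom $d$-dimensional halves are identical; in particular its coordinate indexed by $(0,\ldots,0,1)$ equals $x$, whereas the corresponding coordinate of $(x,y^{[d+1]}_*)$ is $y$, so when $x\neq y$ this point stays bounded away from the target. The data supplied by the definition of $\RP^{[d]}$ only gives $T^{{\bf n}\cdot\ep}x'\approx T^{{\bf n}\cdot\ep}y'$ for $\ep\neq\emptyset$, not that these are close to $y$; collapsing the non-$\emptyset$ coordinates down to $y$ is a genuine limiting/recurrence step that the ``splitting'' reduction does not reproduce, and no single face transformation (which shifts coordinates by powers of $T$, and does not permute them) repairs this. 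For part (2), the inductive step asserts that a minimal idempotent $u$ in the enveloping semigroup fixes $x^{[d]}$, citing ``the inductive hypothesis''; but in a non-distal minimal system an Ellis idempotent need not fix $x$, and that is exactly the obstruction your induction does not dissolve. Shao--Ye deal with it by first proving minimality of the orbit closure under the larger group generated by $\F^{[d]}$ together with the diagonal transformation $T^{[d]}$, and then descending to $\F^{[d]}$ by a separate argument; your outline omits this step. Since the two load-bearing components (forward (1) and the inductive step of (2)) are left open, the sketch amounts to a plan to cite the result -- which is what the paper itself does -- rather than a proof.
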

Note that (5) means that $(X/\RP^{[d]},T)$ is a system of order $d$
and any system of order $d$ factor of $(X,T)$ is a factor of
$(X/\RP^{[d]},T)$.

\begin{rem}
In \cite{SY}, Theorem \ref{ShaoYe} was proved for compact metric
spaces. In fact, one can show that Theorem \ref{ShaoYe} holds for
compact Hausdorff spaces by repeating the proofs sentence by
sentence in \cite{SY}. However, we will describe a direct approach
in Appendix \ref{appendix:CT2}. This result will be used in the next
section.
\end{rem}

\section{Nil$_d$ Bohr$_0$-sets, Poincar\'e sets and $\RP^{[d]}$}

In this section using Theorem A we
characterize $\RP^{[d]}$ using the families $\F_{Poi_d}, \F_{Bir_d}$
and $\F_{d,0}^*$.

\subsection{Nil Bohr-sets}

Recall $\F_{d,0}$ is the family consisting of all Nil$_d$
Bohr$_0$-sets.


\medskip

For $F_1,F_2\in \F_{d,0}$, there are $d$-step nilsystems $(X,T)$,
$(Y,S)$, $(x,y)\in X\times Y$ and $U\times V$ neighborhood of
$(x,y)$ such that $N(x,U)\subset F_1$ and $N(y,V)\subset F_2$. It is
clear that $N(x,U)\cap N(y, V)=N((x,y), U\times V)\in \F_{d,0}$.
This implies that $F_1\cap F_2\in\F_{d,0}$. So we conclude that

\begin{prop}
Let $d \in \N$. Then $\F_{d,0}$ is a filter, and $\F_{d,0}^*$ has
the Ramsey property.
\end{prop}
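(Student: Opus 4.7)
The plan is to observe that the second assertion is a formal consequence of the first. Once we know $\F_{d,0}$ is a filter, then $(\F_{d,0}^{*})^{*}=\F_{d,0}$ is a filter, and by the standard duality for Furstenberg families (recalled in the Preliminaries: a proper family has the Ramsey property if and only if its dual is a filter), the family $\F_{d,0}^{*}$ has the Ramsey property. So the real content is to verify that $\F_{d,0}$ itself is a filter.

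To see that $\F_{d,0}$ is a filter, I would check three things. First, $\F_{d,0}$ is hereditary upward, which is built into the definition of family. Second, $\F_{d,0}$ is proper: the whole group $\Z$ is a Nil$_d$ Bohr$_0$-set (take the trivial one-point nilsystem, or any nilsystem and $U=X$), and $\emptyset\notin\F_{d,0}$ because for any $x_0\in X$ and any open neighborhood $U$ of $x_0$ one has $0\in N(x_0,U)$. Third, and the only substantive point, I would show closure under finite intersections. Given $F_1,F_2\in\F_{d,0}$, pick $d$-step nilsystems $(X,T)$ and $(Y,S)$, points $x_0\in X$, $y_0\in Y$ and open neighborhoods $U\ni x_0$, $V\ni y_0$ with $N(x_0,U)\subset F_1$ and $N(y_0,V)\subset F_2$. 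Form the product system $(X\times Y, T\times S)$ with base point $(x_0,y_0)$ and open neighborhood $U\times V$. Then
\[
N\bigl((x_0,y_0),\,U\times V\bigr)=N(x_0,U)\cap N(y_0,V)\subset F_1\cap F_2,
\]
so it suffices to know that $(X\times Y,T\times S)$ is again a $d$-step nilsystem.

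The only step requiring a small verification is that the product of two $d$-step nilsystems is a $d$-step nilsystem. If $X=G/\Gamma$ and $Y=H/\Lambda$ with $G,H$ being $d$-step nilpotent Lie groups and $\Gamma,\Lambda$ uniform subgroups, then $X\times Y=(G\times H)/(\Gamma\times\Lambda)$, and $G\times H$ is a $d$-step nilpotent Lie group because its lower central series satisfies $(G\times H)_{j}=G_{j}\times H_{j}$, so $(G\times H)_{d+1}=\{e\}\times\{e\}$. The translation $T\times S$ is left multiplication by $(\tau_T,\tau_S)\in G\times H$, so $(X\times Y,T\times S)$ is a $d$-step nilsystem as required. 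I do not expect any serious obstacle here; the argument is the natural product construction and mirrors the proof of the classical fact that $\F_{1,0}$ (ordinary Bohr$_0$-sets) is a filter.
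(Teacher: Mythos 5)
Your proposal is correct and follows essentially the same route as the paper: closure of $\F_{d,0}$ under finite intersections is obtained by taking the product nilsystem with the product neighborhood, and the Ramsey property of $\F_{d,0}^*$ then follows from the standard duality between filters and Ramsey families recalled in the Preliminaries. You merely spell out a couple of steps the paper leaves implicit (properness of $\F_{d,0}$ and the fact that a product of two $d$-step nilsystems is again a $d$-step nilsystem), both of which are routine.
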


\subsection{Sets of $d$-recurrence}

\subsubsection{}

Recall that for $d\in \N$, $\F_{Poi_d}$ (resp. $\F_{Bir_d}$) is the
family consisting of all sets of $d$-recurrence
(resp. sets of $d$-topological recurrence).

\begin{rem} It is known that for all integer $d\ge 2$ there exists a set of
$(d-1)$-recurrence that is not a set of $d$-recurrence \cite{FLW}.
This also follows from Theorem~ \ref{several}.
\end{rem}

Recall that a set $S\subset \Z$ is {\em $d$-intersective} \index{$d$-intersective} if every
subset $A$ of $\Z$ with positive density contains at least one
arithmetic progression of length $d+1$ and a common difference in
$S$, i.e. there is some $n\in S$ such that
$$A\cap (A-n)\cap (A-2n)\ldots\cap (A-dn)\neq \emptyset.$$
Similarly, one can define topological $d$-intersective set by
replacing the set with positive density by a syndetic set in the
above definition.

We now give some equivalence conditions of $d$-topological
recurrence.
\begin{prop}\label{birkhoff-equi1} Let $S\subset \Z$. Then the following statements are equivalent:
\begin{enumerate}
\item $S$ is a set of topological $d$-intersective.

\item $S$ is a set of {$d$-topological recurrence}.

\item For any t.d.s. $(X,T)$ there are $x\in X$ and
$\{n_i\}_{i=1}^\infty\subset S$ such that
$$\lim_{i\lra +\infty}T^{jn_i}x=x\ \text{for each}\ 1\le j\le d.$$
\end{enumerate}
\end{prop}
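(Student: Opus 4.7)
My plan is to prove the equivalences by establishing $(1)\Leftrightarrow(2)$ via the Furstenberg correspondence principle and then $(2)\Leftrightarrow(3)$ by dynamical arguments; the only nontrivial direction is $(2)\Rightarrow(3)$, which I would attack by a Baire category argument inside a minimal subsystem.

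For $(1)\Leftrightarrow(2)$, given a minimal t.d.s.\ $(X,T)$ and nonempty open $U\subset X$, Theorem~\ref{topocase}(2) supplies a syndetic set $E\subset\Z$ such that for every finite $\alpha\subset\Z$,
\[
\bigcap_{k\in\alpha}(E-k)\ne\emptyset\implies\bigcap_{k\in\alpha}T^{-k}U\ne\emptyset.
\]
Taking $\alpha=\{0,n,2n,\ldots,dn\}$ and invoking (1) on $E$ produces $n\in S$ with $E\cap(E-n)\cap\cdots\cap(E-dn)\ne\emptyset$, hence $U\cap T^{-n}U\cap\cdots\cap T^{-dn}U\ne\emptyset$; the converse runs symmetrically through Theorem~\ref{topocase}(1). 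For $(3)\Rightarrow(2)$, given a minimal $(X,T)$ and nonempty open $U$, take $x$ and $\{n_i\}\subset S$ from (3); by minimality pick $k$ with $T^kx\in U$, and then $y=T^kx$ satisfies $T^{jn_i}y\to y$ for each $j$, so for large $i$ all of $y,T^{n_i}y,\ldots,T^{dn_i}y$ lie in $U$.

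For $(2)\Rightarrow(3)$, replace $(X,T)$ by a minimal subsystem $(M,T)$ (any $x$ satisfying (3) in $M$ also works in $X$). For $k\in\N$ consider the open subset of $M$
\[
W_k=\bigcup_{\substack{n\in S\\|n|\ge k}}\bigl\{x\in M:\rho(T^{jn}x,x)<\tfrac{1}{k}\text{ for all }1\le j\le d\bigr\}.
\]
Any $x\in\bigcap_k W_k$ produces the data required by (3). Since $M$ is compact metric, by Baire's theorem it suffices to show each $W_k$ is dense in $M$: given a nonempty open $V\subset M$, pick a nonempty open $V'\subset V$ with $\mathrm{diam}(V')<1/k$ and apply (2) to $(M,T)$ and $V'$ to obtain $n\in S$ and $x\in V'$ with $T^{jn}x\in V'$ for all $1\le j\le d$; then $x\in W_k\cap V$, provided $|n|\ge k$.

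The main obstacle is therefore arranging $|n|\ge k$ in the above application of (2), which amounts to showing that for each finite $F\subset S$ the set $S\setminus F$ is still a set of $d$-topological recurrence. I would prove this by contradiction: suppose for some $V'$ the set $S\cap A_{V'}\subset\{n_1,\ldots,n_M\}$, where $A_{V'}=\{n:V'\cap T^{-n}V'\cap\cdots\cap T^{-dn}V'\ne\emptyset\}$. Since each $\mathrm{Fix}(T^{n_i})$ is a $T$-invariant closed subset of the minimal $(M,T)$ it is either empty or all of $M$; in the generic case where every $\mathrm{Fix}(T^{n_i})=\emptyset$, each compact set $\pi_i(M)=\{(x,T^{n_i}x,\ldots,T^{dn_i}x):x\in M\}\subset M^{d+1}$ is disjoint from the diagonal $\Delta$, so one can shrink $V'$ to a strictly smaller nonempty open $V''$ whose cube $(V'')^{d+1}$ lies in a diagonal neighborhood avoiding every $\pi_i(M)$, yielding $S\cap A_{V''}=\emptyset$ and contradicting (2). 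The exceptional case in which $T^{n_i}=\mathrm{id}_M$ for some $i$ forces $M$ to be a finite periodic orbit, where (3) follows immediately from (2) applied repeatedly.
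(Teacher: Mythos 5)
Your treatment of $(1)\Leftrightarrow(2)$ via the Furstenberg correspondence principle (Theorem~\ref{topocase}) is correct and genuinely more self-contained than the paper, which simply cites \cite{FLW, F81}; your argument nicely shows how both directions factor through the correspondence. Your $(3)\Rightarrow(2)$ is also correct and matches the paper. The issue is in $(2)\Rightarrow(3)$, where you have introduced an unnecessary requirement that creates a genuine gap.

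You insist that the witnesses $n$ in the sets $W_k$ satisfy $|n|\ge k$. But the statement of $(3)$ does not require the sequence $\{n_i\}$ to be injective, unbounded, or tend to infinity --- it only asks for \emph{some} sequence $\{n_i\}\subset S$ with $T^{jn_i}x\to x$. The paper's $W_j$ therefore puts no size restriction on $n$: it only asks for some $n\in S$ with $\rho(T^{kn}x,x)<1/j$ for $1\le k\le d$. Each $W_j$ is then a union of open sets (hence open), and density follows immediately from $(2)$ applied to any $V'\subset V$ with $\operatorname{diam}(V')<1/j$. A point $x\in\bigcap_j W_j$ supplies, for each $j$, some $n_j\in S$ with $\rho(T^{kn_j}x,x)<1/j$, and the sequence $\{n_j\}$ already satisfies $(3)$, whether or not the $n_j$ repeat. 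The entire $|n|\ge k$ machinery is self-imposed and not needed.

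Moreover, the patch you attempt --- showing that $S\setminus F$ remains a set of $d$-topological recurrence for every finite $F\subset S$ --- is \emph{false as stated} and your proof of it has a concrete error. The paper explicitly permits $n=0$ in the definition of a $d$-topological recurrence set, so $S=\{0\}$ is such a set but $S\setminus\{0\}=\emptyset$ is not. More seriously, in your case analysis, whenever $0\in S$ you will always have $0\in S\cap A_{V'}$ (since $0\in A_{V'}$ for every $V'$), and then $\operatorname{Fix}(T^{0})=M$ automatically; but the inference ``$T^{n_i}=\mathrm{id}_M$ forces $M$ to be a finite periodic orbit'' is only valid for $n_i\ne 0$. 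You would have to isolate and separately dispose of the case $0\in S$ before any of the shrinking-to-$V''$ argument is even reached. The clean fix is simply to drop the $|n|\ge k$ constraint, which removes the obstacle entirely and collapses your $(2)\Rightarrow(3)$ to the paper's one-line Baire category argument.
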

\begin{proof} The equivalence between (1) and (2) was proved in \cite{FLW,
F81}.

$(2)\Rightarrow(3)$. Now assume that whenever $(Y, S)$ is a minimal
t.d.s. and $V\subset Y$ a nonempty open set, there is $n\in S$
such that
$$V\cap T^{-n}V\cap \ldots\cap T^{-dn}V\not=\emptyset.$$
Let $(X, T)$ be a t.d.s., and without loss of generality we assume
that $(X,T)$ is minimal, since each t.d.s. contains a minimal
subsystem. Define for each $j\in\N$
$$W_j=\{x\in X: \exists\ n\in S\
\text{with}\ d(T^{kn}x,x)<\tfrac{1}{j}\ \text{for each}\ 1\le k\le
d\}.$$ Then it is easy to verify that $W_j$ is non-empty, open and
dense. Then any $x\in \bigcap_{j=1}^\infty W_j$ is the point we look
for.

$(3)\Rightarrow(2).$  Let $(X, T)$ be a minimal t.d.s. and
$U\subset X$ a nonempty open set. Then there are $x\in X$ and
$\{n_i\}_{i=1}^\infty\subset S$ such that for each given $1\le k\le
d$, $T^{kn_i}x\lra x$. Since $(X,T)$ is minimal, there is some $l\in
\Z$ such that $x\in V=T^{-l}U$. When $i_0$ is larger enough, we have
$V \cap T^{-n_{i_0}}V\cap \ldots \cap T^{-dn_{i_0}}V\neq \emptyset$,
which implies that $U\cap T^{-n}U\cap \ldots \cap T^{-dn}U\neq
\emptyset$ by putting $n=n_{i_0}$.

\end{proof}

\subsubsection{}

The following fact follows from the Poincar\'e and Birkhoff multiple
recurrent theorems.
\begin{prop}\label{ramseyp}
For all $d\in \N$, $\F_{Poi_d}$ and $\F_{Bir_d}$ have the Ramsey
property.
\end{prop}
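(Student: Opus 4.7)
The plan is to prove both Ramsey properties by a product-system argument, the key point being that the ``bad'' witnesses for non-membership can be combined into a single product system that then witnesses non-membership of the union.

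For $\F_{Poi_d}$ the argument is direct. Assume toward contradiction that $A=A_1\cup A_2\in \F_{Poi_d}$ but $A_1,A_2\notin \F_{Poi_d}$. Then for each $i=1,2$ there is a measure preserving system $(X_i,\X_i,\mu_i,T_i)$ and a set $B_i\in\X_i$ with $\mu_i(B_i)>0$ such that $\mu_i(B_i\cap T_i^{-n}B_i\cap\ldots\cap T_i^{-dn}B_i)=0$ for every $n\in A_i$. Form the product measure preserving system $(X_1\times X_2,\X_1\otimes\X_2,\mu_1\times\mu_2, T_1\times T_2)$ and set $B=B_1\times B_2$, which has positive product measure. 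Since the $d$-fold intersection along $T_1\times T_2$ factors as a product of the corresponding intersections in each coordinate, its measure vanishes whenever $n\in A_1$ (first factor is zero) or $n\in A_2$ (second factor is zero), hence for every $n\in A_1\cup A_2$. This contradicts $A\in\F_{Poi_d}$.

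For $\F_{Bir_d}$ the direct analogue fails because one cannot in general arrange a minimal subsystem of a product of minimal systems to meet a prescribed product of open sets. I plan to bypass this by invoking Proposition \ref{birkhoff-equi1}(3), which characterizes $S\in\F_{Bir_d}$ by the existence, for every t.d.s. $(X,T)$, of some $x\in X$ and $\{n_i\}\subset S$ with $T^{jn_i}x\to x$ for all $1\le j\le d$. Assume $A_1,A_2\notin\F_{Bir_d}$; then by the negation of (3) there are t.d.s.\ $(X_i,T_i)$ such that no point of $X_i$ admits a sequence in $A_i$ along which all of $T_i^{n_i},T_i^{2n_i},\ldots,T_i^{dn_i}$ return to it. Apply (3) to the product t.d.s.\ $(X_1\times X_2,T_1\times T_2)$ and the hypothetically $d$-Birkhoff set $A_1\cup A_2$: one obtains $(x_1,x_2)\in X_1\times X_2$ and an infinite sequence $\{n_j\}\subset A_1\cup A_2$ with $T_i^{kn_j}x_i\to x_i$ for $i=1,2$ and $1\le k\le d$. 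Pass to a subsequence so that $\{n_j\}$ is contained in $A_1$ or in $A_2$; either choice contradicts the chosen property of $(X_{i},T_{i})$.

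The main (minor) obstacle is purely bookkeeping for the Birkhoff case: getting comfortable that invoking the ``any t.d.s.'' form in Proposition \ref{birkhoff-equi1}(3) gives exactly the statement needed to feed the product system back into the hypothesis on $A_1\cup A_2$. The Poincar\'e case has no real obstacle since the product measure trivially produces a witness of positive measure. Both arguments are short and can be combined into a single short proof written in two paragraphs.
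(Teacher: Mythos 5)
Your Poincar\'e argument coincides with the paper's proof exactly: take a product of the two witnessing measure-preserving systems and use that the $d$-fold intersection factors across the product measure.

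For the Birkhoff case your route is correct but not the paper's, and your stated reason for abandoning the direct analogue is not quite accurate. The paper does run the product argument directly: it takes a minimal subsystem $X\subset X_1\times X_2$ and then observes that, since each $(X_i,T_i)$ is minimal, one may replace $U_i$ by $T_i^{-k_i}U_i$ for suitable $k_i$ so that $(T_1^{-k_1}U_1\times T_2^{-k_2}U_2)\cap X\neq\emptyset$; the emptiness of $\bigcap_{j=0}^d T_i^{-jn}U_i$ for $n\in F_i$ is preserved under this shift, so the same product computation gives the contradiction. Your alternative bypasses this bookkeeping by converting to the pointwise characterization of Proposition~\ref{birkhoff-equi1}(3): build the product t.d.s., obtain from (3) a single point and a sequence in $A_1\cup A_2$ witnessing simultaneous recurrence in both coordinates, and pass to a subsequence landing entirely in one $A_i$. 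Both arguments are sound; the paper's is more self-contained (it uses only the open-set definition and minimality), while yours trades a small shift argument for an appeal to Proposition~\ref{birkhoff-equi1}. One small correction to your narrative: the direct product-of-open-sets approach does not ``fail,'' it merely needs the shift adjustment that the paper supplies.
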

\begin{proof}
Let $F\in \F_{Poi_d}$ and $F=F_1\cup F_2$. Assume the contrary that
$F_i\not \in \F_{Poi_d}$ for $i=1,2$. Then there are measure
preserving systems $(X_i,\mathcal{B}_i,\mu_i,T_i)$ and $A_i\in
\mathcal{B}_i$ with $\mu_i(A_i)>0$ such that $\mu_i(A_i\cap
T_i^{-n}A_i\cap\ldots\cap T_i^{-dn}A_i)=0$ for $n\in F_i$, where
$i=1,2$. Set $X=X_1\times X_2$, $\mu=\mu_1\times \mu_2$,
$A=A_1\times A_2$ and $T=T_1\times T_2$. Then we have $$ \mu(A\cap
T^{-n}A\cap \ldots\cap T^{-dn}A)=\mu_1(\bigcap _{i=0}^d
T_1^{-in}A_1) \mu_2(\bigcap _{i=0}^d T_2^{-in}A_2)=0$$ for each
$n\in F=F_1\cup F_2$, a contradiction.

Now let $F\in \F_{Bir_d}$ and $F=F_1\cup F_2$. Assume the contrary
that $F_i\not \in \F_{Bir_d}$ for $i=1,2$. Then there are minimal
systems $(X_i,T_i)$ and non-empty open subsets $U_i$ such that
$U_i\cap T_i^{-n}U_i\cap\ldots\cap T_i^{-dn}U_i=\emptyset$ for $n\in
F_i$, where $i=1,2$. Let $X$ be a minimal subset of $X_1\times X_2$,
$U=(U_1\times U_2)\cap X$ and $T=T_1\times T_2$. Replacing $U_1$ and
$U_2$ by $T_1^{-i_1}U_1$ and $T_2^{-i_2}U_2$ respectively (if
necessary) we may assume that $U\not=\emptyset$ (using the
minimality of $T_1$ and $T_2$). Then we have
$$ U\cap T^{-n}U\cap \ldots\cap T^{-dn}U\subset \bigcap_{i=0}^d
T_1^{-in}U_1\times \bigcap_{i=0}^d T_2^{-in}U_2=\emptyset$$ for each
$n\in F=F_1\cup F_2$, a contradiction.
\end{proof}

\subsection{Nil$_d$ Bohr$_0$-sets and $\RP^{[d]}$}

To show the following result we need several well known facts
(related to distality) from the Ellis enveloping semigroup theory,
see \cite{Au88, V77}. Also we note that the lifting property in
Theorem \ref{ShaoYe} is valid when $X$ is compact and Hausdorff (see
Appendix \ref{appendix:CT2} for more details).

\begin{thm}\label{huang10}
Let $(X,T)$ be a minimal t.d.s. Then $(x,y)\in \RP^{[d]}$ if and
only if $N(x,U)\in \F_{d,0}^*$ for each neighborhood $U$ of $y$.
\end{thm}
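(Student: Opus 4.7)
The proof has two directions.

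For the easier $(\Leftarrow)$ direction, I would argue by contraposition. By Theorem~\ref{ShaoYe}(5), the quotient $\pi\colon X\to X_d:=X/\RP^{[d]}$ is the maximal $d$-step nilfactor, and as the excerpt observes, a system of order $d$ may replace a $d$-step nilsystem in the defining condition of $\F_{d,0}$. So if $(x,y)\notin\RP^{[d]}$ then $\pi(x)\ne\pi(y)$; pick disjoint open neighborhoods $W_1\ni\pi(x)$, $W_2\ni\pi(y)$ and set $U:=\pi^{-1}(W_2)$ and $A:=N_{T_d}(\pi(x),W_1)\in\F_{d,0}$. For any $n\in N(x,U)$ we have $\pi(T^nx)\in W_2$, disjoint from $W_1$, so $n\notin A$; hence $N(x,U)\cap A=\emptyset$, contradicting $N(x,U)\in\F_{d,0}^*$.

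For the harder $(\Rightarrow)$ direction, suppose $(x,y)\in\RP^{[d]}(X)$ and let $A\supset N_S(y_0,V)$ be a Nil$_d$ Bohr$_0$-set witnessed by a minimal $d$-step nilsystem $(Y,S)$, $y_0\in Y$, and an open neighborhood $V\ni y_0$. It suffices to produce $n$ with $T^nx\in U$ and $S^ny_0\in V$. My plan is to work in $(X\times Y,T\times S)$: since $(Y,S)$ is distal (every $d$-step nilsystem is), the Ellis semigroup $E(Y)$ is a group, and a standard argument (see \cite{Au88,V77})---using that every idempotent of $E(X\times Y)$ must have identity $Y$-component, combined with an idempotent of $E(X)$ fixing $x$---shows every point of $X\times Y$ is almost periodic. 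Hence $M:=\overline{\O((x,y_0),T\times S)}$ is minimal, and both projections $\pi_X\colon M\to X$ and $\pi_Y\colon M\to Y$ are surjective factor maps by minimality of $X$ and $Y$.

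Next, I would apply the lifting property of Theorem~\ref{ShaoYe}(4) (valid in the compact Hausdorff setting, cf.\ Appendix~\ref{appendix:CT2}) to $\pi_X$ to obtain $(x,y_*),(y,y_*')\in M$ with $((x,y_*),(y,y_*'))\in\RP^{[d]}(M)$; applying it to $\pi_Y$ and using $\RP^{[d]}(Y)=\Delta_Y$ (since $Y$ is its own maximal $d$-step nilfactor) forces $y_*=y_*'$. The main obstacle---and the crux of the proof---is then to upgrade this to $((x,y_0),(y,y_0))\in\RP^{[d]}(M)$, replacing the auxiliary $y_*$ by the prescribed base point $y_0$. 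I would exploit the group structure of $E(Y)$ one more time: the element of $E(Y)$ sending $y_*$ to $y_0$ lifts to an element of $E(M)$ which, via the $E(M)$-invariance of the closed invariant equivalence relation $\RP^{[d]}(M)$ (Theorem~\ref{ShaoYe}(3)), transports the pair to one with $Y$-coordinate $y_0$. With this in hand, Theorem~\ref{ShaoYe}(1) gives face transformations $R^{(k)}\in\F^{[d+1]}$ such that $R^{(k)}(x,y_0)^{[d+1]}\to((x,y_0),(y,y_0)^{[d+1]}_*)$; reading off the coordinate $\ep=\{1\}$ then produces, for $k$ large enough, an integer $n=n^{(k)}_1$ with $T^nx\in U$ and $S^ny_0\in V$, so $n\in N(x,U)\cap A$ as required.
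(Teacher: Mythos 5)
Your $(\Leftarrow)$ argument is correct and is simply the contrapositive of the paper's; the facts used ($\pi:X\to X_d$ is the quotient by $\RP^{[d]}$, and $(X_d,T_d)$ witnesses a Nil$_d$ Bohr$_0$-set) are the same.

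For $(\Rightarrow)$, the setup parallels the paper's: pass to a minimal $M$ in a product, lift $(x,y)$ to $((x,y_*),(y,y_*'))\in\RP^{[d]}(M)$ via Theorem~\ref{ShaoYe}(4), and use $\pi_Y$ to get $y_*=y_*'$. But the transport step you yourself flag as the crux does not close the gap. Fix $p\in E(Y)$ with $py_*=y_0$ and a preimage $q\in E(M)$ of $p$ under the surjection $E(M)\to E(Y)$ induced by $\pi_Y$. Since $q$ is a limit of restrictions of $(T\times S)^{n_k}$, it acts coordinate-wise, $q(a,b)=(q_1a,\,pb)$ with $q_1\in E(X)$. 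Applying $q\times q$ to your pair therefore yields $((q_1x,y_0),(q_1y,y_0))\in\RP^{[d]}(M)$, and you have no control over $q_1$: requiring simultaneously $S^{n_k}y_*\to y_0$ and $T^{n_k}x\to x$ is not automatic (for instance, if $M$ is the graph of a factor map $X\to Y$, then $q_1$ is completely determined by $p$). So the integers $n$ you extract at the end via Theorem~\ref{ShaoYe}(1) satisfy $T^n(q_1x)$ near $q_1y$ and $S^ny_0\in V$, which says nothing about $N(x,U)$.

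This is precisely the obstacle the paper's $Z_\infty$ device is built to remove. It works in $X\times Z_\infty$ where $Z_\infty\subset Z^Z$ is the orbit closure of $\id_Z$ under $R^Z$. Distality of $(Z,R)$ guarantees every $\omega\in Z_\infty$ is a bijection of $Z$, so after lifting $(x,y)$ to $((x,\omega^1),(y,\omega^2))\in\RP^{[d]}(A)$, one may \emph{choose} a coordinate $z_1$ with $\omega^1(z_1)=z_0$ and then push forward by $(u,\omega)\mapsto(u,\omega(z_1))$. This sends $(x,\omega^1)$ to $(x,z_0)$ with the $X$-coordinate left intact --- exactly what your $E(M)$-transport cannot arrange --- and applying $\pi_Z$ then forces $\omega^2(z_1)=z_0$, after which minimality of the image $B$ gives syndeticity of $N((x,z_0),U\times V)$. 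You would need to incorporate this ``pick the coordinate after the lift'' device, or some other mechanism that adjusts the second coordinate without disturbing the first, for the proof to go through.
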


\begin{proof}
First assume that $N(x,U)\in \F_{d,0}^*$ for each neighborhood $U$
of $y$. Let $(X_d,S)$ be the maximal $d$-step nilfactor of $(X,T)$
(see Theorem \ref{ShaoYe}) and $\pi:X\lra X_d$ be the projection.
Then for any neighborhood $V$ of $\pi(x)$, we have $N(x,U)\cap
N(\pi(x),V)\not=\emptyset$ since $N(x,U)\in \F_{d,0}^*$. This means
that there is a sequence $\{n_i\}$ such that $$(T\times S)^{n_i}(x,
\pi(x))\lra (y,\pi(x)),\ i\to \infty .$$ Thus, we have $$
\pi(y)=\pi(\lim_{i}T^{n_i}x)=\lim_{i}S^{n_i}\pi(x)=\pi(x),$$ i.e.
$(x,y)\in \RP^{[d]}$.

\medskip
Now assume that $(x,y)\in \RP^{[d]}$ and $U$ is a neighborhood of
$y$. We need to show that if $(Z,R)$ is a $d$-step nilsystem,
$z_0\in Z$ and $V$ is a neighborhood of $z_0$ then $N(x,U)\cap
N(z_0,V)\not=\emptyset$.

Let $$W=\prod_{z\in Z}Z \quad \text{ (i.e. $W=Z^Z$) and
}R^Z:W\rightarrow W$$ with $(R^Z\omega)(z)=R(\omega(z))$ for any
$z\in Z$, where $\omega=(\omega(z))_{z\in Z}\in W$. Note that in
general $(W,R^Z)$ is not a metrizable but a compact Hausdorff
system. Since $(Z,R)$ is a $d$-step nilsystem, $(Z,R)$ is distal.
Hence $(W,R^Z)$ is also distal.

Choose $\omega^*\in W$ with $\omega^*(z)=z$ for all $z\in Z$, and
let $Z_\infty=\overline{\O(\omega^*,R^Z)}$. Then
$(Z_\infty,R^Z)$ is a minimal subsystem of $(W,R^Z)$ since $(W,R^Z)$
is distal. For any $\omega\in Z_\infty$, there exists $p\in E(Z,R)$
such that $\omega(z)=p(\omega^*(z))=p(z)$ for all $z\in Z$. Since
$(Z,R)$ is a distal system, the Ellis semigroup $E(Z,R)$ is
a group (Appendix \ref{appendix:CT2}). Particularly, $p:Z\rightarrow
Z$ is a surjective map. Thus
$$\{\omega(z):z\in Z\}=\{ p(z):z\in Z\}=Z.$$ Hence there
exists $z_\omega\in Z$ such that $\omega(z_\omega)=z_0$.

\medskip

Take a minimal subsystem $(A,T\times R^Z)$ of the product system
$(X\times Z_\infty,T\times R^Z)$. Let $\pi_X: A\rightarrow X$ be the
natural coordinate projection. Then $\pi_X:(A,T\times
R^Z)\rightarrow (X,T)$ is a factor map between two minimal systems.
Since $(x,y)\in \RP^{[d]}(X,T)$, by Theorem \ref{ShaoYe} there exist
$\omega^1,\omega^2\in W$ such that $((x,\omega^1),(y,\omega^2))\in
\RP^{[d]}(A,T\times R^Z)$.

For $\omega^1$, there exists $z_1\in Z$ such that
$\omega^1(z_1)=z_0$ by the above discussion. Let $\pi: A\rightarrow
X\times Z$ with $\pi(u,\omega)=(u,\omega(z_1))$ for $(u,\omega)\in
A$, $u\in X$, $\omega\in W$. Let $B=\pi(A)$. Then $(B,T\times R)$ is
a minimal subsystem of $(X\times Z,T\times R)$, and $\pi:(A,T\times
R^Z)\rightarrow (B,T\times R)$ is a factor map between two minimal
systems. Clearly $\pi(x,\omega^1)=(x,z_0)$,
$\pi(y,\omega^2)=(y,z_2)$ for some $z_2\in Z$, and
$$((x,z_0),(y,z_2))=\pi\times \pi((x,\omega^1),(y,\omega^2))\in
\RP^{[d]}(B,T\times R).$$ Moreover, we consider the projection
$\pi_Z$ of $B$ onto $Z$. Then $\pi_Z:(B,T\times R)\rightarrow (Z,R)$
is a factor map and so $(z_0,z_2)=\pi_Z\times
\pi_Z((x,z_0),(y,z_2))\in \RP^{[d]}(Z,R)$. Since $(Z,R)$ is a system
of order $d$, $z_0=z_2$. Thus $((x,z_0),(y,z_0))\in
B$. Particularly, $N(x,U)\cap
N(z_0,V)=N((x,z_0),U\times V)$ is a syndetic set since $(B,T\times
R)$ is minimal. This completes the proof of theorem.
\end{proof}

\begin{rem}\label{huangrem} From the proof of Theorem \ref{huang10},
we have the following result: if $(X,T)$ is minimal and $(x,y)\in
\RP^{[d]}$ then  $N(x,U)\cap F$ is a syndetic set for each $F\in
\F_{d,0}$ and each neighborhood $U$ of $y$.
\end{rem}


\subsection{Recurrence sets and $\RP^{[d]}$}

Now we can sum up the main result of this section as follows, whose proof depends on Theorem A.

\begin{thm}\label{several}
Let $(X,T)$ be a minimal t.d.s., $d\in\N$ and $x,y\in X$. Then the
following statements are equivalent:

\begin{enumerate}

\item $(x,y)\in \RP^{[d]}$.

\item
$N(x,U)\in \F_{Poi_d}$ for each neighborhood $U$ of $y$.

\item $N(x,U)\in \F_{Bir_d}$ for each
neighborhood $U$ of $y$.

\item $N(x,U)\in \F_{d,0}^*$ for each
neighborhood $U$ of $y$.
\end{enumerate}
\end{thm}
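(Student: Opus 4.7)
The plan is to close the cycle (2) $\Rightarrow$ (3) $\Rightarrow$ (4) $\Rightarrow$ (1) $\Rightarrow$ (2). The implications (2) $\Rightarrow$ (3) and (3) $\Rightarrow$ (4) come for free from the family inclusions $\F_{Poi_d} \subset \F_{Bir_d} \subset \F_{d,0}^*$ in Corollary D, and (4) $\Leftrightarrow$ (1) is Theorem \ref{huang10}. So the substantive work is (1) $\Rightarrow$ (2): assuming $(x,y) \in \RP^{[d]}$ and a neighborhood $U$ of $y$, I must show $N(x,U)$ is a set of $d$-recurrence.

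Fix a measure-preserving system $(Y,\mathcal{Y},\nu,S)$ and $B \in \mathcal{Y}$ with $\nu(B) > 0$. First I would reduce to the ergodic case. Decompose $\nu = \int \nu_\omega\,d\mu(\omega)$; the set $\Omega = \{\omega : \nu_\omega(B) > 0\}$ has positive $\mu$-measure. If the ergodic case is known, then for each $\omega \in \Omega$ there is $n_\omega \in N(x,U)$ with $\nu_\omega\bigl(\bigcap_{i=0}^d S^{-in_\omega}B\bigr) > 0$; since $N(x,U) \subset \Z$ is countable, a pigeonhole across $\Omega$ produces one value $n \in N(x,U)$ taken on a positive-$\mu$-measure subset of $\Omega$, and integrating over that subset gives $\nu\bigl(\bigcap_{i=0}^d S^{-in}B\bigr) > 0$.

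For the ergodic case, apply Theorem \ref{d-recurrence} to $(Y,\mathcal{Y},\nu,S)$ and $B$: there exist $c > 0$, a Nil$_d$ Bohr$_0$-set $F$, and $M \subset \Z$ with $BD^*(M) = 0$ such that
\[
I := \bigl\{n \in \Z : \nu(B \cap S^{-n}B \cap \ldots \cap S^{-dn}B) > c\bigr\} = F \triangle M,
\]
so $I \supset F \setminus M$. Now invoke Remark \ref{huangrem}, which says that $(x,y) \in \RP^{[d]}$ forces $N(x,U) \cap F$ to be syndetic, hence of positive lower Banach density; subtracting the zero-upper-Banach-density set $M$ preserves positive lower density, so $(N(x,U) \cap F) \setminus M \subset N(x,U) \cap I$ is non-empty, and any $n$ in it is the desired recurrence time.

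The main obstacle is the ergodic reduction: unlike in the topological setting, positivity of $\nu_\omega$-measure for a single component does not lift to positivity of $\nu$-measure, and one really needs a single $n$ good simultaneously for a positive-measure family of $\omega$'s. Once the pigeonhole argument (enabled by countability of $N(x,U)$) handles this, the combination of Theorem \ref{d-recurrence} with Remark \ref{huangrem} is the heart of the matter, with the zero-density error in Theorem \ref{d-recurrence} absorbed comfortably by the syndeticity from Remark \ref{huangrem}.
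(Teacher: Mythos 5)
Your proposal is correct and matches the paper's own argument essentially step for step: the cycle $(2)\Rightarrow(3)\Rightarrow(4)$ via Corollary D, $(4)\Rightarrow(1)$ via Theorem \ref{huang10}, and the substantive $(1)\Rightarrow(2)$ via ergodic decomposition, Theorem \ref{d-recurrence}, Remark \ref{huangrem}, and a countable pigeonhole over the ergodic components. The only cosmetic difference is that you phrase the ergodic reduction as a separate preliminary step, whereas the paper interleaves it with the application of Theorem \ref{d-recurrence}; and your statement of the threshold set $I=\{n:\nu(B\cap\cdots\cap S^{-dn}B)>c\}$ is actually slightly more careful than the paper's wording (which writes $F_\omega$ with threshold $0$ where the cited theorem uses a positive threshold), so no issues.
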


\begin{proof}
First we show that $(1)\Rightarrow (2)$.
Let $U$ be a neighborhood of $y$. We need to show $N(x,U)\in
\F_{Poi_d}$.

Now let $(Y,\mathcal{Y},\mu, S)$ be a measure preserving system and
$A\in \mathcal{Y}$ with $\mu(A)>0$. Let $\mu=\int_{\Omega}
\mu_\omega d m(\omega)$ be an ergodic decomposition of $\mu$. Then
there is $\Omega'\subset \Omega$ with $m(\Omega')>0$ such that for
each $\omega\in \Omega'$, $\mu_\omega(A)>0$. For $\omega\in \Omega'$, set
$$F_\omega=\{n\in\Z:\mu_\omega(A\cap S^{-n}A\cap \ldots\cap S^{-dn}A)>0\}.$$
By Theorem \ref{d-recurrence} there is some subset $M$ with
$BD^*(M)=0$ such that $B=F_\omega \D M$ is a Nil$_d$ Bohr$_0$-set. Hence we
have $N(x,U)\cap (F_\omega\Delta M)$ is syndetic by Remark \ref{huangrem}.
Thus we conclude that there is $n_\omega\not=0$ with $n_\omega\in
N(x,U)\cap F_\omega$ since $BD^*(M)=0$. This implies that there are
$\Omega''\subset \Omega'$ with $m(\Omega'')>0$ and $n\in N(x,U)$
such that for each $\omega\in \Omega''$ one has
$\mu_\omega(A\cap S^{-n}A\cap \ldots\cap S^{-dn}A)>0$ which in turn
implies $\mu(A\cap S^{-n}A\cap \ldots\cap S^{-dn}A)>0$. By the
definition, $N(x,U)\in \F_{Poi_d}$.

It follows from Corollary D that $(2)\Rightarrow (3)
\Rightarrow (4)$. By Theorem \ref{huang10}, one has that
$(4)\Rightarrow (1)$ and completes the proof.
\end{proof}

\section{$SG_d$-sets and $\RP^{[d]}$}
In this section we will describe $\RP^{[d]}$ using the $SG_d$-sets
introduced by Host and Kra in \cite{HK10}. First we recall some
definitions.

\subsection{Sets $SG_d(P)$}\index{$SG_d(P)$}

Recall that for $d\in\N$ and  a (finite or infinite) sequence
$P=\{p_i\}_i$ in $\Z$ the {\em set of sums with gaps of length less
than $d$} of $P$ is the set $SG_d(P)$ of all integers of the form
$$\ep_1p_1+\ep_2p_2+\ldots +\ep_np_n$$ where $n\ge 1$ is an integer,
$\ep_i\in \{0,1\}$ for $1\le i\le n$, the $\ep_i$ are not all equal
to $0$, and the blocks of consecutive $0$'s between two $1$ have
length less than $d$. \index{set of sums with gaps of length less
than $d$}

Note that in this definition, $P$ is a sequence and not a subset of
$\Z$. For example, if $P=\{p_i\}$, then $SG_1(P)$ is the set of all
sums $p_m+p_{m+1}+\ldots +p_n$ of consecutive elements of $P$, and
thus it coincides with the set $\D(S)$ where $S=\{0, p_1, p_1+p_2,
p_1+p_2+p_3,\ldots\}$. Therefore $SG^*_1$-sets are the same as
$\D^*$-sets.

For a sequence $P$, $SG_2(P)$ consists of all sums of the form
$$\sum_{i=m_0}^{m_1}p_i +\sum_{i=m_1+2}^{m_2}p_i+\ldots+
\sum_{i=m_{k-1}+2}^{m_k}p_i+\sum_{i=m_k+2}^{m_{k+1}}p_i$$ where
$k\in \N$ and $m_0,m_1,\ldots, m_{k+1}$ are positive integers
satisfying $m_{i+1}\ge m_i+2$ for $i=1,\ldots,k$, and $m_1\ge m_0$.

\medskip

Recall that for each $d\in \N$, $\F_{SG_d}$ is the family generated by
$SG_d$. \index{$\F_{SG_d}$} Moreover, let $\F_{fSG_d}$ be the
family of sets containing
arbitrarily long $SG_d(P)$ sets with $P$ finite. That is, $A\in
\F_{fSG_d}$ if and only if there are finite sequences $P^i$ with
$|P^i|\lra \infty$ such that $\bigcup_{i=1}^\infty SG_d(P^i)\subset
A$. It is clear that
$$\F_{SG_1}\supset \F_{SG_2}\supset \ldots\supset
\F_{SG_\infty}=:\bigcap_{i=1}^\infty
\F_{SG_i},$$ and
$$\F_{fSG_1}\supset \F_{fSG_2}\supset \ldots\supset
\F_{fSG_\infty}=:\bigcap_{i=1}^\infty \F_{fSG_i}.$$

We now show
\begin{prop} The following statements hold:
\begin{enumerate}
\item $\F_{SG_\infty}=\{A:\exists \ P^i\ \text{infinite for each}\ i\in
\N\ \text{such that}\ A\supset \bigcup_{i=1}^\infty SG_i(P^i)\}.$

\item $\F_{fSG_\infty}=\F_{fip}$.
\end{enumerate}
\end{prop}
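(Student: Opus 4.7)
The plan for part (1) is essentially definitional. For the forward inclusion, given $A\in \F_{SG_\infty}=\bigcap_{d=1}^\infty \F_{SG_d}$, the membership $A\in \F_{SG_d}$ for each $d$ produces by definition an infinite sequence $P^d$ in $\Z$ with $A\supset SG_d(P^d)$; relabelling the index, taking the union over $d$ yields the stated representation $A\supset \bigcup_{i=1}^\infty SG_i(P^i)$. The reverse inclusion is immediate, since for each fixed $i$ the containment $A\supset SG_i(P^i)$ with $P^i$ infinite witnesses $A\in \F_{SG_i}$, so $A$ lies in the intersection.

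The interesting statement is part (2). The inclusion $\F_{fip}\subset \F_{fSG_\infty}$ is easy: for every $d\in \N$ and every finite sequence $Q$ one has $SG_d(Q)\subset FS(Q)$, since restricting to patterns $\ep$ obeying the gap condition only shrinks the set of admissible sums. Hence if $A\in \F_{fip}$ contains $FS(Q)$ for arbitrarily long finite $Q$, then $a$ fortiori it contains $SG_d(Q)$ for arbitrarily long $Q$ and each $d$, giving $A\in \F_{fSG_d}$ for every $d$.

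For the reverse inclusion, the plan hinges on the elementary combinatorial observation that for a finite sequence $Q=\{q_1,\ldots,q_k\}$ one has $SG_d(Q)=FS(Q)$ as soon as $k\le d+1$: in any pattern $\ep\in\{0,1\}^k$ the gap between two consecutive $1$'s is at most $k-2\le d-1<d$, so the gap constraint in the definition of $SG_d$ is automatically satisfied and every nonempty sum appears. Given this, to show $\F_{fSG_\infty}\subset \F_{fip}$ I would fix $N\in \N$, apply the assumption $A\in \F_{fSG_N}$ to produce a finite sequence $P$ with $|P|\ge N$ and $SG_N(P)\subset A$, let $Q$ be the initial segment of $P$ of length $N$, and observe that $SG_N(Q)\subset SG_N(P)$ (appending zeros on the right to a valid pattern from $Q$ introduces no new $1$ and thus preserves the gap condition). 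Combined with $SG_N(Q)=FS(Q)$, this gives $FS(Q)\subset A$ for each $N$, i.e.\ $A\in \F_{fip}$.

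No serious obstacle arises; the whole argument is a careful manipulation of the defining conditions of the families, and the combinatorial equality $SG_d(Q)=FS(Q)$ for $|Q|\le d+1$ is the only non-trivial ingredient.
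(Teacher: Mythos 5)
Your proof is correct and follows essentially the same route as the paper's. The only difference is that the paper dismisses the inclusion $\F_{fSG_\infty}\subset\F_{fip}$ in part (2) as ``clear,'' whereas you spell out the underlying combinatorial fact $SG_d(Q)=FS(Q)$ for $|Q|\le d+1$ together with the truncation argument ($SG_N(Q)\subset SG_N(P)$ for $Q$ an initial segment of $P$), which is exactly the right justification for that step.
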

\begin{proof} (1). Assume that $A\in \F_{SG_\infty}$. Then $A\in\bigcap_{i=1}^\infty
\F_{SG_i}$ and hence $A\in \F_{SG_i}$ for each $i\in \N$. Thus for
each $i\in\N$ there is $P^i$ infinite such that $A\supset SG_i(P^i)$
which implies that $A\supset\bigcup_{i=1}^\infty SG_i(P^i).$

Now let $B=\bigcup_{i=1}^\infty SG_i(P^i),$ where $P^i$ infinite for
each $i\in\N$. It is clear that $B\subset \F_{SG_i}$ for each $i$
and thus, $B\in\F_{SG_\infty}$. Since $\F_{SG_\infty}$ is a family,
we conclude that $\{A:\exists P^i\ \text{infinite for each}\ i\in
\N\ \text{such that}\ A\supset \bigcup_{i=1}^\infty
SG_i(P^i)\}\subset \F_{SG_\infty}$.

\medskip

(2) It is clear that $\F_{fSG_\infty}\subset \F_{fip}$. Let $A\in
\F_{fip}$ and without loss of generality assume that
$A=\bigcup_{i=1}^\infty FS (P^i)$ with $P^i=\{p_1^i, \ldots,
p_i^i\}$ and $|P^i|\lra\infty$.

Put $A_d=\bigcup_{i=1}^\infty SG_d(P^i)\subset A$ for $d\in\N$. Then
$A_d\in \F_{fSG_d}$ which implies that $A\in \F_{fSG_d}$ for each
$d\ge 1$ and hence $A\in\F_{fSG_\infty}$. That is, $\F_{fip}\subset
\F_{fSG_\infty}$.
\end{proof}

\subsection{$SG_d$-sets and $\RP^{[d]}$}

The following theorem is the main result of this section.

\begin{thm}\label{huang12}
Let $(X,T)$ be a minimal t.d.s. Then for any $d\in\N$,
$(x,y)\in\RP^{[d]}$ if and only if $N(x,U)\in \F_{SG_d}$ for each
neighborhood $U$ of $y$. The same holds when $d=\infty$.
\end{thm}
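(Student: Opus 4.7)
The plan is to handle the two directions separately, reducing the $d=\infty$ case to the finite ones at the end.

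For the easier direction $(\Leftarrow)$, assume $N(x,U) \in \F_{SG_d}$ for every neighborhood $U$ of $y$. The key combinatorial observation is that any non-empty subset of the first $d+1$ positions of a sequence is automatically $SG_d$-admissible, since any two chosen consecutive positions differ by at most $d$, leaving at most $d-1$ zeros between them. Thus, picking any infinite $P = \{p_i\}$ with $SG_d(P) \subset N(x,U)$ and setting ${\bf n} := (p_1, \ldots, p_{d+1})$, one gets $T^{{\bf n}\cdot \epsilon} x \in U$ for every non-empty $\epsilon \subset [d+1]$. Letting $U$ shrink through a neighborhood base of $y$ and passing to a limit, this produces elements of $\F^{[d+1]}$ whose action on $x^{[d+1]}$ converges to $(x, y^{[d+1]}_*)$. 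Hence $(x, y^{[d+1]}_*) \in \overline{\F^{[d+1]}}(x^{[d+1]})$, and Theorem \ref{ShaoYe}(1) yields $(x,y) \in \RP^{[d]}$.

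For the harder direction $(\Rightarrow)$, assume $(x,y) \in \RP^{[d]}$ and fix a neighborhood $U$ of $y$. The plan is to construct an infinite sequence $P = \{p_k\}_{k\ge 1}$ with $SG_d(P) \subset N(x,U)$ by induction, maintaining at stage $k$ the invariant that every non-empty $SG_d$-admissible sum $\sum_{i \in F} p_i$ with $F \subset [k]$ lies in $N(x,U)$, and moreover that ``recent'' sums (those with $\max F$ close to $k$) land in smaller and smaller neighborhoods of $y$ drawn from a nested basis $U \supset U_1 \supset U_2 \supset \cdots$. Adding $p_{k+1}$ generates only finitely many new admissible sums, namely $p_{k+1} + \sum_{i\in F'}p_i$ where $F' \subset [k]$ is empty or admissible with $\max F' \ge k-d+2$. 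Equivalently, one must find a single $p_{k+1}$ so that $T^{p_{k+1}}$ simultaneously sends $x$ and each of the finitely many points $y_{F'} := T^{\sum_{i \in F'} p_i} x$ (most of them already very close to $y$) into a prescribed small neighborhood of $y$.

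The crux, and where the main obstacle will lie, is producing such a $p_{k+1}$ at every stage with uniform control as the target neighborhoods shrink. The plan is to leverage the minimality of $\overline{\F^{[d+1]}}(x^{[d+1]})$ under the face group action together with the fact that $(x, y^{[d+1]}_*)$ lies in this orbit closure (Theorem \ref{ShaoYe}(1),(2)). By projecting onto a finite product carrying the required ``target'' coordinates $(x, y_{F'_1}, \ldots, y_{F'_m})$ and invoking minimality of the face-group action on the appropriate factor, I intend to extract an element of $\F^{[d+1]}$ whose application moves $x$ close to $y$ while simultaneously keeping each $y_{F'}$ close to $y$. Distilling a single integer $p_{k+1}$ from the $d+1$ face directions will then require a combinatorial argument exploiting the specific shape of $SG_d$-admissible sums, in order to maintain consistency across all overlapping constraints at the inductive step.

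Finally, the $d = \infty$ case follows immediately from the finite ones: since $\F_{SG_\infty} = \bigcap_{d\ge 1} \F_{SG_d}$ and $\RP^{[\infty]} = \bigcap_{d\ge 1} \RP^{[d]}$, the conditions ``$N(x,U) \in \F_{SG_\infty}$ for every neighborhood $U$ of $y$'' and ``$(x,y) \in \RP^{[\infty]}$'' both decouple into a conjunction over all finite $d$, so the equivalence transfers.
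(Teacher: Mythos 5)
Your treatment of the $(\Leftarrow)$ direction and the reduction of $d=\infty$ to finite $d$ are both correct and match the paper.

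The $(\Rightarrow)$ direction has a genuine gap, located exactly where you flag ``the main obstacle.'' Building $P=\{p_k\}$ one integer at a time forces you to produce $p_{k+1}$ so that $T^{p_{k+1}}$ sends $x$ and each of the finitely many points $y_{F'}=T^{\sum_{i\in F'}p_i}x$ (each already near $y$) back near $y$. But the cube structure furnished by $(x,y)\in\RP^{[d]}$ and Theorem \ref{ShaoYe} supplies something different: vectors $(n_1,\ldots,n_{d+1})$ with $T^{n_{d+1}}x$ near $y$ and $T^{{\bf n}\cdot\ep}x$ near $x$ for every other nontrivial $\ep\subset[d+1]$. Nothing there controls $T^{n_{d+1}}z$ for a point $z$ near $y$ rather than near $x$, so the single integer you seek need not exist; the obstruction is dynamical, not merely a bookkeeping issue with $SG_d$-shapes.

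The paper's resolution avoids the problem by never seeking a single new integer per step. It first constructs a doubly-indexed family $n^j_1,\ldots,n^j_{d+1}$ so that $\rho(T^{n^j_{d+1}}x,y)<\eta_j$ and, writing $E_j$ for the set of sums ${\bf n}^j\cdot\ep$ over $\ep\in\{0,1\}^{d+1}\setminus\{{\bf 0},(0,\ldots,0,1)\}$, one also has $\rho(T^{s+r}x,T^sx)<\eta_j$ for all $r\in E_j$ and all $s$ in the finite IP-set $S_{j-1}$ built so far. Each $P_k$ is then a \emph{compound} sum interleaved across $d+1$ consecutive generations,
\[
P_k \;=\; n^k_{d+1}+n^{k+1}_{k\ (\mathrm{mod}\ d)}+\cdots+n^{k+d}_{k\ (\mathrm{mod}\ d)},
\]
chosen precisely so that every $SG_d$-admissible $n=\sum_j P_{i_j}$ regroups generation by generation as $a_1+a_2+\cdots$ with $a_1=n^{i_1}_{d+1}$ and each later $a_j\in E_{i_1+j-1}$. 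The estimate $\rho(T^nx,y)<\eta$ is then a telescoping sum: $a_1$ carries $x$ near $y$, and each $a_j$ ($j\ge 2$) shifts the running image by less than $\eta_{i_1+j-1}$, because it acts at a point $T^sx$ with $s$ in the controlled set $S_{i_1+j-2}$. This compound construction---letting the ``move $x$ to $y$'' step come first and all later increments be near-identity perturbations applied only at points of the form $T^sx$, never at points near $y$---is the idea your plan is missing; a single new $p_{k+1}$ cannot play both roles. Note also that the telescoping error depends on the \emph{smallest} index $i_1$ of the admissible sum, not on $\max F$, so your proposed invariant is tracking the wrong quantity.
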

\begin{proof}
It is clear that if $N(x,U)\in \F_{SG_d}$ for each neighborhood $U$
of $y$, then it contains some $FS(\{n_i\}_{i=1}^{d+1})$ for each
neighborhood $U$ of $y$ which implies that $(x,y)\in\RP^{[d]}$ by
Theorem \ref{ShaoYe}.

\medskip
Now assume that $(x,y)\in\RP^{[d]}$ for $d\ge 1$. Let for $i\ge 2$
$$A_i=:\{0,1\}^i\setminus \{(0,\ldots, 0,0), (0,\ldots,0, 1)\}$$

The case when $d=1$ was proved by Veech \cite{V68} and our method is
also valid for this case. To make the idea of the proof clearer, we
first show the case when $d=2$ and the general case follows by the
same idea.

\medskip

\noindent {\bf I. The case $d=2$.}

Assume that $(x,y)\in\RP^{[2]}$. Then by Theorem \ref{ShaoYe} (1)
for each neighborhood $V\times U$ of $(x,y)$, there are
$n_1,n_2, n_3\in \mathbb{Z}$ such that
$$T^{\ep_1n_1+\ep_2n_2+\ep_3n_3}x\in V \ \text{and}\ T^{n_3}x\in U,$$
for each $(\ep_1,\ep_2,\ep_3)\in A_3.$ For a given $U$, let $\eta>0$
with $B(y,\eta)\subset U$, and take $\eta_i>0$ with
$\sum_{i=1}^\infty\eta_i<\eta$, where $B(y,\eta)=\{z\in X:
\rho(z,y)<\eta\}$.

Choose $n_1^1,n_2^1,n_3^1\in \mathbb{Z}$ such that
$$\rho(T^{n_3^1}x,y)<\eta_1\ \text{and}\ \rho(T^{r}x,x)<\eta_1,$$
for each $r\in E_1$ with
$$E_1=\{\ep_1n_1^1+\ep_2n_2^1+\ep_3n_3^1: (\ep_1,\ep_2,\ep_3)\in
A_3\}.$$ Let
$$S_1=FS(\{n_1^1,n_2^1,n_3^1\}).$$

Choose $n_1^2,n_2^2,n_3^2\in \mathbb{Z}$ such that
$$\rho(T^{n_3^2}x,y)<\eta_2 \text{ and } \max_{s\in
S_1}\rho(T^{s+r}x, T^sx)<\eta_2$$ for each $r\in E_2$ with
$$E_2=\{\ep_1n_1^2+\ep_2n_2^2+\ep_3n_3^2:
(\ep_1,\ep_2,\ep_3)\in A_3\}.$$ Let
$$S_2=FS(\{n_i^j:j=1,2,i=1,2,3\}).$$

Generally when $n_1^i$, $n_2^i$, $n_3^i$, $E_i,S_i$ are defined for $1\le
i\le k$ choose $n_1^{k+1}$, $n_2^{k+1}$, $n_3^{k+1}$ $\in \mathbb{Z}$ such
that
\begin{equation}\label{huang1}
\rho(T^{n_3^{k+1}}x,y)<\eta_{k+1} \text{ and } \max_{s\in
S_k}\rho(T^{s+r}x, T^sx)<\eta_{k+1}. \end{equation} for each $r\in
E_{k+1}$, where
$$E_{k+1}=\{\ep_1n_1^{k+1}+\ep_2n_{2}^{k+1}+\ep_3n_3^{k+1}:
(\ep_1,\ep_2,\ep_3)\in A_3\}.$$ Let
$$S_{k+1}=FS(\{n_i^j:i=1,2,3, 1\le j\le k+1\}).$$

Now we define a sequence $P=\{P_k\}$ such that
$$P_1=n_3^1+n_1^2+n_1^3, P_2=n_3^2+n_2^3+n_2^4, P_3=n_3^3+n_1^4+n_1^5,P_4=n_3^4+n_2^5+n_2^6, \ldots$$
That is, \begin{equation*}
    P_k=n^k_{3}+n^{k+1}_{k\ ({\rm mod}\ 2)}+n^{k+2}_{k\  ({\rm
    mod}\ 2)},
\end{equation*}
where we set $2m\ ({\rm mod}\ 2)=2$ for $m\in \mathbb{N}$.  We
claim that $N(x,U)\supset SG_2(P).$

Let $n\in SG_2(P)$. Then $n=\sum_{j=1}^k P_{i_j},$ where $1\le
i_{j+1}-i_j\le 2$ for $1\le j\le k-1$. By induction for $k$, it is
not hard to show that $n$ can be written as
$$n=a_1+a_2+\ldots+a_{i_k-i_1+3}$$ such that $a_1=n_3^{i_1}$, $a_j\in
E_{j+i_1-1}$ for $j=2,3,\ldots,i_k-i_1+1$ and $a_{i_k-i_1+2}\in \{
n_1^{i_k+1},n_2^{i_k+1},n_1^{i_k+1}+n_2^{i_k+1}\}$,
$a_{i_k-i_1+3}=n_{i_k \ ({\rm mod}\ 2)}^{i_k+2}$. In other words,
$n$ can be written as $n=a_1+a_2+\ldots+a_{i_k-i_1+3}$ with
$a_1=n_3^{i_1}$ and $a_j\in E_{i_1+j-1}$ for $2\le j\le i_k-i_1+3$.

Note that $\sum \limits_{\ell=1}^j a_\ell\in S_{i_1+j-1}$ and
$a_{j+1}\in E_{i_1+j}$ for $1\le j\le i_k-i_1+2$. Thus by
(\ref{huang1}) we have
$$\rho(T^{\sum_{i=1}^ja_i}x,T^{\sum_{i=1}^{j+1}a_i}x)<\eta_{j+i_1}$$
for $1\le j\le i_k-i_1+2$. This implies that
\begin{align*}
\rho(T^nx,y)&\le \rho(T^{\sum_{j=1}^{i_k-i_1+3}a_i}x,
T^{\sum_{j=1}^{i_k-i_1+2}
a_i}x)+\ldots+\rho(T^{n_3^{i_1}+a_2}x,T^{n_3^{i_1}}x)+
\rho(T^{n_3^{i_1}}x,y)\\
&<\sum_{j=0}^{i_k-i_1+2}\eta_{j+i_1}<\eta.
\end{align*}
That is, $n\in N(x,U)$ and hence $N(x,U)\supset SG_2(P).$

\medskip

\noindent {\bf II. The general case.}

Generally assume that $(x,y)\in\RP^{[d]}$ with $d\ge 2$. Then by
Theorem \ref{ShaoYe} (1) for each neighborhood $V\times U$
of $(x,y)$, there are $n_1,n_2,\ldots, n_{d+1}\in \mathbb{Z}$ such
that
$$T^{\ep_1n_1+\ep_2n_2+\ldots+\ep_{d+1}n_{d+1}}x\in V \ \text{and}\ T^{n_{d+1}}x\in U,$$
for each $(\ep_1,\ep_2,\ldots, \ep_{d+1})\in A_{d+1}.$ For a given $U$,
let $\eta>0$ with $B(y,\eta)\subset U$, and take $\eta_i>0$ with
$\sum_{i=1}^\infty\eta_i<\eta$.

Choose $n_1^1,n_2^1,\ldots,n_{d+1}^1\in \mathbb{Z}$ such that
$\rho(T^{n_{d+1}^1}x,y)<\eta_1\ \text{and}\ \rho(T^{r}x,x)<\eta_1$
where $r\in E_1$ with
$$E_1=\{\ep_1n_1^1+\ep_2n_2^1+\ldots+\ep_{d+1}n_{d+1}^1: (\ep_1,\ep_2,\ldots,\ep_{d+1})\in
A_{d+1}\}.$$ Let
$$S_1=FS(\{n^1_1,\ldots,n^1_{d+1}\}).$$

Choose $n_1^2,n_2^2,\ldots,n_{d+1}^2\in\Z$ such that
$$\rho(T^{n_{d+1}^2}x,y)<\eta_2\ \text{and}\ \max_{s\in
S_1}\rho(T^{s+r}x, T^sx)<\eta_2$$ for each $r\in E_2$ with
$$E_2=\{\ep_1n_1^2+\ep_2n_2^2+\ldots+\ep_{d+1}n_{d+1}^2:
(\ep_1,\ep_2,\ldots,\ep_{d+1})\in A_{d+1}\}.$$ Let
$$S_2=FS(\{n_1^1,\ldots, n_{d+1}^1,n_1^2,\ldots,n_{d+1}^2\}).$$

Generally when $n_1^i,\ldots,n_{d+1}^i$, $E_i,S_i$ are defined for
$1\le i\le k$ choose $n_1^{k+1}$, $\ldots$, $n_{d+1}^{k+1}$ $\in \mathbb{Z}$
such that
\begin{equation}\label{huang2}
\rho(T^{n_{d+1}^{k+1}}x,y)<\eta_{k+1}\ \text{and}\ \max_{s\in
S_k}\rho(T^{s+r}x, T^sx)<\eta_{k+1}. \end{equation} for each $r\in
E_{k+1}$, where
$$E_{k+1}=\{\ep_1n_1^{k+1}+\ep_2n_{2}^{k+1}+\ldots+\ep_{d+1}n_{d+1}^{k+1}:
(\ep_1,\ep_2,\ldots,\ep_{d+1})\in A_{d+1}\}.$$ Let
$$S_{k+1}=FS(\{n_i^j:i=1,\ldots, d+1, 1\le j\le k+1\}).$$

Now we define a sequence $P=\{P_k\}$ such that
\begin{eqnarray*}
P_1&=&n_{d+1}^1+n_1^2+\ldots+n_1^{d+1},
P_2=n_{d+1}^2+n_2^3+\ldots+n_2^{d+2},
\ldots,\\ P_d&=&n_{d+1}^d+n_d^{d+1}+\ldots+n_d^{2d},\\
P_{d+1}&=& n_{d+1}^{d+1}+n_1^{d+2}+\ldots+n_1^{2d+1},
P_{d+2}=n_{d+1}^{d+2}+n_2^{d+3}+\ldots+n_2^{2d+2},\ldots,\\
P_{2d}&=&n_{d+1}^{2d}+n_d^{2d+1}+\ldots+n_d^{3d}, \ldots
\end{eqnarray*}
That is,
\begin{equation*}
    P_k=n^k_{d+1}+n^{k+1}_{k\ ({\rm mod}\ d)}+\ldots+n^{k+d}_{k\  ({\rm
    mod}\
    d)},
\end{equation*}
where we set $dm\ ({\rm mod}\ d)=d$ for $m\in \mathbb{N}$.

We claim that $N(x,U)\supset SG_d(P).$ Let $n\in SG_d(P)$ then
$n=\sum_{j=1}^k P_{i_j},$ where $1\le i_{j+1}-i_j\le d$ for $1\le
j\le k-1$. By induction for $k$, it is not hard to show that $n$ can
be written as
$$n=a_1+a_2+\ldots+a_{i_k-i_1+d+1}$$ such that $a_1=n_{d+1}^{i_1}$, $a_j\in
E_{j+i_1-1}$ for $j=2,3,\ldots,i_k-i_1+1$ and
$$a_{i_k-i_1+1+r}\in FS(\{ n^{i_k+r}_\ell:  \ell \in \{ 1,2,\ldots,d\} \setminus \cup_{j=1}^{r-1} \{ i_k+j \ ({\rm mod}\ d)\}\})$$
for $1\le r \le d$. In other words, $n$ can be written as
$n=a_1+a_2+\ldots+a_{i_k-i_1+d+1}$ with $a_1=n_{d+1}^{i_1}$ and
$a_j\in E_{i_1+j-1}$ for $2\le j\le i_k-i_1+d+1$.

 Note that $\sum_{\ell=1}^j a_\ell\in
S_{i_1+j-1}$ and  $a_{j+1}\in E_{i_1+j}$ for $1\le j\le
i_k-i_1+d$. Thus by $(\ref{huang2})$ we have
$$\rho(T^{\sum_{i=1}^ja_i}x,T^{\sum_{i=1}^{j+1}a_i}x)<\eta_{i_1+j}$$
for $1\le j\le i_k-i_1+d$. This implies that
\begin{align*}
\rho(T^nx,y)&\le \rho(T^{\sum_{j=1}^{i_k-i_1+d+1}a_i}x,
T^{\sum_{j=1}^{i_k-i_1+d}
a_i}x)+\ldots+ \rho(T^{n_{d+1}^{i_1}}x,y)\\
&<\sum_{j=0}^{i_k-i_1+d}\eta_{j+i_1}<\eta.
\end{align*}
That is, $n\in N(x,U)$ and hence $N(x,U)\supset SG_d(P)$ which
implies that $N(x,U)\in \F_{SG_d}$. The proof is completed.
\end{proof}



\section{Cubic version of multiple recurrence sets and $\RP^{[d]}$}

Cubic version of multiple ergodic averages was studied in
\cite{HK05}, and also was proved very useful in some other questions
\cite{HK10, HKM}.

In this section we will discuss the question how to describe
$\RP^{[d]}$ using cubic version of multiple recurrence sets. Since
by Theorem \ref{ShaoYe} one can use dynamical parallelepipeds to
characterize $\RP^{[d]}$, it seems natural to describe $\RP^{[d]}$
using the cubic version of multiple recurrence sets.

\subsection{Cubic version of multiple Birkhoff recurrence sets}

First we give definitions for the cubic version of multiple
recurrence sets. We leave  the equivalent statements in viewpoint of
intersective sets to Appendix \ref{appendix:Intersective}.

\subsubsection{Birkhoff recurrence sets}\index{Birkhoff recurrence set}\index{set of topological
recurrence}
First we recall the classical definition. Let $P\subset \Z$. $P$ is
called a {\em Birkhoff recurrence set} (or a {\em set of topological
recurrence}) if whenever $(X, T)$ is a minimal t.d.s. and
$U\subset X$ a nonempty open set, then $P\cap N(U,U)\neq
\emptyset$. Let $\F_{Bir}$ denote the collection of Birkhoff
recurrence subsets of $\Z$. An alternative definition is that for
any t.d.s. $(X,T)$ there are $\{n_i\}\subset P$ and $x\in X$ such
that $T^{n_i}x\lra x$. Now we generalize the above definition to the
higher order.

\begin{de}\label{def-cubic}\index{Birkhoff recurrence set of order $d$}\index{set of topological recurrence of order $d$}
Let $d\in \N$. A subset $P$ of $\Z$ is called a {\em Birkhoff
recurrence set of order $d$} (or a {\em set of topological
recurrence of order $d$}) if whenever $(X, T)$ is a t.d.s. there are
$x\in X$ and $\{n_i^j\}_{j=1}^d\subset P$, $i\in \N$, such that
$FS(\{n_i^j\}_{j=1}^d)\subset P, i\in \N$ and for each given
$\ep=(\ep_1,\ldots,\ep_d)\in \{0,1\}^d$, $T^{m_i}x\lra x$, where
$m_i=\ep_1n_i^1+\ldots+\ep_dn_i^d$, $i\in\N$. A subset $F$ of $\Z$
is a {\em Birkhoff recurrence set of order $\infty$} if it is a
Birkhoff recurrence set of order $d$ for any $d\ge 1$.\index{Birkhoff recurrence set of order $\infty$}

For example, when $d=2$ this means that there are sequence $\{n_i\},
\{m_i\}\subset P$ and $x\in X$ such that $\{n_i+m_i\}\subset P$ and
$T^{n_i}x\lra x, T^{m_i}x\lra x$, $T^{n_i+m_i}x\lra x$.
\end{de}

Similarly we can define (topologically) intersective of order $d$
and intersective of order $d$ (see Appendix
\ref{appendix:Intersective}). We have
\begin{prop}\label{birkhoff-equi2} Let $d\in \N$ and $P\subset \Z$. The following statements are equivalent:
\begin{enumerate}
\item $P$ is a {Birkhoff recurrence set} of order $d$.

\item Whenever $(X, T)$ is a minimal t.d.s. and
$U\subset X$ a nonempty open set,  there are $n_1,\ldots,n_d$ with
$FS(\{n_i\}_{i=1}^d)\subset P$ such that
$$U \cap \big(\bigcap_{n\in FS(\{n_i\}_{i=1}^d)}T^{-n}U \big)\neq \emptyset.$$

\item $P$ is (topologically) intersective of order $d$.
\end{enumerate}
\end{prop}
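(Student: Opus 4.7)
The plan is to mirror the proof of Proposition \ref{birkhoff-equi1} and prove the cycle $(1)\Rightarrow(2)\Rightarrow(1)$ together with $(2)\Leftrightarrow(3)$, while being careful that the ``cubic'' structure $FS(\{n_i\}_{i=1}^d)$ must be preserved throughout. The main feature that distinguishes this from Proposition \ref{birkhoff-equi1} is that we have $2^d-1$ simultaneous return conditions rather than $d$, but the basic Baire category machinery still works once the target sets are defined in a cube-compatible way.

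First I would show $(2)\Rightarrow(1)$ by the same Baire argument as in the classical case. Given an arbitrary t.d.s. $(X,T)$, replace it by a minimal subsystem (which exists by Zorn's lemma). For each $j\in\N$ define
\[
W_j=\Big\{x\in X:\exists\,n_1,\ldots,n_d\text{ with }FS(\{n_i\}_{i=1}^d)\subset P\text{ and }\rho(T^m x,x)<\tfrac1j\ \forall\, m\in FS(\{n_i\}_{i=1}^d)\Big\}.
\]
Each $W_j$ is open by continuity of $T$. To see it is dense, let $V\subset X$ be any nonempty open set and choose a nonempty open $V_1\subset V$ with $\diam(T^k V_1)<1/j$ for all $k$ in a suitably chosen finite set (using uniform continuity of the finitely many iterates that will arise); applying (2) to the minimal system with the open set $V_1$ yields $n_1,\ldots,n_d$ with $FS(\{n_i\}_{i=1}^d)\subset P$ and a point $x\in V_1\cap \bigcap_{n\in FS(\{n_i\}_{i=1}^d)}T^{-n}V_1$. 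Then $x\in V\cap W_j$. By Baire's theorem any $x\in\bigcap_j W_j$ satisfies Definition~\ref{def-cubic}, producing the required $\{n_i^j\}_{j=1}^d$ for $i\in\N$ by a diagonal extraction.

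Next I would prove $(1)\Rightarrow(2)$ directly. Let $(X,T)$ be minimal and $U\subset X$ nonempty open. Apply (1) to the system $(X,T)$ to obtain $x\in X$ and sequences $\{n_i^j\}_{j=1}^d$ for $i\in\N$ with $FS(\{n_i^j\}_{j=1}^d)\subset P$ and $T^{m_i(\varepsilon)}x\to x$ for every $\varepsilon\in\{0,1\}^d$, where $m_i(\varepsilon)=\sum_j \varepsilon_j n_i^j$. By minimality pick $l$ with $T^l x\in U$, set $V=T^{-l}U$, and choose a small neighborhood $V'\subset V$ of $x$. For $i$ large enough the $2^d-1$ points $T^{m_i(\varepsilon)}x$ ($\varepsilon\neq \mathbf 0$) all lie in $V'\subset V$, giving $V\cap\bigcap_{n\in FS(\{n_i^j\}_j)}T^{-n}V\neq\emptyset$, and translating by $T^l$ yields the same statement for $U$.

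Finally, for $(2)\Leftrightarrow(3)$ I would invoke the (topological) Furstenberg correspondence principle (Theorem~\ref{topocase}) exactly as in the classical case, since the only content beyond $d=1$ is that one carries the finite-sums structure $FS(\{n_i\}_{i=1}^d)$ across the correspondence. Specifically, unravelling the definition of being (topologically) intersective of order $d$ as given in Appendix~\ref{appendix:Intersective}, a syndetic set $E$ admits $n_1,\ldots,n_d$ with $FS(\{n_i\}_{i=1}^d)\subset P$ and $\bigcap_{n\in\{0\}\cup FS(\{n_i\}_{i=1}^d)}(E-n)\neq\emptyset$ if and only if the minimal system $(X,T)$ produced by the correspondence principle admits the same configuration with $E-n$ replaced by $T^{-n}U$. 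Thus (2) and (3) are equivalent.

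The main obstacle I anticipate is the density step in $(2)\Rightarrow(1)$: to guarantee that the $n_1,\ldots,n_d$ supplied by (2) place \emph{all} of $T^{m}x$ for $m\in FS(\{n_i\}_{i=1}^d)$ inside a ball of radius $1/j$ about $x$, not merely the single return $T^{n}x$ of the classical argument. This is handled by shrinking $V_1$ enough that its diameter, together with all iterates appearing in the sum structure, stays below $1/j$; since $FS(\{n_i\}_{i=1}^d)$ has only $2^d-1$ elements for each fixed tuple, this is a finite constraint, and the abstract formulation of (2) still provides the requisite configuration.
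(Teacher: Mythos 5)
Your overall strategy matches the paper's: $(1)\Leftrightarrow(2)$ by a Baire category argument modelled on Proposition \ref{birkhoff-equi1}, and $(2)\Leftrightarrow(3)$ by passing between syndetic sets and minimal systems. (The paper instantiates the second equivalence via the subshift generated by $1_F$ rather than by citing Theorem \ref{topocase}, but those are the same device.)

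The one place you go astray is precisely the spot you flag as the ``main obstacle.'' You propose to choose $V_1\subset V$ so that $\diam(T^k V_1)<1/j$ for all $k$ ``in a suitably chosen finite set,'' invoking uniform continuity of ``the finitely many iterates that will arise.'' This is circular: the relevant iterates are the elements of $FS(\{n_i\}_{i=1}^d)$, and those $n_i$ are produced only \emph{after} you apply hypothesis (2) to $V_1$, so you cannot arrange in advance for $T^m V_1$ to be small at those times. Fortunately, no such control is needed. The correct (and simpler) observation is that once you have $y\in V_1\cap\bigcap_{m\in FS(\{n_i\}_{i=1}^d)}T^{-m}V_1$, both $y$ and every $T^m y$ lie in $V_1$, so $\rho(T^m y,y)\le\diam(V_1)$; choosing $V_1\subset V$ with $\diam(V_1)<1/j$ (which depends on nothing but $j$) already forces $y\in W_j\cap V$. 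With that one adjustment the density step, and hence $(2)\Rightarrow(1)$, is sound. The rest of the proposal — including $(1)\Rightarrow(2)$ by minimality and $(2)\Leftrightarrow(3)$ via the correspondence principle applied to $\alpha=\{0\}\cup FS(\{n_i\}_{i=1}^d)$ — is correct as written.
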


\begin{proof} $(1)\Leftrightarrow (2)$ follows from the proof of Proposition
\ref{birkhoff-equi1}. See Appendix \ref{appendix:Intersective} for
the proof $(1)\Leftrightarrow (3).$
\end{proof}

\begin{rem}
From the above proof, one can see that for a minimal t.d.s. the set
of recurrent point in the Definition \ref{def-cubic} is residual.
\end{rem}

\subsubsection{Some properties of Birkhoff sequences of order $d$ }

The family of all Birkhoff recurrence
sets of order $d$ is denoted by $\F_{B_d}$. \index{$\F_{B_d}$} We have
$$\F_{B_1}\supset \F_{B_2}\supset \ldots \supset \F_{B_d}\supset \ldots\supset
\F_{B_\infty}=:\bigcap_{d=1}^\infty \F_{B_d}.$$ \index{$\F_{B_\infty}$}

We will show later (after Proposition \ref{longproof}) that
\begin{prop}\label{birkhoff-pro} $\F_{B_\infty}=\F_{fip}.$
\end{prop}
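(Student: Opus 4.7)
The inclusion $\F_{B_\infty}\subset\F_{fip}$ is immediate: for any $A\in\F_{B_\infty}=\bigcap_{d\ge 1}\F_{B_d}$ and any $d$, applying the definition of $\F_{B_d}$ to any nontrivial t.d.s.\ and reading off the $i=1$ configuration yields a length-$d$ sequence $(n_1^1,\ldots,n_1^d)\subset A$ with $FS(\{n_1^j\}_{j=1}^d)\subset A$; since $d$ is arbitrary, $A$ contains finite IP sets of arbitrarily long length, whence $A\in\F_{fip}$.

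For the reverse inclusion $\F_{fip}\subset\F_{B_\infty}$, fix $A\in\F_{fip}$ and $d\in\N$; I will show $A\in\F_{B_d}$. By the equivalence $(1)\Leftrightarrow(2)$ of Proposition~\ref{birkhoff-equi2} (whose proof, parallel to that of Proposition~\ref{birkhoff-equi1}, upgrades the one-shot formulation to the full sequential one via a Baire-category argument on a minimal subsystem), it suffices to show that for every minimal t.d.s.\ $(X,T)$ and every nonempty open $U\subset X$ there exist $n_1,\ldots,n_d$ with $FS(\{n_j\}_{j=1}^d)\subset A$ and
\[
U\cap\bigcap_{m\in FS(\{n_j\})}T^{-m}U\neq\emptyset.
\]

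Fix such $(X,T)$ and $U$. Pick $V$ open with $\overline{V}\subset U$, $\delta>0$ with $B(V,\delta)\subset U$, and $L\in\N$ (a uniform return-time gap coming from compactness of $X$ and minimality) such that every $y\in X$ admits some $|k|\le L$ with $T^ky\in V$; since $\{T^k:|k|\le L\}$ is a finite family of continuous maps on compact $X$, choose $\eta>0$ with $d(a,b)<\eta\Rightarrow d(T^ka,T^kb)<\delta$ for all $|k|\le L$. Cover $X$ by finitely many open sets $W_1,\ldots,W_r$ of diameter $<\eta$. The finitary Hindman finite-sums theorem (via the binary encoding of subsets, a direct consequence of the classical finite Hindman theorem on $\N$) supplies $N=N(r,d+1)$ such that every $r$-coloring of the nonempty subsets of $[N]$ admits pairwise disjoint nonempty $J_0,\ldots,J_d\subset[N]$ on whose nonempty unions the coloring is constant. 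Since $A\in\F_{fip}$, pick $FS(\{p_1,\ldots,p_N\})\subset A$ and fix any $y\in X$; color each nonempty $\sigma\subset[N]$ by the index of a $W_c$ containing $T^{p_\sigma}y$, where $p_\sigma=\sum_{i\in\sigma}p_i$. Extract $J_0,\ldots,J_d$ and their common color $c_*$, set $q_j=\sum_{i\in J_j}p_i$ (so $FS(\{q_0,\ldots,q_d\})\subset FS(\{p_1,\ldots,p_N\})\subset A$), and let $z=T^{q_0}y\in W_{c_*}$. For every nonempty $\tau\subset\{1,\ldots,d\}$, $T^{\sum_{j\in\tau}q_j}z=T^{q_0+\sum_{j\in\tau}q_j}y\in W_{c_*}$, so $d(T^mz,z)<\eta$ for all $m\in FS(\{q_1,\ldots,q_d\})$. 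Pick $|k^*|\le L$ with $T^{k^*}z\in V$ and set $x=T^{k^*}z\in V\subset U$; then $d(T^mx,x)=d(T^{k^*}T^mz,T^{k^*}z)<\delta$ for each such $m$, so $T^mx\in B(V,\delta)\subset U$, exhibiting the required configuration $(n_1,\ldots,n_d)=(q_1,\ldots,q_d)$.

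The main obstacle I anticipate is the apparent need, in the definition of $\F_{B_d}$, to produce a single recurrent point $x$ together with an entire sequence of IP-configurations converging to it; this is bypassed by invoking Proposition~\ref{birkhoff-equi2}(2), which asks only for one intersection per nonempty open set and absorbs the passage to a single $x$ into a Baire-category argument of its own. The other technical input is a finitary disjoint-union version of the Hindman finite-sums theorem, which is standard.
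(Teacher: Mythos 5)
Your proof is correct, and it takes a genuinely different route from the paper. The paper derives $\F_{fip}\subset\F_{B_\infty}$ as a corollary of the stronger measurable inclusion $\F_{fip}\subset\F_{P_\infty}$, which is proved in Proposition \ref{longproof}(1) by a careful iterated application of a Gillis-type counting lemma (Proposition \ref{infinite-type}) to a measure-preserving system, together with the trivial containment $\F_{P_d}\subset\F_{B_d}$. You instead prove $\F_{fip}\subset\F_{B_d}$ directly in topological terms: after reducing, via Proposition \ref{birkhoff-equi2}(1)$\Leftrightarrow$(2), to exhibiting a single finite IP configuration returning a nonempty open $U$ to itself, you uniformize return times via minimality and compactness, pass to a finite cover by small sets, and extract the desired IP block with the finitary finite-unions form of Hindman's theorem. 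The paper's route buys more (the Poincar\'e version $\F_{fip}\subset\F_{P_\infty}$ comes free, and the combinatorial input, Gillis's lemma, is elementary pigeonhole), while your route is self-contained within topological dynamics, bypasses measure theory entirely, and is shorter, at the cost of invoking the heavier Hindman-type machinery. Both arguments are sound; they differ in which classical recurrence toolbox they lean on.
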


\subsection{Birkhoff recurrence sets and $\RP^{[d]}$}

We have the following theorem

\begin{thm}\label{birkhoff-thm}
Let $(X,T)$ be a minimal t.d.s. Then for any $d\in\N\cup\{\infty\}$,
$(x,y)\in\RP^{[d]}$ if and only if $N(x,U)\in \F_{B_d}$ for each
neighborhood $U$ of $y$.
\end{thm}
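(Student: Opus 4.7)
The plan is to split into the two implications and reduce each to a previously established characterisation of $\RP^{[d]}$; the case $d=\infty$ then follows by intersecting over $d \in \N$, since $\RP^{[\infty]}=\bigcap_d \RP^{[d]}$ and $\F_{B_\infty}=\bigcap_d \F_{B_d}$.

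For the reverse direction I plan to prove the set-theoretic inclusion $\F_{B_d} \subset \F_{d,0}^*$; combined with Theorem \ref{huang10} this immediately delivers $(x,y) \in \RP^{[d]}$. Fix $P \in \F_{B_d}$ and an arbitrary Nil$_d$ Bohr$_0$-set $F$ containing $N_R(x_0,V)$ for some $d$-step nilsystem $(Z,R)$, point $x_0 \in Z$ and neighbourhood $V$ of $x_0$. The idea is to force the cubic return times supplied by the $\F_{B_d}$-property to produce recurrence at the specific point $x_0$ rather than at some uncontrolled point of $Z$. Mimicking the $Z^Z$-enveloping construction used in the proof of Theorem \ref{huang10}, consider the product action $(Z^Z,R^Z)$ (distal since $(Z,R)$ is) and the orbit closure $Z_\infty$ of the identity point $\omega^*(z)=z$ in $Z^Z$, which is minimal by distality. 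Every point $\omega \in Z_\infty$ equals $p \circ \omega^* = p$ for some $p$ in the Ellis group $E(Z,R)$, and such $p\colon Z \to Z$ is surjective. Applying the (compact Hausdorff extension of the) definition of $\F_{B_d}$ inside $Z_\infty$ yields $\{n_i^j\}_{j=1}^d \subset P$ with $FS(\{n_i^j\}_{j=1}^d) \subset P$ and $(R^Z)^{m_i^\ep}\omega \to \omega$ pointwise, so $R^{m_i^\ep}(p(z)) \to p(z)$ for every $z \in Z$; choosing $z$ with $p(z)=x_0$ gives $R^{m_i^\ep} x_0 \to x_0$, whence $m_i^\ep \in N_R(x_0,V) \subset F$ for large $i$ and some non-empty $\ep$, proving $P \cap F \neq \emptyset$.

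For the forward direction I would construct the cubic configurations in $N(x,U)$ directly. Given a test t.d.s.\ $(Y,S)$, pass to the orbit closure of a minimal point so that without loss of generality $(Y,S)$ is minimal and the chosen base point $y_0 \in Y$ is uniformly recurrent. The key observation, verified directly from the definition of $\RP^{[d]}$, is that $((x,y_0),(y,y_0)) \in \RP^{[d]}(X \times Y, T \times S)$: setting both $Y$-coordinates equal to $y_0$ makes $Y$-side closeness automatic, while $X$-side closeness is supplied by $(x,y) \in \RP^{[d]}(X,T)$. Next, following the Ellis-semigroup strategy of the proof of Theorem \ref{huang10} (embed $Y$ into $Y^Y$, take the orbit closure of the identity, and take a minimal subsystem of $X$ times this extension, using the lift property of Theorem \ref{ShaoYe}(4)) places this regionally proximal pair inside a minimal t.d.s.\ where Theorem \ref{ShaoYe}(1) applies. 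That theorem produces, for each $\delta>0$, integers $n_1,\ldots,n_{d+1}$ with $T^{n \cdot \ep} x \in B(y,\delta)$ and $S^{n \cdot \ep} y_0 \in B(y_0,\delta)$ for every non-empty $\ep \subset [d+1]$; discarding $n_{d+1}$ and letting $\delta = \delta_i \to 0$ yields the required sequences $(n_i^1,\ldots,n_i^d)$.

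The main obstacle lies in the forward direction: Theorem \ref{ShaoYe}(1) requires a minimal ambient system, while $(X \times Y, T \times S)$ need not be minimal and, when $(Y,S)$ is not distal, the clean $Y^Y$-construction used in the reverse direction does not apply verbatim. The work therefore concentrates in combining a minimal subsystem of an appropriate extension of $X \times Y$ with the lifting property so that the pair $((x,y_0),(y,y_0))$ survives into a minimal extension with its $Y$-coordinates pinned at $y_0$.
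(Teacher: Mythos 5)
Both directions of your proposal run into trouble, and in both cases the paper's actual argument is appreciably simpler and avoids the difficulty.

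For the reverse implication you reduce to the set-theoretic inclusion $\F_{B_d}\subset\F_{d,0}^*$. That inclusion is \emph{not} established anywhere in the paper; indeed, in the discussion following Question 2 the authors explicitly record that proving $\F_{P_d}\subset\F_{B_d}\subset\F_{d,0}^*$ is something they ``can not prove.'' Your $Z^Z$-argument for it has a genuine gap: the family $\F_{B_d}$ is defined only for compact \emph{metric} systems, and the recurrent-point formulation of Definition~\ref{def-cubic} does not survive the passage to the non-metrizable compact Hausdorff system $(Z_\infty, R^Z)$. One cannot extract a single coherent recurrent $\omega\in Z_\infty$ from the inverse limit of metric factors (different factors give unrelated recurrent points), and the natural reduction to a countable dense $D\subset Z$ fails because the element $p\in E(Z,R)$ representing $\omega$ need not be continuous, so surjectivity of $p$ on $Z$ does not put $x_0$ into $p(D)$. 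The paper sidesteps all of this: since $(X,T)$ is itself a minimal metric system and $U$ a nonempty open set, apply the $\F_{B_d}$-property \emph{directly} to $(X,T)$ and $U$. This yields $FS(\{n_i\}_{i=1}^d)\subset N(x,U)$ together with a point $y'\in U$ satisfying $T^n y'\in U$ for all $n\in FS(\{n_i\}_{i=1}^d)$; taking $x'=x$, the pair $(x',y')$ and the vector $(n_1,\dots,n_d)$ witness the definition of $\RP^{[d]}$ once $U$ is small enough. No $Z^Z$, no set-theoretic inclusion, no Ellis semigroup.

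For the forward implication you correctly observe that $((x,y_0),(y,y_0))\in\RP^{[d]}(X\times Y)$, but you then acknowledge you cannot apply Theorem~\ref{ShaoYe}(1) because $X\times Y$ is not minimal, and your proposed remedy --- an Ellis-style $Y^Y$ construction pinning the $Y$-coordinate at $y_0$ --- is not available here, since $(Y,S)$ (your test system) is an arbitrary minimal t.d.s., not distal, so the identity point of $Y^Y$ need not lie in a minimal subsystem. The key idea you are missing is that one should \emph{not} insist on pinning the $Y$-coordinate. The paper instead picks a minimal subsystem $\Lambda\subset X\times Z$ and uses the lifting property of Theorem~\ref{ShaoYe}(4) along the factor map $\Lambda\to X$ to obtain \emph{some} $z_1,z_2\in Z$ with $((x,z_1),(y,z_2))\in\RP^{[d]}(\Lambda)$. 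Then, given the target open set $V\subset Z$, one chooses $m$ with $z_2\in R^{-m}V$ and applies Theorem~\ref{ShaoYe}(1) to $\Lambda$ with the neighborhood $U\times R^{-m}V$ of $(y,z_2)$; an $FS(\{n_i\}_{i=1}^{d+1})\subset N\big((x,z_1),U\times R^{-m}V\big)$ results, and the point $R^{n_{d+1}+m}z_1$ exhibits $V\cap\bigcap_{n\in FS(\{n_i\}_{i=1}^d)}R^{-n}V\neq\emptyset$, which is exactly what $\F_{B_d}$-membership of $N(x,U)$ requires. The extra index $n_{d+1}$ is what replaces your attempt to pin the $Z$-coordinate.
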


\begin{proof} We first show the case when $d\in\N$.

($\Leftarrow$)  Let $d\in \N$ and assume $N(x,U)\in \F_{B_d}.$ Then
there are $FS(\{n_i\}_{i=1}^d)\subset N(x,U)$ such that $U\cap
\bigcap_{n\in FS(\{n_i\}_{i=1}^d)}T^{-n}U\neq \emptyset.$ This means
that there is $y'\in U$ such that $T^ny'\in U$ for any $n\in
FS(\{n_i\}_{i=1}^d)$. Since $T^nx\in U$ for any $n\in
FS(\{n_i\}_{i=1}^d)$, we conclude that $(x,y)\in\RP^{[d]}$ by the
definition.

\medskip

($\Rightarrow$) Assume that $(x,y)\in\RP^{[d]}$ and $U$ is a
neighborhood of $y$. Let $(Z, R)$ be a minimal  t.d.s., $V$ be a
non-empty open subset of $Z$ and $\Lambda\subset X\times Z$ be a
minimal subsystem. Let $\pi:\Lambda\lra X$ be the projection. Since
$(x,y)\in\RP^{[d]}$ there are $z_1,z_2\in Z$ such that
$((x,z_1),(y,z_2))\in \RP^{[d]}(\Lambda, T\times R)$ by Theorem
\ref{ShaoYe}. Let $m\in\N$ such that $R^{-m}V$ be a neighborhood of
$z_2$. Then $U\times R^{-m}V$ is a neighborhood of $(y,z_2)$. By
Theorem \ref{ShaoYe}, there are $n_1,\ldots,n_{d+1}$ such that
$$N((x,z_1),U\times R^{-m}V)\supset FS(\{n_i\}_{i=1}^{d+1}).$$ This
implies that $\bigcap_{n\in FS(\{n_i\}_{i=1}^{d+1})}
R^{-n-m}V\not=\emptyset.$ Thus, $V\cap \bigcap_{n\in
FS(\{n_i\}_{i=1}^{d})} R^{-n}V\not=\emptyset,$ i.e. $N(x,U)\in
\F_{B_d}$.

The case $d=\infty$ is followed from the result for $d\in \N$ and
the definitions.
\end{proof}

\subsection{Cubic version of multiple Poincar\'e recurrence sets}

\subsubsection{Poincar\'e recurrence sets}

Now we give the cubic version of multiple Poincar\'e recurrence
sets.

\begin{de}\index{Poincar\'e recurrence set of order $d$}
For $d\in \N$, a subset $F$ of $\Z$ is a {\em Poincar\'e recurrence
set of order $d$} if for each $(X,\mathcal{B},\mu,T)$ and $A\in
\mathcal{B}$ with $\mu(A)>0$ there are $n_1,\ldots,n_d\in\Z$ such
that $FS(\{n_i\}_{i=1}^d)\subset F$ and
$$\mu(A\cap\big (\bigcap _{n\in
FS(\{n_i\}_{i=1}^d)} T^{-n}A\big ))>0.$$

A subset $F$ of $\Z$ is a {\em Poincar\'e recurrence set of order
$\infty$} if it is a Poincar\'e recurrence set of order $d$ for any
$d\ge 1$.\index{Poincar\'e recurrence set of order
$\infty$}
\end{de}

\begin{rem}
We remark that $F$ is a Poincar\'e recurrence set of order $1$ if
and only if it is a Poincar\'e sequence. Moreover, a Poincar\'e
recurrence set of order $1$ does not imply that it is a Poincar\'e
recurrence set of order $2$. For example, $\{n^k: n\in\mathbb{N}\}$
($k\ge 3$) is a Poincar\'e sequence \cite{F}, it is not a Poincar\'e
recurrence set of order $2$ by the famous Fermat Last Theorem.
\end{rem}

\subsubsection{Some properties of Poincar\'e recurrence
sets of order $d$ }

Let for $d\in \N\cup\{\infty\}$, $\F_{P_d}$ be the family
consisting of all Poincar\'e recurrence sets of order $d$. \index{$\F_{P_d}$}
Thus
$$\F_{P_1}=\F_{Poi}\supset \F_{P_2}\supset \ldots \supset
\F_{P_d}\supset \ldots\supset \F_{P_\infty}=:\bigcap_{d=1}^\infty
\F_{P_d}.$$

We want to show that $\F_{P_\infty}=\F_{fip}$. It is clear that
$\F_{P_\infty}\subset\F_{fip}$. To show $\F_{P_d}\supset\F_{fip},$
we need the following proposition, for a proof see \cite{G} or
\cite{HLY}. \index{$\F_{P_\infty}$}

\begin{prop}\label{infinite-type}
Let $(X,\mathcal{B},\mu)$ be a
probability space, and $\{E_i\}_{i=1}^\infty$ be a sequence of
measurable sets with $\mu(E_i)\ge a>0$ for some constant $a$ and any
$i\in\N$. Then for any $k\ge 1$ and $\ep>0$ there is $N=N(a, k,
\ep)$ such that for any tuple $\{ s_1<s_2<\ldots <s_n\}$ with $n\ge
N$ there exist $1\le t_1<t_2<\ldots<t_k\le n$ with
\begin{align}\label{bds-key}
\mu(E_{s_{t_1}}\cap E_{s_{t_2}}\cap \ldots \cap E_{s_{t_{k}}})\ge
a^k-\ep.
\end{align}
\end{prop}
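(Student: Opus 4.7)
The plan is to reduce the statement to a moment/averaging estimate applied to the indicator functions $f_i = \mathbf{1}_{E_i}$, and then to extract a good $k$-subset by pigeonholing the average of $\mu(E_{s_{t_1}}\cap\cdots\cap E_{s_{t_k}})$ over all $\binom{n}{k}$ choices.

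First I would set $g_n = \frac{1}{n}\sum_{i=1}^n f_{s_i}$, so that $0\le g_n\le 1$ and $\int g_n\, d\mu \ge a$ by the hypothesis $\mu(E_i)\ge a$. By Jensen's inequality applied to the convex function $t\mapsto t^k$ on $[0,1]$,
\begin{equation*}
\int g_n^k\, d\mu \;\ge\; \Bigl(\int g_n\, d\mu\Bigr)^k \;\ge\; a^k.
\end{equation*}

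Next I would expand the $k$-th power and separate the contribution of multi-indices with repetitions from those with all entries distinct. Writing
\begin{equation*}
g_n^k \;=\; \frac{1}{n^k}\sum_{(i_1,\ldots,i_k)\in\{1,\ldots,n\}^k} f_{s_{i_1}}\cdots f_{s_{i_k}},
\end{equation*}
the tuples with at least one repeated coordinate number at most $n^k - n(n-1)\cdots(n-k+1)\le \binom{k}{2} n^{k-1}$, and each corresponding integral is at most $1$. The remaining terms, having all coordinates distinct, arrange themselves into $k!$ copies of each unordered $k$-subset $\{t_1<\cdots<t_k\}$, and for such a subset $f_{s_{t_1}}\cdots f_{s_{t_k}} = \mathbf{1}_{E_{s_{t_1}}\cap\cdots\cap E_{s_{t_k}}}$. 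Combining,
\begin{equation*}
a^k \;\le\; \int g_n^k\, d\mu \;\le\; \frac{k!}{n^k}\sum_{t_1<\cdots<t_k} \mu\Bigl(\bigcap_{j=1}^k E_{s_{t_j}}\Bigr) \;+\; \frac{\binom{k}{2}}{n}.
\end{equation*}

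Rearranging and dividing by $\binom{n}{k}$, the average of $\mu(E_{s_{t_1}}\cap\cdots\cap E_{s_{t_k}})$ over the $\binom{n}{k}$ many $k$-subsets is at least $\frac{n^k}{n(n-1)\cdots(n-k+1)}\bigl(a^k - \binom{k}{2}/n\bigr)$, which tends to $a^k$ as $n\to\infty$ with $k$ fixed. Hence for all sufficiently large $n$, depending only on $a$, $k$ and $\varepsilon$, this average exceeds $a^k-\varepsilon$; choosing $N=N(a,k,\varepsilon)$ to be any such threshold, some individual term in the average must satisfy \eqref{bds-key}, completing the proof. There is no real obstacle here; the only thing to keep an eye on is the quantitative control of the diagonal remainder, which is the source of the dependence of $N$ on $a$, $k$ and $\varepsilon$.
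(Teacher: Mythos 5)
Your proof is correct and complete. The paper does not actually prove Proposition \ref{infinite-type}; it cites Gillis \cite{G} and \cite{HLY} for a proof, so there is no in-paper argument to compare against line by line. That said, your approach — averaging the indicator functions, applying Jensen's inequality to the convex map $t\mapsto t^k$, expanding the $k$-th power of the average, separating the $O(n^{k-1})$ diagonal contribution, and then pigeonholing over the $\binom{n}{k}$ off-diagonal terms — is exactly the standard moment/averaging argument that underlies Gillis-type correlation estimates, and it is quantitatively clean: the threshold $N$ may be taken on the order of $\binom{k}{2}/\epsilon$ (together with $n\ge k$), independently of $a$. The one point worth making explicit when writing this up is the inequality $n(n-1)\cdots(n-k+1)\ge n^k\bigl(1-\binom{k}{2}/n\bigr)$, which follows from $\prod_{j=1}^{k-1}(1-j/n)\ge 1-\sum_{j=1}^{k-1}j/n$ for $n\ge k$; you use this to bound the number of $k$-tuples with a repeated index by $\binom{k}{2}n^{k-1}$. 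No gap.
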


\medskip

\begin{rem}\label{remark}
To prove Proposition \ref{longproof}, one needs to use Proposition
\ref{infinite-type} repeatedly. To avoid explaining the same idea
frequently, we illustrate how we will use Proposition
\ref{infinite-type} in the proof of Proposition \ref{longproof}
first.

\medskip

For each $j\in\N$, let $\{k_i^j\}_{i=1}^\infty$ be a sequences in $\Z$.
Assume $(X,\mathcal{B},\mu,T)$ is a measure preserving system and
$A\in \mathcal{B}$ with $\mu(A)>0$. { Let $A_1=A$, $a_1=\mu(A_1)$, and
$a_{j+1}=\frac 12 a_j^2$ for all $j\ge 1$. We
will show that there are a decreasing sequence $\{A_j\}_j$ of measurable sets and
a sequence $\{N_j\}\subset \N$
such that for each $j$,  $\mu(A_j)\ge \frac 12a_{j-1}^2=a_j>0$, and for $n\ge N_j$
and any tuple $\{ s(1)<s(2)<\cdots <s(n)\}$ there exist $1\le
t(1,j)<t(2,j)\le n$ with $\mu(T^{-k^j_{s(t(1,j))}}A_j\cap
T^{-k^j_{s(t(2,j))}}A_j)\ge \frac{1}{2}a_j^2=a_{j+1}$.}

Set $E^1_i=T^{-k^1_i}A, i\in \N$. Let $N_1=N(a_1,2,\frac{1}{2}a_1^2)$
be as in Proposition
\ref{infinite-type}. Then for $n\ge N_1$ and any tuple $\{
s(1)<\cdots <s(n)\}$ there exist $1\le t(1,1)<t(2,1)\le n$ with
$\mu(E^1_{s(t(1,1))}\cap E^1_{s(t(2,1))})\ge \frac{1}{2}a_1^2=a_2.$

Fix $t(1,1)<t(2,1)$ for a given tuple $\{s(1)<\cdots <s(n)\}$.
Now let $A_2=A_1\cap T^{-k^1_{s(t(2,1))}+k^1_{s(t(1,1))}}A_1$. Then
$ \mu(A_2)=\mu(E^1_{s(t(1,1))}\cap E^1_{s(t(2,1))})\ge \frac 12
a_1^2=a_2$. Let $E^2_i=T^{-k^2_i}A_2, i\in \N$ and
$N_2=N(a_2,2,\frac{1}{2}a_2^2)$ be as in Proposition
\ref{infinite-type}.  Thus for $n\ge N_2$ and any tuple $\{
s(1)<\cdots <s(n)\}$ there exist $1\le t(1,2)<t(2,2)\le n$ with
$\mu(E^2_{s(t(1,2))}\cap E^2_{s(t(2,2))})  \ge \frac{1}{2}a_2^2=a_3.$

Inductively, assume that $\{E^j_i=T^{-k^{j}_i}A_{j}\}_{i=1}^\infty,
A_j, a_j, N_j$ are defined such that for $n\ge N_j$ and
any tuple $\{s(1)<\cdots <s(n)\}$ there exist $1\le
t(1,j)<t(2,j)\le n$ with $\mu(E^j_{s(t(1,j))}\cap E^j_{s(t(2,j))}) \ge
\frac{1}{2}a_j^2=a_{j+1}$.

Fix $t(1,j)<t(2,j)$ for a given tuple $\{s(1)<\cdots <s(n)\}$. Let
$A_{j+1}=A_j\cap T^{-k^j_{s(t(2,j))}+k^j_{s(t(1,j))} }A_j$. Then
$$ \mu(A_{j+1})=\mu(E^j_{s(t(1,j))}\cap E^j_{s(t(2,j))})\ge 1/2a_j^2=a_{j+1}.$$ Let
$E^{j+1}_i=T^{-k^{j+1}_i}A_{j+1}$, $i\in \N$, and
$N_{j+1}=N(a_{j+1},2,\frac{1}{2}a_{j+1}^2)$ be as in Proposition
\ref{infinite-type}. Then for $n\ge N_{j+1}$ and any tuple
$\{s(1)<\cdots <s(n)\}$  there exist $1\le t(1,j+1)<t(2,j+1)\le n$
with $ \mu(E^{j+1}_{s(t(1,j+1))}\cap E^{j+1}_{s(t(2,j+1))} )\ge
\frac{1}{2}a_{j+1}^2=a_{j+2}.$

\medskip

Note that the choices of $\{N_i\}$ is independent of
$\{k_i^j\}_{i=1}^\infty$. \hfill $\square$
\end{rem}

Now we are ready to show

\begin{prop}\label{longproof}
The following statements hold.
\begin{enumerate}
\item For each $d\in\N$, $\F_{fip}\subset\F_{P_d},$ which implies that
$\F_{P_\infty}=\F_{fip}.$

\item $\F_{SG_d}\subset \F_{P_d}$ for each $d\in\N\cup\{\infty\}$.
Moreover one has $\F_{fSG_d}\subset \F_{P_d}$.

\end{enumerate}
\end{prop}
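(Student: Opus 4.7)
The plan for part (1) is to carry out verbatim the iterative pigeonhole scheme outlined in Remark \ref{remark}, using partial sums drawn from pairwise disjoint sub-intervals of a long finite IP-set contained in $F$. Given $F\in\F_{fip}$ and $\mu(A)=a_1>0$, set $a_{j+1}=\tfrac{1}{2}a_j^2$ and $N_j=N(a_j,2,\tfrac{1}{2}a_j^2)$. Since $F$ contains finite IP-sets of arbitrary length, fix $FS(\{p_1,\dots,p_M\})\subset F$ with $M\ge N_1+\cdots+N_d$ and partition $\{1,\dots,M\}$ into consecutive intervals $I_1,\dots,I_d$ with $|I_j|\ge N_j$. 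At step $j$, let $k^j_i$ be the partial sum of the first $i$ elements of $\{p_k:k\in I_j\}$ and apply Proposition \ref{infinite-type} to produce indices $t(1,j)<t(2,j)$ and $n_j=k^j_{t(2,j)}-k^j_{t(1,j)}$, a consecutive sum of $p_k$'s inside $I_j$, with $\mu(A_j\cap T^{-n_j}A_j)\ge a_{j+1}$; then set $A_{j+1}=A_j\cap T^{-n_j}A_j$. The disjointness of the $I_j$ ensures every $\sum_{j\in\alpha}n_j$ is a subset sum of $\{p_1,\dots,p_M\}$ and hence lies in $F$; the telescoped lower bound yields $\mu\bigl(A\cap\bigcap_{\emptyset\ne\alpha\subset[d]}T^{-\sum_{j\in\alpha}n_j}A\bigr)\ge a_{d+1}>0$, so $F\in\F_{P_d}$. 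The identity $\F_{P_\infty}=\F_{fip}$ follows by intersecting over $d\in\N$, the reverse inclusion being immediate.

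For part (2), first note that $\F_{SG_d}\subset\F_{fSG_d}$: if $SG_d(P)\subset F$ for infinite $P=\{p_1,p_2,\dots\}$, then $SG_d(\{p_1,\dots,p_M\})\subset F$ for every $M$. Thus it suffices to prove $\F_{fSG_d}\subset\F_{P_d}$. The plan is to imitate the iterative scheme of part (1) inside a long finite generator set $\{p_1,\dots,p_M\}$ with $SG_d(\{p_1,\dots,p_M\})\subset F$, arranging that each $n_j=\sum_{k\in S_j}p_k$ uses a disjoint index set $S_j\subset\{1,\dots,M\}$ and that every union $\bigcup_{j\in\alpha}S_j$ has all internal blocks of $0$'s of length strictly less than $d$. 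This guarantees $\sum_{j\in\alpha}n_j\in SG_d(\{p_1,\dots,p_M\})\subset F$ for every $\emptyset\ne\alpha\subset[d]$. A natural ansatz is the ``narrow-middle'' configuration: $S_1\subset[1,M_1]$ and $S_d\subset[M_1+d-1,M_1+d-2+M_d]$ are large end-blocks, while $S_j=\{M_1+j-1\}$ for $2\le j\le d-1$ are singletons filling the intermediate strip; then every union has maximum $0$-gap at most $d-2<d$.

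The crucial difficulty is that this structure rigidifies the middle coordinates: once $M_1$ is fixed, $n_2,\dots,n_{d-1}$ are forced to be specific single $p_k$'s and the pigeonhole of Proposition \ref{infinite-type} offers no freedom for them. The plan to overcome this is a two-scale argument: first select $M_1$ by an outer pigeonhole over admissible starting positions so that the combined event at the forced middle coordinates $C(M_1)=A\cap\bigcap_{\emptyset\ne\beta\subset\{2,\dots,d-1\}}T^{-\sum_{j\in\beta}p_{M_1+j-1}}A$ has uniformly positive measure along an infinite set of $M_1$'s; then fix such an $M_1$ and run the two-step iterative Proposition \ref{infinite-type} on the end-blocks $I_1$ and $I_d$ starting from $C(M_1)$ in place of $A$. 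The hard part is extracting the required uniform positivity of the middle-coordinate event without circularly invoking the statement being proved; I expect this to require either a direct pigeonhole over $M_1$ (for instance, applying Proposition \ref{infinite-type} to the sequence of sets $\{T^{-P_{M_1+d-2}}A\}_{M_1}$ and peeling off the boundary differences), or an appeal to the Nil$_d$ Bohr$_0$-set / Bergelson--Host--Kra machinery developed in Theorem~A and Chapter~\ref{section-pre} to supply the required positive-measure shifts before launching the inner iterative pigeonhole.
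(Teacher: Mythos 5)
Your part~(1) is essentially the paper's proof: disjoint consecutive blocks of partial sums, one application of Proposition~\ref{infinite-type} per block, and the telescoping lower bound $\mu(A_{d+1})\ge a_{d+1}>0$. That part is fine.

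Part~(2) has a genuine gap, and you half-see it yourself. Your ``narrow-middle'' ansatz fails for two reasons, only the first of which you name. First, as you note, the singleton middle blocks $S_j=\{M_1+j-1\}$ remove all pigeonhole freedom, so Proposition~\ref{infinite-type} cannot supply positivity of the intersection at those coordinates. Second --- and this is fatal even before the positivity question --- the iterative scheme chooses $S_1$ as a consecutive sub-interval of $[1,M_1]$ determined by the pigeonhole, so there is no way to guarantee $\max S_1=M_1$; hence the gap between $\max S_1$ and the fixed position $M_1+1$ of $S_2$ is uncontrolled, and even $n_1+n_2$ may fail to lie in $SG_d(P)$. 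Your proposed two-scale ``outer pigeonhole over $M_1$'' does not address the alignment problem: the inner pigeonhole's output $S_1$ still floats within $[1,M_1]$ independently of where you placed the middle singletons. The alternative you sketch (appealing to the Nil$_d$ Bohr$_0$ machinery) would be circular here, since this proposition is one of the ingredients feeding Theorem~\ref{rpd}, not a corollary of it.

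The idea you are missing is to split by residue class modulo $d$ rather than by contiguous position. Write $M_i=\prod_{j=i}^d N_j$ and at step $k$ work with the indices $\equiv k\pmod d$. Take $q^1_r=\sum_{i=(r-1)M_2+1}^{rM_2}P_{di-(d-1)}$ for $1\le r\le N_1$; the pigeonhole gives a consecutive run $n_1=\sum_{r=i_1}^{j_1}q^1_r$, which is a \emph{consecutive} arithmetic progression inside the residue class $1\pmod d$, hence lies in $SG_d(P)$. Crucially, its footprint starts at a point $dm_1+1$ (with $m_1=(i_1-1)M_2$) that you then use to anchor the \emph{next} block: set $q^2_r=\sum_{i=m_1+(r-1)M_3+1}^{m_1+rM_3}P_{di-(d-2)}$ for $1\le r\le N_2$ and pigeonhole again. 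Because the $k$-th block is nested inside the footprint of the $(k-1)$-th and the residue class advances by one each time, every union $\bigcup_{j\in\alpha}S_j$ has all internal $0$-gaps of length $<d$: at worst you skip $d-1$ consecutive residues between two used ones. The geometric sizing $M_i=\prod_{j\ge i}N_j$ is exactly what guarantees that at step $k$ there are still $N_k$ candidate partial sums inside the window inherited from step $k-1$. This gives both the positivity (pigeonhole freedom at every step) and the $SG_d$ membership of all subset sums, with no outer-pigeonhole or auxiliary machinery needed.
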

\begin{proof}
(1) Let $F\in \F_{fip}$. Fix $d\in \N$. Now we  show $F\in
\F_{P_d}$. For this purpose, assume that $(X,\mathcal{B},\mu,T)$ is
a measure preserving system and $A\in \mathcal{B}$ with $\mu(A)>0$.
Since  $F\in \F_{fip}$, there are $p_1,p_2,\ldots , p_{\ell_ d}\in
\mathbb{Z}$  { with $\ell_d=\sum_{i=1}^d N_i$ }such that $F\supset
FS(\{p_i\}_{i=1}^{\ell_d})$, where $N_i$ are chosen as in Remark
\ref{remark} for $(X,\mathcal{B},\mu,T)$ and $A$.

{ Let $A_1=A$, $a_1=\mu(A_1)$, and
$a_{j+1}=\frac 12 a_j^2$ for all $j\ge 1$.}
For $p_1,p_1+p_2,\cdots, p_1+\cdots+p_{N_1}$ by the
argument in Remark ~\ref{remark} (by setting $\{k^1_i\}=\{p_1,p_1+p_2,\ldots\}$
and $(s(1),\ldots, s(N_1))=(1,\ldots,N_1)$) there is
$q_1=p_{i_1^1}+\cdots+p_{i_2^1}$ such that $\mu(A_1\cap
T^{-q_1}A_1)\ge\frac{1}{2}a_1^2=a_2,$ where $1\le
i_1^1<i_2^1\le N_1.$ Let $A_2=A_1\cap T^{-q_1}A_1$.
For $p_{N_1+1},p_{N_1+1}+p_{N_1+2}, \cdots,
p_{N_1+1}+\cdots+p_{N_1+N_2}$, there is
$q_2=p_{i_1^2}+\cdots+p_{i_2^2}$ such that $\mu(A_2\cap
T^{-q_2}A_2)\ge\frac{1}{2}a_2^2=a_3,$ where $N_1+1\le i_1^2<i_2^2\le
N_1+N_2.$ Note that $q_1,q_2, q_1+q_2\in F$.

Inductively we obtain $$N_1+\ldots+N_j+1\le i_1^{j+1}<i_2^{j+1}\le
N_1+\ldots+N_{j+1},\ 0\le j\le d-1.$$ $q_1,\ldots, q_d$ and $A_1,
\ldots, A_q$ with $q_j=\sum_{i=i_1^j}^{i_2^j}p_i$ and {
$A_j=A_{j-1}\cap T^{-q_{j-1}}A_{j-1}$ such that $\mu(A_j)\ge a_j$ and
$\mu(A_j\cap T^{-q_j}A_j)\ge \frac{1}{2}a_j^2=a_{j+1}$.} Thus
$$\mu(A\cap \bigcap_{n\in FS(\{q_i\}_{i=1}^d)}T^{-n}A)\ge\frac 12 a_d^2>0,$$
and it is clear that $F\supset FS(\{q_i\}_{i=1}^d)$. This implies
that $F\in \F_{P_d}$.

Thus  $\F_{P_\infty}\supset\F_{fip}.$ Since it is clear that
$\F_{P_\infty}\subset\F_{fip},$ we are done.


\medskip
(2) Since each $SG_1$-set is a $\Delta$-set, it is a
Poincar\'e recurrence set (this is easy to be checked by Poincar\'e
recurrence Theorem \cite{F81}). We first show the case when $d=2$
which will illustrate the general idea. Then we give the proof for
the general case.

Let $F\in SG_2$. Then there is $P=\{P_i\}_{i=1}^\infty\subset \Z$
with $F=SG_2(P)$. Let $(X, \mathcal{B},\mu,T)$ be a measure
preserving system and $A\in\mathcal{B}$ with $\mu(A)>0$. Set $A_1=A$
and $a_1=\mu(A_1)$.

Let $$q_1=\sum_{i=1}^{N_2}P_{2i-1},
q_2=\sum_{i=N_2+1}^{2N_2}P_{2i-1},\ \ \ldots,\ \text{and}\
q_{N_1}=\sum_{i=(N_1-1)N_2+1}^{N_1N_2}P_{2i-1},$$ where
$N_1=N(a_1,2,\frac{1}{2}a_1^2)$ and $N_2=N(a_2,2,\frac{1}{2}a_2^2)$
are chosen as in Remark \ref{remark} for $(X,\mathcal{B},\mu,T)$ and
$A$. Consider the sequence $q_1,q_1+q_2, \ldots, q_1+q_2+\ldots
+q_{N_1}$. Then as in Remark \ref{remark} there are $1\le i_1\le j_1\le
N_1$ such that $\mu(A_2)\ge \frac{1}{2}\mu(A)^2$, where $A_2=A_1\cap
T^{-n_1}A_1$ and $n_1=\sum_{i=i_1}^{j_1}q_i $. Note that
$$n_1=P_{2(i_1-1)N_2+1}+P_{2(i_1-1)N_2+3}+ \ldots + P_{2j_1N_2-1}.$$

Now consider the sequence
$$P_{2(i_1-1)N_2}, P_{2(i_1-1)N_2}+P_{2(i_1-1)N_2+2},
\ldots,P_{2(i_1-1)N_2}+P_{2(i_1-1)N_2+2}+\ldots+P_{2i_1N_2}.$$ It
has $N_2+1$ terms. So as in Remark \ref{remark} there are $1\le
i_2\le j_2\le N_2$ such that $\mu(A_2\cap T^{-n_2}A_2)\ge \frac 12 a_2^2$, where
$n_2=\sum_{i=(i_1-1)N_2+i_2}^{(i_1-1)N_2+j_2} P_{2i} $. Note that
$n_1,n_2,n_1+n_2\in F$ by the definition of $SG_2(P)$. It is easy to
verify that
$$\mu(A\cap T^{-n_1}A\cap T^{-n_2}A\cap T^{-n_1-n_2}A)\ge \frac 12
a_2^2>0.$$ Hence $F\in \F_{P_2}$.

\medskip

Now we show the general case. Assume that $d\ge 3$ and let $F\in
SG_d$. We show that $F\in \F_{P_d}$.

Since $F\in SG_d$, there is $P=\{P_i\}_{i=1}^\infty\subset \Z$ with
$F=SG_d(P)$. Let $(X, \mathcal{B},\mu,T)$ be a measure preserving
system and $A\in\mathcal{B}$ with $\mu(A)>0$. Set $A_1=A$. Let
$N_1,\ldots,N_d$ be the numbers as defined in Remark \ref{remark}
for $(X,\mathcal{B},\mu,T)$, $A$ and let $M_i=\prod_{j=i}^d N_j$ for
$1\le i\le d$.

Let $$q_1^1=\sum_{i=1}^{M_2}P_{di-(d-1)},\
q_2^1=\sum_{i=M_2+1}^{2M_2}P_{di-(d-1)},\ \ldots,\
q_{N_1}^1=\sum_{i=(N_1-1)M_2+1}^{M_1}P_{di-(d-1)}.$$ Consider the
sequence $q_1^1,q_1^1+q_2^1, \ldots, q_1^1+q_2^1+\ldots +q_{N_1}^1$.
Then as in Remark \ref{remark} there are $1\le i_1\le j_1\le N_1$ such
that $\mu(A_2)\ge \frac{1}{2}a_1^2$, where $A_2=A_1\cap
T^{-n_1}A_1$ and $n_1=\sum_{i=i_1}^{j_1}q_i^1$.

Let $m_1=(i_1-1)M_2$. Note that there is $t_1\ge M_2-1$ such that
$$n_1=\sum_{i=i_1}^{j_1}q_i^1=P_{dm_1+1}+P_{dm_1+d+1} +\ldots +
P_{dm_1+t_1d+1}.$$

Now consider $$q_1^2=\sum_{i=m_1+1}^{m_1+M_3}P_{di-(d-2)},\
q_2^2=\sum_{i=m_1+M_3+1}^{m_1+2M_3}P_{di-(d-2)},\ \ldots,\
q_{N_2}^2=\sum_{i=m_1+(N_2-1)M_3+1}^{m_1+M_2}P_{di-(d-2)}.$$

Now consider $q_1^2,q_1^2+q_2^2, \ldots, q_1^2+q_2^2+\ldots
+q_{N_2}^2$. It has $N_2$ terms. So as in Remark \ref{remark} there
are $1\le i_2\le j_2\le N_2$ such that $\mu(A_3)\ge
\frac{1}{2}a_2^2$, where $A_3=A_2\cap T^{-n_2}A_2$ and
$n_2=\sum_{i=i_2}^{j_2} q_i^2$. Let $m_2=m_1+(i_2-1)M_3$. Note that
$n_1,n_2,n_1+n_2\in F$ and there is $t_2\ge M_3-1$ such that
$$n_2=\sum_{i=i_2}^{j_2}q_i^2=P_{dm_2+2}+P_{dm_2+d+2} +\ldots +
P_{dm_2+t_2d+2}.$$ Note that $n_2$ has at least $M_3$ terms.

Inductively for $1\le k\le d-1$ we have $1\le i_k\le j_k\le N_k$ and
$$n_k=\sum_{i=i_k}^{j_k}q_i^k=P_{dm_k+k}+P_{dm_k+d+k} +\ldots +
P_{dm_k+t_kd+k},$$ where $t_k\ge M_{k+1}-1$. Also we have
$A_k=A_{k-1}\cap T^{-n_{k-1}}A_{k-1}$ with $\mu(A_k)\ge \frac 12
a_{k-1}^2$, and $FS(\{n_j\}_{j=1}^{k})\subset F$.

Especially, when $k=d$, we get $1\le i_d\le j_d\le N_d$ and
$n_d=\sum_{i=i_d}^{j_d}P_{di}$. By the definition of $SG_d$ we get
that $FS(\{n_i\}_{i=1}^d)\subset F$. From the definition of $A_j,
j=1,2,\ldots, d$, one has
$$\mu(A\cap \bigcap_{n\in FS(\{n_i\}_{i=1}^d)}T^{-n}A)\ge\frac 12 a_d^2>0,$$
which implies that $F\in \F_{P_d}$. The proof is completed.
\end{proof}

\subsubsection{}

\noindent{\it {Proof of Proposition \ref{birkhoff-pro}:}} It is
clear that $\F_{B_\infty}\subset\F_{fip}.$ Since $\F_{fip}\subset
\F_{P_\infty}\subset\F_{B_\infty}$ (by Proposition \ref{longproof}
and the obvious fact that $\F_{P_d}\subset \F_{B_d}$) we have
$\F_{B_\infty}=\F_{fip}.$

\subsection{Poincar\'e recurrence sets and $\RP^{[d]}$}

\begin{thm}\label{poincare} Let $(X,T)$ be a minimal t.d.s. Then
for each $d\in\N\cup \{\infty\}$, $(x,y)\in \RP^{[d]}$ if and only
if $N(x,U)\in \F_{P_d}$ for any neighborhood $U$ of $y$.
\end{thm}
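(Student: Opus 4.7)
The plan is to deduce Theorem \ref{poincare} directly from the already-established Theorems \ref{huang12} and \ref{birkhoff-thm}, by inserting the family $\F_{P_d}$ into the chain $\F_{SG_d}\subset \F_{P_d}\subset \F_{B_d}$. The first inclusion is Proposition \ref{longproof}(2); the second is a short consequence of Krylov--Bogolyubov that has not been stated explicitly above.

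For the forward implication, suppose $(x,y)\in\RP^{[d]}$ and let $U$ be any neighborhood of $y$. Theorem \ref{huang12} gives $N(x,U)\in\F_{SG_d}$, and then Proposition \ref{longproof}(2) upgrades this to $N(x,U)\in\F_{P_d}$. This direction is essentially free once $\F_{SG_d}\subset\F_{P_d}$ is available, because the substantive work of constructing an $SG_d$-configuration inside $N(x,U)$ has already been carried out in the proof of Theorem \ref{huang12}.

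For the converse, I would first establish $\F_{P_d}\subset\F_{B_d}$. Let $F\in\F_{P_d}$ and let $(Z,R)$ be any minimal t.d.s. with $V\subset Z$ a nonempty open set; by Proposition \ref{birkhoff-equi2} it suffices to produce $n_1,\ldots,n_d$ with $FS(\{n_i\}_{i=1}^d)\subset F$ and $V\cap\bigcap_{n\in FS(\{n_i\}_{i=1}^d)}R^{-n}V\neq\emptyset$. Krylov--Bogolyubov provides an $R$-invariant Borel probability measure $\nu$ on $Z$, and minimality of $(Z,R)$ forces $\nu$ to be fully supported, so $\nu(V)>0$. Applying the defining property of $\F_{P_d}$ to the measure preserving system $(Z,\mathcal{B},\nu,R)$ and $A=V$ yields $n_1,\ldots,n_d$ with $FS(\{n_i\}_{i=1}^d)\subset F$ and $\nu\bigl(V\cap\bigcap_{n\in FS(\{n_i\}_{i=1}^d)}R^{-n}V\bigr)>0$; in particular this intersection is nonempty, so $F\in\F_{B_d}$. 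Therefore, if $N(x,U)\in\F_{P_d}$ for every neighborhood $U$ of $y$, then $N(x,U)\in\F_{B_d}$, and Theorem \ref{birkhoff-thm} produces $(x,y)\in\RP^{[d]}$.

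The case $d=\infty$ follows by intersecting the cases for all $d\in\N$, since $\RP^{[\infty]}=\bigcap_{d\in\N}\RP^{[d]}$ and $\F_{P_\infty}=\bigcap_{d\in\N}\F_{P_d}$; alternatively, since Theorems \ref{huang12}, \ref{birkhoff-thm} and Proposition \ref{longproof}(2) are already formulated for $d=\infty$, the argument above applies verbatim. There is no genuine obstacle: the hard cubic combinatorics (the iterated pigeonhole via Proposition \ref{infinite-type}) is concentrated inside Proposition \ref{longproof}(2), and the $SG_d$-construction inside Theorem \ref{huang12}; the present theorem simply packages these together with the invariant-measure argument.
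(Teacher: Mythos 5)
Your proposal is correct and follows essentially the same route as the paper: the forward direction via Theorem \ref{huang12} and Proposition \ref{longproof}(2), and the converse via the inclusion $\F_{P_d}\subset\F_{B_d}$ together with Theorem \ref{birkhoff-thm}. The only difference is that you spell out the Krylov--Bogolyubov argument behind $\F_{P_d}\subset\F_{B_d}$, which the paper treats as an obvious fact.
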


\begin{proof} We first show the case when $d\in\N$.
$(\Leftarrow)$ Since $\F_{P_d}\subset \F_{B_d}$, it follows from
Theorem \ref{birkhoff-thm}.

\medskip
$(\Rightarrow)$ Assume that $(x,y)\in \RP^{[d]}$ and $U$ is a
neighborhood of $y$. By Theorem \ref{huang12}, $N(x,U)\in
\F_{SG_d}$. Then by Proposition \ref{longproof} we have
$N(x,U)\in\F_{P_d}.$

\medskip

The case $d=\infty$ follows from the case $d\in \N$ and definitions.
\end{proof}

\section{Conclusion}

Now we sum up the results of previous three sections.
Note that $\F_{Bir_\infty}$ and $\F_{Poi_\infty}$ can be defined
naturally. Since $\F_{1,0}\subset \F_{2,0}\subset \ldots$ we define
$\F_{\infty,0}=:\bigcup_{d=1}^\infty \F_{d,0}$. Another way to do
this is that one follows the idea in \cite{D-Y} to define
$\infty$-step nilsystems and view $\F_{\infty,0}$ as the family
generated by all Nil$_\infty$ Bohr$_0$-sets. It is easy to check
that Theorem \ref{several} holds for $d=\infty$.

Thus we have
\begin{thm}\label{rpd}
Let $(X,T)$ be a minimal t.d.s. and $x,y\in X$. Then the following
statements are equivalent for $d\in\N\cup\{\infty\}$:

\begin{enumerate}

\item $(x,y)\in \RP^{[d]}$.

\item $N(x,U)\in \F_{d,0}^*$ for each
neighborhood $U$ of $y$.

\item $N(x,U)\in \F_{Poi_d}$ for each
neighborhood $U$ of $y$.

\item $N(x,U)\in \F_{Bir_d}$ for each neighborhood $U$ of $y$.

\item $N(x,U)\in \F_{SG_d}$ for each neighborhood $U$ of $y$.

\item $N(x,U)\in \F_{fSG_d}$ for each neighborhood $U$ of $y$.

\item $N(x,U)\in \F_{B_d}$ for each
neighborhood $U$ of $y$.

\item $N(x,U)\in \F_{P_d}$ for each neighborhood
$U$ of $y$.

\end{enumerate}

\end{thm}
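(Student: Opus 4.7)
The plan is to assemble Theorem \ref{rpd} from the equivalences already established in this chapter, together with one small additional observation handling the family $\F_{fSG_d}$ and the passage $d=\infty$. For $d\in\N$, the equivalence of (1), (2), (3), (4) is exactly Theorem \ref{several}; the equivalence of (1) and (5) is Theorem \ref{huang12}; the equivalence of (1) and (7) is Theorem \ref{birkhoff-thm}; and the equivalence of (1) and (8) is Theorem \ref{poincare}. Thus for $d\in\N$ only (6) remains to be interlocked into the cycle.

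For (6), first I would observe the trivial inclusions of families. Since any finite $SG_d$-set formed from a long enough finite sequence is contained in the $SG_d$-set of any extension of that sequence, we have $\F_{fSG_d}\subset \F_{SG_d}$, so (6)$\Rightarrow$(5) is immediate. For the converse direction (5)$\Rightarrow$(6) I would argue dynamically rather than combinatorially: given that $N(x,U)\in \F_{SG_d}$ for every neighborhood $U$ of $y$, the proof of Theorem \ref{huang12} actually constructs, for each $\eta>0$ and neighborhood $U$, a finite sequence $P$ of length that can be made arbitrarily large (obtained by iterating Theorem \ref{ShaoYe}(1) with decreasing error parameters $\eta_k$) such that $SG_d(P)\subset N(x,U)$. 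Truncating that inductive construction at stage $k$ yields a finite $SG_d$-set of length $k$ contained in $N(x,U)$. Since $k$ is arbitrary, $N(x,U)\in \F_{fSG_d}$. This closes the cycle of equivalences for each finite $d$.

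For $d=\infty$, I would leverage the formula $\RP^{[\infty]}=\bigcap_{d\in\N}\RP^{[d]}$ together with the definitions of the $\infty$-versions of the families: $\F_{\infty,0}=\bigcup_{d\in\N}\F_{d,0}$ and $\F_{\mathcal{C}_\infty}=\bigcap_{d\in\N}\F_{\mathcal{C}_d}$ for $\mathcal{C}\in\{Poi, Bir, SG, fSG, B, P\}$. Taking duals turns unions into intersections, so $\F_{\infty,0}^*=\bigcap_{d\in\N}\F_{d,0}^*$. Then each of the conditions (2)--(8) at level $\infty$ is equivalent to the conjunction of the corresponding conditions at all finite levels $d$, and by the equivalence already proven for each finite $d$, this conjunction is in turn equivalent to $(x,y)\in \RP^{[d]}$ for every $d\in\N$, i.e. to $(x,y)\in \RP^{[\infty]}$. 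For this to work I need to verify that the equivalence between (1)$_d$ and (i)$_d$ is compatible with intersecting over $d$; this is automatic because the quantification ``for each neighborhood $U$ of $y$'' is the same at every level.

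The main obstacle I foresee is not in the cyclic chain itself, which is essentially a bookkeeping exercise on top of Theorems \ref{several}, \ref{huang12}, \ref{birkhoff-thm}, \ref{poincare}, but rather in the careful verification that the proof of Theorem \ref{huang12} does produce the stronger conclusion (6) rather than just (5). One needs to check that the inductive construction of the sequence $P=\{P_k\}$ there can be stopped after $k$ steps to yield a finite $SG_d$-block of length $k$ still contained in $N(x,U)$, with constants $\eta_k$ chosen in advance so that this truncation works uniformly. Once that refinement is in hand, the rest of the argument is purely formal, and the $d=\infty$ case requires no additional dynamical input beyond the monotonicity of the families and Theorem \ref{ShaoYe}(3).
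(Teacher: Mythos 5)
Your assembly of Theorem~\ref{rpd} from Theorems~\ref{several}, \ref{huang12}, \ref{birkhoff-thm}, and \ref{poincare} is exactly what the paper does, and your treatment of the $d=\infty$ case by passing to intersections of families together with $\RP^{[\infty]}=\bigcap_{d\in\N}\RP^{[d]}$ is sound. The gap is in how you interlock condition (6). The inclusion you assert, $\F_{fSG_d}\subset\F_{SG_d}$, is backwards: the observation you offer in support of it (a finite $SG_d$-block sits inside the $SG_d$-set of any extension of that sequence) is precisely the reason for the \emph{trivial} inclusion $\F_{SG_d}\subset\F_{fSG_d}$, which gives $(5)\Rightarrow(6)$, not $(6)\Rightarrow(5)$. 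If $A\in\F_{fSG_d}$, then $A$ contains $SG_d(P^i)$ for finite sequences $P^i$ of unbounded length, but those $P^i$ need not be nested or compatible, so there is no reason $A$ should contain $SG_d(P)$ for a single infinite $P$. Your second, dynamical argument (truncating the inductive construction in the proof of Theorem~\ref{huang12}) is correct, but it establishes $(1)\Rightarrow(6)$, i.e.\ the easy direction $(5)\Rightarrow(6)$ again, which the trivial inclusion already gives. What is missing is an exit from (6), namely an implication $(6)\Rightarrow(i)$ for some other $i$; without it the cycle does not close.

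The paper supplies exactly this exit in Proposition~\ref{longproof}(2), which proves $\F_{fSG_d}\subset\F_{P_d}$ for all $d\in\N\cup\{\infty\}$. That yields $(6)\Rightarrow(8)$, and combined with Theorem~\ref{poincare} ($(8)\Leftrightarrow(1)$) and the trivial $(5)\Rightarrow(6)$, the chain closes as $(1)\Leftrightarrow(5)\Rightarrow(6)\Rightarrow(8)\Leftrightarrow(1)$. With your false inclusion replaced by this chain through $\F_{P_d}$, the rest of your bookkeeping goes through and matches the paper's argument.
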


\chapter{$d$-step almost automorpy and recurrence sets}\label{chapter-AA}

In the previous chapter we obtain some characterizations of
regionally proximal relation of order $d$. In the present section we
study $d$-step almost automorpy.

\section{Definition of $d$-step almost automorpy}

\subsubsection{}
First we recall the notion of $d$-step almost automorphic systems
and give its structure theorem.

\begin{de}\index{$d$-step almost automorphic point}
Let $(X,T)$ be a t.d.s. and $d\in \N\cup\{\infty\}$.  A point $x\in
X$ is called a {\em $d$-step almost automorphic} point (or $d$-step
AA point for short) if $\RP^{[d]}(Y)[x]=\{x\}$, where
$Y=\overline{\{T^nx:n\in \mathbb{Z}\}}$ and $\RP^{[d]}(Y)[x]=\{y\in
Y: (x,y)\in \RP^{[d]}(Y)\}$.

A minimal t.d.s. $(X,T)$ is called {\em $d$-step almost automorphic}
($d$-step AA for short) if it has a $d$-step almost automorphic
point.
\end{de}

\begin{rem} Since
$$\RP^{[\infty]}\subset \ldots \subset \RP^{[d]}\subset \RP^{[d-1]}\subset
\ldots \subset \RP^{[1]},$$ we have
$$\text{AA}=\text{1-step AA}\Rightarrow \ldots \Rightarrow\text{(d-1)-step AA}
\Rightarrow\text{d-step AA}\Rightarrow \ldots \Rightarrow
\infty\!-\!\text{step AA}.$$
\end{rem}

\subsubsection{}
The following theorem follows from Theorem \ref{ShaoYe}.

\begin{thm}[Structure of $d$-step almost
automorphic systems]\label{thm-AA}
Let $(X,T)$ be a minimal t.d.s. Then $(X,T)$ is a $d$-step almost
automorphic system for some $d\in \N\cup\{\infty\}$ if and only if
it is an almost one-to-one extension of its maximal $d$-step
nilfactor $(X_d, T)$.

\[
\begin{CD}
X @>{T}>> X\\
@V{\pi}VV      @VV{\pi}V\\
X_d @>{T }>> X_d
\end{CD}
\]

\end{thm}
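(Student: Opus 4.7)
The plan is to derive this structure theorem as a direct consequence of Theorem \ref{ShaoYe}. Recall from Theorem \ref{ShaoYe}(3) that for a minimal t.d.s. $(X,T)$ and any $d\in\N\cup\{\infty\}$, the relation $\RP^{[d]}$ is a closed invariant equivalence relation on $X$, and by Theorem \ref{ShaoYe}(5) the quotient $X_d := X/\RP^{[d]}$ (endowed with the induced map, still denoted $T$) is precisely the maximal $d$-step nilfactor. Let $\pi:X\to X_d$ be the canonical factor map. The key identification is that for every $x\in X$,
\begin{equation*}
\pi^{-1}(\pi(x))=\RP^{[d]}[x],
\end{equation*}
which is just the definition of the quotient map. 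This tautological identity will carry essentially all of the content.

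For the ($\Leftarrow$) direction, I would assume $\pi:X\to X_d$ is an almost one-to-one extension, meaning there exists at least one $x\in X$ with $\pi^{-1}(\pi(x))=\{x\}$. By the displayed identity, $\RP^{[d]}[x]=\{x\}$, so $x$ is a $d$-step AA point and hence $(X,T)$ is $d$-step AA.

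For the ($\Rightarrow$) direction, suppose $(X,T)$ admits a $d$-step AA point $x_0$, so $\RP^{[d]}[x_0]=\{x_0\}$. Again by the displayed identity, $\pi^{-1}(\pi(x_0))=\{x_0\}$, i.e. $\pi$ is one-to-one at $x_0$. To conclude that $\pi$ is an almost one-to-one extension (in the standard sense that the set of points of one-to-oneness is residual), I would invoke the well-known fact from topological dynamics that when $\pi:(X,T)\to (Y,S)$ is a factor map between minimal systems and there exists even a single point $x$ with $|\pi^{-1}(\pi(x))|=1$, the set $X_0:=\{x\in X:|\pi^{-1}(\pi(x))|=1\}$ is a dense $G_\delta$ invariant subset of $X$. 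Since both $X$ and $X_d$ are minimal under $T$ (the latter is a factor of a minimal system), this applies directly and yields the almost one-to-one conclusion.

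The only step that requires any care is the invocation of the residuality fact in the ($\Rightarrow$) direction; the rest is a bookkeeping translation between ``$\RP^{[d]}$-class of $x$ is trivial'' and ``$\pi$-fiber over $\pi(x)$ is trivial''. The $d=\infty$ case is handled uniformly because Theorem \ref{ShaoYe}(3) already gives $\RP^{[\infty]}$ as an equivalence relation and the corresponding quotient $X_\infty=X/\RP^{[\infty]}$ is the maximal $\infty$-step nilfactor. No new ideas beyond Theorem \ref{ShaoYe} will be needed.
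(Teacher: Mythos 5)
Your proof is correct and matches the paper's approach exactly: the paper presents this theorem as an immediate consequence of Theorem \ref{ShaoYe}, and your argument spells out precisely the bookkeeping (the identity $\pi^{-1}(\pi(x))=\RP^{[d]}[x]$ together with the standard residuality fact for minimal factor maps) that makes the deduction go through.
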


\medskip

\subsection{$1$-step almost automorphy}
First we recall some classical results about almost automorphy.

Let $(X,T)$ be a minimal t.d.s.. In \cite{V68} it is proved that
$(x,y)\in \RP^{[1]}$ if and only if for each neighborhood $U$ of
$y$, $N(x,U)$ contains some $\D$-set, see also Theorem
\ref{huang12}. Similarly, we have that for a minimal system $(X,T)$,
$(x,y)\in \RP^{[1]}$ if and only if for each neighborhood $U$ of
$y$, $N(x,U)\in \F_{Poi}$ \cite{HLY}, see also Theorem \ref{rpd}.

Using these theorems and the facts that $\F_{Poi}$ and $\F_{Bir}$
have the Ramsey property, one has

\begin{thm}\label{thm6.4}
Let $(X,T)$ be a minimal t.d.s. and $x\in X$. Then the following
statements are equivalent:
\begin{enumerate}
\item $x$ is AA.

\item $N(x,V)\in \F_{Poi}^*$ for each
neighborhood $V$ of $x$.

\item $N(x,V)\in \F_{Bir}^*$ for each
neighborhood $V$ of $x$.

\item  $N(x,V)\in \Delta^*$ for each
neighborhood $V$ of $x$. \cite{V65,F}
\end{enumerate}
\end{thm}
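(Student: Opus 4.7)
The plan is to derive Theorem~\ref{thm6.4} as the $d=1$ specialization of the scheme underlying Theorem~F (see Theorem~\ref{AAgeneral}). By definition $x$ is AA iff $\RP^{[1]}(Y)[x]=\{x\}$ where $Y=\overline{\O(x,T)}$. Applied inside $Y$, the $d=1$ case of Theorem~\ref{rpd} characterizes $(x,y)\in\RP^{[1]}(Y)$ by $N(x,U)\in\F$ for every neighborhood $U$ of $y$, with $\F$ any of $\F_{Poi}$, $\F_{Bir}$, or $\F_{SG_1}$ (the family generated by $\D$-sets, so $\F_{SG_1}^{*}=\D^{*}$). Each of these three families possesses the Ramsey property --- equivalently, the respective dual is a filter --- and this provides a uniform template for the equivalences (1)$\Leftrightarrow$(2), (1)$\Leftrightarrow$(3), (1)$\Leftrightarrow$(4). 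I will describe the argument for (2); the other two are identical up to replacing $\F_{Poi}$ by $\F_{Bir}$ or $\F_{SG_1}$.

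The easy direction (2)$\Rightarrow$(1) is an immediate contradiction: if some $y\in\RP^{[1]}(Y)[x]$ satisfied $y\neq x$, I would pick disjoint open neighborhoods $V\ni x$ and $U\ni y$, deduce $N(x,U)\in\F_{Poi}$ from Theorem~\ref{rpd}, note $N(x,V)\in\F_{Poi}^{*}$ by (2), and obtain the contradiction $\emptyset=N(x,U\cap V)=N(x,U)\cap N(x,V)\neq\emptyset$.

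For the hard direction (1)$\Rightarrow$(2), fix a neighborhood $V$ of $x$ and an arbitrary $F\in\F_{Poi}$; the goal is $F\cap N(x,V)\neq\emptyset$. I would choose an open $V'\ni x$ with $\overline{V'}\subset V$ and finitely many open sets $W_1,\ldots,W_k$ covering $X\setminus V'$ with $x\notin\overline{W_i}$, giving the partition
\[
F=(F\cap N(x,V))\cup\bigcup_{i=1}^{k}(F\cap N(x,W_i)).
\]
By the Ramsey property of $\F_{Poi}$, one summand lies in $\F_{Poi}$. If it is $F\cap N(x,V)$ we are done; otherwise $N(x,W_{i_0})\in\F_{Poi}$ for some $i_0$, and I would iterate the cover-plus-Ramsey step on progressively finer open covers of $\overline{W_{i_0}}$. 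The resulting nested open sets have diameter tending to zero, so by compactness they shrink to a single point $y\in\overline{W_{i_0}}$, in particular $y\neq x$, with the property that $N(x,U)\in\F_{Poi}$ for every neighborhood $U$ of $y$. Theorem~\ref{rpd} then yields $(x,y)\in\RP^{[1]}(Y)$, contradicting AA. The proofs of (1)$\Rightarrow$(3) and (1)$\Rightarrow$(4) proceed verbatim, using the Ramsey property of $\F_{Bir}$ and $\F_{SG_1}$ respectively.

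The principal obstacle is this iterative shrinking step in the hard direction: the Ramsey property extracts only one ``good'' open set at each stage but does not by itself pin down a limit point, so one must interleave it with compactness and the second-countability of $X$ to produce a concrete $y\neq x$ whose entire neighborhood filter consists of sets whose return-time sets remain in $\F_{Poi}$. Once that is secured, invoking Theorem~\ref{rpd} closes the loop, and the whole scheme is visibly identical for $\F_{Bir}$ and $\F_{SG_1}$, so the three equivalences are obtained in parallel.
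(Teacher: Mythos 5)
Your proof is correct and follows essentially the same route as the paper: the paper declares Theorem~\ref{thm6.4} a special case of Theorem~\ref{AAgeneral}, whose proof combines Theorem~\ref{rpd} with the Ramsey property of the relevant families and the shrinking-cover extraction argument from Theorem~\ref{aainfty} — exactly your scheme. The only cosmetic difference is that you decompose an arbitrary $F\in\F_{Poi}$ over a cover that includes $V$, whereas the paper works contrapositively from $N(x,V^c)\in\F_{Poi}$ and covers $V^c$ directly; both lead to the same extraction of a point $y\neq x$ with $N(x,U)\in\F_{Poi}$ for all $U\ni y$.
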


We will not give the proof of this theorem since it is a special
case of Theorem~ \ref{AAgeneral}.

\subsection{$\infty$-step almost automorphy}

In this subsection we give one characterization for $\infty$-step
AA. Following from Theorem \ref{ShaoYe}, one has

\begin{prop}\label{shaoye}
Let $(X,T)$ be a minimal t.d.s. and $d\ge 1$. Then

\begin{enumerate}
\item $(x,y)\in \RP^{[d]}$ if and only if $N(x,U)$ contains a finite IP-set of
length $d+1$ for any neighborhood $U$ of $y$, and thus

\item $(x,y)\in \RP^{[\infty]}$ if and only if $N(x,U)\in \F_{fip}$ for any
neighborhood $U$ of $y$.
\end{enumerate}
\end{prop}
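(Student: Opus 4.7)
The plan is to derive both statements directly from Theorem \ref{ShaoYe}(1) by interpreting the face-group orbit closure condition in terms of the dynamics of a single point $x$ under $T$, with the face-transformation exponents reading off as elements of a finite IP set.

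First I would unpack the structure of the face group $\F^{[d+1]}$. An arbitrary element of $\F^{[d+1]}$ acting on $x^{[d+1]}$ has the form: for some $\mathbf{n}=(n_1,\ldots,n_{d+1})\in\Z^{d+1}$, the coordinate at $\epsilon\subset[d+1]$ equals $T^{\mathbf{n}\cdot\epsilon}x$. Thus, by Theorem \ref{ShaoYe}(1), the condition $(x,y^{[d+1]}_*)\in\overline{\F^{[d+1]}}(x^{[d+1]})$ translates into: for every neighborhood $U$ of $y$ there exists $\mathbf{n}\in\Z^{d+1}$ with $T^{\mathbf{n}\cdot\epsilon}x\in U$ for every nonempty $\epsilon\subset[d+1]$, i.e. the set $\{\mathbf{n}\cdot\epsilon:\emptyset\neq\epsilon\subset[d+1]\}=FS(\{n_1,\ldots,n_{d+1}\})$ is contained in $N(x,U)$. (The $\epsilon=\emptyset$ coordinate is automatically $x$, matching the desired limit point.) Conversely, given such an $\mathbf{n}$ we have $\F^{[d+1]}$-translates of $x^{[d+1]}$ with all nonempty-$\epsilon$ coordinates in $U$; shrinking $U$ arbitrarily and passing to a limit places $(x,y^{[d+1]}_*)$ in $\overline{\F^{[d+1]}}(x^{[d+1]})$.

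For (1), the forward direction then follows: if $(x,y)\in\RP^{[d]}$, Theorem \ref{ShaoYe}(1) gives $(x,y^{[d+1]}_*)\in\overline{\F^{[d+1]}}(x^{[d+1]})$, and the unpacking above produces, for every neighborhood $U$ of $y$, integers $n_1,\ldots,n_{d+1}$ with $FS(\{n_i\}_{i=1}^{d+1})\subset N(x,U)$, which is a finite IP-set of length $d+1$. The converse is the same argument run backwards: extracting a convergent subsequence from the $\mathbf{n}$'s produced by arbitrarily small $U$ gives the required point in $\overline{\F^{[d+1]}}(x^{[d+1]})$, and then Theorem \ref{ShaoYe}(1) yields $(x,y)\in\RP^{[d]}$.

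Statement (2) is then immediate: by definition $\RP^{[\infty]}=\bigcap_{d\ge 1}\RP^{[d]}$, so $(x,y)\in\RP^{[\infty]}$ iff, for every $d\ge 1$ and every neighborhood $U$ of $y$, $N(x,U)$ contains a finite IP-set of length $d+1$, which is exactly the condition $N(x,U)\in\F_{fip}$. No real obstacle is anticipated; the only point requiring some care is making sure that the ``$\epsilon=\emptyset$'' coordinate is correctly accounted for in the face-group action (it always stays at $x$, which matches the zero element of $FS(\{n_i\})$ being excluded), and that Theorem \ref{ShaoYe}(1) is being applied to the full $(d+1)$-dimensional cube, so the IP-set produced has length $d+1$ (not $d$).
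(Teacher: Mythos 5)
Your proposal is correct and is precisely the argument the paper intends: the paper simply asserts Proposition~\ref{shaoye} ``follows from Theorem \ref{ShaoYe}'', and you have filled in the details by unpacking the $\F^{[d+1]}$-orbit of $x^{[d+1]}$ (coordinate at $\epsilon$ is $T^{\mathbf{n}\cdot\epsilon}x$) and identifying $\{\mathbf{n}\cdot\epsilon:\emptyset\neq\epsilon\subset[d+1]\}$ with $FS(\{n_1,\ldots,n_{d+1}\})$. One cosmetic remark: in the converse of (1) there is no need to ``extract a convergent subsequence from the $\mathbf{n}$'s'' (which live in the non-compact $\Z^{d+1}$); rather, the resulting orbit points $(x,\text{coords in }U)$ themselves converge to $(x,y^{[d+1]}_*)$ as $U$ shrinks, which is exactly what membership in $\overline{\F^{[d+1]}}(x^{[d+1]})$ requires.
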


To show the next theorem we need the following lemma which should be
known, see for example Huang, Li and Ye \cite{HLY2}.

\begin{lem}
$\F_{fip}$ has the Ramsey property.
\end{lem}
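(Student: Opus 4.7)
The plan is as follows. Since $\F_{fip}$ is a family by definition (hereditary upward), the Ramsey property reduces to showing that if $A=A_1\cup A_2\in\F_{fip}$ and $A_1\notin\F_{fip}$, then $A_2\in\F_{fip}$. I will argue by contrapositive: assume both $A_1\notin\F_{fip}$ and $A_2\notin\F_{fip}$, and show $A_1\cup A_2\notin\F_{fip}$. By definition of $\F_{fip}$, there exist $n_1,n_2\in\N$ such that $A_i$ contains no $FS(\{p_j\}_{j=1}^{n_i})$; setting $n=\max(n_1,n_2)$, neither $A_1$ nor $A_2$ contains an $n$-term finite IP-set.

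The key ingredient is the finitary version of Hindman's theorem (the Folkman--Rado--Sanders theorem in its IP-subset form): for every $n,r\in\N$ there exists $N=N(n,r)\in\N$ such that for any finite sequence $p_1,\dots,p_N\in\Z$ and any $r$-coloring of $FS(\{p_i\}_{i=1}^N)$, one can find pairwise disjoint nonempty $F_1,\dots,F_n\subset\{1,\dots,N\}$ for which $FS(\{q_j\}_{j=1}^n)$ is monochromatic, where $q_j=\sum_{i\in F_j}p_i$. (Disjointness of the $F_j$ guarantees that $FS(\{q_j\}_{j=1}^n)\subset FS(\{p_i\}_{i=1}^N)$ is a genuine $n$-term finite IP-set.) This statement is obtained from the infinite Hindman theorem by a routine compactness/K\"onig's lemma argument: if it failed for some fixed $n,r$, one could diagonalize to produce an infinite sequence with an $r$-coloring of its $FS$-set admitting no monochromatic $n$-term sub-IP-set, contradicting Hindman's theorem.

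With this in hand, apply it with $r=2$ to obtain $N=N(n,2)$. Because $A=A_1\cup A_2\in\F_{fip}$, there exists $p_1,\dots,p_N\in\Z$ with $FS(\{p_i\}_{i=1}^N)\subset A$. Two-color each element of $FS(\{p_i\}_{i=1}^N)$ according to which of $A_1,A_2$ it lies in (break ties arbitrarily). The finitary Hindman theorem yields disjoint nonempty $F_1,\dots,F_n\subset\{1,\dots,N\}$ such that $FS(\{q_j\}_{j=1}^n)$ is monochromatic, so this $n$-term finite IP-set is contained entirely in $A_1$ or entirely in $A_2$, contradicting the choice of $n$. This forces $A_1\in\F_{fip}$ or $A_2\in\F_{fip}$, proving the Ramsey property.

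The only nontrivial step is establishing the finitary Folkman-type statement; this is the main (but well-known) obstacle, and in the paper it can either be cited from \cite{HLY2} or derived from Hindman's theorem via a standard compactness argument on $\{0,1,\dots,r-1\}^{\N}$. The rest is a direct pigeonhole reduction to that statement.
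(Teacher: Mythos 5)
Your proof is correct. The paper gives no proof of this lemma, deferring instead to \cite{HLY2} with the remark that it ``should be known,'' so there is no internal argument to compare against. The route you take is the standard one: reduce to the finitary finite-unions form of Hindman's theorem, apply it with $r=2$, and derive a contradiction by coloring $FS(\{p_i\}_{i=1}^N)$ according to membership in $A_1$ or $A_2$. The bookkeeping is right: the negation of $A_i\in\F_{fip}$ yields a uniform bound $n_i$ on admissible IP-lengths, taking $n=\max(n_1,n_2)$ handles both sets, and your insistence on pairwise disjoint index sets $F_1,\dots,F_n$ is exactly what guarantees that $FS(\{q_j\}_{j=1}^n)$ lies inside $FS(\{p_i\}_{i=1}^N)$ and is a genuine $n$-term finite IP-set. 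One presentational remark: since distinct subsets of $\{1,\dots,N\}$ may share the same $p$-sum, what you are really applying is the finite-unions theorem to the induced coloring $F\mapsto\chi\bigl(\sum_{i\in F}p_i\bigr)$ of nonempty subsets of $\{1,\dots,N\}$, rather than to a coloring of the numerical sums per se; this identification is harmless and the conclusion is exactly what you state. The compactness derivation of the finitary statement from the infinite Hindman theorem that you sketch is likewise standard and correct.
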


We have the following

\begin{thm}\label{aainfty}
Let $(X,T)$ be a minimal t.d.s. Then $(X,T)$ is $\infty$-step AA if
and only if there is $x\in X$ such that $N(x,V)\in \F_{fip}^*$ for
each neighborhood $V$ of $x$.
\end{thm}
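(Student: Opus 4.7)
The plan is to deduce both implications from Proposition~\ref{shaoye}(2), which identifies $\RP^{[\infty]}$ with the "every neighborhood has $N(x,U)\in\F_{fip}$" condition, together with the Ramsey property of $\F_{fip}$ stated just before the theorem.

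For the easy direction, assume $N(x,V)\in \F_{fip}^*$ for every neighborhood $V$ of $x$, and suppose for contradiction that some $y\neq x$ satisfies $(x,y)\in\RP^{[\infty]}$. Pick disjoint open neighborhoods $V$ of $x$ and $U$ of $y$. By Proposition~\ref{shaoye}(2), $N(x,U)\in\F_{fip}$, and by hypothesis $N(x,V)\cap N(x,U)\neq\emptyset$; any common point $n$ forces $T^nx\in V\cap U=\emptyset$, a contradiction. Hence $\RP^{[\infty]}[x]=\{x\}$, i.e., $x$ is an $\infty$-step AA point.

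For the converse (the main direction), assume $x$ is $\infty$-step AA but some neighborhood $V$ of $x$ satisfies $N(x,V)\notin\F_{fip}^*$. Then there is $F\in\F_{fip}$ with $F\cap N(x,V)=\emptyset$, so $\{T^nx:n\in F\}$ lies in the compact set $X\setminus V$. For each integer $m\ge 1$, cover $X\setminus V$ by finitely many open sets $W_{m,1},\ldots,W_{m,k_m}$ of diameter at most $1/m$. Then
\[
F=\bigcup_{i=1}^{k_m}\bigl(F\cap N(x,W_{m,i})\bigr),
\]
and by the Ramsey property of $\F_{fip}$ some $F\cap N(x,W_{m,j_m})\in\F_{fip}$. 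Choose $n_m$ in this set and set $y_m=T^{n_m}x\in W_{m,j_m}\subset X\setminus V$.

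By compactness of $X\setminus V$, pass to a subsequence with $y_m\to y\in X\setminus V$, so $y\ne x$. For any open neighborhood $W$ of $y$, since $\operatorname{diam}(W_{m,j_m})\le 1/m\to 0$ and $y_m\to y$, we have $W_{m,j_m}\subset W$ for all sufficiently large $m$; hence $N(x,W)\supset F\cap N(x,W_{m,j_m})\in\F_{fip}$, and upward-closedness of $\F_{fip}$ gives $N(x,W)\in\F_{fip}$. By Proposition~\ref{shaoye}(2), $(x,y)\in\RP^{[\infty]}$, contradicting $\RP^{[\infty]}[x]=\{x\}$.

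The main obstacle is executing the forward direction cleanly: one must combine the Ramsey selection with a diagonal/compactness argument to produce a limit point $y\neq x$ every neighborhood of which is picked up by $F$. This is exactly the role of the Ramsey property of $\F_{fip}$ (equivalently, the fact that $\F_{fip}^*$ is a filter), mirroring the classical argument for Theorem~\ref{thm6.4} where $\F_{Poi}$ and $\F_{Bir}$ play the same role.
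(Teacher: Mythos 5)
Your proof is correct and follows essentially the same line as the paper's: both directions rest on Proposition~\ref{shaoye}(2) combined with the Ramsey property of $\F_{fip}$, and the main direction uses a finite cover of $X\setminus V$ at shrinking scales plus a compactness argument to produce the forbidden point $y\in\RP^{[\infty]}[x]\setminus\{x\}$. The only cosmetic difference is that the paper builds a nested sequence of closed balls $U_1\supset U_2\supset\cdots$ with $\bigcap U_n=\{y\}$, whereas you apply the Ramsey selection independently at each scale $1/m$ and extract $y$ as a subsequential limit of the $y_m$; the two bookkeeping choices are interchangeable.
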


\begin{proof}
Assume that there is $x\in X$ such that $N(x,V)\in \F_{fip}^*$ for
each neighborhood $V$ of $x$. If there is $y\in X$ such that
$(x,y)\in\RP^{[\infty]}$, then by Proposition \ref{shaoye} for any
neighborhood $U$ of $y$, $N(x,U)\in\F_{fip}$. This implies that
$x=y$, i.e. $(X,T)$ is $\infty$-step AA.

Now assume that $(X,T)$ is $\infty$-step AA, i.e. there is $x\in X$
such that $\RP^{[\infty]}[x]=\{x\}$. If for some neighborhood $V$ of
$x$, $N(x,V)\not\in \F_{fip}^*$, then $N(x,V^c)$ contains finite
IP-sets of arbitrarily long lengths.

Let $U_1=V^c$. Covering $U_1$ by finitely many closed balls
$U_1^1,\ldots, U_1^{i_1}$ of diam $\le 1$. Then there is $j_1$ such
that $N(x,U_1^{j_1})$ contains finite IP-sets of arbitrarily long
lengths. Let $U_2=U_1^{j_1}$. Covering $U_1$ by finitely many closed
balls $U_2^1,\ldots, U_2^{i_2}$ of diam $\le \frac{1}{2}$. Then
there is $j_2$ such that $N(x,U_2^{j_2})$ contains finite IP-sets of
arbitrarily long lengths. Let $U_3=U_2^{j_2}$. Inductively, there
are a sequence of closed balls $U_n$ with diam $\le \frac{1}{n}$
such that $N(x,U_n)$ contains finite IP-sets of arbitrarily long
lengths. Let $\{y\}=\bigcap U_n$. It is clear that $(x,y)\in
\RP^{[\infty]}$ with $y\not=x$, a contradiction. Thus $N(x,V)\in
\F_{fip}^*$ for each neighborhood $V$ of $x$.
\end{proof}

\section{Characterization of $d$-step almost automorphy}

Now we use the results built in previous sections to get the
following characterization for $d$-step AA via recurrence sets.

\begin{thm}\label{AAgeneral}
Let $(X,T)$ be a minimal t.d.s., $x\in X$ and
$d\in\N\cup\{\infty\}$. Then the following statements are
equivalent:
\begin{enumerate}
\item $x$ is a $d$-step AA point.

\item $N(x,V)\in \F_{d,0}$ for each neighborhood $V$ of $x$.

\item $N(x,V)\in \F_{Poi_d}^*$ for each
neighborhood $V$ of $x$.

\item $N(x,V)\in \F_{Bir_d}^*$ for each
neighborhood $V$ of $x$.

\end{enumerate}
\end{thm}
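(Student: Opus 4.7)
The plan is to prove the four equivalences using Theorem \ref{thm-AA} for (1) $\Leftrightarrow$ (2), and combining the Ramsey property of $\F_{Poi_d}$ and $\F_{Bir_d}$ (Proposition \ref{ramseyp}) with Theorem \ref{rpd} for (1) $\Leftrightarrow$ (3) and (1) $\Leftrightarrow$ (4). This Ramsey-based route is self-contained and avoids routing through Corollary D (and hence Theorem A); moreover, as Question 2 makes clear, the scheme fails precisely when the relevant family is not known to have the Ramsey property, which is why (2), (3), (4) need separate treatment.

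For (1) $\Leftrightarrow$ (2) I would invoke Theorem \ref{thm-AA}: $x$ is $d$-step AA iff the factor map $\pi: X \to X_d$ onto the maximal $d$-step nilfactor has $\pi^{-1}(\pi(x)) = \{x\}$. Given (1) and a neighborhood $V$ of $x$, compactness of $X \setminus V$ and the fibre condition yield an open set $W \ni \pi(x)$ in $X_d$ with $\pi^{-1}(W) \subset V$, so $N(x,V) \supset N(\pi(x), W) \in \F_{d,0}$ for finite $d$. For $d = \infty$, the set $W$ must be further approximated by the pullback of an open set from a finite $d_i$-step nilquotient via the inverse-limit structure of $X_\infty$, yielding a Nil$_{d_i}$ Bohr$_0$-set inside $N(x,V)$ and hence a member of $\F_{\infty,0} = \cup_d \F_{d,0}$. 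Conversely, if $x$ is not $d$-step AA, any $y \neq x$ in $\RP^{[d]}[x]$ together with disjoint neighborhoods $V \ni x$, $U \ni y$ give, via Theorem \ref{rpd}, $N(x,V) \in \F_{d,0}$ but $N(x,U) \in \F_{d,0}^*$, contradicting $N(x,V) \cap N(x,U) \subset N(x, V \cap U) = \emptyset$.

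For (3) $\Rightarrow$ (1), taking $y \neq x$ in $\RP^{[d]}[x]$ and disjoint open $V \ni x$, $U \ni y$, Theorem \ref{rpd} gives $N(x,U) \in \F_{Poi_d}$ while disjointness forces $N(x,V) \cap N(x,U) = \emptyset$, contradicting (3); the argument for (4) is identical with $\F_{Bir_d}$ in place of $\F_{Poi_d}$. For (1) $\Rightarrow$ (3) --- the main place the Ramsey property enters --- I would argue by contradiction: assuming some neighborhood $V \ni x$ admits $A \in \F_{Poi_d}$ with $A \cap N(x,V) = \emptyset$, cover the compact set $X \setminus V$ by finitely many open sets $U_1, \ldots, U_k$ of diameter $< 1$, so $A = \bigcup_i \bigl(A \cap N(x,U_i)\bigr)$, and use the Ramsey property of $\F_{Poi_d}$ to select $i_1$ with $N(x,U_{i_1}) \in \F_{Poi_d}$; iterating the refinement on $U_{i_1}$ with open covers of diameter $< 1/n$ produces a nested sequence $U_{i_1} \supset U_{i_1 i_2} \supset \ldots$ whose closures shrink to a single point $y \in X \setminus V$ satisfying $N(x,W) \in \F_{Poi_d}$ for every neighborhood $W$ of $y$; Theorem \ref{rpd} then gives $(x,y) \in \RP^{[d]}$ with $y \neq x$, contradicting (1). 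The implication (1) $\Rightarrow$ (4) runs identically with $\F_{Bir_d}$. The hard part will be the $d = \infty$ case, where $\F_{Poi_\infty}$ and $\F_{Bir_\infty}$ are intersections of Ramsey families and need not themselves have the Ramsey property, so the shrinking-cover argument breaks down; this obstacle will be circumvented by deducing the $d = \infty$ versions of (1) $\Rightarrow$ (3) and (1) $\Rightarrow$ (4) from the chain (1) $\Rightarrow$ (2) combined with the natural extension of Corollary D, namely $\F_{\infty,0} \subset \F_{Bir_\infty}^* \subset \F_{Poi_\infty}^*$, which follows by dualizing the obvious inclusions $\F_{Poi_\infty} \subset \F_{Bir_\infty} \subset \F_{\infty,0}^*$ inherited from finite $d$.
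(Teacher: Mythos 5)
Your proof is correct, and for $(1)\Leftrightarrow(2)$ you take a genuinely different route from the paper. The paper proves $(1)\Rightarrow(2)$ by contradiction via the Ramsey property of $\F_{d,0}^*$ together with the shrinking-cover argument of Theorem \ref{aainfty}: if $N(x,V)\notin\F_{d,0}$ then $N(x,V^c)\in\F_{d,0}^*$, and refining finite closed covers of $V^c$ of shrinking diameter produces a point $y\neq x$ with $N(x,U)\in\F_{d,0}^*$ for every neighborhood $U$ of $y$, hence $(x,y)\in\RP^{[d]}$, a contradiction. Your direct argument via Theorem \ref{thm-AA} --- the fibre condition $\pi^{-1}(\pi(x))=\{x\}$ together with compactness of $X\setminus V$ gives an open $W\ni\pi(x)$ with $\pi^{-1}(W)\subset V$, so $N(x,V)\supset N(\pi(x),W)\in\F_{d,0}$ --- is shorter and exposes the structural content of almost one-to-one extensions more directly; it also treats $d=\infty$ cleanly by shrinking $W$ to the pullback of an open set from a finite level of the inverse limit defining $X_\infty$. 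For $(1)\Leftrightarrow(3)$ and $(1)\Leftrightarrow(4)$ your Ramsey-plus-shrinking-covers argument is the same as what the paper intends by ``the rest is similar.''

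One point is slightly off, though not damaging. Your concern that the shrinking-cover argument ``breaks down'' at $d=\infty$ because an intersection of Ramsey families need not be Ramsey is unfounded here: the families $\F_{Poi_d}$ (and $\F_{Bir_d}$) are nested decreasing in $d$, and a nested decreasing intersection of proper Ramsey families is again Ramsey. Indeed if $A=A_1\cup A_2\in\F_{Poi_\infty}$ with $A_i\notin\F_{Poi_{d_i}}$ for $i=1,2$, then taking $d=\max(d_1,d_2)$ and using $\F_{Poi_d}\subset\F_{Poi_{d_i}}$ gives $A_1,A_2\notin\F_{Poi_d}$ while $A\in\F_{Poi_d}$, contradicting the Ramsey property at level $d$. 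So the cover argument works uniformly in $d\in\N\cup\{\infty\}$. Your alternative route through the inclusion $\F_{\infty,0}\subset\F_{Bir_\infty}^*\subset\F_{Poi_\infty}^*$ (obtained by dualizing Corollary D and intersecting over finite $d$) is also valid, just an unnecessary detour; it has the minor virtue of making the dependence on Corollary D explicit.
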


\begin{proof}
Roughly speaking this theorem follows from Theorem \ref{rpd}, the
fact $\F_{d,0}^*, \F_{Poi_d}$ and $\F_{Bir_d}$ have the Ramsey
property, and the idea of the proof of Theorem \ref{aainfty}. We
show that $(1)\Leftrightarrow(2)$, and the rest is similar.

$(1)\Rightarrow (2)$: Let $x$ be a $d$-step AA point. If (2) does
not hold, then there is some neighborhood $V$ of $x$ such that
$N(x,V)\not \in \F_{d,0}$. Then $N(x,V^c)=\Z\setminus N(x,V)\in
\F^*_{d,0}$. Since $\F^*_{d,0}$ has the Ramsey property, similar to
the proof of Theorem \ref{aainfty} one can find some $y\in V^c$ such
that $N(x,U)\in \F^{*}_{d,0}$ for every neighborhood $U$ of $y$. By
Theorem \ref{rpd}, $y\in \RP^{[d]}[x]$. Since $y\neq x$, this
contradicts the fact $x$ being $d$-step AA.

$(2)\Rightarrow (1)$: If $x$ is not $d$-step AA, then there is some
$y\in \RP^{[d]}[x]$ with $x\neq y$. Let $U_x$ and $U_y$ be
neighborhoods of $x$ and $y$ with $U_x\cap U_y=\emptyset$. By $(2)$
$N(x,U_x)\in \F_{d,0}$. By Theorem \ref{rpd}, $N(x,U_y)\in
\F^*_{d,0}$. Hence $N(x,U_x)\cap N(x,U_y)\neq \emptyset$,  which
contradicts the fact that $U_x\cap U_y=\emptyset$.
\end{proof}

\appendix

\chapter{}

\section{The Ramsey properties} \label{appendix:Ramsey}

Recall that a family $\F$ has the {\em Ramsey property} means that if
$A\in\F$ and $A=\cup_{i=1}^n A_i$ then one of $A_i$ is still in
$\F$. In this section, we show that $\F_{SG_2}$ does not have the
Ramsey property. \index{Ramsey property}

\begin{thm}
$\F_{SG_2}$ does not have the Ramsey property.
\end{thm}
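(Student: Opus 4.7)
The goal is to exhibit a set $A\in\F_{SG_2}$ and a two-piece partition $A=A_0\cup A_1$ for which neither $A_i$ lies in $\F_{SG_2}$. I would take $P=\{p_i\}_{i\ge 1}$ with $p_i=10^i$ and set $A=SG_2(P)$. Every $n\in A$ then has a unique representation $n=s_I:=\sum_{i\in I}p_i$, and $I\subset\N$ is itself a valid $SG_2$-subset of $\N$ (no two consecutive missing integers between $\min I$ and $\max I$); moreover, because base-$10$ digits of elements of $A$ lie in $\{0,1\}$, whenever $s_I+s_J$ still lies in $A$ one must have $I\cap J=\emptyset$. Writing $r(I)$ for the number of maximal runs of consecutive integers in $I$, define
$$A_k=\{s_I\in A:r(I)\equiv k\pmod 2\},\qquad k=0,1.$$

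Suppose for contradiction that $SG_2(Q)\subset A_0$ for some infinite sequence $Q=\{q_j\}_{j\ge 1}$ in $\Z$; the argument for $A_1$ will be parallel. Each $q_j$ lies in $A$ and has a unique representing set $I_j$; by the disjointness observation above, the $I_j$ are pairwise disjoint, and for every $SG_2$-admissible index set $S\subset\N$ the union $\bigcup_{j\in S}I_j$ is again a valid $SG_2$-subset of $\N$ whose number of runs is even.

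The combinatorial heart is to rule out any such infinite family $\{I_j\}$. In the ordered regime ($\max I_j<\min I_{j+1}$ for all $j$), validity of $I_j\cup I_{j+2}$ forces $\min I_{j+2}-\max I_j\le 2$, so the intervening $I_{j+1}\subset(\max I_j,\min I_{j+2})$ has $|I_{j+1}|\le 1$, contradicting $r(I_{j+1})\ge 2$ (even and positive). For the interleaved regime one runs a case analysis on the shape of $I_j$; the representative calculation is that $|I_j|=2$ with $r(I_j)=2$ forces $I_j=\{a_j,a_j+2\}$, then $I_j\cup I_{j+1}$ being valid with $r(I_j\cup I_{j+1})$ even pins down $a_{j+1}=a_j\pm 4$, after which no choice of $a_{j+2}$ makes $I_j\cup I_{j+2}$ both valid and disjoint from previous terms (the forward-forward step leaves five consecutive missing integers, the forward-backward returns to $I_j$). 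A similar enumeration handles each fixed shape of $I_j$ for larger $|I|$, using also the validity of longer admissible unions such as $I_j\cup I_{j+1}\cup I_{j+3}\cup I_{j+4}$ (pattern $(1,1,0,1,1)$) when needed, and the $A_1$ side is analogous (e.g.\ $|I_j|=2,r=1$ gives $I_j=\{a_j,a_j+1\}$, then $a_{j+1}=a_j\pm 2$, and then $I_j\cup I_{j+2}$ has two consecutive missing integers).

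The main obstacle is the breadth of the case analysis: valid $SG_2$-subsets of $\N$ come in shapes parametrized by $|I|$ and $r(I)$, and an infinite sequence $Q$ may mix these shapes. The unifying principle is the window bound coming from $SG_2$-validity: a valid subset of size $s$ fits in an interval of length at most $2s-1$, so the disjointness and validity conditions on all admissible unions pin down the positions of the $I_j$'s tightly enough that the run-parity constraint cannot be sustained along an infinite sequence.
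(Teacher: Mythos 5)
Your decomposition is genuinely different from the paper's. The paper splits $SG_2(P)$ (with $P$ superincreasing, $p_{i+1}>2(p_1+\cdots+p_i)$) into \emph{three} pieces: $B_1$ = sums of consecutive odd-indexed $p_i$'s, $B_2$ = sums of consecutive even-indexed $p_i$'s, and the remainder $B_0$. You propose a \emph{two}-piece partition of $SG_2(\{10^i\})$ by the parity of the number of maximal runs in the representing index set. Both use the lacunary/digit structure to get unique representations, but the parity idea is not in the paper and, if it works, is arguably conceptually tidier. However, as written there are two real gaps.

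First, the case analysis is not actually carried out. You settle the ``ordered'' regime and one ``representative'' interleaved shape ($|I_j|=2$, $r(I_j)=2$), but then write that ``a similar enumeration handles each fixed shape'' and that the window bound ``pins down the positions tightly enough.'' An infinite sequence $Q$ can mix shapes freely from step to step (and the shapes grow unboundedly in $|I|$), so there is no a priori reduction to a fixed shape. The ``window bound'' heuristic does constrain things, but it does not by itself force the run-parity contradiction; that would have to be argued, and you don't. Relatedly, the pairwise disjointness of the $I_j$ is asserted from the single observation about $s_I+s_J\in A$, but for $|j-j'|>2$ the sum $q_j+q_{j'}$ is not itself in $SG_2(Q)$; one needs the small induction using the full consecutive sums $q_j+\cdots+q_{j'}\in SG_2(Q)$ to push disjointness outward. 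This is fixable but should be said.

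Second, and more to the point, you are working much harder than necessary. To show $B\notin\F_{SG_2}$ it suffices to show that $B$ does not contain $SG_2(\{a_1,a_2,a_3\})$ for any $a_1,a_2,a_3$: any infinite $SG_2(Q)\subset B$ would already contain the $SG_2$-set on its first three terms, so a three-element obstruction kills the whole thing. This is exactly what the paper does: for each of $B_0,B_1,B_2$ it shows there are no $a_1\le a_2\le a_3$ with $a_1,a_2,a_3,a_1+a_2,a_2+a_3,a_1+a_3\in B_i$, and the verification is a short argument with the superincreasing property. Your parity decomposition should be tested the same way --- can $A_k$ contain all seven elements of some $SG_2(\{a_1,a_2,a_3\})$? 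If you can rule that out cleanly (the small examples suggest you can), the infinite case analysis evaporates and you have a genuinely different, and perhaps nicer, proof. As the proposal stands, though, the infinite-regime casework is an unfilled hole, and the finite reduction that would close it is not invoked.
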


\begin{proof}
Let $P=\{p_1, p_2, \ldots \}$ be a subsequence of $\N$ with
$p_{i+1}>2(p_1+\ldots +p_i)$. The assumption that
$p_{i+1}>2(p_1+\ldots +p_i)$ ensures that each element of $SG_2(P)$
has a unique expression with the form of $\sum_i p_{j_i}$.

Now divide the set $SG_2(P)$ into the following three sets:
\begin{equation*}
\begin{split}
B_1&=\{p_{2n-1}+\ldots+p_{2m-1}:n\le m\in\N\}=SG_1(\{p_1,p_3,\ldots\}),\\
B_2&=\{p_{2n}+\ldots+p_{2m}:n\le m\in\N\}=SG_1(\{p_2,p_4,\ldots\}),\\
B_0&=SG_2(P)\setminus (B_1\cup B_2).
\end{split}
\end{equation*}

We show that $B_i\not\in \F_{SG_2}$ for $i=0,1,2.$  In fact, we will
prove that for each $i=0,1,2$ there do not exist $a_1\le a_2\le a_3$ such
that
\begin{equation}
    a_1,a_2,a_3,a_1+a_2,a_2+a_3,a_1+a_3\in B_i, \tag{$*$}
\end{equation}
which obviously implies that $B_i\not\in \F_{SG_2}$ for $i=0,1,2.$
\medskip

\noindent (1). First we show $B_2\not\in \F_{SG_2}$. The proof
$B_1\not\in \F_{SG_2}$ follows similarly. Assume the contrary, i.e.
there exist $a_1\le a_2\le a_3$ such that
\begin{equation*}
    a_1,a_2,a_3,a_1+a_2,a_2+a_3,a_1+a_3\in B_2.
\end{equation*}
Let
\begin{equation*}
\begin{split}
a_1&=p_{2n_1}+\ldots+p_{2m_1},\ n_1\le m_1;\\
a_2&=p_{2n_2}+\ldots+p_{2m_2},\ n_2\le m_2;\\
a_3&=p_{2n_3}+\ldots+p_{2m_3},\ n_3\le m_3.
\end{split}
\end{equation*}
Since $a_1\le a_2\le a_3$ and the assumption that $p_{i+1}>2(p_1+\ldots
+p_i)$, one has that $m_1\le m_2\le m_3$. Since $a_1+a_2, a_2+a_3\in
B_2$, one has that $n_2=m_1+1$ and $n_3=m_2+1$. Hence $n_3=m_2+1\ge
n_2+1=m_1+2$, i.e. $n_3>m_1+1$. Thus $$a_1+a_3\not \in B_2,$$ a
contraction!

\medskip

\noindent (2). Now we show $B_0\not\in \F_{SG_2}$. Assume the
contrary, i.e. there exist $a_1\le a_2\le a_3$ such that
\begin{equation*}
    a_1,a_2,a_3,a_1+a_2,a_2+a_3,a_1+a_3\in B_0.
\end{equation*}
Let
\begin{equation*}
\begin{split}
a_1&=p_{i^1_1}+p_{i^1_2}+\ldots+p_{i^1_{k_1}};\\
a_2&=p_{i^2_1}+p_{i^2_2}+\ldots+p_{i^2_{k_2}};\\
a_3&=p_{i^3_1}+p_{i^3_2}+\ldots+p_{i^3_{k_3}},
\end{split}
\end{equation*}
where $i^r_1<i^r_2<\ldots <i^r_{k_r}$, $i^r_{j+1}\le i^r_j+2$ for
$1\le j\le k_r-1$, and there are both even and odd numbers in
$\{i^r_1,i^r_2,\ldots,i^r_{k_r}\}$ $(r=1,2,3)$.

Since there are both even and odd numbers in
$\{i^r_1,i^r_2,\ldots,i^r_{k_r}\}$ $(r=1,2,3)$ and $i^r_{j+1}\le
i^r_j+2$ for $1\le j \le k_r-1$, there exist $1\le j_r\le k_r-1$
such that $i^r_{j+1}=i^r_{j_r}+1$. Since $a_1\le a_2\le a_3$ and the
assumption that $p_{i+1}>2(p_1+\ldots +p_i)$, one has that
$i^1_{k_1}\le i^2_{k_2}\le i^3_{k_3}$. Note that we have
\begin{equation*}
    i^1_1<i^1_2<\ldots <i^1_{j_1}<i^1_{j_1+1}=i^1_{j_1}+1<\ldots<i^1_{k_1},
\end{equation*}
\begin{equation*}
    i^2_1<i^2_2<\ldots <i^2_{j_2}<i^2_{j_2+1}=i^2_{j_2}+1<\ldots<i^2_{k_2},
\end{equation*}
\begin{equation*}
    i^3_1<i^3_2<\ldots <i^3_{j_3}<i^3_{j_3+1}=i^3_{j_3}+1<\ldots<i^3_{k_3}.
\end{equation*}

The condition $a_1+a_2\in B_0$ implies that
\begin{equation}
    i^1_{j_1+1}<i^2_1\le i^1_{k_1}+2; \ i^1_{k_1}<i^2_{j_2}. \tag{a}
\end{equation}
In fact if $i^2_1<i^1_{j_1}$, then the gap $\{i^1_{j_1},
i^1_{j_1}+1\}$ is missing in the term of $a_2$ and it contradicts
the assumption $a_2\in SG_2(P)\in \F_{SG_2}$. The statement
$i^1_{k_1}<i^2_{j_2}$ follows by the same argument.

Similarly, using the assumptions $a_2+a_3\in B_0$ and $a_1+a_3\in
B_0$, one has
\begin{equation}
    i^2_{j_2+1}<i^3_1\le i^2_{k_2}+2; \ i^2_{k_2}<i^3_{j_3}. \tag{b}
\end{equation}
and
\begin{equation}
    i^1_{j_1+1}<i^3_1\le i^1_{k_1}+2; \ i^1_{k_1}<i^3_{j_3}. \tag{c}
\end{equation}
From (a), we have that $i_{k_1}^1<i^2_{j_2}$; and from (b), we have
$i^2_{j_2+1}=i^2_{j_2}+1< i^3_1$. Hence we have $i^3_1\ge
i^1_{k_1}+3$, which contradicts (c). The proof is completed.
\end{proof}

\section{Compact Hausdorff systems}\label{appendix:CT2}

In this section we discuss compact Hausforff systems, i.e. the
systems with phase space being compact Hausdorff. The reason for
this is not generalization for generalization's sake, but rather
that we have to deal with non-metrizable systems. For example, we
will use (in the proof of Theorem \ref{huang10}) an important tool
named Ellis semigroup which is a subspace of an uncountable product
of copies of the phase space and therefore in general not
metrizable.

\subsection{Compact Hausdorff systems}

In the classical theory of abstract topological dynamics, the basic
assumption about the system is that the space is a compact Hausdorff
space and the action group is a topological group. In this paper, we
mainly consider the compact metrizable system under $\Z$-actions,
but in some occasions we have to deal with compact Hausdorff spaces
which are non-metrizable. Note that each compact Hausdorff space is
a uniform space, and one may use the uniform structure replacing the
role of a metric, see for example the Appendix of \cite{Au88}.

First we recall a classical equality concerning regionally proximal
relation in compact Hausdorff systems.
A {\em compact Hausdorff system} is a pair $(X,T)$, where $X$ is a
compact Hausdorff space and $T:X\rightarrow X$ is a homeomorphism.
Let $(X,T)$ be a compact Hausdorff system and $\mathcal{U}_X$ be the
unique uniform structure of $X$. 
The {\em regionally proximal relation} on $X$ is defined
by \index{regionally proximal relation}
\begin{equation*}
    \RP=\bigcap_{\a \in \mathcal{U}_X}\overline{ \bigcup_{n\in \Z}(T\times
    T)^{-n}\a}
\end{equation*}

\subsection{Ellis semigroup} \index{Ellis semigroup}

A beautiful characterization of distality was given by R. Ellis
using so-called enveloping semigroup. Given a compact Hausdorff
system $(X,T)$, its {\it enveloping semigroup} (or {\em Ellis
semigroup}) \index{enveloping semigroup} $E(X,T)$ is defined as the closure of the set $\{T^n: n
\in \Z\}$ in $X^X$ (with its compact, usually non-metrizable,
pointwise convergence topology). Ellis showed that a compact
Hausdorff system $(X,T)$ is distal if and only if $E(X,T)$ is a
group if and only if every point in $(X^2, T\times T)$ is minimal
\cite{E69}.

\subsection{Limits of Inverse systems}

Suppose that  every $\ll$ in a set $\Lambda$ directed by the
relation $\le$ corresponds a t.d.s. $(X_\ll,T_\ll)$, and that for
any $\ll,\xi\in \Lambda$ satisfying $\xi\le \ll$ a factor map
$\pi^\ll_\xi: (X_\ll,T_\ll)\rightarrow (X_\xi,T_\xi)$ is defined;
suppose further that $\pi^\xi_\tau\pi^\ll_\xi=\pi^\ll_\tau$ for all
$\ll, \xi,\tau\in \Lambda$ with $\tau\le \xi\le \ll$ and that
$\pi^\ll_\ll={\rm id}_X$ for all $\ll\in \Lambda$. In this situation
we say that the family $\{X_\ll,
\pi^\ll_\xi,\Lambda\}=\{(X_\ll,T_\ll), \pi^\ll_\xi,\Lambda\}$ is an
{\em inverse system of the systems $(X_\ll,T_\ll)$}; and the
mappings $\pi^\ll_\xi$ are called {\em bonding mappings} of the
inverse system.\index{inverse system}

Let $\{X_\ll, \pi^\ll_\xi,\Lambda\}$ be an inverse system. The {\em
limit of the inverse system  $\{X_\ll, \pi^\ll_\xi,\Lambda\}$} is
the set
\begin{equation*}
    \Big\{(x_\ll)_\ll\in \prod_{\ll\in \Lambda}X_\ll: \pi^\ll_\xi(x_\ll)=
    x_\xi\ \text{for all $\xi\le \ll\in \Lambda$}\Big\},
\end{equation*}
and is denoted by $\varprojlim \{X_\ll, \pi^\ll_\xi,\Lambda\}$. Let
$X=\varprojlim \{X_\ll, \pi^\ll_\xi,\Lambda\}$. For each $\ll\in
\Lambda $, let $\pi_\ll: X\rightarrow X_\ll,
(x_\sigma)_\sigma\mapsto x_\ll$ be the projection mapping.

\medskip

A well known result is the following (see for example \cite{Ke}):

\begin{lem}\label{inverse}
Each compact Hausdorff system is the inverse limit of topological
dynamical systems.
\end{lem}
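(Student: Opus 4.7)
The plan is to exhibit, for each compact Hausdorff system $(X,T)$, a directed family of metric factors whose inverse limit recovers $(X,T)$. The key idea is that although $X$ itself may be non-metrizable, every countable $T$-invariant subfamily of $C(X)$ determines a metrizable equivariant quotient, and these quotients collectively separate points.

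First I would set up the quotients. Given a countable subset $\mathcal{F}\subset C(X)$ that is $T$-invariant in the sense that $f\in\mathcal{F}$ implies $f\circ T,\,f\circ T^{-1}\in\mathcal{F}$, define
\[
x R_{\mathcal{F}} y\quad\Longleftrightarrow\quad f(x)=f(y)\text{ for all }f\in\mathcal{F}.
\]
A routine check shows $R_{\mathcal{F}}$ is a closed equivalence relation and, using $T$-invariance of $\mathcal{F}$, that $xR_{\mathcal{F}}y$ iff $TxR_{\mathcal{F}}Ty$. Hence $T$ descends to a homeomorphism $T_{\mathcal{F}}$ on $X_{\mathcal{F}}:=X/R_{\mathcal{F}}$, which we can scale and embed into $[0,1]^{\mathcal{F}}$; since $\mathcal{F}$ is countable, $X_{\mathcal{F}}$ is a compact metric space, so $(X_{\mathcal{F}},T_{\mathcal{F}})$ is a t.d.s.\ in the sense of this paper. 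Given any countable $\mathcal{F}_0\subset C(X)$, the set $\mathcal{F}:=\{f\circ T^n:f\in\mathcal{F}_0,\,n\in\Z\}$ is still countable and $T$-invariant, so such $\mathcal{F}$ are abundant.

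Next I would assemble these into an inverse system. Let $\Lambda$ be the collection of all countable $T$-invariant subsets of $C(X)$, directed by inclusion. Whenever $\mathcal{F}_1\subset\mathcal{F}_2$ one has $R_{\mathcal{F}_2}\subset R_{\mathcal{F}_1}$, which furnishes a canonical equivariant factor map $\pi^{\mathcal{F}_2}_{\mathcal{F}_1}\colon X_{\mathcal{F}_2}\to X_{\mathcal{F}_1}$, and these bonding maps satisfy the required cocycle identity $\pi^{\mathcal{F}_2}_{\mathcal{F}_1}\circ\pi^{\mathcal{F}_3}_{\mathcal{F}_2}=\pi^{\mathcal{F}_3}_{\mathcal{F}_1}$. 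The natural equivariant map
\[
\Phi\colon X\longrightarrow \varprojlim\{X_{\mathcal{F}},\pi^{\mathcal{F}'}_{\mathcal{F}},\Lambda\},\qquad x\longmapsto([x]_{\mathcal{F}})_{\mathcal{F}\in\Lambda},
\]
is the candidate isomorphism.

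Finally I would verify that $\Phi$ is a homeomorphism. Continuity and equivariance are built into the construction. Injectivity follows because $C(X)$ separates points on a compact Hausdorff space (Urysohn), so for distinct $x,y$ one can put a separating $f$ into some $\mathcal{F}\in\Lambda$. The main obstacle is surjectivity, since we cannot appeal to sequential arguments in the non-metric setting; I would handle it by a finite-intersection-property argument: given a thread $(x_{\mathcal{F}})_{\mathcal{F}\in\Lambda}$, the sets $\pi_{\mathcal{F}}^{-1}(x_{\mathcal{F}})\subset X$ are closed, and any finite subcollection $\mathcal{F}_1,\ldots,\mathcal{F}_k$ has common refinement $\mathcal{F}_1\cup\cdots\cup\mathcal{F}_k\in\Lambda$, whose selected preimage lies in all $\pi_{\mathcal{F}_i}^{-1}(x_{\mathcal{F}_i})$; compactness of $X$ then yields a point in $\bigcap_{\mathcal{F}\in\Lambda}\pi_{\mathcal{F}}^{-1}(x_{\mathcal{F}})$, which $\Phi$ maps to the given thread. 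Being a continuous bijection between compact Hausdorff spaces, $\Phi$ is a homeomorphism, and the proof is complete.
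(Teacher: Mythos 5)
Your argument is correct, and it is the standard proof of this classical fact. The paper does not actually prove Lemma~\ref{inverse} but simply cites it (to \cite{Ke}), so there is nothing in the text to compare against; your write-up fills in the omitted argument. All the key points are in order: for a countable $T$-invariant $\mathcal{F}\subset C(X)$, the relation $R_{\mathcal{F}}$ is a closed $T$-invariant equivalence relation, the quotient $X_{\mathcal{F}}$ embeds into the metrizable cube $[0,1]^{\mathcal{F}}$, the family $\Lambda$ of such $\mathcal{F}$ is directed by union, and the finite-intersection-property argument correctly handles surjectivity of $\Phi$ in the non-sequential setting. One cosmetic remark: it would be slightly cleaner to say explicitly that $\Phi$ is continuous because each coordinate map $\pi_{\mathcal{F}}\colon X\to X_{\mathcal{F}}$ is continuous and the inverse limit carries the subspace topology of the product, but this is routine.
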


\subsection{The regionally proximal relation of order $d$ for
compact Hausdorff systems}

The definition of the regionally proximal relation of order $d$ for
compact Hausdorff systems is similar to the metric case.

\begin{de}\index{regionally proximal relation of order $d$}
Let $(X, T)$ be a compact Hausdorff system, $\mathcal{U}_X$ be the
unique uniform structure of $X$ and let $d\ge 1$ be an integer. A
pair $(x, y) \in X\times X$ is said to be {\em regionally proximal
of order $d$} if for any $\a  \in \mathcal{U}_X$, there exist $x',
y'\in X$ and a vector ${\bf n} = (n_1,\ldots , n_d)\in\Z^d$ such
that $(x, x') \in  \a, (y, y') \in \a$, and $$ (T^{{\bf n}\cdot
\ep}x', T^{{\bf n}\cdot \ep}y') \in \a \ \text{for any $\ep\in
\{0,1\}^d$, $\ep\not=(0,\ldots,0)$},
$$ where ${\bf n}\cdot \ep = \sum_{i=1}^d \ep_in_i$. The set of all
regionally proximal pairs of order $d$ is denoted by $\RP^{[d]}(X)$,
which is called {\em the regionally proximal relation of order $d$}.
\end{de}

By Lemma \ref{inverse}, each compact Hausdorff system is the inverse
limit of topological dynamical systems. Recall the definition of the
product uniformity. Let $(X_\ll, \U_\ll)_{\ll\in \Lambda}$ be a
family of uniform spaces and let $Z=\prod_{\ll\in \Lambda}X_\ll$.
The uniformity on $Z$ (the product uniformity) is defined as
follows. If $F=\{\ll_1,\ldots,\ll_m\}$ is a finite subset of the
index set $\Lambda$ and $\a_{\ll_j}\in \U_{\ll_j}$ $(j=1,\ldots,m)$,
let
$$\Phi_{\a_{\ll_1},\ldots,\a_{\ll_m}}=\{(x,y)\in Z\times Z:
(x_{\ll_j},y_{\ll_j})\in \a_{\ll_j} ,\ j=1,\ldots, m\}.$$ The
collection of all such sets $\Phi_{\a_{\ll_1},\ldots,\a_{\ll_m}}$
for all finite subsets $F$ of $\Lambda$ is a base for the product
uniformity. From this and the definition of the regionally proximal
relation of order $d$, one has the following result.

\begin{prop}
Let $(X,T)$ be a compact Hausdorff system and $d\in \N$. Suppose
that $X=\varprojlim \{X_\ll, \pi^\ll_\xi,\Lambda\}$, where $(X_\ll,
T_\ll)_{\ll\in \Lambda}$ are t.d.s.. Then
$$\RP^{[d]}(X)=\varprojlim
\{\RP^{[d]}(X_\ll), \pi^\ll_\xi\times \pi^\ll_\xi,\Lambda\}.$$
\end{prop}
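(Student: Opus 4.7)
The plan is to establish the two inclusions separately. First I would note that a factor map $\pi:(Y_1,S_1)\to (Y_2,S_2)$ between compact Hausdorff systems is uniformly continuous, so for each $\b\in \mathcal{U}_{Y_2}$ there is $\a\in \mathcal{U}_{Y_1}$ with $(\pi\times\pi)(\a)\subset \b$; applying the definition of $\RP^{[d]}$ directly gives $(\pi\times\pi)(\RP^{[d]}(Y_1))\subset \RP^{[d]}(Y_2)$. Applying this to each projection $\pi_\ll:(X,T)\to (X_\ll,T_\ll)$ immediately yields $(\pi_\ll(x),\pi_\ll(y))\in \RP^{[d]}(X_\ll)$ for every $\ll\in \Lambda$ whenever $(x,y)\in \RP^{[d]}(X)$, which is the inclusion $\subset$.

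For the reverse inclusion $\supset$, suppose $(x,y)$ satisfies $(\pi_\ll(x),\pi_\ll(y))\in \RP^{[d]}(X_\ll)$ for every $\ll$. Given any $\a\in \mathcal{U}_X$, I would first shrink $\a$ to a basic entourage for the product uniformity, i.e.\ one of the form $\Phi_{\a_{\ll_1},\ldots,\a_{\ll_m}}\cap(X\times X)$ for some finite $\{\ll_1,\ldots,\ll_m\}\subset \Lambda$ and $\a_{\ll_i}\in \mathcal{U}_{X_{\ll_i}}$. Using that $\Lambda$ is directed, pick $\ll\in \Lambda$ with $\ll_i\le \ll$ for all $i$. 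Uniform continuity of the bonding maps $\pi^\ll_{\ll_i}$ gives a single $\b_\ll\in \mathcal{U}_{X_\ll}$ such that $(u,v)\in \b_\ll$ implies $(\pi^\ll_{\ll_i}(u),\pi^\ll_{\ll_i}(v))\in \a_{\ll_i}$ for all $i=1,\ldots,m$.

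Now I would apply $(\pi_\ll(x),\pi_\ll(y))\in \RP^{[d]}(X_\ll)$ with the entourage $\b_\ll$: there exist $u,v\in X_\ll$ and ${\bf n}\in\Z^d$ with $(\pi_\ll(x),u),(\pi_\ll(y),v)\in \b_\ll$ and $(T_\ll^{{\bf n}\cdot\ep}u,T_\ll^{{\bf n}\cdot\ep}v)\in \b_\ll$ for every nonempty $\ep\subset [d]$. To finish, I need to lift $u,v$ to points $x',y'\in X$. This is where surjectivity of $\pi_\ll$ enters: because each bonding map in an inverse system of dynamical systems is a factor map (hence surjective between compact Hausdorff spaces), the projection $\pi_\ll:X\to X_\ll$ is surjective, so one can choose $x',y'\in X$ with $\pi_\ll(x')=u$ and $\pi_\ll(y')=v$. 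Combining $\pi_\ll(T^{{\bf n}\cdot\ep}x')=T_\ll^{{\bf n}\cdot\ep}u$ with the choice of $\b_\ll$ shows $(x,x'),(y,y')\in \a$ and $(T^{{\bf n}\cdot\ep}x',T^{{\bf n}\cdot\ep}y')\in \a$ for all nonempty $\ep$, giving $(x,y)\in \RP^{[d]}(X)$.

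The main obstacle I foresee is purely book-keeping: reducing an arbitrary entourage of $X$ to a basic product entourage and then collapsing the finite family $\{\ll_1,\ldots,\ll_m\}$ to a single index $\ll$ by directedness. The surjectivity of the projections $\pi_\ll$, needed to lift witnesses from $X_\ll$ to $X$, is standard for inverse limits of compact Hausdorff spaces with surjective bonding maps and should not require a separate argument beyond citing this fact.
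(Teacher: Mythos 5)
Your proposal is correct and takes exactly the route the paper has in mind: the paper itself offers no proof beyond citing the description of the product uniformity and "the definition of the regionally proximal relation of order $d$," and your argument is the natural unpacking of that remark (the $\subset$ direction via uniform continuity of each coordinate projection, and the $\supset$ direction by reducing an arbitrary entourage to a basic one, collapsing the finite index set to a single $\ll$ by directedness, pushing the $\RP^{[d]}$ condition down to $X_\ll$, and lifting the witnesses $u,v$ back to $X$ using surjectivity of $\pi_\ll$). All the intermediate steps check out, including the identity $\pi_{\ll_i}=\pi^\ll_{\ll_i}\circ\pi_\ll$ and the equivariance $\pi_\ll\circ T^{{\bf n}\cdot\ep}=T_\ll^{{\bf n}\cdot\ep}\circ\pi_\ll$ that transfer the estimates from $X_\ll$ to $X$.
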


Thus combining this proposition with Theorem \ref{ShaoYe}, one has

\begin{thm}
Let $(X, T)$ be a minimal compact Hausdorff system and $d\in \N$.
Then
\begin{enumerate}

\item $\RP^{[d]}(X)$ is an equivalence relation, and so is
$\RP^{[\infty]}.$

\item If $\pi:(X,T)\lra (Y,S)$ is a factor map, then $(\pi\times
\pi)(\RP^{[d]}(X))=\RP^{[d]}(Y).$

\item $(X/\RP^{[d]},T)$ is the maximal $d$-step nilfactor of $(X,T)$.
\end{enumerate}
\end{thm}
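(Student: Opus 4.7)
The plan is to reduce everything to the metric case that was handled in Theorem \ref{ShaoYe}, using the inverse limit representation furnished by Lemma \ref{inverse} and the preceding proposition. Since $(X,T)$ is minimal, each factor $(X_\lambda,T_\lambda)$ in a decomposition $X=\varprojlim\{X_\lambda,\pi^\lambda_\xi,\Lambda\}$ is a minimal metric t.d.s., and
$$\RP^{[d]}(X)=\varprojlim\{\RP^{[d]}(X_\lambda),\pi^\lambda_\xi\times\pi^\lambda_\xi,\Lambda\}.$$
Thus $(x,y)\in\RP^{[d]}(X)$ if and only if $(\pi_\lambda(x),\pi_\lambda(y))\in\RP^{[d]}(X_\lambda)$ for every $\lambda\in\Lambda$, and Theorem \ref{ShaoYe} can be applied coordinate-wise.

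For (1), reflexivity and symmetry are immediate from the definition. For transitivity, if $(x,y),(y,z)\in\RP^{[d]}(X)$ then for each $\lambda$ the pairs $(\pi_\lambda(x),\pi_\lambda(y))$ and $(\pi_\lambda(y),\pi_\lambda(z))$ lie in $\RP^{[d]}(X_\lambda)$, which is an equivalence relation by Theorem \ref{ShaoYe}(3); hence $(\pi_\lambda(x),\pi_\lambda(z))\in\RP^{[d]}(X_\lambda)$ for all $\lambda$, so $(x,z)\in\RP^{[d]}(X)$. The statement for $\RP^{[\infty]}=\bigcap_d\RP^{[d]}$ then follows, as an intersection of equivalence relations is an equivalence relation.

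For (2), the inclusion $(\pi\times\pi)(\RP^{[d]}(X))\subset\RP^{[d]}(Y)$ is direct from the definition. For the reverse inclusion, I write $Y=\varprojlim\{Y_\mu,\rho^\mu_\nu,M\}$ and, by passing to a cofinal refinement of the index set, arrange that $\pi$ is given by a compatible family of factor maps $\pi_\lambda\colon X_\lambda\to Y_\lambda$ between metric minimal systems. Given $(y_1,y_2)\in\RP^{[d]}(Y)$, for each finite $F\subset\Lambda$ put
$$A_F=\bigl\{(x_1,x_2)\in\pi^{-1}(y_1)\times\pi^{-1}(y_2):(\pi_\lambda\times\pi_\lambda)(x_i)=\rho_\lambda(y_i)\text{ and }(\pi_\lambda(x_1),\pi_\lambda(x_2))\in\RP^{[d]}(X_\lambda)\ \forall\lambda\in F\bigr\}.$$
Each $A_F$ is closed in $X\times X$, and Theorem \ref{ShaoYe}(4) applied in each metric coordinate (together with the compatibility of the projections) shows $A_F\ne\emptyset$; moreover $\{A_F\}$ has the finite intersection property. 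Compactness of $X\times X$ yields a point $(x_1,x_2)\in\bigcap_F A_F$ with $\pi(x_i)=y_i$ and $(x_1,x_2)\in\RP^{[d]}(X)$.

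For (3), I will show that the natural map $\Phi\colon X/\RP^{[d]}(X)\to\varprojlim X_\lambda/\RP^{[d]}(X_\lambda)$ is a topological conjugacy. Continuity and equivariance are automatic; injectivity follows from the inverse limit description of $\RP^{[d]}(X)$; surjectivity is another compactness/finite-intersection argument, lifting each coordinate via Theorem \ref{ShaoYe}(4). Since each $X_\lambda/\RP^{[d]}(X_\lambda)$ is a (metric) system of order $d$ by Theorem \ref{ShaoYe}(5), their inverse limit, and hence $X/\RP^{[d]}(X)$, is a system of order $d$ in the sense of Definition \ref{de-nilsystem}. For maximality, suppose $p\colon(X,T)\to(Z,R)$ is a factor with $(Z,R)$ a system of order $d$; then $\RP^{[d]}(Z)$ is the diagonal (its metric nilsystem factors separate $\RP^{[d]}$-classes, so the same holds for the inverse limit by the argument above), and (2) applied to $p$ gives $(p\times p)(\RP^{[d]}(X))\subset\RP^{[d]}(Z)=\Delta_Z$, so $p$ factors through $X/\RP^{[d]}(X)$.

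The main obstacle will be executing the two compactness arguments in (2) and the surjectivity half of (3) cleanly. The metric case supplies non-emptiness in each coordinate, but one must (i) arrange a compatible family of metric factor maps $\pi_\lambda\colon X_\lambda\to Y_\lambda$ via a cofinal refinement of the indexing, and (ii) verify the finite intersection property by choosing a common dominating index $\mu\ge F$ and using surjectivity of the bonding maps together with Theorem \ref{ShaoYe}(4) for $\pi^\mu_\lambda\times\pi^\mu_\lambda$. Once this bookkeeping is in place, everything else is a transcription of the metric proofs through the inverse limit.
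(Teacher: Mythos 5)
Your proof follows exactly the route the paper takes: represent $X$ as an inverse limit of metric minimal systems (Lemma \ref{inverse}), use the proposition $\RP^{[d]}(X)=\varprojlim\RP^{[d]}(X_\ll)$, and apply Theorem \ref{ShaoYe} coordinate-wise. The paper states this in one line ("combining this proposition with Theorem \ref{ShaoYe}"); you have merely made explicit the compactness and finite-intersection bookkeeping that the paper leaves implicit.
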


Note that for a compact Hausdorff system $(X,T)$ we say that it is a
system of order $d$ for some $d\in\N$ if it is an inverse limit of
$d$-step minimal nilsystems.

\section{Intersective}\label{appendix:Intersective}
It is well known that $P$ is a Birkhoff recurrence set if and only
if $P\cap(F-F)\not=\emptyset$ for each $F\in\F_{s}$. To give a
similar characterization we have
\begin{de}\index{topologically intersective of order $d$}
A subset $P$ is {\em topologically intersective of order $d$} if for each
$F\in \F_{s}$ there are $n_1,\ldots,n_d$ with
$FS(\{n_i\}_{i=1}^d)\subset P$ and $a\in F$ with
$a+FS(\{n_i\}_{i=1}^d)\subset F$, i.e. $F\cap\bigcap_{n\in
FS(\{n_i\}_{i=1}^d)} (F-n)\not=\emptyset$.
\end{de}

\begin{thm}A subset $P$ is topologically intersective of order $d$ if and
only if it is a Birkhoff recurrence set of order $d$.
\end{thm}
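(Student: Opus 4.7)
The plan is to reduce both directions to the equivalent formulation (2) of Proposition \ref{birkhoff-equi2}, which reformulates the definition of a Birkhoff recurrence set of order $d$ as a return-type statement: for every minimal t.d.s.\ $(X,T)$ and every nonempty open $U\subset X$ there exist $n_1,\ldots,n_d$ with $FS(\{n_i\}_{i=1}^d)\subset P$ and $U\cap \bigcap_{n\in FS(\{n_i\}_{i=1}^d)}T^{-n}U\neq \emptyset$. Both implications will then follow by the Furstenberg correspondence principle, tailored so that the finite-sum structure $FS(\{n_i\}_{i=1}^d)$ is preserved under the correspondence.

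For the direction that every Birkhoff recurrence set $P$ of order $d$ is topologically intersective of order $d$, I would fix a syndetic subset $F\subset \Z$ and build a natural minimal symbolic system from it. Let $\omega=\mathbf{1}_F\in\{0,1\}^{\Z}$ be the indicator sequence, let $\sigma$ denote the shift, and let $Y=\overline{\O(\omega,\sigma)}$. Since $F$ is syndetic with gap bounded by some $N$, every $y\in Y$ satisfies $y(k)=1$ for some $k\in[0,N]$, so the clopen set $U_0=\{y\in Y: y(0)=1\}$ meets every minimal subsystem $Z\subset Y$. Set $U=U_0\cap Z$, which is nonempty and open in $Z$. Applying Proposition \ref{birkhoff-equi2}(2) to the minimal system $(Z,\sigma|_Z)$ and $U$, I obtain $n_1,\ldots,n_d$ with $FS(\{n_i\}_{i=1}^d)\subset P$ and some $z\in Z$ satisfying $z(0)=1$ and $z(n)=1$ for every $n\in FS(\{n_i\}_{i=1}^d)$. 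Approximating $z$ by translates $\sigma^{m_k}\omega$ on the finite set $\{0\}\cup FS(\{n_i\}_{i=1}^d)$ yields some $m_k\in F$ with $m_k+FS(\{n_i\}_{i=1}^d)\subset F$, which is exactly the condition in the definition of topological intersectiveness.

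For the converse, I would fix an arbitrary syndetic witness scenario in a minimal t.d.s.\ $(X,T)$ with nonempty open $U\subset X$: pick $x_0\in U$ and a smaller open neighborhood $V\subset U$ of $x_0$, and consider the return-time set $F=N(x_0,V)$. Minimality of $(X,T)$ forces $F$ to be syndetic, so topological intersectiveness of $P$ of order $d$ produces $n_1,\ldots,n_d$ with $FS(\{n_i\}_{i=1}^d)\subset P$ and $a\in F$ such that $a+n\in F$ for every $n\in FS(\{n_i\}_{i=1}^d)$. Then $T^a x_0\in V\subset U$ and $T^{a+n}x_0\in V\subset U$ for each such $n$, hence $T^ax_0\in U\cap \bigcap_{n\in FS(\{n_i\}_{i=1}^d)}T^{-n}U$, verifying condition (2) of Proposition \ref{birkhoff-equi2} and therefore that $P$ is a Birkhoff recurrence set of order $d$.

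The main obstacle I expect is purely bookkeeping in the first direction: one must verify that when passing to a minimal subsystem $Z\subset Y$ the clopen set $U_0\cap Z$ is indeed nonempty, which uses the syndeticity of $F$ together with the fact that the bounded-gap property transfers to the orbit closure. Everything else is a direct transcription of the classical Furstenberg correspondence, leveraging that $FS(\{n_i\}_{i=1}^d)$ is a finite set and therefore the relevant coordinate conditions pass through product-topology limits without difficulty.
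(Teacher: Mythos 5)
Your proof is correct and follows essentially the same route as the paper: for the forward direction you pass to a minimal subsystem of the orbit closure of $\mathbf{1}_F$ under the shift and use the cylinder set $[1]$ exactly as the authors do, and for the converse you apply the intersective property to the syndetic return set of a point of $U$. The only cosmetic difference is that you shrink $U$ to a smaller $V$ in the converse, which is unnecessary but harmless; the paper works with $F=N(x,U)$ directly.
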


\begin{proof} Assume that $P$ is a Birkhoff recurrence set of order $d$.
Let $F\in\F_s$. Then $1_F\in \{0,1\}^{\Z}$. Let $(X,T)$ be a minimal
subsystem of $(\overline{\O(1_F,T)},T)$, where $T$ is the shift.
Since $F\in \F_s$, $[1]$ is a non-empty open subset of $X$. By the
definition there are $n_1,\ldots,n_d$ with
$FS(\{n_i\}_{i=1}^d)\subset P$ such that $[1] \cap
\big(\bigcap_{n\in FS(\{n_i\}_{i=1}^d)}T^{-n}[1] \big)\neq
\emptyset.$ It implies that there is $a\in F$ with
$a+FS(\{n_i\}_{i=1}^d)\subset F$ and hence $P$ is topologically
intersective of order $d$.

\medskip
Assume that $P$ is topologically intersective of order $d$. Let
$(X,T)$ be a minimal t.d.s. and $U$ be an open non-empty subset.
Take $x\in U$, then $F=N(x,U)\in \F_s$. Thus there are
$n_1,\ldots,n_d$ with $FS(\{n_i\}_{i=1}^d)\subset P$ and $a\in F$
with $a+FS(\{n_i\}_{i=1}^d)\subset F$. It follows that $U \cap
\big(\bigcap_{n\in FS(\{n_i\}_{i=1}^d)}T^{-n}U \big)\neq \emptyset.$
\end{proof}

It is well known that $P$ is a Poincar\'e recurrence set if and only
if $P\cap(F-F)\not=\emptyset$ for each $F\in\F_{pubd}$. To give a
similar characterization we have

\begin{de} A subset $P$ is intersective of order $d$ if for each $F\in
\F_{pubd}$ there are $n_1,\ldots,n_d$ with
$FS(\{n_i\}_{i=1}^d)\subset P$ and $a\in F$ with
$a+FS(\{n_i\}_{i=1}^d)\subset F$.
\end{de}

\begin{thm}
A subset is intersective of order $d$ if and only if it is a
Poincar\'e recurrence set of order $d$.
\end{thm}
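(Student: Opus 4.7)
The plan is to mimic the argument of the preceding theorem (the topological/syndetic analogue) but replace the use of syndetic sets and minimal systems by the use of positive upper Banach density sets and measure preserving systems; this is precisely what the measurable Furstenberg correspondence principle (the measurable case stated in the excerpt) is designed for.

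First I would prove the direction ``Poincaré recurrence set of order $d$ $\Rightarrow$ intersective of order $d$''. Given $F \in \F_{pubd}$, I would apply part (1) of the measurable Furstenberg correspondence principle to obtain a measure preserving system $(X,\X,\mu,T)$ and $A \in \X$ with $\mu(A) = BD^*(F) > 0$ such that
\[
BD^*\Big(\bigcap_{n\in \alpha}(F-n)\Big) \ge \mu\Big(\bigcap_{n\in \alpha}T^{-n}A\Big)
\]
for every $\alpha \in \F(\Z)$. By hypothesis, there exist $n_1,\ldots,n_d$ with $FS(\{n_i\}_{i=1}^d) \subset P$ and
\[
\mu\Big(A \cap \bigcap_{n\in FS(\{n_i\}_{i=1}^d)} T^{-n}A\Big) > 0.
\]
Taking $\alpha = \{0\} \cup FS(\{n_i\}_{i=1}^d)$ shows $BD^*(F \cap \bigcap_{n \in FS(\{n_i\}_{i=1}^d)}(F-n)) > 0$, so this intersection is non-empty; any element $a$ of it satisfies $a \in F$ and $a + FS(\{n_i\}_{i=1}^d) \subset F$, proving intersectivity of order $d$.

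For the converse, suppose $P$ is intersective of order $d$, and let $(X,\X,\mu,T)$ be a measure preserving system and $A \in \X$ with $\mu(A) > 0$. Apply part (2) of the measurable Furstenberg correspondence principle to obtain a set $F \subset \Z$ with $D^*(F) \ge \mu(A) > 0$ (in particular $F \in \F_{pubd}$, since $BD^*(F) \ge D^*(F) > 0$) such that whenever $\bigcap_{n\in\alpha}(F-n) \neq \emptyset$ for some $\alpha \in \F(\Z)$ one has $\mu(\bigcap_{n\in\alpha}T^{-n}A) > 0$. By the intersectivity hypothesis applied to $F$, choose $n_1,\ldots,n_d$ with $FS(\{n_i\}_{i=1}^d) \subset P$ and $a \in F$ with $a + FS(\{n_i\}_{i=1}^d) \subset F$; setting $\alpha = \{0\} \cup FS(\{n_i\}_{i=1}^d)$ one has $a \in \bigcap_{n\in\alpha}(F-n)$, hence $\mu(A \cap \bigcap_{n\in FS(\{n_i\}_{i=1}^d)} T^{-n}A) > 0$, which is exactly the Poincaré recurrence condition of order $d$.

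There is essentially no obstacle here beyond bookkeeping: both directions are direct translations via the correspondence principle, entirely parallel to the topological equivalence just proved. The only minor point to be careful about is that in direction (2) we end up with $F$ of positive upper density (not just positive upper Banach density), but this only makes the hypothesis easier to apply since $\F_{pd} \subset \F_{pubd}$.
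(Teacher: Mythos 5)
Your proof is correct and follows essentially the same route as the paper: both directions are direct applications of the two halves of the measurable Furstenberg correspondence principle, using part (1) to transfer the Poincar\'e hypothesis to density and part (2) to transfer intersectivity to measure. The only cosmetic difference is that you spell out the choice $\alpha=\{0\}\cup FS(\{n_i\}_{i=1}^d)$ and the remark that $D^*(F)>0$ implies $F\in\F_{pubd}$, which the paper leaves implicit.
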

\begin{proof}
Assume that $P$ is intersective of order $d$. Let $(X,\mathcal{B},
\mu,T)$ be a measure preserving system and $A\in\mathcal{B}$ with
$\mu(A)>0$. By the Furstenberg corresponding principle, there exists
$F\subset \mathbb{Z}$ such that $D^*(F)\ge \mu(A)$ and
\begin{equation}\label{F1}
\{\alpha\in \F(\mathbb{Z}):\bigcap_{n\in \alpha}
(F-n)\not=\emptyset\}\subset \{\alpha\in
\F(\mathbb{Z}):\mu(\bigcap_{n\in \alpha} T^{-n}A)>0\},
\end{equation}
where $\F(\mathbb{Z})$ denotes the collection of non-empty finite
subsets of $\mathbb{Z}$. Since $P$ is intersective of order $d$,
there are $n_1,\ldots,n_d$ with $FS(\{n_i\}_{i=1}^d)\subset P$ and
$a\in F$ with $a+FS(\{n_i\}_{i=1}^d)\subset F$, i.e. $F\cap \bigcap
\limits_{n\in FS(\{n_i\}_{i=1}^d)} (F-n)\not=\emptyset$. By
(\ref{F1}) $P\in \F_{P_d}$.

Now assume that $P\in\F_{P_d}$ and $F\in \F_{pubd}$. Then by the
Furstenberg corresponding principle, there are a measure preserving
system $(X,\mathcal{B}, \mu,T)$ and $A\in\mathcal{B}$ such that
$\mu(A)=BD^*(F)>0$ and
\begin{equation}\label{F2}
BD^*(\bigcap_{n\in \alpha} (F-n))\ge \mu(\bigcap_{n\in \alpha}
T^{-n}A)
\end{equation}
for all $\alpha\in \F(\Z)$. Since $P\in\F_{P_d}$, there are
$n_1,\ldots,n_d$ with $FS(\{n_i\}_{i=1}^d)\subset P$ and $\mu(A\cap
\bigcap \limits_{n\in FS(\{n_i\}_{i=1}^d)}T^{-n}A)>0.$ This implies
$F\cap \bigcap \limits_{n\in FS(\{n_i\}_{i=1}^d)}
(F-n)\not=\emptyset$ by (\ref{F2}).
\end{proof}

\backmatter
\bibliographystyle{amsalpha}

\printindex
\end{document}